\ifpdf \RequirePackage[pdftex]{hyperref} \fi
\newcommand*{\transpose}{%
  {\mathpalette\@transpose{}}%
}
\newcommand*{\@transpose}[2]{%
  \raisebox{\depth}{$\m@th#1\intercal$}%
}
\newcommand*{\da@rightarrow}{\mathchar"0\hexnumber@\symAMSa 4B }
\newcommand*{\da@leftarrow}{\mathchar"0\hexnumber@\symAMSa 4C }
\newcommand*{\xdashrightarrow}[2][]{%
  \mathrel{%
    \mathpalette{\da@xarrow{#1}{#2}{}\da@rightarrow{\,}{}}{}%
  }%
}
\newcommand{\xdashleftarrow}[2][]{%
  \mathrel{%
    \mathpalette{\da@xarrow{#1}{#2}\da@leftarrow{}{}{\,}}{}%
  }%
}
\newcommand*{\da@xarrow}[7]{%
  \sbox0{$\ifx#7\scriptstyle\scriptscriptstyle\else\scriptstyle\fi#5#1#6\m@th$}%
  \sbox2{$\ifx#7\scriptstyle\scriptscriptstyle\else\scriptstyle\fi#5#2#6\m@th$}%
  \sbox4{$#7\dabar@\m@th$}%
  \dimen@=\wd0 %
  \ifdim\wd2 >\dimen@
    \dimen@=\wd2 %
  \fi
  \count@=2 %
  \def\da@bars{\dabar@\dabar@}%
  \@whiledim\count@\wd4<\dimen@\do{%
    \advance\count@\@ne
    \expandafter\def\expandafter\da@bars\expandafter{%
      \da@bars
      \dabar@ 
    }%
  }%
  \mathrel{#3}%
  \mathrel{%
    \mathop{\da@bars}\limits
    \ifx\\#1\\%
    \else
      _{\copy0}%
    \fi
    \ifx\\#2\\%
    \else
      ^{\copy2}%
    \fi
  }%
  \mathrel{#4}%
}
\DeclareMathOperator{\GL}{GL}
\DeclareMathOperator{\Ad}{Ad}
\DeclareMathOperator{\Hom}{Hom}
\DeclareMathOperator{\End}{End}
\def\eps{\varepsilon}
\DeclareMathOperator{\U}{U}
\DeclareMathOperator{\trace}{trace}
\DeclareMathOperator{\prim}{prim}
\DeclareMathOperator{\diag}{diag}
\def\O{\operatorname{O}}
\DeclareMathOperator{\fin}{fin}
\DeclareMathOperator{\rank}{rank}
\DeclareMathOperator{\Eis}{Eis}
\DeclareMathOperator{\Ind}{Ind}
\DeclareMathOperator{\vol}{vol}
\theoremstyle{plain} \newtheorem{theorem} {Theorem}  \newtheorem{corollary} [theorem] {Corollary} \newtheorem{proposition} [theorem] {Proposition} 
\theoremstyle{definition} \newtheorem{definition} [theorem] {Definition}  
\newtheorem{example} [theorem] {Example}
            \newtheorem{remark} [theorem] {Remark}
\newtheoremstyle{itplain} 
{6pt}                    
{5pt\topsep}                    
{\itshape}                   
{}                           
{\itshape}                   
{.}                          
{5pt plus 1pt minus 1pt}                       
{}  
\theoremstyle{itplain} 
\newtheorem{lemma}[theorem]{Lemma}
\newtheorem*{lemma*}{Lemma}
\newtheorem*{proposition*}{Proposition}
\newtheorem*{definition*}{Definition}
\newtheorem*{example*}{Example}
\newtheorem*{results*}{Results}
\newcommand{\mfq}{\mathfrak{q}}
\newcommand{\RR}{R} \newcommand{\rn}{A}
\newcommand{\psiT}{\psi_{\tilde{T}}}
\numberwithin{equation}{section} \numberwithin{theorem}{section}
\begin{document}
\title{The subconvexity bound for standard L-function in level aspect}

\author{Yueke Hu} \address{YMSC, Tsinghua University, Beijing, China} \email{yhumath@tsinghua.edu.cn}

\author{Paul D. Nelson} \address{Aarhus University, Aarhus, Denmark} \email{paul.nelson@math.au.dk}
\begin{abstract}
  In this paper we prove a new subconvexity result  for the standard L-function of a unitary cuspidal automorphic representation $\pi$ of $\GL_n$, where the finite set of  places $S$ with large conductors is allowed to vary,
  provided that the local parameters at every place in $S$ satisfy certain uniform growth condition.
\end{abstract}
\maketitle
\tableofcontents

\section{Introduction}\label{sec:cnq7tfnatf}

\subsection{Background}\label{sec:cnq7tfm86o}

Let $\pi$ be a cuspidal automorphic representation of $\GL_n$ over $\mathbb{Q}$.  We may write the archimedean $L$-factor for $\pi$ in the form $L(\pi_\infty, s) = \prod_{i = 1 }^n \Gamma_{\mathbb{R} } (s + \lambda_j)$.  We refer to the $\lambda_j$ as the \emph{archimedean $L$-function parameters} of $\pi$ (or of its local component $\pi_\infty$).

We recall the main result of \cite{2021arXiv210915230N}.

\begin{theorem}\label{theorem:cnpy9gjptx}
  Let $n$ be a natural number, and let $c_0 > 0$.  For each $\delta < \delta_n^\sharp$, there exists $C_1 \geq 0$ with the following property.  Let $\pi$ be a unitary cuspidal automorphic representation of $\GL_n$ over $\mathbb{Q}$ such that each archimedean $L$-function parameter $\lambda_j$ of $\pi$ satisfies the ``uniform growth'' condition
  \begin{equation}\label{eqn:uniform-growth-archimedean}
    c_0 T_\infty  \leq 1 + \lvert \lambda_j  \rvert \leq  T_\infty.
  \end{equation}
  Then
  \begin{equation*}
    \lvert L(\pi, \tfrac{1}{2}) \rvert \leq C_1 (T_\infty^n)^{1/4 - \delta} C(\pi_{\fin})^{1/2}.
  \end{equation*}
\end{theorem}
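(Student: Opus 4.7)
The plan is to combine an integral representation of $L(\pi,\tfrac12)$ with an amplified pre-trace formula in which the archimedean weight is microlocalized at a well-chosen regular semisimple element of $\GL_n(\mathbb{R})$, following the architecture of \cite{2021arXiv210915230N}.

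First, I would realize the L-value as a period. Rankin--Selberg integration of $\varphi \in \pi$ against an auxiliary cuspform $\varphi_\sigma$ on $\GL_{n-1}$ gives an identity of the shape
\[
  \left\lvert \int_{[\GL_{n-1}]} \varphi(h) \overline{\varphi_\sigma(h)}\, dh \right\rvert^2 \sim \frac{L(\pi \times \sigma, \tfrac12)}{L(\pi, \Ad, 1) L(\sigma, \Ad, 1)}.
\]
By averaging over a spectral family of cuspidal $\sigma$ of $\GL_{n-1}$ whose parameters are comparable to $T_\infty$ and invoking a Kuznetsov-type formula, one obtains, by positivity, a second moment that dominates $\lvert L(\pi,\tfrac12) \rvert^2$ and whose leading size is the target convexity bound times a main term of size $T_\infty^{n^2/2}$ coming from $L(\pi,\Ad,1)$.

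Second, I would take $\varphi_\infty \in \pi_\infty$ to be an \emph{analytic newvector}: a unit vector microlocalized at a regular semisimple $\tau \in \GL_n(\mathbb{R})$ whose eigenvalues realize the archimedean parameters $\lambda_j$. The uniform growth hypothesis \eqref{eqn:uniform-growth-archimedean} guarantees that $\tau$ remains at bounded distance from every wall of the regular set, uniformly in $T_\infty$; this is precisely what is needed to apply microlocal calculus on $\GL_n(\mathbb{R})$ with effective constants. The matrix coefficient $K_\infty(g) = \langle \pi_\infty(g)\varphi_\infty, \varphi_\infty \rangle$ is then essentially supported on a neighborhood of the centralizer of $\tau$ of co-volume $\sim T_\infty^{-\dim N /2}$, with $N$ the unipotent radical of a Borel.

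Third, I would expand the second moment by Plancherel and unfold to a relative/pre-trace formula. The geometric side becomes
\[
  \sum_{\gamma \in \GL_n(\mathbb{Q})} K_\infty(\gamma) K_{\fin}(\gamma),
\]
with $K_{\fin}$ essentially forcing $\gamma$ to be close to the identity at the finite places. The identity contribution recovers the target main term up to constants, so any improvement on convexity amounts to showing that the $\gamma \ne 1$ contribution is smaller by a positive power of $T_\infty$. This is the main obstacle: it reduces to an arithmetic-geometric counting problem, bounding the number of rational $\gamma$ whose conjugation approximates $\tau$ to precision $T_\infty^{-1/2}$. The decisive input is a rigidity lemma forcing such $\gamma$ to centralize a large subspace, a property made uniform by the regularity of $\tau$ guaranteed by \eqref{eqn:uniform-growth-archimedean}. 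Optimizing the microlocal window against the amplifier length then produces the positive exponent $\delta_n^\sharp$.
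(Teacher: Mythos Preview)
This statement is not proved in the present paper; it is explicitly recalled as the main result of \cite{2021arXiv210915230N}, and the current paper's contribution is to generalize it to varying $S$.  Any comparison must therefore be against that reference, whose architecture is summarized in \S\ref{Sec:sketch} and reused in \S\ref{sec:cnpv1hj5sh}.

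Your sketch departs from that architecture at the first step, and the departure is a genuine gap.  Pairing $\varphi$ against a cuspform $\varphi_\sigma$ on $\GL_{n-1}$ produces $L(\pi \times \sigma,\tfrac12)$, not $L(\pi,\tfrac12)$, and averaging over cuspidal $\sigma$ via a Kuznetsov formula does not isolate the standard $L$-function.  The method of \cite{2021arXiv210915230N} instead pairs $\varphi$ against a pseudo-Eisenstein series $\Psi = \Eis[\Phi[L(Y) f]]$ on $\GL_{n-1}$, with $f \in \mathcal{S}^e(U_H \backslash H)^{W_H}$ constructed as in \cite[Theorem 3.1]{2021arXiv210915230N}; the global zeta integral then represents $L(\pi,\tfrac12)$ directly (up to archimedean factors and the harmless polynomial $\mathcal{P}_G$), and there is no Kuznetsov step.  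This choice forces a second ingredient missing from your outline: because $\Psi$ is not cuspidal, one needs a growth bound for $\lVert \Psi \rVert_{t,\Omega}$ as $t$ escapes to the cusp (\cite[Theorem 4.1]{2021arXiv210915230N}, generalized here as Theorem \ref{theorem:eisenstein-growth-bound-general-level}).  Finally, the geometric side is a relative, not absolute, pre-trace expansion, with off-diagonal terms indexed by $\gamma \in \overline{G} - H$ and controlled by the bilinear estimate of \cite[Theorem 3.1(vii)]{2021arXiv210915230N}; the relevant hypothesis on $\tau$ is stability with respect to the pair $(G,H)$, not merely regularity in $G$.
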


This estimate is nontrivial when the ramification of $\pi$ concentrates at the infinite place, with parameters satisfying the uniform growth condition \eqref{eqn:uniform-growth-archimedean}.

This result built on \cite{2020arXiv201202187N}, which established a similar conclusion for $L$-functions on $\U_{n + 1} \times \U_n$, with $\U_{n}$ anisotropic (so that it defines a compact quotient).  Liyang Yang \cite{2023arXiv2309.07534} obtained related results for certain Rankin--Selberg convolutions.  Marshall \cite{2023arXiv2309.16667} established a subconvex bound in the depth aspect at a fixed $p$-adic place, assuming the analogue of \eqref{eqn:uniform-growth-archimedean} together with an additional ``wall-avoidance'' condition.

In \cite{2023arXiv2309.06314}, we initiated the study of higher rank subconvexity in ``horizontal aspects'', where the set of ramified places itself varies.  This required a quite different approach to the ``volume bound'', which is the main technical result driving the ``geometric'' estimates of prior results.  That paper, like \cite{2020arXiv201202187N}, treated the setting $U_{n+1}\times U_n$, with $\U_n$ anisotropic.  In this paper, we combine the tools and results from \cite{2021arXiv210915230N} and \cite{2023arXiv2309.06314} to derive a subconvexity bound for the standard L-functions, with possibly varying set of ramifications, assuming a parity condition on the ramification at finite places.  We summarize with the following diagram:

\begin{table}[htbp]
  \centering
  \caption{Higher-rank subconvex bounds by setting and ramification pattern}
  \begin{tabular}{|c|c|c|}
    \hline
    & \multicolumn{2}{c|}{\textbf{Set of ramified places}} \\
    \cline{2-3}
    & Fixed & Varying \\
    \hline
    \textbf{Compact quotient} & \cite{2020arXiv201202187N, 2023arXiv2309.16667} & \cite{2023arXiv2309.06314} \\
    \hline
    \textbf{GL$_\mathbf{n}$} & \cite{2021arXiv210915230N, 2023arXiv2309.07534} & This paper \\
    \hline
  \end{tabular}
\end{table}

To formulate our results, we first introduce an analogue, at a finite place, of the ``uniform growth'' condition \eqref{eqn:uniform-growth-archimedean}:

\subsection{Uniform growth at finite places}\label{sec:cnq7tbmypw}
Let $F$ be a non-archimedean local field, with ring of integers $\mathfrak{o}$ and maximal ideal $\mathfrak{p}$.  For example, we could take $F = \mathbb{Q}_p$ and $\mathfrak{o} = \mathbb{Z}_p$, so that $\mathfrak{p} = p \mathbb{Z}_p$.  Let $\mathfrak{q} \subseteq \mathfrak{p}$ be an ideal, thus $\mathfrak{q} = \mathfrak{p}^m$ for some $m \geq 1$.  Let $\pi$ be a representation of $\GL_n(F)$. Let $K(\mfq)$ be the principal congruence subgroup of depth $\mfq$.  We say that $\pi$ is \emph{uniform at depth $\mathfrak{q}^2$} if it contains a vector such that $K(\mfq)$ acts on it by a character $\chi_\tau$ with $\det(\tau)$ a unit. (See Definition \ref{definition:we-say-that-repr-pi-mathbfgf-has-emph-depth-mathfr-uniform-depth} and \cite[Definition 8.1]{2023arXiv2309.06314}.)  This condition is analogous to the uniform growth condition \eqref{eqn:uniform-growth-archimedean}.  It is the specialization of the ``regular'' condition given in \cite[Definition 8.6]{2023arXiv2309.06314} to the case that the representation of the smaller group is unramified.  We illustrate with some examples.
\begin{itemize}
\item Let $\pi$ be a principal series representation of $\GL_n(F)$, induced by characters $\chi_1, \dotsc, \chi_n$ of $F^\times$.  Then $\pi$ is uniform at depth $\mathfrak{q}^2$ precisely when each $\chi_j$ has conductor $\mathfrak{q}^2$, that is to say, when each $\chi_j$ has trivial restriction to $1 + \mathfrak{q}^2$ but not to $1 + \mathfrak{p}^{-1} \mathfrak{q}^2$.
\item In particular, the twist of an unramified representation by a character of conductor $\mathfrak{q}^2$ is uniform at depth $\mathfrak{q}^2$.
\item Certain supercuspidal representations of depth $\mathfrak{q}^2$, as discussed in \cite[Section 8.8]{2023arXiv2309.06314}, are uniform at depth $\mathfrak{q}^2$. On the other hand  for any fixed supercuspidal representation $\pi$ of $\GL_n(F)$, the twist $\pi \otimes \chi$ by a character of conductor $\mathfrak{q}^2$ is uniform at depth $\mathfrak{q}^2$ by Lemma \ref{lem:cq0ikz837w}, provided that $m$ is large enough.
\item \cite[Proposition 8.19]{2023arXiv2309.06314} implies that uniformity at depth $\mathfrak{q}^2$ is closed under parabolic induction.  That is to say, suppose given a partition $n = m_1 + \dotsb + m_k$, corresponding to a standard parabolic subgroup of $\GL_n(F)$.  For each $j \in \{1, \dotsc, k\}$, let $\pi_j$ be a representation of $\GL_{m_j}(F)$ that is uniform at depth $\mathfrak{q}^2$.  Then the same holds for the parabolic induction of $\pi_1 \otimes \dotsb \otimes \pi_k$ to $\GL_n(F)$.
\end{itemize}
We do not determine which representations are uniform at depth $\mathfrak{q}^2$; we are content to observe that the above discussion provides a rich source of examples.

\subsection{Main results}


\begin{theorem}\label{MainTheorem}
  Let $n$ be a natural number, and let $0<c_0<1$ be fixed.  There exists $\delta>0$ and $C_1\geq 0$ with the following property.

  Let $S$ be a finite set of places of $\mathbb{Q}$, containing the infinite place.  Let $T_\infty \geq 1$.  For each finite prime $p \in S$, let $\mathfrak{q}_p$ be a proper ideal in $\mathbb{Z}_p$.
  Denote $T_p=[\mathfrak{o}_p:\mathfrak{q}_p]^2$,
  and set $T := \prod_{\mathfrak{p} \in S} T_\mathfrak{p} \geq 1$.

  Let $\pi$ be a unitary cuspidal automorphic representation of $\GL_n$ over $\mathbb{Q}$ with the following properties.
  \begin{enumerate}
    [(i)]
  \item The archimedean $L$-function parameters for $\pi$ satisfy the uniform growth condition~\eqref{eqn:uniform-growth-archimedean}.
  \item For each finite prime $p \in S$, the local component $\pi_p $ is uniform at depth $\mathfrak{q}_p^2$.
  \end{enumerate}
  Then
  \begin{equation*}
    \lvert L(\pi, \tfrac{1}{2}) \rvert \leq C_1 (T^n)^{1/4 - \delta} C(\pi^S)^{1/2}.
  \end{equation*}
\end{theorem}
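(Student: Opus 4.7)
The plan is to combine the adelic pre-trace formula machinery for $\GL_n$ developed in \cite{2021arXiv210915230N} with the horizontal $p$-adic volume bounds of \cite{2023arXiv2309.06314}. The two cited papers already handle the extreme cases of the theorem; what is required here is a unified local--global argument in which the microlocal setup at each place of $S \cup \{\infty\}$ has the same structure, plus a global counting bound that combines the savings from both ingredients.

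Following \cite{2021arXiv210915230N}, I would first express $|L(\pi, 1/2)|^2$ (or a closely related quantity) as the $\pi$-isotypic part of an automorphic bilinear form, via Rankin--Selberg theory applied to $\pi \times \sigma$ for an auxiliary cuspidal representation $\sigma$ of bounded analytic conductor, with archimedean parameters placed in generic position relative to those of $\pi$. Spectral expansion together with positivity isolates $\pi$, and the problem reduces to bounding the geometric side of a pre-trace (or relative trace) formula applied to a carefully chosen test function $f = \bigotimes_v f_v$.

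The test function is built place-by-place as a microlocal lift. At $\infty$, one uses the archimedean construction of \cite[\S 14]{2021arXiv210915230N}, whose non-degeneracy input is exactly the uniform growth condition~\eqref{eqn:uniform-growth-archimedean}. At a finite $p \in S$, uniformity at depth $\mathfrak{q}_p^2$ provides a $K(\mathfrak{q}_p)$-eigenvector with character parameter $\tau$ satisfying $\det(\tau) \in \mathfrak{o}_p^\times$, from which $f_p$ can be built as in \cite[\S 8]{2023arXiv2309.06314}. At places outside $S \cup \{\infty\}$, $f_v$ is essentially spherical. By design the spectral coefficient of $\pi$ is bounded below uniformly, and the identity orbital integral on the geometric side matches the convexity bound $(T^n)^{1/2} C(\pi^S)$.

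The main obstacle, and the heart of the proof, is to bound the non-identity orbital contribution by $(T^n)^{1/2 - 2\delta} C(\pi^S)$ for some absolute $\delta > 0$. This reduces to controlling a weighted sum over non-identity rational $\gamma \in \GL_n(\mathbb{Q})$ of the product over all $v$ of local orbital integrals of $f_v$. At each $v \in S \cup \{\infty\}$, a volume bound is available: at $\infty$ from \cite[\S 15]{2021arXiv210915230N}, and at each $p \in S$ from the horizontal framework of \cite[\S 9--11]{2023arXiv2309.06314}, which is precisely where the uniform-depth hypothesis is used. The novelty is in showing that these local bounds combine multiplicatively in the global count: parameterizing $\gamma$ by its $\GL_n$-invariants (e.g.\ characteristic polynomial coefficients) and exploiting the shrinking of each local microlocal support around generic values of those invariants, the savings from different places accumulate rather than compete. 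This global counting step runs parallel to \cite[\S 13]{2023arXiv2309.06314}, and the final $\delta > 0$ can be taken as a fixed fraction of the minimum of the exponents produced by the two source papers.
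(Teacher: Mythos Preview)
Your high-level picture---build microlocal test data at each place of $S$ and combine local savings into a global subconvex exponent---is right, but the plan glosses over exactly the non-compact features that distinguish $\GL_n$ from the anisotropic unitary setting of \cite{2023arXiv2309.06314}. You describe the geometric side as ``non-identity orbital integrals'' to be controlled by a global matrix count parallel to \cite[\S13]{2023arXiv2309.06314}. In the paper, by contrast, the quantity to be bounded is $\int_{[\mathbf{G}]} |\Psi|^2 \theta$ for a pseudo-Eisenstein series $\Psi$ and a mirabolic Eisenstein series $\theta$, and the crucial non-compact input is a \emph{growth bound for $\Psi$ in the cusp} (the analogue of \cite[Theorem~4.1]{2021arXiv210915230N}, here Theorem~\ref{theorem:eisenstein-growth-bound-general-level}). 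This ingredient is absent from your plan, and without it the argument does not close: the orbital-integral language you use is adequate only for compact quotients. Relatedly, the auxiliary object paired with $\pi$ is a (pseudo-)Eisenstein series on $\GL_{n-1}$, not a cuspidal $\sigma$.

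Two further technical issues are swept under ``the savings from different places accumulate rather than compete''. First, when $T_\infty$ is small relative to $T$, the archimedean construction of \cite{2021arXiv210915230N} produces an error term of size $T_\infty^{-\infty}$, which is \emph{not} negligible in the hybrid regime; the paper fixes this by introducing an auxiliary scale $R = \max\{T_\infty, T^{\delta_0}\}$ and reproving the archimedean local theorem with weakened bounds (Lemma~\ref{Lemma:archimedeananalogueforR}). Second, the local period estimates at places in $S$ cannot be multiplied na\"ively: the archimedean estimate \cite[Prop.~21.28]{2021arXiv210915230N} uses that the diagonal matrix $c$ has \emph{integer} entries, whereas in the present setting $c$ is only $S$-integral. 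The paper addresses this with an $S$-adic variant of that estimate (Lemma~\ref{lemma:cnjen3r9kt}) together with new denominator control at finite $p\in S$ (Proposition~\ref{Prop:denominatorcontrol}, reducing to an effective bound on the support of Jacquet integrals). Your plan would need to confront both of these before the global count can proceed.
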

Here $C(\pi^S)$ is the conductor ``away from $S$''.

In view of examples of ``uniform growth'' discussed in \S\ref{sec:cnq7tbmypw}, this theorem allows us to resolve the subconvexity problem for twists of a fixed standard $L$-function in the case that the twisting character has square conductor:
\begin{corollary}
  For each natural number $n$, there exists $\delta>0$ such that for each fixed unitary cuspidal automorphic representation $\pi$ of $\GL_n$ over the rational numbers, there exists $C_1 \geq 0$ so that for all $t\in \mathbb{R}$, $N\in \mathbb{Z}$, and Dirichlet characters $\chi$ of conductor $N^2$ which is coprime to $C(\pi)$, we have
  \begin{equation*}
    \lvert L(\pi \otimes \chi, \tfrac{1}{2}+it) \rvert \leq C_1 \left((1+|t|)N^2\right)^{n(1/4-\delta)}.
  \end{equation*}
\end{corollary}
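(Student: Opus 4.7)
The plan is to deduce the corollary by applying Theorem \ref{MainTheorem} to the unitary cuspidal representation $\pi' := \pi \otimes (\chi \otimes |\cdot|^{it})$ of $\GL_n$ over $\mathbb{Q}$, exploiting the identity $L(\pi', \tfrac{1}{2}) = L(\pi \otimes \chi, \tfrac{1}{2} + it)$. I take $S := \{\infty\} \cup \{p : p \mid N\}$ and, for each $p \mid N$, set $\mathfrak{q}_p := p^{v_p(N)}\mathbb{Z}_p$, so that $T_p = p^{2v_p(N)}$ and $\prod_{p \in S \setminus \{\infty\}} T_p = N^2$. I set $T_\infty := 2(1+|t|)$, giving $T = 2(1+|t|) N^2$.

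The finite-place hypothesis (ii) of Theorem \ref{MainTheorem} follows immediately from the second bulleted example in \S\ref{sec:cnq7tbmypw}: since $\gcd(N, C(\pi)) = 1$, each $\pi_p$ with $p \mid N$ is unramified; since $\chi$ has conductor $N^2$, each $\chi_p$ with $p \mid N$ has conductor exactly $\mathfrak{q}_p^2$; and $|\cdot_p|^{it}$ is unramified. Hence $\pi'_p$ is the twist of an unramified representation by a character of conductor $\mathfrak{q}_p^2$, so uniform at depth $\mathfrak{q}_p^2$. Away from $S$, both $\chi_p$ and $|\cdot_p|^{it}$ are unramified, so $C(\pi'^S) = C(\pi^S) \leq C(\pi)$ is bounded in terms of $\pi$ alone. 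The archimedean hypothesis (i) follows by a triangle inequality: the parameters of $\pi'_\infty$ are $\lambda_j + it$ for the fixed $\lambda_1,\dotsc,\lambda_n$ of $\pi$, and for $|t|$ exceeding a threshold $M(\pi, c_0)$ one has $c_0 T_\infty \leq 1 + |\lambda_j + it| \leq T_\infty$ with $T_\infty = 2(1+|t|)$.

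Plugging into Theorem \ref{MainTheorem} and simplifying yields
\begin{equation*}
  |L(\pi \otimes \chi, \tfrac{1}{2} + it)| \leq C_1 (T^n)^{1/4 - \delta} C(\pi)^{1/2} \ll_\pi \bigl((1+|t|) N^2\bigr)^{n(1/4 - \delta)}
\end{equation*}
in the range $|t| \geq M$. The bounded range $|t| < M$ I would handle by re-applying Theorem \ref{MainTheorem} with a $\pi$-dependent $c_0^* \in (0, c_0]$ and with $T_\infty^*$ taken to be a $\pi$-dependent constant, for which the uniform growth condition becomes automatic; this yields the required subconvex bound in $N$ alone, with the bounded factor $(1+|t|)^{n(1/4-\delta)}$ absorbed into $C_1(\pi)$.

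The argument is essentially a mechanical packaging of Theorem \ref{MainTheorem}. I expect the only genuine delicacy to be the bounded-$|t|$ regime, where the archimedean uniform growth condition no longer holds with the fixed $c_0$ used for large $|t|$: one must verify that Theorem \ref{MainTheorem} can be re-invoked with a $\pi$-dependent $c_0^*$ without degrading the subconvex exponent $\delta$ selected in the large-$|t|$ analysis, so that a single $\delta > 0$ (depending only on $n$) suffices across all $t$. Apart from this, everything reduces to the local/archimedean verifications sketched above.
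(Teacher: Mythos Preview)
Your proposal is correct and matches the paper's own (one-line) derivation: apply Theorem~\ref{MainTheorem} to $\pi' = \pi \otimes \chi \otimes |\cdot|^{it}$, using the second bulleted example of \S\ref{sec:cnq7tbmypw} at each $p \mid N$ and the coprimality hypothesis to control $C(\pi'^S)$.

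Regarding the delicacy you flag in the bounded-$|t|$ regime: your concern that re-invoking Theorem~\ref{MainTheorem} with a $\pi$-dependent $c_0^*$ might degrade $\delta$ is legitimate given the phrasing of the theorem, but it dissolves once one inspects the proof in \S\ref{sec:cnpv1hj5sh}. There the exponent $\delta$ is produced either via \cite[Theorem~3.1]{2021arXiv210915230N} (whose threshold $\delta_n^\sharp$ depends only on $n$, cf.\ Theorem~\ref{theorem:cnpy9gjptx}) or, when $T_\infty < R = T^{\delta_0}$, via Lemma~\ref{Lemma:archimedeananalogueforR} combined with the $p$-adic saving and a choice of $\delta_0$ depending only on $n$; in neither branch does $c_0$ enter the exponent. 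Thus $\delta = \delta(n)$, and only $C_1$ absorbs the $c_0^*$-dependence. (Alternatively, for bounded $|t|$ and large $N$ one is automatically in the regime $T_\infty < T^{\delta_0}$, where the proof uses only the upper bound $|\lambda_j| \leq R$ and not the lower bound in~\eqref{eqn:uniform-growth-archimedean}.) With this observation, your two-regime argument goes through with a single $\delta$ depending only on $n$, and the finitely many small-$N$, small-$|t|$ cases are absorbed into $C_1(\pi)$.
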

\begin{remark}
  The coprime condition is not essential.  To remove it, one must check the local uniformity in the scenario of joint ramification at a finite place $p$ where $\pi_p$ is fixed and $C(\chi_p)$ is large enough. Considering the discussion in Section \ref{sec:cnq7tbmypw}, this is indeed true when $\pi_p$ is parabolically induced from supercuspidal representations. Though we shall not bother to check for more general cases in this paper.
\end{remark}

\subsection{Sketch of proof}\label{Sec:sketch}
The strategy to prove Theorem \ref{MainTheorem} follows that of \cite[\S5]{2021arXiv210915230N}, with the main efforts focusing on formulating and proving the analogues of the key ingredients (more precisely Theorem 3.1 and 4.1) of \cite{2021arXiv210915230N}.  The explicit constructions of test vectors at finite places follow those in \cite{2023arXiv2309.06314}, and the bilinear form control is directly taken from \cite[Theorem 9.1]{2023arXiv2309.06314}.

Many of the $p$-adic adaptations of their archimedean counterparts in \cite{2021arXiv210915230N} turn out to be more streamlined and explicit.  This simplicity makes the current treatment more accessible, though we still rely on the previous work at several crucial points.  There are, however, additional complications when estimating the local period integrals relevant for controlling the error terms, which we briefly explain here.

The first issue is that in the estimate for archimedean local period integral in \cite[Proposition 21.28]{2021arXiv210915230N}, there is a term roughly of size $T_\infty^{-\infty}$, which is negligible in the setting of \cite {2021arXiv210915230N} as $T=T_\infty\rightarrow \infty$. This term however could lead to very large errors in our hybrid setting in Theorem \ref{MainTheorem}, when $T_\infty$ is very small or even fixed  compared to $T$.

Our solution to this issue is to introduce an auxiliary parameter $R$, defined by
\begin{equation*}
  R=\max\{T_\infty, T^{\delta_0}\}
\end{equation*}
for some  small fixed $\delta_0>0$ to be optimized later.  Then one can construct archimedean test function and test vectors with $R$ replacing $T_\infty$, and get similar bound as in \cite[Proposition 21.28]{2021arXiv210915230N} in terms of $R$. The term with $R^{-\infty}$ is then negligible.

Note that in the case $T_\infty<R$, \cite[Theorem 3.1]{2021arXiv210915230N} needs to be adjusted with some bounds weakened (see Lemma \ref{Lemma:archimedeananalogueforR}), since now the stability condition (see \cite[Section 7]{2021arXiv210915230N}) no longer holds. As the result of this, one get an upper bound for the L-function with an additional coefficient of size $R^{O(1)}$. This however can be compensated with the saving from the p-adic places and taking $\delta_0$ small enough.


The second issue is that one can not naively  prove an analogue of \cite[Proposition 21.28]{2021arXiv210915230N} for every local place $v\in S$ and multiply them together. This is due to the fact that the proof of \cite[Proposition 21.28]{2021arXiv210915230N} used conditions following from global considerations (more precisely, the conditions on $c$ and $Y$). For example, the diagonal matrix $c$ as in Lemma \ref{lemma:let-f_s-=-otim-in-s-f_mathfr-be-as-abov-assume-tha} or \cite[Lemma 20.19]{2021arXiv210915230N} must have integer entries due to the nonvanishing requirement of local period integrals at the remaining unramified places.
In the current setting, we only have $S-$integral condition (meaning that the entries of $c$ are integral only at finite places $p\nmid S$). Thus \cite[Proposition 21.28]{2021arXiv210915230N} is actually not directly applicable.

To solve this issue, we adapt some of the arguments and prove directly an $S$-adic variant of \cite[Proposition 21.28]{2021arXiv210915230N} (see Lemma \ref{lemma:cnjen3r9kt}), for which being $S$-adic integral is enough. See Section \ref{sec:cnq4lj3ylk} for more details.
(This approach also solves the issue about $Y$.)

Furthermore we also need to control the denominators for $c$ at finite places $p|S$, extending the integral condition at unramified places in \cite[Lemma 21.3]{2021arXiv210915230N}. We achieve this in Section \ref{sec:cnjobc8wnb}, reducing the problem to an effective control  of nontrivial contributions to Jacquet's integral for Whittaker functions in Proposition \ref{Prop:WhittakerIntDomain}. The proof of Proposition \ref{Prop:WhittakerIntDomain} is left in the Appendix, as it has somewhat different flavor from the rest of the paper and may have independent interest.
\subsection{Organization of the paper}
\S\ref{Sec:basic} consists of the basic notations. In \S\ref{Sec:LWhit} we discuss the Whittaker function associated to the test vectors constructed in \cite{2023arXiv2309.06314}. In \S\ref{sec:some-analysis-u-backslash-g} we give explicit construction of test functions relevant for defining pseudo-Eisenstein series, and verify its properties.

In \S\ref{Sec:mainlocal} we combine the results from \cite{2023arXiv2309.06314} with those from \S\ref{Sec:LWhit} \S\ref{sec:some-analysis-u-backslash-g}  to prove Theorem \ref{theorem:main-local-noncompact}, which is the direct analogue of \cite[Theorem 3.1]{2021arXiv210915230N}.
We also show Lemma \ref{Lemma:archimedeananalogueforR}, which is an adapted version of \cite[Theorem 3.1]{2021arXiv210915230N} in the case $T_\infty$ is too small.

In \S\ref{sec:cnjen3n0cb} we prove Theorem \ref{theorem:eisenstein-growth-bound-general-level} which is the analogue of \cite[Theorem 4.1]{2021arXiv210915230N} with a couple of novelties in the proof as explained in Section \ref{Sec:sketch}.

In \S\ref{sec:cnpv1hj5sh} we finish the proof of Theorem \ref{MainTheorem}.
\subsection{Acknowledgment}
Y.H. is supported by the National Key Research and Development Program of China (No. 2021YFA1000700).  P.N.\ is a Villum Investigator based at Aarhus University, supported by a research grant (VIL54509) from VILLUM FONDEN.

\section{Basic notations}\label{Sec:basic}
\subsection{Global}\label{Sec:notationglobal}
We denote by
\begin{itemize}
\item $\mathbf{V}$ a finite free $\mathbb{Z}$-module regarded as an affine group scheme, so that for each ring $R$, we have $\mathbf{V}(R)\simeq R^n$, with $n$ the rank of $\mathbf{V}$,
\item $\mathbf{G} = \GL(\mathbf{V})$, thus $\mathbf{G}(R)=\text{Aut}_R(\mathbf{V}(R))\simeq \GL_n(R)$,
\item $\mathbf{A}$ the diagonal subgroup of $\mathbf{G}$,
\item $\mathbf{N}$ the strictly upper-triangular unipotent subgroup,
\item $\mathbf{B}=\mathbf{A}\mathbf{N}$ the upper-triangular Borel subgroup, and
\item $\mathbf{Q} = \mathbf{A} \mathbf{U}$ another minimal parabolic subgroup, $\mathbf{U}$ its unipotent radical.
\end{itemize}
According to convenience, we will take $\mathbf{Q}$ to be $\mathbf{B}$ or its opposite, the lower-triangular Borel subgroup.

When working with a Rankin--Selberg pair (\S\ref{sec:cnq7tfnatf}, \S\ref{Sec:LWhit} and \S\ref{Sec:mainlocal}), we fix a basis element $e\in \mathbf{V}$ and let $\mathbf{V}_H$ denote the complement submodule, so that
\begin{equation*}
  \mathbf{V}=\mathbf{V}_H\oplus \mathbb{Z}e.
\end{equation*}
We define group schemes $\mathbf{H}, \mathbf{N}_H, \mathbf{U}_H$, etc., by analogy.  In such cases, we denote by $n+1$ the rank of $\mathbf{V}$, so that $n$ is the rank of $\mathbf{V}_H$.

\subsection{Local}\label{Sec:Notationlocal}
Let $F$ be a non-archimedean local field.  We then denote by
\begin{equation*}
  \mathfrak{o}, \qquad \mathfrak{p}, \qquad \varpi \in \mathfrak{p} ,\qquad q = \lvert \mathfrak{o} / \mathfrak{p}  \rvert
\end{equation*}
its ring of integers, its maximal ideal, a fixed uniformizer, and the cardinality of the residue field.  We write $G=\mathbf{G}(F)$, $V=\mathbf{V}(F)$, etc.

We denote by $K=\mathbf{G}(\mathfrak{o})$ the standard maximal compact open subgroup of $G$, and define $K_H=\mathbf{H}(\mathfrak{o})$, $K_N=\mathbf{N}(\mathfrak{o})$, etc., similarly.

For a unipotent subgroup $U$ (or $N$), denote by $\delta_U$ the corresponding modular character for the group $UA$.

Let $\psi$ be an unramified unitary character of $F$ (i.e., trivial on $\mathfrak{o}$ but not on $\mathfrak{p}^{-1}$).  We use it to define an unramified nondegenerate unitary character of $N$, also denoted $\psi$, in the standard way: $\psi(n) = \psi(n_{12} + n_{23} + \dotsb)$.

Let $\mathfrak{q} \subseteq \mathfrak{p}$ be a nonzero $\mathfrak{o}$-ideal.  We set
\begin{equation}\label{eqn:cj3m0de4ie}
  K(\mathfrak{q}) := \ker(\mathbf{G}(\mathfrak{o}) \rightarrow \mathbf{G}(\mathfrak{o}/\mathfrak{q})),
  \quad
  K_H(\mathfrak{q}) := \ker(\mathbf{H}(\mathfrak{o}) \rightarrow \mathbf{H}(\mathfrak{o}/\mathfrak{q})).
\end{equation}
We choose a generator $\tilde{T}^{1/2}$ of the fractional ideal $\mathfrak{q}^{-1}$, and define
\begin{equation}\label{eqn:tild-:=-left-tild-right2-qquad-tild-rho_-:=-left-t}
  \tilde{T} := \left( \tilde{T}^{1/2} \right)^2, \qquad \tilde{T}^{\pm \rho_U^\vee} :=
  \left( \tilde{T}^{1/2} \right)^{\pm 2 \rho_U^\vee},
  \quad
  T := \lvert \tilde{T} \rvert = \lvert \mathfrak{o} / \mathfrak{q}^2 \rvert.
\end{equation}
Here $U$ is the lower triangular unipotent subgroup, $\rho_U$ is the half sum of the positive roots relative to $U$, and $\rho_U^\vee$ is the corresponding coroot.

We denote by $\psi_{\tilde{T}}$ the unitary character of $F$ given by
\begin{equation*}
  \psi_{\tilde{T}} (x) := \psi (\tilde{T} x),
\end{equation*}
which then has conductor $\mathfrak{q}^2$.

The Haar measures on respective groups will be normalized so that $K$, $K_H$, $K_A=\mathbf{A}(\mathfrak{o})$, $\mathbf{N}(\mathfrak{o})$ and $\mathbf{U}(\mathfrak{o})$ all have volume $1$.

\section{Localized Whittaker functions} \label{Sec:LWhit}

\subsection{Linear-algebraic preliminaries}
In this section, we work over an individual ring $R$, and write $V := \mathbf{V}(R)$, etc.
\begin{definition}
  By a \emph{decorated flag} $\mathcal{F}$, we mean a flag $V_1 \subset \dotsb \subset V_n$ together with a basis element $\bar{e}_j \in V_j / V_{j-1}$ for each $j$.
\end{definition}
\begin{example}
  An ordered basis $\mathcal{B} = (e_1,\dotsc,e_n)$ gives rise to a decorated flag by taking $V_j := \langle e_1,\dotsc,e_j \rangle$ and $\bar{e}_j$ to be the image of $e_j$.  Conversely, every decorated flag arises in this way.
\end{example}
We denote by $N_{\mathcal{F}}$ the stabilizer of the decorated flag $\mathcal{F}$.  This coincides with the group of linear automorphisms that are strictly upper-triangular {and unipotent} with respect to some (equivalently, any) basis $\mathcal{B}$ giving rise to $\mathcal{F}$.

\begin{definition}
  For $\tau \in M$ and a decorated flag $\mathcal{F}$, we say that $\tau$ is \emph{subcyclic with respect to $\mathcal{F}$} if for some (equivalently, any) basis $(e_1,\dotsc,e_n)$ giving rise to $\mathcal{F}$, we have
  \begin{equation*}
    \tau e _j - e _{j + 1} \in \langle e_1, \dotsc, e_j \rangle
  \end{equation*}
  for all $j \in \{1, \dotsc, n-1\}$.
\end{definition}
Note that the above condition depends only upon $\mathcal{F}$.  Concretely, it says that the matrix form of $\tau$ with respect to such a basis is given by
\begin{equation*}
  \begin{pmatrix}
    \ast & \ast & \ast \\
    1 & \ast & \ast \\
    0 & 1 & \ast \\
  \end{pmatrix}
  .
\end{equation*}
Observe that if $\tau$ satisfies this condition, then so does $\Ad(g) \tau$ for each $g \in N_{\mathcal{F}} G_\tau$.  In fact, the converse holds:
\begin{proposition}\label{proposition:let-tau-in-m-be-regul-let-g-in-g-let-mathc-be-deco}
  Let $\tau \in M$, let $g \in G$, and let $\mathcal{F}$ be a decorated flag.  Suppose that $\tau$ and $\Ad(g) \tau$ are both subcyclic with respect to $\mathcal{F}$.  Then $g \in N_{\mathcal{F}} G_\tau$.
\end{proposition}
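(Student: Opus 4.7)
The plan is to reduce the claim to a uniqueness property for subcyclic decorated flags attached to a fixed $\tau$, and then to exploit the cyclic-vector structure forced by subcyclicity. To this end, I would first reformulate: since $\Ad(g)\tau$ is subcyclic with respect to $\mathcal{F}$ if and only if $\tau$ is subcyclic with respect to $\mathcal{F}' := g^{-1}\mathcal{F}$, and since $g \in N_{\mathcal{F}} G_\tau$ translates (by a short manipulation using that $N_{\mathcal{F}}$ stabilizes $\mathcal{F}$) into the existence of $h \in G_\tau$ with $h \mathcal{F}' = \mathcal{F}$, it suffices to show: if $\tau$ is subcyclic with respect to two decorated flags $\mathcal{F}, \mathcal{F}'$, then they lie in a common $G_\tau$-orbit.

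Next I would identify a canonical basis attached to a subcyclic decorated flag. Given any basis $(e_1,\dotsc,e_n)$ representing $\mathcal{F}$, a short induction on $j$ using the subcyclic relations $\tau e_j \equiv e_{j+1} \pmod{V_j}$ yields $\tau^{j-1} e_1 \equiv e_j \pmod{V_{j-1}}$ for all $j$. Consequently, the tuple $(\tau^{j-1} e_1)_{j=1}^{n}$ differs from $(e_1,\dotsc,e_n)$ by an element of $N_{\mathcal{F}}$ and hence represents the \emph{same} decorated flag $\mathcal{F}$. In particular, $e_1$ is a cyclic vector for $\tau$, and $\mathcal{F}$ is determined by the pair $(\tau, e_1)$.

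Finally, let $e_1$ and $e_1'$ denote the initial basis vectors of $\mathcal{F}$ and $\mathcal{F}'$ respectively. Since $e_1'$ is cyclic for $\tau$, the evaluation map $R[\tau] \to V$, $X \mapsto X e_1'$, is surjective, so there exists $h \in R[\tau]$ with $h e_1' = e_1$. This $h$ commutes with $\tau$ and carries the basis $(\tau^{j-1} e_1')_{j=1}^{n}$ of $V$ to the tuple $(\tau^{j-1} e_1)_{j=1}^{n}$, which is itself a basis of $V$ by cyclicity of $e_1$. Hence $h$ is invertible, so $h \in G_\tau$, and by the canonical representation above, $h \mathcal{F}' = \mathcal{F}$. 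I expect the only step requiring real care to be the canonical-basis observation; once that is established, the rest is a formal consequence of the transitivity of $G_\tau$ on cyclic vectors for $\tau$.
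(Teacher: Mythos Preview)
Your proof is correct and follows essentially the same strategy as the paper's. The paper also passes through a canonical cyclic basis $(e_1,\tau e_1,\dotsc,\tau^{n-1}e_1)$ attached to a subcyclic decorated flag (its Lemma~\ref{lemma:let-tau-in-m-be-regul-let-mathc-be-decor-flag-with}) and then invokes transitivity of $G_\tau$ on cyclic bases (its Lemma~\ref{lemma:if-tau-in-m-regul-then-g_tau-acts-simply-trans-set}); the only cosmetic differences are that the paper phrases the first step as conjugating $\tau$ by $N_{\mathcal{F}}$ rather than changing the basis, and proves transitivity via Cayley--Hamilton rather than by directly producing $h\in R[\tau]$ from surjectivity of $R[\tau]\to V$.
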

The proof will be given after a couple of lemmas.

\begin{definition}
  Let $\mathcal{B} = (e_1,\dotsc,e_n)$ be an ordered basis of $V$.  We say that $\tau \in M$ is \emph{cyclic} with respect to $\mathcal{B}$, or that $\mathcal{B}$ is a \emph{cyclic basis} for $\tau$, if $\tau e _j = e _{j + 1}$ for $j = 1, \dotsc, n-1$, or equivalently, if $\tau$ is of the form
  \begin{equation*}
    \begin{pmatrix}
      0 & 0 & \ast \\
      1 & 0 & \ast \\
      0 & 1 & \ast \\
    \end{pmatrix}
    .
  \end{equation*}
\end{definition}
In particular, if $\tau$ is cyclic with respect to $\mathcal{B}$, then $\tau$ is subcyclic with respect to the decorated flag $\mathcal{F}$ attached to $\mathcal{B}$.


\begin{lemma}\label{lemma:if-tau-in-m-regul-then-g_tau-acts-simply-trans-set}
  If $\tau \in M$ admits a cyclic basis, then the centralizer $G_\tau$ acts simply-transitively on the set of all bases with respect to which $\tau$ is cyclic.
\end{lemma}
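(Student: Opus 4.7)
The plan is to exhibit a natural action of $G_\tau$ on the set of cyclic bases and to verify that it is both free and transitive.

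First, I would check that the action is well-defined: if $g \in G_\tau$ and $\mathcal{B} = (e_1,\ldots,e_n)$ is a cyclic basis for $\tau$, then $(ge_1,\ldots,ge_n)$ is still a basis of $V$ (as $g$ is invertible), and it is cyclic for $\tau$ because $\tau(ge_j) = g\tau e_j = ge_{j+1}$ for $j < n$. Freeness is then immediate: if $g \in G_\tau$ satisfies $ge_j = e_j$ for all $j$, then $g$ acts as the identity on a basis and hence $g = \id$.

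The substantive step is transitivity. Given a second cyclic basis $\mathcal{B}' = (e_1',\ldots,e_n')$, I define $g \in G$ to be the unique linear map with $ge_j = e_j'$, and I need to show $g \in G_\tau$. For $j < n$, the identity $g\tau e_j = ge_{j+1} = e_{j+1}' = \tau e_j' = \tau g e_j$ is forced by the cyclic condition. The only nontrivial case is $j = n$: if $\tau e_n = \sum_k a_k e_k$ and $\tau e_n' = \sum_k a_k' e_k'$, I must verify $a_k = a_k'$ for all $k$.

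This reduces to a standard linear-algebraic fact: the matrix of $\tau$ in any cyclic basis is the companion matrix of the characteristic polynomial $\det(tI - \tau)$, whose last column is (up to sign) the coefficients of this polynomial. Since $\det(tI - \tau)$ is defined without reference to a basis, the entries $a_k$ and $a_k'$ extracted from the two companion matrices must coincide. This yields $g\tau e_n = g\sum_k a_k e_k = \sum_k a_k' e_k' = \tau e_n'  = \tau g e_n$, completing the verification. The argument is valid over an arbitrary commutative ring $R$, since basis-independence of the characteristic polynomial holds in that generality, and I do not anticipate any genuine obstacle beyond recognizing this companion-matrix identity.
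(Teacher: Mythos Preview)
Your proof is correct and follows essentially the same approach as the paper's. Both arguments reduce transitivity to the fact that the last column of the matrix of $\tau$ in a cyclic basis is determined by the (basis-independent) characteristic polynomial; the paper phrases this via Cayley--Hamilton applied to $e_1$, while you phrase it as the companion-matrix identity, but these are the same observation.
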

\begin{proof}
  It is obvious that the action is simple. We check the transitivity here.  The Cayley---Hamilton theorem says $\tau$ is annihilated by its characteristic polynomial $P(x) = x^n + \sum_{j=0}^{n-1} c_j x^j$.  In particular, if $\tau$ is cyclic with respect to $(e_1,\dotsc,e_n)$, then
  \begin{equation*}
    0 = P(\tau) e_1 = \tau e_n + \sum_{j=0}^{n-1} c_j e_{j+1}.
  \end{equation*}
  If $\tau$ is cyclic with respect to both $(e_1,\dotsc,e_n)$ and $(e_1',\dotsc,e_n')$, then the above relation holds also for the $e_j'$, which shows that the linear automorphism $e_j \mapsto e_j'$ lies in $G_\tau$.
\end{proof}

\begin{lemma}\label{lemma:let-tau-in-m-be-regul-let-mathc-be-decor-flag-with}
  Let $\tau \in M$.  Let $\mathcal{F}$ be a decorated flag with respect to which $\tau$ is subcyclic.  Let $\mathcal{B}$ be any ordered basis that gives rise to $\mathcal{F}$.  Then there is a unique $N_{\mathcal{F}}$-conjugate of $\tau$ that is cyclic with respect to $\mathcal{B}$.
\end{lemma}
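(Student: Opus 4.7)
The plan is to reformulate the claim in terms of bases and then construct the required basis by an elementary induction.

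An $N_{\mathcal{F}}$-conjugate $\tau' = u \tau u^{-1}$ (with $u \in N_{\mathcal{F}}$) is cyclic with respect to $\mathcal{B} = (e_1,\dots,e_n)$ precisely when the basis $\mathcal{B}' := (f_1,\dots,f_n)$ defined by $f_j := u^{-1} e_j$ satisfies $\tau f_j = f_{j+1}$ for $j = 1,\dots,n-1$. Since $u \in N_{\mathcal{F}}$, the basis $\mathcal{B}'$ automatically gives rise to $\mathcal{F}$, and conversely any basis giving rise to $\mathcal{F}$ arises from a unique such $u$. It therefore suffices to show: there is a unique basis $(f_1,\dots,f_n)$ giving rise to $\mathcal{F}$ with $\tau f_j = f_{j+1}$ for all $j < n$. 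Recall that ``giving rise to $\mathcal{F}$'' just means $f_j - e_j \in V_{j-1}$ for every $j$ (where $V_0 = 0$).

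For existence, we would set $f_1 := e_1$ and define $f_{j+1} := \tau f_j$ recursively, then verify by induction on $j$ that $f_j - e_j \in V_{j-1}$. The key observation is that subcyclicity gives $\tau e_j - e_{j+1} \in V_j$, and hence by linearity $\tau V_{j-1} \subseteq V_j$. Writing $f_j = e_j + w$ with $w \in V_{j-1}$ (by induction), we then get $f_{j+1} = \tau e_j + \tau w \in (e_{j+1} + V_j) + V_j = e_{j+1} + V_j$, as required.

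For uniqueness, note that $f_1$ is forced to equal $e_1$ (the condition $f_1 - e_1 \in V_0 = 0$ pins it down), after which $f_{j+1} = \tau f_j$ determines all remaining entries. There is no genuine obstacle in this argument; the only mild thing to track is that subcyclicity propagates correctly through the induction, which is transparent once one writes the matrix of $\tau$ with respect to any basis giving rise to $\mathcal{F}$ in the displayed triangular form.
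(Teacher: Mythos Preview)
Your proof is correct and follows essentially the same approach as the paper: both reformulate the claim as the existence and uniqueness of a basis $(f_1,\dots,f_n)$ giving rise to $\mathcal{F}$ with $\tau f_j = f_{j+1}$, construct it inductively via $f_1 = e_1$ and $f_{j+1} = \tau f_j$, and use the simply-transitive action of $N_{\mathcal{F}}$ on bases inducing $\mathcal{F}$ to translate back. Your induction step is written out a bit more explicitly than the paper's, but the arguments are the same.
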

\begin{proof}
  We first verify that there is a unique ordered basis $\mathcal{B} ' = (e_1', \dotsc, e_n')$ such that
  \begin{enumerate}
    [(i)]
  \item\label{enumerate:mathc--gives-rise-same-decor-flag-as-mathc-} $\mathcal{B} '$ gives rise to the same decorated flag as $\mathcal{B}$, and
  \item\label{enumerate:tau-cyclic-with-respect-mathcalb-.-} $\tau$ is cyclic with respect to $\mathcal{B} '$.
  \end{enumerate}
  The uniqueness follows from the observation that, by~\eqref{enumerate:mathc--gives-rise-same-decor-flag-as-mathc-} and~\eqref{enumerate:tau-cyclic-with-respect-mathcalb-.-} respectively, we have
  \begin{equation*}
    e_1' = e_1 \quad \text{ and } \quad e_j' = \tau e_{j-1}' \quad (j = 2, \dotsc, n).
  \end{equation*}
  As for existence, since $\tau$ is subcyclic with respect to $\mathcal{F}$, it follows from the above formulas that the set $\mathcal{B} '$ of vectors constructed inductively by the above formulas is a basis and gives rise to $\mathcal{F}$.  It is then clear from the same formulas that $\tau$ is cyclic with respect to $\mathcal{B} '$.

  Turning to the proof of the lemma, it is equivalent to show that there is a unique $N_{\mathcal{F}}$-translate of $\mathcal{B}$ with respect to which $\tau$ is cyclic.  This follows from the previous observation and the fact that $N_{\mathcal{F}}$ acts simply-transitively on the set of ordered bases giving rise to $\mathcal{F}$, which is in turn a consequence of the fact that $G$ acts simply-transitively on the set of all bases and $N_{\mathcal{F}}$ is the stabilizer of $\mathcal{F}$.
\end{proof}

\begin{proof}
  [Proof of Proposition~\ref{proposition:let-tau-in-m-be-regul-let-g-in-g-let-mathc-be-deco}]
  Suppose that the cyclic element $\tau$ and its conjugate $\Ad(g) \tau$ are subcyclic with respect to the same decorated flag $\mathcal{F}$.  Let $\mathcal{B}$ be an ordered basis for $\mathcal{F}$ with respect to which $\tau$ is cyclic.  By Lemma~\ref{lemma:let-tau-in-m-be-regul-let-mathc-be-decor-flag-with}, there is a unique $u \in N_{\mathcal{F}}$ such that $\Ad(u g) \tau$ is cyclic with respect to $\mathcal{B}$.  By Lemma~\ref{lemma:if-tau-in-m-regul-then-g_tau-acts-simply-trans-set}, there is a unique $c \in G_\tau$ such that $c u g = 1$.  It follows, as required, that $g$ lies in $N_{\mathcal{F}} G_\tau$.
\end{proof}

\subsection{Stability}\label{sec:cnsgt0z1cm}

Here we work with $\mathbf{V} = \mathbf{V}_H \oplus \mathbb{Z} e$.  We denote by $\mathbf{M}$ the endomorphism ring of $\mathbf{V}$, regarded as the affine group scheme with $\mathbf{M}(R) = \End_R(\mathbf{V} (R)) \cong M_{n+1}(R)$.  We analogousy define the ``upper left $n \times n$ block'' $\mathbf{M}_H$ of $\mathbf{M}$, thus $\mathbf{M}_H (R) = \End_R(\mathbf{V}_H(R)) \cong M_n(R)$.  Given $\tau \in \mathbf{M}(R)$, we may define its projection $\tau_H \in \mathbf{M}_H(R)$.  We recall from \cite[Lemma 4.6]{2023arXiv2309.06314} that the following conditions on $\tau$ are equivalent:
\begin{itemize}
\item The vector $e$ and the dual vector $e^* : \mathbf{V}(R) \rightarrow R$ given by $e^*|_{\mathbf{V}_H(R)} = 0$, $e^*(e) = 1$ generate $\mathbf{V}(R)$ and $\Hom(\mathbf{V}(R), R)$, respectively, as $R[\tau]$-modules.
\item The characteristic polynomials of $\tau$ and $\tau_H$ generate the unit ideal in the one variable polynomial ring over $R$.
\end{itemize}
We say in such cases that $\tau$ is \emph{stable}.

\subsection{Concentration of localized Whittaker functions}

Retaining the setting of \S\ref{sec:cnsgt0z1cm}, we now let $F$ be a non-archimedean local field, with the standard accompanying notation (\S\ref{Sec:Notationlocal}).

Let $(e_1,\dotsc,e_{n+1})$ be a basis for $\mathbf{V}(\mathfrak{o})$ such that $(e_1,\dotsc,e_n)$ is a basis for $\mathbf{V}_H(\mathfrak{o})$ and $e=e_{n+1}$.  These induce bases of $\mathbf{V}(F), \mathbf{V}(\mathfrak{o}/\mathfrak{q})$, etc., that we write in the same way.  They give rise to flags, hence to unipotent subgroups $\mathbf{N} \leq \mathbf{G}$ (resp. $\mathbf{N}_H \leq \mathbf{H}$) given by strictly upper-triangular matrices.

Recall from \S\ref{Sec:Notationlocal} that for an ideal $\mathfrak{q} \subseteq \mathfrak{p}$, we have defined a character $\psiT$ of $F$ that is trivial on $\mathfrak{q}^{2}$ but not on $\mathfrak{p}^{-1} \mathfrak{q}^2$.  It can also be viewed as a character of $N$, by setting
\begin{equation*}
  \psiT(u) := \psiT(u_{12} + u_{23} + \dotsb).
\end{equation*}

We recall from \cite[(8.1)]{2023arXiv2309.06314} that for $\tau \in \mathbf{M}(\mathfrak{o}/\mathfrak{q})$, we can define a character $\chi_\tau$ of $K(\mathfrak{q})/K(\mathfrak{q}^2)$ by the formula
\begin{equation}\label{Eq:chitau}
  \chi_\tau(1 + x) = \psiT (\trace (x \tau )).
\end{equation}

We show next that any Whittaker function on $\mathbf{G}(F)$ that transforms under $K(\mathfrak{q})$ according to a stable subcyclic parameter $\tau \in \mathbf{M}(\mathfrak{o}/\mathfrak{q})$ has restriction to $\mathbf{H}(F)$ concentrated near the identity:
\begin{proposition}\label{proposition:localized-whittaker-function-subcyclic-parameter-concentrates}
  Let $\tau \in \mathbf{M}(\mathfrak{o}/\mathfrak{q})$ be
 subcyclic with respect to the basis $e _1, \dotsc, e_{n+1}$.
  Let $W : \mathbf{G}(F) \rightarrow \mathbb{C}$ satisfy that for all $(u,y) \in \mathbf{N}(F) \times K(\mathfrak{q})$, we have
  \begin{equation}\label{eq:cnsgt121j6}
    W(u g y) = \psiT(u) \chi_{\tau}(y) W(g).
  \end{equation}
  Then for $h \in \mathbf{H}(F)$, we have $W(h) \neq 0$ only if
  \begin{equation}\label{eqn:wh-neq-0-implies-h-in-mathbfn_hf-k_hmathfrakq.-}
    h \in \mathbf{N}_H(F) K_H(\mathfrak{q})
  \end{equation}
  in which case, writing $h = u y$ with $(u,y) \in \mathbf{N}(F) \times K(\mathfrak{q})$, we have
  \begin{equation}\label{eqn:wh-=-psiu-chi_tauy-w1.-}
    W(h) =\psiT(u) \chi_\tau(y) W(1).
  \end{equation}
\end{proposition}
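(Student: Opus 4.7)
The plan is to prove the vanishing claim~\eqref{eqn:wh-neq-0-implies-h-in-mathbfn_hf-k_hmathfrakq.-} by induction on $n$; the formula~\eqref{eqn:wh-=-psiu-chi_tauy-w1.-} is immediate from~\eqref{eq:cnsgt121j6}, taking $g=1$.  The engine of the induction is a double computation of $W(hk)$ for $h \in \mathbf{H}(F)$ and $k \in K(\mathfrak{q})$: by the right $K(\mathfrak{q})$-equivariance it equals $\chi_\tau(k) W(h)$, and whenever $hkh^{-1} \in \mathbf{N}(F)$ the left $\mathbf{N}(F)$-equivariance in addition gives $W(hk)=W((hkh^{-1})h) = \psiT(hkh^{-1}) W(h)$.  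Hence $W(h) \neq 0$ forces the consistency identity $\chi_\tau(k) = \psiT(hkh^{-1})$ for every such $k$.

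I would exploit this with the one-parameter test family $k = I + c\, E_{i, n+1}$, $c \in \mathfrak{q}$ and $1 \leq i \leq n$.  Since $h$ fixes $e_{n+1}$, one has $hkh^{-1} = I + c \sum_{p=1}^{n} h_{pi} E_{p, n+1} \in \mathbf{N}(F)$ automatically, and only the $(n, n+1)$ entry of this matrix contributes a superdiagonal term, so $\psiT(hkh^{-1}) = \psi(\tilde{T} c\, h_{n, i})$.  The subcyclic hypothesis on $\tau$ gives $\tau_{n+1, i} = \delta_{i, n}$, whence $\chi_\tau(k) = \psi(\tilde{T} c\, \delta_{i, n})$.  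Letting $c$ range over $\mathfrak{q}$, the consistency identity pins the last row of $h$ (viewed as an element of $\GL_n(F)$) to be congruent to $(0, \dotsc, 0, 1)$ modulo $\mathfrak{q}$.

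With this bottom-row congruence in hand, write $h = \begin{pmatrix} A & b \\ r & d \end{pmatrix}$ with $d \in 1 + \mathfrak{q}$ and $r \in \mathfrak{q}^{n-1}$, and construct explicitly $k' \in K_H(\mathfrak{q})$ and $u' \in \mathbf{N}_H(F)$ that clear the last row and column: $u' h k' = \diag(A', 1)$ with $A' := A - b d^{-1} r \in \GL_{n-1}(F)$.  The transformation law~\eqref{eq:cnsgt121j6} then yields $W(\diag(A', 1, 1)) = \psiT(u') \chi_\tau(k') W(h) \neq 0$.  To close the induction I would pass to the restricted function $W'(g) := W(\diag(g, 1))$ on $\GL_n(F)$, which transforms on the left by the standard Whittaker character and on the right, under the principal congruence subgroup of $\GL_n(\mathfrak{o})$, by $\chi_{\tau'}$ for $\tau'$ the top-left $n \times n$ block of $\tau$; direct inspection of the matrix form shows $\tau'$ inherits subcyclicity with respect to $e_1, \dotsc, e_n$.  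Applying the inductive hypothesis to $W'$ at $A' \in \GL_{n-1}(F) \hookrightarrow \GL_n(F)$ places $A'$ in the corresponding $\mathbf{N} \cdot K(\mathfrak{q})$-coset, and pulling this back through the block reduction gives $h \in \mathbf{N}_H(F) K_H(\mathfrak{q})$.  The main delicacy I would expect is verifying that the test family $I + c\, E_{i, n+1}$ is already sufficient — that no further $k \in K(\mathfrak{q})$ produces constraints beyond the last-row condition — together with the clean inheritance of subcyclicity in the passage from $\tau$ to $\tau'$; both are elementary but easy to mis-index.
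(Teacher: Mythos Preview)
Your argument is correct and takes a genuinely different route from the paper's.

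The paper does not induct on $n$. Instead it writes down the consistency identity in the form ``$u \in \mathbf{N}(F) \cap h K(\mathfrak{q}) h^{-1} \Rightarrow \psiT(u) = \chi_\tau(h^{-1}uh)$'' and splits into two cases. For $h \in \mathbf{H}(\mathfrak{o})$ it uses \emph{all} such $u$ at once: the identity forces $\Ad(h)\tau$ to be subcyclic with respect to the same flag as $\tau$, and then the linear-algebra Proposition~\ref{proposition:let-tau-in-m-be-regul-let-g-in-g-let-mathc-be-deco} (together with the cyclicity of $e^*$) yields $\bar{h} \in \mathbf{N}(\mathfrak{o}/\mathfrak{q})$, hence $h \in \mathbf{N}_H(\mathfrak{o}) K_H(\mathfrak{q})$. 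For $h \notin \mathbf{H}(\mathfrak{o})$ the paper uses the Iwasawa decomposition $h = a k$ and constructs ad hoc test elements (depending on whether the exponents $\ell_j$ violate dominance or not) that produce a contradiction.

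Your approach trades this case analysis and the invocation of Proposition~\ref{proposition:let-tau-in-m-be-regul-let-g-in-g-let-mathc-be-deco} for an induction that handles integral and non-integral $h$ uniformly. The single test family $I + c E_{i,n+1}$ already pins down the last row of $h$ modulo $\mathfrak{q}$ (your worry that further $k$ might be needed is unfounded: once the last row is constrained, the block reduction and inductive hypothesis take care of the rest). The inheritance of subcyclicity by the top-left block $\tau'$ is indeed immediate from the matrix description. What your route buys is a shorter, more self-contained argument that does not rely on the preparatory linear algebra of \S3.1; what the paper's route buys is a more structural explanation --- the support condition is visibly the shadow of the statement that subcyclic orbits under $N_{\mathcal{F}}$ meet $G_\tau$-orbits in points --- and it reuses machinery already set up for other purposes.
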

\begin{proof}
  If $W(h) \neq 0$, then for all $(u,y) \in \mathbf{N}(F) \times K(\mathfrak{q})$, we must have
  \begin{equation*}
     u h y = h \implies \psiT(u) \chi_\tau(y) = 1,
   \end{equation*}
  or equivalently,
  \begin{equation}\label{eqn:u-in-mathbfnf-cap-h-mathc-h-1-impl-psiu-=-chi_t-1-}
    u \in \mathbf{N}(F) \cap h K(\mathfrak{q}) h^{-1} \implies \psiT(u) = \chi_\tau(h^{-1} u h).
  \end{equation}

  Consider first the case $h \in \mathbf{H}(\mathfrak{o})$.  Then $h K(\mathfrak{q}) h^{-1} = K(\mathfrak{q})$, so any $u$ as in the left hand side of~\eqref{eqn:u-in-mathbfnf-cap-h-mathc-h-1-impl-psiu-=-chi_t-1-} lies in $\mathbf{N}(F) \cap K(\mathfrak{q})$.  For such $u$, we have $\psiT(u) = \chi_\theta(u)$, where $\theta \in M(\mathfrak{o}/\mathfrak{q})$ is the standard lower-triangular nilpotent Jordan block, given, e.g., for $n+1 = 3$ by
  \begin{equation}\label{eqn:theta-:=-beginpmatrix-0--0--0-}
    \theta :=
    \begin{pmatrix}
      0 & 0 & 0 \\
      1 & 0 & 0 \\
      0 & 1 & 0 \\
    \end{pmatrix}
    .
  \end{equation}
  The right hand side of~\eqref{eqn:u-in-mathbfnf-cap-h-mathc-h-1-impl-psiu-=-chi_t-1-} then implies that $\Ad(h) \tau - \theta$ pairs trivially against strictly upper-triangular matrices in $K(\mathfrak{q})$, or equivalently, is upper-triangular when mod $\mathfrak{q}$, i.e.,
  \begin{equation*}
    \Ad(h) \tau \equiv
    \begin{pmatrix}
      \ast & \ast & \ast \\
      1 & \ast & \ast \\
      0 & 1 & \ast \\
    \end{pmatrix}
    \mod \mathfrak{q}
    .
  \end{equation*}
  In other words, $\Ad(h) \tau \in \mathbf{M}(\mathfrak{o}/\mathfrak{q})$ is subcyclic with respect to the given basis.

  Since $\tau$ is subcyclic with respect to the same basis, we deduce from Proposition~\ref{proposition:let-tau-in-m-be-regul-let-g-in-g-let-mathc-be-deco} that the image $\bar{h} \in \mathbf{H}(\mathfrak{o}/\mathfrak{q})$ lies in the intersection
  \begin{equation*}
    \mathbf{H}(\mathfrak{o}/\mathfrak{q}) \cap \mathbf{N}(\mathfrak{o}/\mathfrak{q}) \mathbf{G}_\tau(\mathfrak{o}/\mathfrak{q}).
  \end{equation*}
  We may thus find an element $v \in \mathbf{N}(\mathfrak{o}/\mathfrak{q})$ such that $v^{-1} \bar{h}$ centralizes $\tau$.  On the other hand, $v^{-1} \bar{h}$ fixes the $\tau$-cyclic vector $e^*$ (as in \cite[Definition 3.1]{2023arXiv2309.06314}), so by the proof of \cite[Lemma 4.7]{2023arXiv2309.06314}, we deduce that $v^{-1} \bar{h} = 1$.  It follows that $h \in \mathbf{N}_H(\mathfrak{o}) K_H(\mathfrak{q})$, as required.

  It remains to address the case where $h \in \mathbf{H}(F) - \mathbf{H}(\mathfrak{o})$.  By the Iwasawa decomposition, we have $h \in \mathbf{N}_H(F) \mathbf{A}_H(F) K_H$.  By the transformation property of $W$ on the left under $\mathbf{N}(F)$, we may reduce to the case $h \in \mathbf{A}_H(F) K_H$.  We may then write
  \begin{equation*}
    h = a k, \quad k \in K_H, \quad a = \diag(\varpi^{\ell_1}, \dotsc, \varpi ^{\ell_{n+1}}), \quad \ell_j \in \mathbb{Z}, \quad \ell_{n+1} = 0.
  \end{equation*}
  Assuming $W(h) \neq 0$ and using the transformation property \eqref{eq:cnsgt121j6}, we aim to produce a contradiction to~\eqref{eqn:u-in-mathbfnf-cap-h-mathc-h-1-impl-psiu-=-chi_t-1-}.  Doing so will involve the elementary matrices $E_{i j}$ with $1$ in the $(i,j)-$entry and $0$ elsewhere.

  Suppose for the moment that
  \begin{equation}\label{eqn:ell_k--ell_k+1-}
    \ell_k < \ell_{k+1}
  \end{equation}
  for some $k \in \{1, \dotsc, n\}$.  Let $t$ be an element of $\mathfrak{q}^2 - \mathfrak{p} \mathfrak{q}^2$, to be determined later.  Set
  \begin{equation*}
    v := 1 + t E_{k,k+1} \in \mathbf{N}(F) \cap K(\mathfrak{q}^2),
  \end{equation*}
  so that
  \begin{equation*}
    u := a v a^{-1} = 1 + \varpi^{\ell_k - \ell_{k+1}} t E_{k,k+1}.
  \end{equation*}
  By~\eqref{eqn:ell_k--ell_k+1-} and our construction of $\psiT$, we see that $t$ may be chosen so that $\psiT (u) \neq 1$.  On the other hand, we have
  \begin{equation*}
    h^{-1} u h = k^{-1} v k \in K(\mathfrak{q}^2),
  \end{equation*}
  so $\chi_\tau(h^{-1} u h) = 1$.  We obtain the required contradiction to~\eqref{eqn:u-in-mathbfnf-cap-h-mathc-h-1-impl-psiu-=-chi_t-1-}.

  It remains to consider the ``anti-dominant'' case in which
  \begin{equation*}
    \ell_1 \geq \ell_2 \geq \dotsb \geq \ell_n \geq \ell_{n+1} = 0.
  \end{equation*}
  If each of these inequalities is in fact an equality, then $h \in \mathbf{H}(\mathfrak{o})$, contrary to assumption.  Otherwise, we may find $m \in \{1, \dotsc, n\}$ so that
  \begin{equation*}
    \ell_1 \geq \dotsb \geq \ell_m > 0, \qquad \ell_{m+1} = \dotsb = \ell_{n+1} = 0.
  \end{equation*}
  We now argue as in the proof of \cite[Lemma 9.3]{2023arXiv2309.06314}, as follows.  We use the partition
  \begin{equation*}
     n + 1 = m + (n-m + 1)
  \end{equation*}
  to describe $\mathbf{M}(\mathfrak{o}/\mathfrak{q})$ in terms of $2 \times 2$ block matrices.
  We claim that the lower-left block of $\Ad(k) \tau$ is nonzero modulo $\mathfrak{p}$: otherwise, $\mathbf{V}_H(\mathfrak{o}/\mathfrak{p})$ would contain a nontrivial $\Ad(k)\tau$-invariant submodule, and thus a nontrivial $\tau$-invariant submodule as $k\in K_H$; On the other hand, $\tau$ being subcyclic implies that there is no $\tau$-invariant submodule of $\mathbf{V}_H(\mathfrak{o}/\mathfrak{p})$, giving the required contradiction.


  We may thus find $1\leq i\leq m$ and $m+1 \leq j\leq n+1$ so that
  \begin{equation}\label{eqn:tracee_k-n+1-k-cdot-tau-in-mathfrako-mathfrakp.-}
    \trace(E_{ij} (\Ad(k)\tau)) \in \mathfrak{o} - \mathfrak{p}.
  \end{equation}
  We then put
  \begin{equation*}
    u := 1 + t \varpi^{\ell_i} E_{ij}
  \end{equation*}
  with $t \in \mathfrak{p}^{-1} \mathfrak{q}^2 - \mathfrak{q}^2$ to be determined.  Since $\ell_i > 0=\ell_j$, we have $u \in K(\mathfrak{q}^2)$, and so $\psiT(u) = 1$.  On the other hand, we have $a ^{-1} u a = 1 + t E_{ij}\in K(\mathfrak{q})$, and so
  \begin{equation*}
    \chi_\tau(h^{-1} u h) = \psiT (t \trace(E_{ij} (\Ad(k)\tau))).
  \end{equation*}
  By~\eqref{eqn:tracee_k-n+1-k-cdot-tau-in-mathfrako-mathfrakp.-}, we may choose $t$ so that the right hand side is $\neq 1$, giving the required contradiction to~\eqref{eqn:u-in-mathbfnf-cap-h-mathc-h-1-impl-psiu-=-chi_t-1-}.
\end{proof}

\begin{remark}
  Proposition~\ref{proposition:localized-whittaker-function-subcyclic-parameter-concentrates} may be understood as a non-archimedean counterpart of some results of~\cite[Part 3]{2021arXiv210915230N}.  More precisely, the cited reference shows (in an archimedean setting) that Whittaker functions lying in an irreducible representation that satisfy a support condition like~\eqref{eqn:wh-neq-0-implies-h-in-mathbfn_hf-k_hmathfrakq.-} and oscillate on their support in a specific way must be localized at a specific parameter $\tau$.  Both results have the effect of producing localized Whittaker functions that concentrate in the Kirillov model.  Here the invariance property is assumed and the localization property is deduced, whereas in \cite[Part 3]{2021arXiv210915230N}, the reverse implication is used.  Vectors having such invariance properties may be constructed as in \cite[Section 8]{2023arXiv2309.06314}.
\end{remark}

\subsection{Multiplicity one for localized vectors in generic representations}\label{Sec:WhittakerKirillov}
We retain the setup of the previous subsection.  Let
\begin{equation*}
  \mathcal{W}(\psiT) := C^\infty(\mathbf{N}(F) \backslash \mathbf{G}(F), \psiT)
\end{equation*}
denote the ``Whittaker space'' consisting of smooth functions $W : \mathbf{G}(F) \rightarrow \mathbb{C}$ satisfying $W(u g) = \psiT(u) W(g)$ for all $u \in \mathbf{N}(F)$.  We recall that a representation $\pi$ of $G$ is \emph{generic} if there is a nontrivial $\mathbf{G}(F)$-equivariant map $\pi \rightarrow \mathcal{W}(\psiT)$.  This notion does not depend upon the choice of $\psiT$.  If $\pi$ is generic and irreducible, then the space of such maps is one-dimensional (see \cite{MR348047}, \cite[\S1]{MR546599}), hence the image of any such map depends only upon $\pi$ and $\psiT$.  That image, denoted $\mathcal{W}(\pi,\psiT)$, is called the \emph{Whittaker model}.  Thus for $\pi$ generic and irreducible, we may identify $\pi$ with $\mathcal{W}(\pi,\psiT) \subseteq \mathcal{W}(\psiT)$.  We recall a basic result concerning the Kirillov model (see \cite[\S6.5]{MR748505}, \cite[\S10.2]{MR1999922}): for $\pi$ generic and irreducible, the restriction map
\begin{equation*}
  \mathcal{W}(\pi,\psiT) \rightarrow \{ \text{functions } \mathbf{H}(F) \rightarrow \mathbb{C}  \}
\end{equation*}
is injective.

\begin{proposition}
  Let $\pi$ be an irreducible generic representation of $\mathbf{G}(F)$.  Let $\tau \in \mathbf{M}(\mathfrak{o}/\mathfrak{q})$ be
    cyclic.  Then the space of vectors in $\pi$ that transform under $K(\mathfrak{q})$ via $\chi_\tau$ is at most one-dimensional.
\end{proposition}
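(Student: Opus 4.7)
The plan is to embed $\pi$ in its Whittaker model and then combine Proposition~\ref{proposition:localized-whittaker-function-subcyclic-parameter-concentrates} with the injectivity of the Kirillov restriction map to show that a vector with the prescribed $K(\mathfrak{q})$-type is determined up to scalar by a single complex number, namely its value at the identity.

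First, I would reduce to the case that $\tau$ is cyclic with respect to the standard basis $(e_1,\dotsc,e_{n+1})$. If $\tau$ is cyclic with respect to some other basis whose reduction mod $\mathfrak{q}$ is obtained from the standard one by the action of $\bar{g}\in \mathbf{G}(\mathfrak{o}/\mathfrak{q})$, lift $\bar g$ to some $g\in K$; then $g K(\mathfrak{q}) g^{-1} = K(\mathfrak{q})$, and the trace-invariance of~\eqref{Eq:chitau} gives $\chi_{\Ad(g)\tau'}(y) = \chi_{\tau'}(g^{-1}yg)$, so $\pi(g)$ intertwines the $\chi_\tau$-isotypic subspace with the $\chi_{\tau'}$-isotypic subspace, where $\tau' = \Ad(g^{-1})\tau$ is cyclic with respect to the standard basis. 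Hence we may assume $\tau$ itself is cyclic (and therefore subcyclic) with respect to $(e_1,\dotsc,e_{n+1})$.

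Next, since $\pi$ is irreducible and generic, identify $\pi$ with its Whittaker model $\mathcal{W}(\pi,\psiT) \subseteq \mathcal{W}(\psiT)$. A vector in $\pi$ transforming under $K(\mathfrak{q})$ by $\chi_\tau$ then corresponds to a Whittaker function $W \in \mathcal{W}(\pi, \psiT)$ satisfying the transformation property~\eqref{eq:cnsgt121j6}. Proposition~\ref{proposition:localized-whittaker-function-subcyclic-parameter-concentrates} applies and shows that for $h \in \mathbf{H}(F)$ we have $W(h) = 0$ unless $h \in \mathbf{N}_H(F) K_H(\mathfrak{q})$, in which case $W(h) = \psiT(u)\chi_\tau(y) W(1)$ for any decomposition $h = uy$ with $u \in \mathbf{N}_H(F)$, $y \in K_H(\mathfrak{q})$. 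Thus the restriction $W|_{\mathbf{H}(F)}$ is determined by the single scalar $W(1)$.

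Finally, invoke the injectivity of the Kirillov restriction map $\mathcal{W}(\pi,\psiT) \to \{\text{functions } \mathbf{H}(F) \to \mathbb{C}\}$ recalled just before the statement. This upgrades the determination of $W|_{\mathbf{H}(F)}$ to a determination of $W$ itself by $W(1)$, so that the linear evaluation map $W \mapsto W(1)$ is injective on the space of $\chi_\tau$-vectors. This yields the claimed at-most-one-dimensionality. The only genuine content is the reduction to the standard basis in the first step and the verification that the hypotheses of Proposition~\ref{proposition:localized-whittaker-function-subcyclic-parameter-concentrates} are met; everything else is formal once the localization and Kirillov results are in hand.
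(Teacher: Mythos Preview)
Your proof is correct and follows essentially the same approach as the paper: reduce to $\tau$ cyclic with respect to the standard basis by a $K$-conjugation argument, embed $\pi$ in its $\psiT$-Whittaker model, apply Proposition~\ref{proposition:localized-whittaker-function-subcyclic-parameter-concentrates} to see that $W|_{\mathbf{H}(F)}$ is determined by $W(1)$, and conclude via injectivity of the Kirillov restriction. The only difference is that you spell out the reduction step more explicitly than the paper does.
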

\begin{proof}
  We may assume that $\pi = \mathcal{W}(\pi,\psiT)$.  We may assume also that $\tau$ is cyclic with respect to the basis $e_1,\dotsc,e_n$, since conjugating $\tau$ does not change the dimension of the space in question.  Let $W \in \pi$ transform under $K(\mathfrak{q})$ via $\chi_\tau$.  Then $W$ satisfies the hypotheses of Proposition \ref{proposition:localized-whittaker-function-subcyclic-parameter-concentrates}, whose conclusion shows that the restriction of $W$ to $\mathbf{H}(F)$ lies in the one-dimensional space of functions satisfying \eqref{eqn:wh-=-psiu-chi_tauy-w1.-}.  By the theory of the Kirillov model, $W$ is determined by this restriction, so the space of such $W$ is at most one-dimensional.
\end{proof}

\begin{remark}
  The space in question is nonzero precisely when $\tau$ is a ``regular parameter for $\pi$ at depth $\mathfrak{q}^2$'' in the sense of \cite[Definition 8.1]{2023arXiv2309.06314}.
\end{remark}

\section{Some analysis on $U \backslash G$}\label{sec:some-analysis-u-backslash-g}
In this section we take $\mathbf{U}$ to be the \emph{lower}-triangular unipotent subgroup, so that it is opposite to $\mathbf{N}$, the upper-triangular unipotent subgroup.  We study certain test functions relevant for defining pseudo-Eisenstein series.

\subsection{Preliminaries}\label{sec:cnjen3o6uy}

Writing $X(\mathbf{A})$ for the group of rational characters $\mathbf{A} \rightarrow \mathbf{G} \mathbf{L} _1$, we set
\begin{equation*}
  \mathfrak{a}^* := X(\mathbf{A}) \otimes_{\mathbb{Z}} \mathbb{R} \hookrightarrow \mathfrak{a}_{\mathbb{C}}^* := X(\mathbf{A}) \otimes_{\mathbb{Z}} \mathbb{C},
\end{equation*}
thus $\mathfrak{a}^* \cong \mathbb{R}^n$ and $\mathfrak{a}_{\mathbb{C}}^* \cong \mathbb{C}^n$.


Recall for $F$ a local field, we abbreviate $G := \mathbf{G}(F)$, etc.

\subsubsection{Schwartz spaces}\label{Sec:Schwartz}

We denote by $\mathcal{S}(U \backslash G)$ the space of Schwartz functions $f : U \backslash G \rightarrow \mathbb{C}$, and by $\mathcal{S}^e(U \backslash G)$ the subspace of $\mathcal{S}(U \backslash G)$ consisting of functions that are left-invariant under the maximal compact subgroup of $A$.  In the archimedean case, ``Schwartz'' means ``rapidly-decaying together with all derivatives'', while in the non-archimedean case, it means ``locally constant and compactly-supported'' (see~\cite[\S2.6]{2021arXiv210915230N} for details).

\subsubsection{Mellin components}\label{Sec:localMellin}
Each $s \in \mathfrak{a}_{\mathbb{C}}^*$ defines a continuous complex-valued character $|.|^s : a \mapsto \lvert a \rvert^s$ of $A$.  Elements $f \in \mathcal{S}^e(U \backslash G)$ may be described in terms of their Mellin components
\begin{equation*}
  f[s] : U \backslash G \rightarrow \mathbb{C},
\end{equation*}
defined for $s \in \mathfrak{a}_{\mathbb{C}}^*$ by
\begin{equation*}
  f[s](g) := \int_{a \in A} \delta_U^{1/2} (a) \lvert a \rvert^s f (a^{-1} g) \, d a.
\end{equation*}
This defines an element of the representation obtained by normalized induction from $|.|^s$.

\subsubsection{Normalized intertwining operators}\label{sec:normalized-intertwining-operators}
Let $\psi$ be any given nontrivial unitary character of $F$.  For $w$ in the Weyl group $W$ of $(\mathbf{G},\mathbf{A})$, we denote by
\begin{equation*}
  \mathcal{F}_{w,\psi} : L^2(U \backslash G) \rightarrow L^2(U \backslash G)
\end{equation*}
the normalized intertwining operators, as defined in~\cite[\S2.13]{2021arXiv210915230N}.  We denote by ${\mathcal{S}^e(U \backslash G)}^W$ the subspace fixed by each of the operators $\mathcal{F}_{w,\psi}$.  In general, this subspace depends upon $\psi$, but we will always take $\psi$ to be
\begin{itemize}
\item unramified (that is, trivial on $\mathfrak{o}$ but not on $\mathfrak{p}^{-1}$) when $F$ is non-archimedean, and
\item to be the standard choice, or possibly its inverse, when $F = \mathbb{R}$.
\end{itemize}
With these restrictions, the subspace is independent of $\psi$.

\subsection{Qualitative analysis}
Here we write $n$ for the rank of $\mathbf{V}$ and choose a basis $e_1,\dotsc,e_n$ for $\mathbf{V}$.  We record some analogues of the results of \cite[\S9, \S10]{2021arXiv210915230N}, established there over archimedean local fields; the proofs simplify in the present non-archimedean case.

Let $\beta \in C_c^\infty(A)$.  As in \cite[\S9.2]{2021arXiv210915230N}, we define a function
\begin{equation*}
  \mathcal{J} [\psiT, \beta ] : U \backslash G \rightarrow \mathbb{C}
\end{equation*}
to be supported on the ``open cell'' consisting of cosets represented by products $a n$, with $(a,n) \in A \times N$, and given there by
\begin{equation}\label{Eq:Jpsibeta}
  \mathcal{J} [\psiT, \beta ] (a n) = \delta_U^{1/2} (a) \beta (a) \psiT (n).
\end{equation}
\begin{example}\label{example:cnsgt7kf1o}
  Suppose that $n = 2$ and $\beta$ is the characteristic function of $K_A$.  We can identify $U \backslash G$ with $F^\times \times (F^2 - \{0\})$ via the map that assigns to a matrix $g = \left(
    \begin{smallmatrix}
      a&b\\
      c&d \\
    \end{smallmatrix}
  \right)$ the pair $(\det (g), (a,b))$.  Using the factorization
  \begin{equation}\label{eq:cnsgt7lxbp}
    g =
    \begin{pmatrix}
      1      & 0 \\
      c/a & 1 \\
    \end{pmatrix}
    \begin{pmatrix}
      a      & 0 \\
      0 & \det(g)/a \\
    \end{pmatrix}
    \begin{pmatrix}
      1      & b/a \\
      0 & 1 \\
    \end{pmatrix},
  \end{equation}
  we see that the function $\mathcal{J}[\psi_{\tilde{T}}, \beta]$ is given in these coordinates by
  \begin{equation*}
    U g \mapsto 1_{a \in \mathfrak{o}^\times} 1_{\det (g) \in \mathfrak{o}^\times} \psi_{\tilde{T}}(b/a).
  \end{equation*}
\end{example}
For $\gamma \in C_c^\infty(G)$, we then define $\mathcal{J} [\psiT, \beta, \gamma ] : U \backslash G \rightarrow \mathbb{C}$ by convolving on the right:
\begin{equation}\label{eqn:mathc-psi-beta-gamma-h-:=-int_g-in-g-gamma-g-mathc}
  \mathcal{J} [\psiT, \beta, \gamma ](h) := \int_{g \in G} \gamma (g) \mathcal{J} [\psiT, \beta ] (h g) \, d g.
\end{equation}
\begin{lemma}\label{lemma:compact-support-of-smoothened-jacquet-distributions}
  We have $\mathcal{J} [\psiT, \beta, \gamma ] \in C_c^\infty(U \backslash G)$.
\end{lemma}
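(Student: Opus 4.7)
The lemma makes two assertions—local constancy and compact support—which I would address separately.

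\emph{Local constancy} is straightforward. Since $\gamma \in C_c^\infty(G)$, it is bi-invariant under some compact open subgroup $K_1 \subseteq G$. For $y \in K_1$, the change of variable $g \mapsto y^{-1}g$ in the defining integral~\eqref{eqn:mathc-psi-beta-gamma-h-:=-int_g-in-g-gamma-g-mathc} combined with the left $K_1$-invariance of $\gamma$ gives
\[
\mathcal{J}[\psi_{\tilde{T}},\beta,\gamma](hy) = \int_G \gamma(y^{-1}g)\mathcal{J}[\psi_{\tilde{T}},\beta](hg)\,dg = \mathcal{J}[\psi_{\tilde{T}},\beta,\gamma](h).
\]
Thus $\mathcal{J}[\psi_{\tilde{T}},\beta,\gamma]$ is right $K_1$-invariant, which is the $p$-adic definition of smoothness.

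For \emph{compact support}, I would first substitute $g \mapsto h^{-1} g$ and parametrize the open cell by $g = uan$, rewriting
\[
\mathcal{J}[\psi_{\tilde{T}},\beta,\gamma](h) = \int_{U \times A \times N} \gamma(h^{-1}uan)\,\delta_U^{1/2}(a)\beta(a)\psi_{\tilde{T}}(n)\,J(a)\,du\,da\,dn,
\]
where $J(a)$ is the Jacobian relating $dg$ to product measure on the open cell. The factor $\beta(a)$ confines $a$ to $\mathrm{supp}(\beta)$ (compact in $A$), while the support of $\gamma$ bounds $u$ for each fixed $(h,a,n)$. The nontrivial point is that the $N$-direction is \emph{not} directly bounded by such support-containment reasoning: since $(u,a,n)\mapsto uan$ is a diffeomorphism onto $UAN$ but not onto $G$, a compact subset of $G$ approaching the boundary of the open cell can have $n$-coordinate arbitrarily far in $N$ (with $u$ moving out to compensate).

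The closing step uses oscillation of $\psi_{\tilde{T}}$ against the local constancy of $\gamma$. Writing $h = vak$ in Iwasawa form and making the substitution $n \mapsto ana^{-1}$ (which is well-defined since $A$ normalizes $N$) turns $\psi_{\tilde{T}}(n)$ into the conjugated character $\psi_{\tilde{T}}^{a}(n) = \psi_{\tilde{T}}(a^{-1}na)$, whose conductor along each simple root of $\mathbf{A}$ is shifted by the corresponding coordinate of $a$. With $\gamma$ being right-invariant under some compact open $K_2 \subseteq K_1$, the $n$-integral, after decomposition into cosets of $N_2 := N \cap K_2$, factors into character integrals $\int_{N_2}\psi_{\tilde{T}}^{a}(n_2)\,dn_2$, which vanish unless $\psi_{\tilde{T}}^a|_{N_2}$ is trivial. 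This triviality condition amounts to simple-root values of $a$ being bounded above in absolute value; combined with the constraint from $\beta(a) \ne 0$, this pins $a$ into a compact subset of $A$, giving compact support on $U\backslash G$.

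The main obstacle I anticipate is formalizing the last paragraph carefully: tracking how the conductor of $\psi_{\tilde{T}}^a$ shifts with each simple root of $a$, handling the combined two-sided bounds that emerge from the interplay between the $\beta$-constraint on $a$ and the Fourier-compact-support constraint on the conjugated character, and verifying that the resulting bounds are uniform in $k \in K$. Once these boundedness results are packaged together, the compact support of $\mathcal{J}[\psi_{\tilde{T}},\beta,\gamma]$ on $U\backslash G$ follows.
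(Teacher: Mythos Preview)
Your smoothness argument is fine; the compact-support argument has a genuine gap. You use the symbol $a$ both for the Bruhat $A$-coordinate of the integration variable $g=uan$ and for the Iwasawa $A$-component of $h=vak$; write the latter as $a_0$. What must be bounded is $a_0$, but the oscillation mechanism you describe yields only a condition independent of $a_0$. Indeed, before any substitution the right $K_2$-invariance of $\gamma$ already makes $n\mapsto\gamma(h^{-1}ubn)$ right-invariant under $N_2:=N\cap K_2$, so the $n$-integral carries the fixed factor $\int_{N_2}\psi_{\tilde T}$. Performing $n\mapsto a_0 n a_0^{-1}$ does turn the character into $\psi_{\tilde T}^{a_0}$, but it simultaneously turns the invariance subgroup of the integrand into $a_0^{-1}N_2 a_0$; the extracted factor is then $\int_{a_0^{-1}N_2 a_0}\psi_{\tilde T}^{a_0}$, which equals the same constant after undoing the substitution. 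A change of integration variable cannot manufacture a constraint on the external parameter $h$. The actual difficulty is that as $a_0$ leaves every compact set the condition $\beta(a_0\,b(g))\neq 0$ forces $g\in\operatorname{supp}(\gamma)$ toward the boundary of the big cell, where the Bruhat projections $b(g),n(g)$ degenerate; controlling the integral there is not a matter of one-variable oscillation against a fixed compact open subgroup.

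The paper avoids this entirely: it passes to Mellin components and invokes the analytic continuation of Jacquet integrals (\cite[Prop.~9.4]{2021arXiv210915230N}), noting that in the non-archimedean setting the argument simplifies because the Mellin transform of $\beta\in C_c^\infty(A)$ is supported on finitely many cosets of the unramified character group, so no vertical-strip growth bounds are needed. (For the specific $\gamma=e_\theta$ used later, supported in $K(\mathfrak{q})$, compact support can in fact be read off directly from the Iwahori factorization of $K(\mathfrak{q})$, as the paper's remark alludes to---but that is a support argument, not an oscillation one, and does not cover general $\gamma$.)
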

\begin{proof}
  It is clear that $\mathcal{J}[\psiT,\beta,\gamma]$ is smooth, but less obvious that it is compactly-supported.  The latter follows from
  the analytic continuation of Jacquet integrals, as explained in the proof of \cite[Prop 9.4]{2021arXiv210915230N}.  The cited reference treats the archimedean case under some additional assumptions.  Those assumptions are used to verify that the Jacquet integral is of ``moderate growth in vertical strips''.  In the non-archimedean case, such arguments are not necessary, because the Mellin transform of $\beta$ is supported on finitely many cosets of the group of $K_A$-invariant characters of $A$.  The proof thus simplifies and gives what is stated.
\end{proof}
\begin{remark}
  Lemma~\ref{lemma:compact-support-of-smoothened-jacquet-distributions} will be employed only to streamline certain Mellin-theoretic arguments.  Incidentally, a minor adjustment to those arguments would reprove the conclusion of Lemma \ref{lemma:compact-support-of-smoothened-jacquet-distributions} for the $\beta$ and $\gamma$ ultimately of interest to us (see Proposition \ref{proposition:formulas-for-fak-fan}).  We have thus included Lemma~\ref{lemma:compact-support-of-smoothened-jacquet-distributions} primarily to simplify our presentation.
\end{remark}


\begin{lemma}\label{lemma:w-in-w-we-have-begin-mathc-psi-1-mathc-beta-gamma-}
  For $w \in W$, we have
  \begin{equation*}
    \mathcal{F}_{w,\psi^{-1}} \mathcal{J}[\psi,\beta,\gamma] = \mathcal{J}[{\psi,{}^w \beta,\gamma}],
  \end{equation*}
  where ${}^w \beta(a) := \beta(w^{-1} a)$.
\end{lemma}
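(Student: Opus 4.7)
The plan is to Mellin-decompose both sides of the proposed identity and then invoke the defining property of the normalized intertwining operator with respect to Jacquet--Whittaker sections. By Lemma~\ref{lemma:compact-support-of-smoothened-jacquet-distributions}, both $\mathcal{F}_{w,\psi^{-1}}\mathcal{J}[\psi,\beta,\gamma]$ and $\mathcal{J}[\psi,{}^w\beta,\gamma]$ belong to $\mathcal{S}^e(U\backslash G)$, so it suffices to verify equality of Mellin components $(\cdot)[s]$ at every $s \in \mathfrak{a}_\mathbb{C}^*$ and then appeal to Mellin inversion.

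I would begin by computing $\mathcal{J}[\psi,\beta][s]$ directly from the definition~\eqref{Eq:Jpsibeta}. The substitution $a \mapsto a_0 a^{-1}$ in the Mellin integral yields, on the open cell,
\begin{equation*}
    \mathcal{J}[\psi,\beta][s](a_0 n_0) = \tilde{\beta}(-s)\,\delta_U^{1/2}(a_0)\,\lvert a_0\rvert^s\,\psi(n_0),
\end{equation*}
where $\tilde{\beta}(s) := \int_A \lvert a\rvert^s \beta(a)\,da$ is the Mellin transform of $\beta$. Writing $\mathcal{W}^\psi_s$ for the $\beta$-independent ``Jacquet--Whittaker'' factor (the section of the principal series $\Ind_{AU}^G(\lvert \cdot \rvert^s)$ whose restriction to the open cell is $a_0 n_0 \mapsto \delta_U^{1/2}(a_0)\lvert a_0\rvert^s\psi(n_0)$), this reads $\mathcal{J}[\psi,\beta][s] = \tilde{\beta}(-s)\,\mathcal{W}^\psi_s$. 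Since right convolution by $\gamma$ commutes with the left $A$-action and hence with the Mellin decomposition, the same factorization survives $\gamma$-convolution:
\begin{equation*}
    \mathcal{J}[\psi,\beta,\gamma][s] = \tilde{\beta}(-s)\,R(\gamma)\mathcal{W}^\psi_s.
\end{equation*}

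Next, the normalized intertwining operator $\mathcal{F}_{w,\psi^{-1}}$ commutes with right convolution, and on each Mellin component restricts to a $G$-intertwiner $\Ind_{AU}^G(\lvert\cdot\rvert^s) \to \Ind_{AU}^G(\lvert\cdot\rvert^{ws})$. The defining normalization in~\cite[\S2.13]{2021arXiv210915230N} is precisely the one for which $\mathcal{W}^\psi_s \mapsto \mathcal{W}^\psi_{ws}$; the appearance of $\psi^{-1}$ rather than $\psi$ is the standard duality whereby a $\psi^{-1}$-normalized intertwining operator preserves the $\psi$-Jacquet sections. Combined with the previous display this yields
\begin{equation*}
    (\mathcal{F}_{w,\psi^{-1}}\mathcal{J}[\psi,\beta,\gamma])[s'] = \tilde{\beta}(-w^{-1}s')\,R(\gamma)\mathcal{W}^\psi_{s'}.
\end{equation*}
A routine change of variables on $A$ gives $\widetilde{{}^w\beta}(s') = \tilde{\beta}(w^{-1}s')$, so applying the Mellin-component formula from the first step to ${}^w\beta$ in place of $\beta$ produces exactly the right-hand side of the display above. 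Mellin inversion then finishes the proof.

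The only nontrivial input, and the main obstacle, is the normalization property $\mathcal{F}_{w,\psi^{-1}}\mathcal{W}^\psi_s = \mathcal{W}^\psi_{ws}$: this is the defining characterization of the normalized intertwining operators used in the paper (they are scaled by the local Gamma factors precisely so as to preserve the family of Jacquet--Whittaker sections), and it is this identification that forces the dual character $\psi^{-1}$ to appear on the intertwining-operator side. Everything else—the Mellin-transform identity $\widetilde{{}^w\beta}(s) = \tilde{\beta}(w^{-1}s)$, the commutation of $R(\gamma)$ with Mellin decomposition and with $\mathcal{F}_{w,\psi^{-1}}$, and Mellin inversion—is routine, with the necessary regularity supplied by Lemma~\ref{lemma:compact-support-of-smoothened-jacquet-distributions}.
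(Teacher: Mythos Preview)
Your argument is correct: Mellin-decompose, identify each component $\mathcal{J}[\psi,\beta,\gamma][s]$ as $\tilde{\beta}(-s)$ times the $\gamma$-smoothed Jacquet section, invoke the defining normalization $\mathcal{F}_{w,\psi^{-1}}\mathcal{W}^\psi_s = \mathcal{W}^\psi_{ws}$, and match the Mellin factors via $\widetilde{{}^w\beta}(s)=\tilde{\beta}(w^{-1}s)$.

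The paper's own proof is simply a pointer to \cite[Lemma~9.11]{2021arXiv210915230N}, whose proof (in the archimedean setting) proceeds along exactly these lines; you have effectively unpacked that citation. One small point of care: the object $\mathcal{W}^\psi_s$ you introduce is not itself a smooth vector in the induced representation (the function on the open cell does not extend smoothly), so the intertwining identity $\mathcal{F}_{w,\psi^{-1}}\mathcal{W}^\psi_s=\mathcal{W}^\psi_{ws}$ should be understood as the normalization of $\mathcal{F}_{w,\psi^{-1}}$ at the level of Jacquet integrals (equivalently, as an identity of Whittaker functionals). The smoothing by $\gamma$ and Lemma~\ref{lemma:compact-support-of-smoothened-jacquet-distributions} guarantee that after convolution everything lies in $\mathcal{S}^e(U\backslash G)$, so Mellin inversion is legitimate, as you note.
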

\begin{proof}
  See \cite[Lemma 9.11]{2021arXiv210915230N}.
\end{proof}

\subsection{Quantitative setting and notation}
Recall that $\psiT : F \rightarrow \mathbb{C}^\times$ is a unitary character that is trivial on $\mathfrak{q}^2$ but not on $\mathfrak{p}^{-1} \mathfrak{q}^2$, and is also used to denote a non-degenerate character of $N$.

Let $\tau \in \mathbf{M}(\mathfrak{o}/\mathfrak{q})$.  Recall from \eqref{Eq:chitau} that, using $\psiT$ and $\tau$, we have defined a character $\chi_\tau$ of $K(\mathfrak{q}) / K(\mathfrak{q}^2)$.  We define the corresponding Hecke algebra idempotent
\begin{equation*}
  e_\tau := {\vol (K (\mathfrak{q} ))} ^{-1} 1 _{K(\mathfrak{q})} \chi _\tau^{-1} \in C_c^\infty(K(\mathfrak{q})) \subseteq C_c^\infty(\mathbf{G}(F)),
\end{equation*}
which projects in each representation of $G$ onto vectors transforming under $K(\mathfrak{q})$ via $\chi_\tau$.

We denote by $\theta \in \mathbf{M}(\mathfrak{o}/\mathfrak{q})$ the standard lower-triangular nilpotent Jordan block, as in \eqref{eqn:theta-:=-beginpmatrix-0--0--0-}.  Then
\begin{equation*}
  \chi_\theta(g) = \psiT(n) \quad \text{ for } g = a u n \in K_A(\mathfrak{q}) K_U(\mathfrak{q}) K_N(\mathfrak{q}).
\end{equation*}

\subsection{Construction of $f$}\label{sec:construction-f}
For the remainder of \S\ref{sec:some-analysis-u-backslash-g}, we set
\begin{equation}\label{eqn:f-:=-mathcaljpsi-beta-gamma.-}
  f := \mathcal{J}[\psiT,1_{K_A},e_\theta].
\end{equation}
Here $1_{K_A} \in C_c^\infty(A)$ denotes the characteristic function of the maximal compact subgroup $K_A$, while $e_\theta$ is the idempotent defined above.  By Lemma \ref{lemma:compact-support-of-smoothened-jacquet-distributions}, we have $f\in \mathcal{S}^e(U \backslash G)$.

\subsection{Invariance under normalized intertwining operators}
\begin{lemma}\label{lemma:f-mathcalf_w-psi-1-invariant-each-w-in-w.-}
  $f$ is $\mathcal{F}_{w,\psiT^{-1}}$-invariant for each $w \in W$.
\end{lemma}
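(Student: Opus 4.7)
The plan is to deduce this immediately from Lemma~\ref{lemma:w-in-w-we-have-begin-mathc-psi-1-mathc-beta-gamma-}. Applied with $\psi = \psiT$, $\beta = 1_{K_A}$, and $\gamma = e_\theta$, that lemma yields
\begin{equation*}
  \mathcal{F}_{w,\psiT^{-1}} f = \mathcal{J}[\psiT, {}^w 1_{K_A}, e_\theta],
\end{equation*}
so it suffices to verify that ${}^w 1_{K_A} = 1_{K_A}$ for every $w \in W$.

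The Weyl group $W = N_{\mathbf{G}}(\mathbf{A})/\mathbf{A}$ is realized over $F$ by the symmetric group $S_n$ acting on $A$ by permutation of diagonal entries (with representatives taken inside $\mathbf{G}(\mathfrak{o})$). Since $K_A = \mathbf{A}(\mathfrak{o}) = (\mathfrak{o}^\times)^n$ is invariant under permutation of coordinates, we have $w^{-1} K_A = K_A$, and hence ${}^w 1_{K_A}(a) = 1_{K_A}(w^{-1} a) = 1_{K_A}(a)$ for all $a \in A$. Substituting this back gives $\mathcal{F}_{w,\psiT^{-1}} f = \mathcal{J}[\psiT, 1_{K_A}, e_\theta] = f$, as required.

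There is no real obstacle here: the construction of $f$ in~\eqref{eqn:f-:=-mathcaljpsi-beta-gamma.-} was designed precisely so that $\beta = 1_{K_A}$ is manifestly Weyl-invariant, which, through Lemma~\ref{lemma:w-in-w-we-have-begin-mathc-psi-1-mathc-beta-gamma-}, translates directly into invariance of $f$ under all normalized intertwining operators. The only small point worth noting is that the idempotent $e_\theta$ on the right plays no role in this computation, since Lemma~\ref{lemma:w-in-w-we-have-begin-mathc-psi-1-mathc-beta-gamma-} passes intertwining operators through convolution on the right.
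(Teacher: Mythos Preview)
Your proof is correct and follows exactly the same approach as the paper: apply Lemma~\ref{lemma:w-in-w-we-have-begin-mathc-psi-1-mathc-beta-gamma-} and use that $1_{K_A}$ is $W$-invariant. You have simply spelled out the details of the $W$-invariance of $1_{K_A}$ that the paper leaves implicit.
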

\begin{proof}
  This follows from Lemma \ref{lemma:w-in-w-we-have-begin-mathc-psi-1-mathc-beta-gamma-} and the fact that $1_{K_A}$ is $W$-invariant.
\end{proof}

\subsection{Explicit formulas}\label{sec:cnjgx9803p}

\begin{proposition}\label{proposition:formulas-for-fak-fan}
  For $(a,k) \in A \times K$, we have
  \begin{equation}\label{eqn:fa-k--=-1_k_aa-1_kmathfrakqk-chi_thetak.-}
    f(a k ) = 1_{K_A}(a) 1_{K_A K_U K(\mathfrak{q})}(k) \chi_{\theta}(k'),
  \end{equation}
  where $k=a'u'k'$ for $a'\in K_A, u'\in K_U, k'\in K(\mfq)$ in the support.  For $(a,n) \in A \times N$, we have
  \begin{equation}\label{eqn:fa-n-=-1_k_aa-1_k_nmathfrakqn-psin.-}
    f(a n) = 1_{K_A}(a) 1_{K_N(\mathfrak{q})}(n) \psiT(n).
  \end{equation}
\end{proposition}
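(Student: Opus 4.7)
The strategy is to establish two equivariance properties of $f$, prove the second formula by a direct computation, and then bootstrap the first from the second.

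First, I would observe that $f$ is left-$U$-invariant---since $\mathcal{J}[\psiT,1_{K_A}]$ descends to a function on $U\backslash G$ by~\eqref{Eq:Jpsibeta}---and right-$\chi_\theta$-equivariant under $K(\mathfrak{q})$, i.e.\ $f(hk'') = \chi_\theta(k'') f(h)$ for $k''\in K(\mathfrak{q})$: a change of variables in~\eqref{eqn:mathc-psi-beta-gamma-h-:=-int_g-in-g-gamma-g-mathc} yields this, reflecting that $e_\theta$ is a right-$\chi_\theta$-equivariant idempotent.

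To prove the second formula, I would parametrize $g\in K(\mathfrak{q})$ via the Iwahori decomposition $g = u_0 a_0 n_0\in K_U(\mathfrak{q})K_A(\mathfrak{q})K_N(\mathfrak{q})$ and Bruhat-decompose $ang = a\cdot(nu_0)\cdot a_0 n_0$. For $n\in K_N$, explicit rational formulas give $nu_0 = u'a'n'$ with $a'\in K_A(\mathfrak{q})$ and $n'\in n\cdot K_N(\mathfrak{q})$, so the $A$-part of $ang$ is $aa'a_0\in a\cdot K_A$, and the indicator $1_{K_A}$ forces $a\in K_A$. The identity $\chi_\theta(g)^{-1} = \psiT(n_0)^{-1}$ then cancels the $n_0$-contribution in $\mathcal{J}(ang)$, leaving an integrand proportional to $\psiT(a_0^{-1}n'a_0)$. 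Expanding modulo $\mathfrak{q}^2$, this equals $\psiT(n)$ when $n\in K_N(\mathfrak{q})$; for $n\in K_N\setminus K_N(\mathfrak{q})$, the $a_0$-integration against the resulting nontrivial additive character vanishes. The case $n\notin K_N$ is handled similarly via a valuation analysis showing that either the Bruhat $A$-part of $ang$ leaves $K_A$ or the residual character-sum vanishes.

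The first formula then follows. For $k=a'u'k'\in K_A K_U K(\mathfrak{q})$, the equivariances yield $f(ak) = \chi_\theta(k')f(aa'u') = \chi_\theta(k')f(aa')$ after absorbing $u'\in U$ into the left-$U$-factor via conjugation by $aa'\in A$; the second formula at $n=1$ then identifies $f(aa')=1_{K_A}(a)$. For the vanishing when $k\notin K_A K_U K(\mathfrak{q})$, if $ak\in UAN$ I would write $ak = u_1 a_1 n_1$ and use left-$U$-invariance to get $f(ak)=f(a_1 n_1)$; the second formula forces $(a_1,n_1)\in K_A\times K_N(\mathfrak{q})$ for nonvanishing, and the relation $u_1 = ak\,n_1^{-1}a_1^{-1}\in U\cap aK$ then imposes $a\in K_A$ and $u_1\in K_U$, giving $k\in K_A K_U K(\mathfrak{q})$---contradicting the hypothesis. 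If $ak\notin UAN$, a direct upper-left minor analysis on $akg$ with $g\in K(\mathfrak{q})$ combined with a character-sum cancellation handles the remaining case.

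The main obstacle is the character-sum argument in the second formula enforcing $n\in K_N(\mathfrak{q})$: one must track carefully how the $K_A(\mathfrak{q})$-conjugation $a_0^{-1}(\cdot)a_0$ perturbs the $(i,i+1)$-entries of $n'$ and how pairing with $\psiT$ (of conductor $\mathfrak{q}^2$) integrates to zero when $n\notin K_N(\mathfrak{q})$. This is the non-archimedean counterpart of the stationary-phase analysis in~\cite{2021arXiv210915230N}.
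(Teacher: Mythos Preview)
Your approach is genuinely different from the paper's, and it has a real gap.

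The paper argues in the \emph{opposite} order. It first establishes the support condition in Iwasawa coordinates $(a,k)$ by passing to Mellin components $f[\eta]\in I(\eta)$ and invoking a multiplicity-one input, \cite[Proposition 8.19(iii)]{2023arXiv2309.06314}, which says the $\chi_\theta$-eigenspace $I(\eta)_\theta$ is one-dimensional with every element supported on $UAK(\mathfrak{q})$. Mellin inversion then forces $f$ itself to be supported on $UK_AK(\mathfrak{q})$, giving the first formula's support. The second formula's support is then deduced from the first via an Iwahori-factorization argument (if $n=bk$ with $b\in AU$, $k\in K(\mathfrak{q})$, then $n\in K_N(\mathfrak{q})$). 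The only direct computation needed is $f(1)=1$. This route entirely sidesteps any analysis of $\mathcal{J}[\psiT,1_{K_A}](ang)$ for $n$ far from the identity.

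Your direct-computation route for the second formula is plausible when $n\in K_N$: the Bruhat decomposition of $nu_0$ for $u_0\in K_U(\mathfrak{q})$ stays in $K_U K_A K_N$ with controlled congruences, and the $a_0$-integration argument can be made to work. But the case $n\notin K_N$ is where the difficulty concentrates, and ``a valuation analysis'' is not a proof. For $n$ with entries of large negative valuation, the Bruhat decomposition of $nu_0$ can degenerate: the big-cell condition may fail on part of $K_U(\mathfrak{q})$, and where it holds the $A$-part need not lie in any fixed compact, so one must track a genuinely moving target. Since your derivation of the first formula from the second writes an arbitrary $ak\in UAN$ as $u_1a_1n_1$ and applies the second formula at $(a_1,n_1)$, you need the second formula for \emph{all} $n\in N$, so this gap cannot be sidestepped. (By contrast, your residual case $ak\notin UAN$ could be absorbed by a density argument using right $K(\mathfrak{q}^2)$-invariance and $\chi_\theta|_{K(\mathfrak{q}^2)}=1$, rather than the minor analysis you sketch.) The paper's Mellin/multiplicity-one argument is precisely what replaces this missing piece.
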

\begin{example}
  When $n = 2$, with coordinates $U g \leftrightarrow (\det(g), (a,b))$ as in Example \ref{example:cnsgt7kf1o}, the function $f$ is given by
  \begin{equation*}
    U g \mapsto 1_{a \in \mathfrak{o}^\times} 1_{\det(g) \in \mathfrak{o}^\times} 1_{\mathfrak{q}}(b) \psi_{\tilde{T}}(b/a).
  \end{equation*}
\end{example}

The proof is contained in the following lemmas.

\begin{lemma}\label{lemma:f-left-invar-under-k_a-u-transf-right-under-kmathf-equivariance-properties-f}
  $f$ is left-invariant under $K_A U$ and transforms on the right under $K(\mathfrak{q})$ via $\chi_\theta$.  Moreover,
  \begin{equation*}
    \chi_{\theta} |_{K_N(\mathfrak{q})} = \psiT|_{K_N(\mathfrak{q})}.
  \end{equation*}
\end{lemma}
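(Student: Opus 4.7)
The plan is to verify each of the three claimed equivariance properties directly from the definitions of $f$, $\mathcal{J}[\psiT,\beta]$, $e_\theta$, and $\chi_\theta$; no serious analysis is required. The left $U$-invariance is tautological, since the whole construction takes values on $U \backslash G$.

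For the left $K_A$-invariance, I would first check it at the level of $\mathcal{J}[\psiT, 1_{K_A}]$. For $a_0 \in K_A$ and a point $an$ in the open cell, the defining formula \eqref{Eq:Jpsibeta} gives
\begin{equation*}
  \mathcal{J}[\psiT,1_{K_A}](a_0 a n) = \delta_U^{1/2}(a_0 a)\, 1_{K_A}(a_0 a)\, \psiT(n).
\end{equation*}
Since $K_A$ lies in the kernel of $\delta_U^{1/2}$ and $1_{K_A}$ is $K_A$-invariant, this equals $\mathcal{J}[\psiT,1_{K_A}](an)$, and the convolution definition \eqref{eqn:mathc-psi-beta-gamma-h-:=-int_g-in-g-gamma-g-mathc} of $f$ propagates this invariance.

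For the right transformation under $K(\mathfrak{q})$, the point is that $e_\theta$ is a Hecke idempotent satisfying $e_\theta(k^{-1} y) = \chi_\theta(k)\, e_\theta(y)$ for $k \in K(\mathfrak{q})$, directly from the formula $e_\theta = \vol(K(\mathfrak{q}))^{-1} 1_{K(\mathfrak{q})} \chi_\theta^{-1}$ and the fact that $\chi_\theta$ is a character. Then the substitution $y \mapsto ky$ in \eqref{eqn:mathc-psi-beta-gamma-h-:=-int_g-in-g-gamma-g-mathc} yields $f(gk) = \chi_\theta(k) f(g)$, as desired.

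The only step requiring a small computation is the identity $\chi_\theta|_{K_N(\mathfrak{q})} = \psiT|_{K_N(\mathfrak{q})}$. For $1+x \in K_N(\mathfrak{q})$ with $x = \sum_{i<j} x_{ij} E_{ij}$ strictly upper-triangular, I would expand
\begin{equation*}
  \chi_\theta(1+x) = \psiT\bigl(\trace(x \theta)\bigr)
\end{equation*}
using the formula $E_{ij} E_{k+1,k} = \delta_{j,k+1} E_{i,k}$, which forces $\trace(E_{ij} E_{k+1,k}) = \delta_{j,i+1}\delta_{k,i}$. Summing, $\trace(x\theta) = \sum_i x_{i,i+1}$, and comparing with the definition $\psiT(1+x) = \psiT(x_{12} + x_{23} + \dotsb)$ of $\psiT$ on $N$ gives the claimed equality. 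No step here looks like a genuine obstacle; this lemma is essentially a bookkeeping check that the convolution construction of $f$ carries the expected equivariance, with the trace computation being the one item that needs explicit attention.
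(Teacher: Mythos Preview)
Your proposal is correct and takes the same approach as the paper, which simply records ``Each assertion is clear by construction.'' You have spelled out exactly what that phrase means: the left $K_A U$-invariance comes from the definition of $\mathcal{J}[\psiT,1_{K_A}]$ on $U\backslash G$ together with $\delta_U|_{K_A}=1$, the right $K(\mathfrak{q})$-equivariance comes from the idempotent property of $e_\theta$ under the convolution~\eqref{eqn:mathc-psi-beta-gamma-h-:=-int_g-in-g-gamma-g-mathc}, and the identity $\chi_\theta|_{K_N(\mathfrak{q})}=\psiT|_{K_N(\mathfrak{q})}$ follows from the trace computation $\trace(x\theta)=\sum_i x_{i,i+1}$ for $\theta$ the lower-triangular nilpotent Jordan block.
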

\begin{proof}
  Each assertion is clear by construction.
\end{proof}

By Lemma \ref{lemma:compact-support-of-smoothened-jacquet-distributions}, we see that for each $g \in G$, the function $A \ni a \mapsto f(a g)$ lies in $C_c^\infty(A)$.  These functions are thus determined by their Mellin transforms, which are supported on the subgroup of unramified (i.e., $K_A$-invariant) characters.  For such a character $\eta$, the Mellin component
\begin{equation}\label{eqn:f_etag-:=-int_a-in-a-delta_u12-eta--1-a-f-a-g--}
  f[\eta](g) :=
  \int_{a \in A} (\delta_U^{1/2} \eta )^{-1} (a)
  f (a g ) \, d a
\end{equation}
lies in the normalized induced representation $I(\eta) := \Ind_{Q}^G(\eta)$, where we recall that $Q = A U$.  More precisely, it lies in the subspace $I(\eta)_{\theta}$ of that representation on which $K(\mathfrak{q})$ acts via $\chi_\theta$.

\begin{lemma}\label{Lem:localmultipicityone-Ieta}
  Let $\eta$ be an unramified character of $A$.  Then $I(\eta)_\theta$ is one-dimensional, with each element supported on $U A K(\mathfrak{q})$.
\end{lemma}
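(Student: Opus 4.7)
The plan is to establish the lemma in two steps: first exhibit a nonzero element of $I(\eta)_\theta$, then show both multiplicity one and the support condition via a Mackey-type double coset analysis. Existence is immediate from the construction in \S\ref{sec:construction-f}: the Mellin component $f[\eta]$ lies in $I(\eta)_\theta$ by Lemma \ref{lemma:f-left-invar-under-k_a-u-transf-right-under-kmathf-equivariance-properties-f}, and a direct computation from \eqref{eqn:fa-k--=-1_k_aa-1_kmathfrakqk-chi_thetak.-} and \eqref{eqn:f_etag-:=-int_a-in-a-delta_u12-eta--1-a-f-a-g--} gives $f[\eta](1) = \int_{K_A} \delta_U^{-1/2}\eta^{-1}(a) \, da = 1 \neq 0$.

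For multiplicity one and the support claim, I would use the Iwasawa decomposition $G = QK$ to reduce to analyzing $\phi|_K$ for an arbitrary $\phi \in I(\eta)_\theta$. Since $\eta$ is unramified and $\delta_U$ is trivial on $K_A$ and on $U$, the restriction $\phi|_K$ is left-invariant under $Q \cap K = K_A K_U$ and right $K(\mathfrak{q})$-equivariant via $\chi_\theta$. By a Mackey analysis on $(Q \cap K) \backslash K / K(\mathfrak{q})$, the coset through $k \in K$ supports a nonzero function exactly when the compatibility $\chi_\theta(k^{-1}hk) = \delta_U^{1/2}\eta(h)$ holds for all $h$ in the stabilizer $H := Q \cap K(\mathfrak{q}) = K_A(\mathfrak{q}) K_U(\mathfrak{q})$, where I use that $K(\mathfrak{q})$ is normal in $K$. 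The right-hand side equals $1$ since $\delta_U^{1/2}$ and $\eta$ both restrict trivially to $H$; expanding the left-hand side via $\chi_\theta(1+y) = \psi_{\tilde T}(\trace(y\theta))$ together with the cyclic property of the trace, the compatibility becomes the statement that $k\theta k^{-1}$ is strictly lower triangular modulo $\mathfrak{q}$.

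The main obstacle, and the only nontrivial step, is to upgrade this local condition to $k \in (Q \cap K) K(\mathfrak{q})$, i.e., to the conclusion that $\bar k$ is lower triangular in $\mathbf{G}(\mathfrak{o}/\mathfrak{q})$. To this end, set $\bar\eta := \bar k \bar\theta \bar k^{-1}$ and $v_j := \bar k e_j$ (the $j$-th column of $\bar k$); the cyclic property $\bar\theta e_j = e_{j+1}$ yields the recursion $v_{j+1} = \bar\eta v_j$. I would then argue by induction on $l$ that the entry $v_{j, l} = \bar k_{l, j}$ vanishes whenever $j > l$: the base case $l = 1$ is immediate from $\bar\eta_{1, m} = 0$ for all $m$; for the inductive step, $v_{j+1, l+1} = \sum_{m \leq l} \bar\eta_{l+1, m} v_{j, m}$ vanishes when $j \geq l+1$, since the induction hypothesis applied to $m \leq l < j$ gives $v_{j, m} = 0$. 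Hence $\bar k$ has no strict upper-triangular entries, so $\bar k$ lies in the lower Borel, as required. Once the support is established, the transformation properties determine $\phi$ on $QK(\mathfrak{q})$ from $\phi(1)$, giving $\dim I(\eta)_\theta \leq 1$.
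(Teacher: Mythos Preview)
Your argument is correct in substance and follows a more self-contained route than the paper. Both proofs begin by restricting to $K$ via Iwasawa; the paper then cites \cite[Proposition 8.19(iii)]{2023arXiv2309.06314} as a black box for one-dimensionality and obtains the support statement by exhibiting an explicit nonzero element on $QK(\mathfrak{q})$. You instead carry out the Mackey analysis directly and prove the key linear-algebraic fact (that $\bar k \bar\theta \bar k^{-1}$ strictly lower triangular forces $\bar k$ lower triangular) by a clean row-by-row induction, yielding both the support condition and the dimension bound in one stroke. Your approach buys self-containment and transparency; the paper's buys brevity at the cost of an external reference.

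One small circularity to fix: your existence paragraph invokes \eqref{eqn:fa-k--=-1_k_aa-1_kmathfrakqk-chi_thetak.-}, but that formula is part of Proposition~\ref{proposition:formulas-for-fak-fan}, whose proof (through Lemma~\ref{lemma:fa-k-=-0-unless-a-k-in-k_a-times-kmathfrakq.-}) uses the very lemma you are proving. This is easy to repair. You could compute $f(a)=1_{K_A}(a)$ directly from the definition of $f$ by the same Iwahori-factorization computation as in Lemma~\ref{Lem:f1}, which is independent of the present lemma. Cleaner still: drop the separate existence step entirely, since your Mackey analysis already shows that the identity double coset satisfies the compatibility condition ($\chi_\theta$ is trivial on $K_A(\mathfrak{q})K_U(\mathfrak{q})$ because $\theta$ is strictly lower triangular), so a nonzero element exists there automatically.
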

\begin{proof}
  By the Iwasawa decomposition $G = Q K$, restriction to $K$ defines an isomorphism from $I(\eta)_\theta$ to the subspace of $\Ind_{K_Q}^{K} \eta$ on which $K(\mfq)$ acts by $\chi_\theta$.  By \cite[Proposition 8.19(iii)]{2023arXiv2309.06314}, the latter subspace is one-dimensional.  This establishes everything but the support property.  To that end, we verify readily that for $(b,k) \in Q \times K(\mathfrak{q})$, the quantity $\delta_U^{1/2} \eta(b) \chi_\theta(k)$ depends only upon the product $b k$, and that the resulting function $G \rightarrow \mathbb{C}$ supported on $Q K(\mathfrak{q})$ defines a nonzero element of $I(\eta)_\theta$.
\end{proof}

\begin{lemma}\label{lemma:fa-k-=-0-unless-a-k-in-k_a-times-kmathfrakq.-}
  $f(a k) = 0$ unless $(a,k) \in K_A \times K_A K_U K(\mathfrak{q})$.
\end{lemma}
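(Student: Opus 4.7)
The plan is to combine the Mellin decomposition with the support statement for $I(\eta)_\theta$ in Lemma~\ref{Lem:localmultipicityone-Ieta} to obtain the condition on $k$, and then to use the invariance properties established in Lemma~\ref{lemma:f-left-invar-under-k_a-u-transf-right-under-kmathf-equivariance-properties-f} together with a direct computation on $A$ to obtain the condition on $a$. First I would verify that $f$ is right $\chi_\theta$-equivariant under $K(\mathfrak{q})$ by a change of variables in~\eqref{eqn:mathc-psi-beta-gamma-h-:=-int_g-in-g-gamma-g-mathc}, exploiting the $\chi_\theta^{-1}$-equivariance of $e_\theta$ under left translation by $K(\mathfrak{q})$. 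Consequently each Mellin component $f[\eta]$ from~\eqref{eqn:f_etag-:=-int_a-in-a-delta_u12-eta--1-a-f-a-g--} lies in $I(\eta)_\theta$, whose elements are supported on $QK(\mathfrak{q})$ by Lemma~\ref{Lem:localmultipicityone-Ieta}. Since $a \mapsto f(ag)$ lies in $C_c^\infty(A)$ by Lemma~\ref{lemma:compact-support-of-smoothened-jacquet-distributions} and is $K_A$-invariant thanks to the left $K_A$-invariance of $f$, Mellin inversion recovers $f(ag)$ from $\eta \mapsto f[\eta](g)$ and hence forces the support of $f$ to lie in $QK(\mathfrak{q})$.

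For $g = ak$ with $a \in A \subseteq Q$, the inclusion $ak \in QK(\mathfrak{q})$ is equivalent to $k \in QK(\mathfrak{q}) \cap K$; writing $k = qk'$ with $q \in Q$ and $k' \in K(\mathfrak{q})$ forces $q = k(k')^{-1} \in Q \cap K = K_A K_U$, giving $k \in K_A K_U K(\mathfrak{q})$, which is half the claim. For the other half, I write $k = a_2 u_2 k_3 \in K_A K_U K(\mathfrak{q})$ and move $u_2$ past $aa_2 \in A$ to get $aa_2 u_2 = u_2' \cdot aa_2$ with $u_2' = (aa_2)u_2(aa_2)^{-1} \in U$. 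Combining the left-$U$-invariance of $f$ on $U \backslash G$, the right $\chi_\theta$-equivariance, the commutativity of $A$, and the left $K_A$-invariance (all from Lemma~\ref{lemma:f-left-invar-under-k_a-u-transf-right-under-kmathf-equivariance-properties-f}), I obtain $f(ak) = \chi_\theta(k_3) f(a)$. So it suffices to show $f(a) = 0$ for $a \in A \setminus K_A$. Using the Iwahori factorization $K(\mathfrak{q}) = K_U(\mathfrak{q}) K_A(\mathfrak{q}) K_N(\mathfrak{q})$, every $g = u_1 a_1 n_1 \in K(\mathfrak{q})$ satisfies $ag = (au_1 a^{-1})(aa_1) n_1 \in UAN$, and~\eqref{Eq:Jpsibeta} gives
\begin{equation*}
  \mathcal{J}[\psiT, 1_{K_A}](ag) = \delta_U^{1/2}(aa_1)\, 1_{K_A}(aa_1)\, \psiT(n_1),
\end{equation*}
which vanishes identically in $g$ when $a \notin K_A$, since $a_1 \in K_A(\mathfrak{q}) \subseteq K_A$ then forces $aa_1 \notin K_A$.

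The main obstacle is the first step: a purely direct computation from the definition of $f$ combined with the Iwahori factorization of $K(\mathfrak{q})$ would only yield $k \in K_A K_U K_N K(\mathfrak{q})$, which is strictly larger than $K_A K_U K(\mathfrak{q})$ (as is visible already for $n = 2$, where the former contains $\left(\begin{smallmatrix}1 & 1\\ 0 & 1\end{smallmatrix}\right)$ while the latter does not). The sharper support statement for $I(\eta)_\theta$ in Lemma~\ref{Lem:localmultipicityone-Ieta}, which rests on \cite[Proposition 8.19(iii)]{2023arXiv2309.06314}, is essential to remove this spurious $K_N$ factor; the rest of the argument is a mechanical bookkeeping exercise with the equivariance properties and the definition of $\mathcal{J}$.
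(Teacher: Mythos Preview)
Your proposal is correct and follows essentially the same approach as the paper: both arguments use the Mellin components together with the support statement of Lemma~\ref{Lem:localmultipicityone-Ieta} to obtain the condition on $k$, then reduce via the equivariance properties of Lemma~\ref{lemma:f-left-invar-under-k_a-u-transf-right-under-kmathf-equivariance-properties-f} to the case $k=1$ and compute $f(a)$ directly from the Iwahori factorization of $K(\mathfrak{q})$ and the definition of $\mathcal{J}$. Your closing remark about why a purely direct computation (bypassing Lemma~\ref{Lem:localmultipicityone-Ieta}) would only yield the coarser support $K_A K_U K_N K(\mathfrak{q})$ is a useful observation, though not needed for the proof itself.
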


\begin{proof}
  By Lemma \ref{Lem:localmultipicityone-Ieta}, we see that each Mellin component $f[\eta]$ is supported on the same double coset $UAK(\mfq)$.  By Mellin inversion, $f$ is also supported on $UAK(\mfq)$.  Thus $f(ak)\neq 0$ implies that $k\in a^{-1}UAK(\mfq)\cap K=K_AK_UK(\mfq)$.

  By the support condition that we just established in the $k$ variable together with the equivariance property under $K(\mathfrak{q})$ noted in Lemma \ref{lemma:f-left-invar-under-k_a-u-transf-right-under-kmathf-equivariance-properties-f}, we may reduce to the case $k=1$.  It follows then from the definitions \eqref{eqn:mathc-psi-beta-gamma-h-:=-int_g-in-g-gamma-g-mathc} and \eqref{eqn:f-:=-mathcaljpsi-beta-gamma.-} that if $f(a) \neq 0$, then there exists $g \in K(\mathfrak{q})$ with $a g \in U K_A N$.  By the Bruhat decomposition, we may write $g = u b n$ with $(u, b, n) \in K_U(\mathfrak{q}) K_A(\mathfrak{q}) K_N(\mathfrak{q})$.  Then
  \begin{equation*}
    a g = (a u a^{-1}) a b n \in U K_A N.
  \end{equation*}
  By the uniqueness of the Bruhat decomposition, it follows that $a b \in K_A$, hence $a \in K_A$, as required.
\end{proof}

\begin{lemma}\label{lemma:let-n-in-n-have-iwas-decomp-n-=-b-k-with-b-in-u}
  Let $n \in N$ have the Iwasawa decomposition $n = b k$, with $b \in A U$ and $k \in K$.  Suppose that $k \in K(\mathfrak{q})$.  Then $n \in K_N(\mathfrak{q})$.
\end{lemma}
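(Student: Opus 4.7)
The plan is to examine matrix entries directly. The hypothesis $k \in K(\mathfrak{q})$ implies $k^{-1} \in K(\mathfrak{q})$ as well, so $(k^{-1})_{ij} \in \mathfrak{q}$ whenever $i \neq j$, while $(k^{-1})_{ii} \in 1 + \mathfrak{q} \subseteq \mathfrak{o}^\times$ is a unit. From the identity $b = n k^{-1}$, the facts that $b \in AU$ is lower triangular (so $b_{ij} = 0$ for $i < j$) and that $n$ is upper-triangular unipotent (so $n_{ii} = 1$ and $n_{il} = 0$ for $l < i$) together force, for every $i < j$, the equation
\begin{equation*}
  n_{ij}(k^{-1})_{jj} = -(k^{-1})_{ij} - \sum_{\substack{l > i \\ l \neq j}} n_{il}(k^{-1})_{lj}.
\end{equation*}

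What remains is an elementary ultrametric argument. Set $M := \max_{i < j} \lvert n_{ij} \rvert$; the goal is to show $M \leq \lvert \mathfrak{q} \rvert$. Suppose toward a contradiction that $M > \lvert \mathfrak{q} \rvert$, and pick indices $(i_0, j_0)$ attaining the maximum. The left side of the displayed equation applied to $(i_0, j_0)$ has absolute value exactly $M$, since $(k^{-1})_{j_0,j_0}$ is a unit. On the right, $(k^{-1})_{i_0,j_0}$ is off-diagonal and hence of size $\leq \lvert \mathfrak{q} \rvert$, while each summand $n_{i_0,l}(k^{-1})_{l,j_0}$ has $l \neq j_0$, so its second factor is off-diagonal of size $\leq \lvert \mathfrak{q} \rvert$ with $\lvert n_{i_0,l} \rvert \leq M$. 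By the strong triangle inequality, the absolute value of the right side is at most $\max(\lvert \mathfrak{q} \rvert, M \lvert \mathfrak{q} \rvert) < M$, contradicting the equation.

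Hence $n_{ij} \in \mathfrak{q}$ for all $i < j$. Combined with $n_{ii} = 1$ and $n_{ij} = 0$ for $i > j$, this gives $n \in \mathbf{G}(\mathfrak{o})$ with $n \equiv 1 \pmod{\mathfrak{q}}$, so $n \in K(\mathfrak{q}) \cap N = K_N(\mathfrak{q})$. The only step requiring any care is the size comparison above, but it is essentially dictated by the structure of the principal congruence subgroup---units congruent to $1$ on the diagonal, every off-diagonal entry in $\mathfrak{q}$---combined with the ultrametric inequality, so I do not anticipate a serious obstacle.
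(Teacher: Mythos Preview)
Your proof is correct. The key displayed identity is right, and the ultrametric contradiction argument goes through: since $\mathfrak{q}\subseteq\mathfrak{p}$ we have $|\mathfrak{q}|<1$, so indeed $\max(|\mathfrak{q}|,M|\mathfrak{q}|)<M$ whenever $M>|\mathfrak{q}|$, regardless of whether $M\leq 1$ or $M>1$.

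The paper takes a different, more structural route. It writes $k=b^{-1}n$, notes that this is a factorization in the big Bruhat cell $AU\cdot N$ (hence unique), and then invokes the Iwahori factorization $K(\mathfrak{q})=K_Q(\mathfrak{q})K_N(\mathfrak{q})$ with $Q=AU$: since $k\in K(\mathfrak{q})$ also factors as $q'n'$ with $q'\in K_Q(\mathfrak{q})\subset AU$ and $n'\in K_N(\mathfrak{q})\subset N$, uniqueness forces $n=n'\in K_N(\mathfrak{q})$. Your argument is entirely elementary, relying only on matrix entries and the strong triangle inequality, and in particular does not require knowing the Iwahori decomposition of $K(\mathfrak{q})$. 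The paper's argument is shorter and more conceptual, and would adapt immediately to other parabolic settings or to groups where explicit coordinates are less convenient; your approach has the virtue of being self-contained.
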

\begin{proof}
  We have $k = b^{-1} n$.  By the Bruhat decomposition, this identity determines $b$ and $n$ from $k$.  On the other hand, $k$ admits an Iwahori factorization relative to $K(\mathfrak{q}) = K_Q(\mathfrak{q}) K_N(\mathfrak{q})$.  It follows that $n \in K_N(\mathfrak{q})$.
\end{proof}
\begin{lemma}\label{Lem:fansupport}
  $f(a n) = 0$ unless $(a,n) \in K_A \times K_N(\mathfrak{q})$.
\end{lemma}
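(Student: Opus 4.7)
The plan is to reduce to Lemma \ref{lemma:fa-k-=-0-unless-a-k-in-k_a-times-kmathfrakq.-} by extracting from $n$ an Iwasawa decomposition whose $K$-part lies in $K(\mathfrak{q})$, so that Lemma \ref{lemma:let-n-in-n-have-iwas-decomp-n-=-b-k-with-b-in-u} can then be invoked.

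First, I apply the Iwasawa decomposition $G = AUK$ to write $n = a'u'k$ with $a' \in A$, $u' \in U$, $k \in K$. Since $A$ is abelian and normalizes $U$, we have $an = u''(aa')k$ for some $u'' \in U$, and the left $U$-invariance of $f \in \mathcal{S}^e(U \backslash G)$ gives $f(an) = f((aa')k)$. If $f(an) \neq 0$, Lemma \ref{lemma:fa-k-=-0-unless-a-k-in-k_a-times-kmathfrakq.-} then forces $aa' \in K_A$ and $k \in K_A K_U K(\mathfrak{q})$. I factor $k = a_0 u_0 k_0$ with $a_0 \in K_A$, $u_0 \in K_U$, $k_0 \in K(\mathfrak{q})$, commute $a_0$ past $u'$ using $u' a_0 = a_0 (a_0^{-1} u' a_0)$, and obtain $n = (a' a_0)\,\tilde u\, k_0$ with $\tilde u := (a_0^{-1} u' a_0) u_0 \in U$. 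This is an Iwasawa decomposition of $n$ whose $K$-part $k_0$ lies in $K(\mathfrak{q})$, so Lemma \ref{lemma:let-n-in-n-have-iwas-decomp-n-=-b-k-with-b-in-u} yields $n \in K_N(\mathfrak{q})$.

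To upgrade $aa' \in K_A$ to $a \in K_A$, I observe that $n \in K_N(\mathfrak{q}) \subset K$, so the trivial factorization $n = 1 \cdot 1 \cdot n$ is itself a valid Iwasawa decomposition; comparing with $n = a'u'k$ via the essential uniqueness of Iwasawa (up to $K_Q = K_A K_U$) gives $a' \in K_A$, and combined with $aa' \in K_A$ this forces $a \in K_A$. The only mildly delicate point is arranging the middle commutation so that Lemma \ref{lemma:let-n-in-n-have-iwas-decomp-n-=-b-k-with-b-in-u} applies to a product in the correct order; every other step is routine bookkeeping.
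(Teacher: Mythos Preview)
Your proof is correct and follows essentially the same route as the paper's: write $n = bk$ with $b \in AU$, use Lemma~\ref{lemma:fa-k-=-0-unless-a-k-in-k_a-times-kmathfrakq.-} to force $k \in K_A K_U K(\mathfrak{q})$, absorb the $K_A K_U$ factor into $b$ so that the $K$-part lies in $K(\mathfrak{q})$, and then apply Lemma~\ref{lemma:let-n-in-n-have-iwas-decomp-n-=-b-k-with-b-in-u}. The paper dismisses the conclusion $a \in K_A$ with ``the rest is clear,'' whereas you spell it out via uniqueness of Iwasawa; both are fine.
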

\begin{proof}
  This follows from Lemmas \ref{lemma:fa-k-=-0-unless-a-k-in-k_a-times-kmathfrakq.-} and \ref{lemma:let-n-in-n-have-iwas-decomp-n-=-b-k-with-b-in-u}.  Indeed we may write $n=bk$ for $b\in AU$ and $k\in K$.  If $f(an)\neq 0$, we have by Lemma \ref{lemma:fa-k-=-0-unless-a-k-in-k_a-times-kmathfrakq.-} $k\in K_AK_UK(\mfq)$.  This implies that there exists $b'\in K_AK_U$ such that $(b')^{-1}k=(b')^{-1}b^{-1}n\in K(\mfq)$. Then Lemma \ref{lemma:let-n-in-n-have-iwas-decomp-n-=-b-k-with-b-in-u} implies that $n\in K_N(\mfq)$. The rest is clear.
\end{proof}

\begin{lemma}\label{Lem:f1}
  $f(1) = 1$.
\end{lemma}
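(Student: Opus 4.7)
The plan is to unfold the definition of $f(1)$ and evaluate the resulting integral by means of the Iwahori factorization of $K(\mathfrak{q})$.

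First I would write, directly from \eqref{eqn:mathc-psi-beta-gamma-h-:=-int_g-in-g-gamma-g-mathc} and \eqref{eqn:f-:=-mathcaljpsi-beta-gamma.-},
\begin{equation*}
  f(1) = \vol(K(\mathfrak{q}))^{-1} \int_{g \in K(\mathfrak{q})} \chi_\theta(g)^{-1}\, \mathcal{J}[\psiT, 1_{K_A}](g)\, dg,
\end{equation*}
and then apply the Iwahori factorization $K(\mathfrak{q}) = K_U(\mathfrak{q}) K_A(\mathfrak{q}) K_N(\mathfrak{q})$, writing each $g \in K(\mathfrak{q})$ uniquely as $g = u a n$ with components in the obvious subgroups. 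Since $K(\mathfrak{q}) \subset UAN$, the integrand is supported on the open Bruhat cell, so the formula \eqref{Eq:Jpsibeta} for $\mathcal{J}[\psiT, 1_{K_A}]$ applies throughout.

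Next I would evaluate both factors of the integrand on $g = uan$. Using left $U$-invariance together with \eqref{Eq:Jpsibeta}, and the facts that $\delta_U^{1/2}(a) = 1$ and $1_{K_A}(a) = 1$ for $a \in K_A(\mathfrak{q}) \subset K_A$, one obtains
\begin{equation*}
  \mathcal{J}[\psiT, 1_{K_A}](uan) = \psiT(n).
\end{equation*}
For the character $\chi_\theta$, I would observe that $K(\mathfrak{q})/K(\mathfrak{q}^2)$ is abelian (isomorphic to the additive group of $\mathbf{M}(\mathfrak{o}/\mathfrak{q})$ via $1 + x \mapsto x$), so $\chi_\theta(uan) = \chi_\theta(u)\chi_\theta(a)\chi_\theta(n)$. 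A quick computation from \eqref{Eq:chitau} gives $\trace(x\theta) = \sum_i x_{i,i+1}$, which vanishes on strictly lower-triangular and diagonal $x$, so $\chi_\theta(u) = \chi_\theta(a) = 1$. The remaining identity $\chi_\theta(n) = \psiT(n)$ for $n \in K_N(\mathfrak{q})$ is already recorded in Lemma~\ref{lemma:f-left-invar-under-k_a-u-transf-right-under-kmathf-equivariance-properties-f}.

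Combining these, $\chi_\theta(g)^{-1}\mathcal{J}[\psiT, 1_{K_A}](g) \equiv 1$ on $K(\mathfrak{q})$, and the integral evaluates to $\vol(K(\mathfrak{q}))$, giving $f(1) = 1$. There is no substantive obstacle here; the only points requiring care are the use of the Iwahori factorization (with matching product measure on the factors) and the elementary trace calculation showing $\chi_\theta$ is trivial on $K_U(\mathfrak{q})K_A(\mathfrak{q})$.
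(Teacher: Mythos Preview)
Your proof is correct and follows essentially the same route as the paper's: unfold the definition, restrict to $K(\mathfrak{q})$, apply the Iwahori factorization $g = uan$, compute both $\mathcal{J}[\psiT,1_{K_A}](uan) = \psiT(n)$ and $\chi_\theta(uan)^{-1} = \psiT(n)^{-1}$, and observe that the integrand is identically $1$. The only difference is that you spell out the trace computation showing $\chi_\theta$ is trivial on $K_U(\mathfrak{q})K_A(\mathfrak{q})$, whereas the paper simply asserts $\gamma(g) = \vol(K(\mathfrak{q}))^{-1}\psiT^{-1}(n)$ ``by the construction of $\theta$''.
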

\begin{proof}
  Recall that by definition,
  \begin{equation*}
    f(1) = \int_{g \in G} \gamma(g) \mathcal{J} [\psiT, \beta ](g) \, d g.
  \end{equation*}
  By construction, $\gamma(g)$ vanishes unless $g \in K(\mathfrak{q})$, in which case $g$ admits the Iwahori factorization $g = u a n$, with
  \begin{equation}\label{eqn:iwahori-components-uan}
    (u,a,n) \in K_U(\mathfrak{q}) \times K_A(\mathfrak{q}) \times K_N(\mathfrak{q}).
  \end{equation}
  By the construction of $\theta$, we then have
  \begin{equation*}
    \gamma(g) = {\vol (K (\mathfrak{q} ))} ^{-1} \psiT^{-1}(n).
  \end{equation*}
  On the other hand,
  \begin{equation*}
    \mathcal{J} [\psiT, \beta ](g) = 1_{K_A}(a) \psiT(n) = \psiT(n);
  \end{equation*}
  here we have used the definition \eqref{Eq:Jpsibeta} and that $\delta_U \equiv 1$ on $K_A$.  Thus for $g \in K(\mathfrak{q})$, we have
  \begin{equation*}
    \gamma(g) \mathcal{J}[\psi_{\bar{T}}, \beta](g) = \vol(K(\mathfrak{q}))^{-1}.
  \end{equation*}
  Integrating over $g$ gives the required identity.
\end{proof}
Proposition \ref{proposition:formulas-for-fak-fan} now follows from Lemma \ref{lemma:f-left-invar-under-k_a-u-transf-right-under-kmathf-equivariance-properties-f}, \ref{lemma:fa-k-=-0-unless-a-k-in-k_a-times-kmathfrakq.-}, \ref{Lem:fansupport}, \ref{Lem:f1},

\subsection{Extension}

\begin{lemma}\label{lemma:let-j_th-leq-k-denote-inverse-image-centr-mathbfg}
  Let $J_\theta \leq K$ denote the inverse image of the centralizer $\mathbf{G}_\theta(\mathfrak{o}/\mathfrak{q})$.  There is a unique extension of $\chi_\theta$ to a character $\tilde{\chi}_\theta$ of $J_\theta$ such that $f$ transforms on the right under $J_\theta$ via $\tilde{\chi}_\theta$.
\end{lemma}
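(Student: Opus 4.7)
The plan is to define $\tilde{\chi}_\theta(j) := f(j)$ and verify the required properties using the explicit formula in Proposition~\ref{proposition:formulas-for-fak-fan}. Uniqueness is immediate by evaluation at $g = 1$ and Lemma~\ref{Lem:f1}, and the extension property $\tilde{\chi}_\theta|_{K(\mathfrak{q})} = \chi_\theta$ follows from Lemma~\ref{lemma:f-left-invar-under-k_a-u-transf-right-under-kmathf-equivariance-properties-f} and Lemma~\ref{Lem:f1}. The substantive content is thus the identity $f(gj) = f(j) f(g)$ for all $g \in G$ and $j \in J_\theta$, which simultaneously yields the transformation property and multiplicativity of $\tilde{\chi}_\theta$.

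I would begin by observing that the centralizer of $\theta$ in $\mathbf{G}(\mathfrak{o}/\mathfrak{q})$ is the group of invertible lower-triangular Toeplitz matrices (since over any ring the centralizer of the shift $\theta$ equals the polynomial ring generated by $\theta$), so $J_\theta \subseteq K_A K_U K(\mathfrak{q})$; the latter is the Iwahori subgroup $I_\mathfrak{q}$ of level $\mathfrak{q}$ and hence a group. Using the left $K_A U$-invariance of $f$ and the Iwasawa decomposition $G = U A K$, I reduce the problem to verifying $f(kj) = f(k) f(j)$ for $k \in K$ and $j \in J_\theta$. If $k \notin I_\mathfrak{q}$, then $f(k) = 0$ by Proposition~\ref{proposition:formulas-for-fak-fan}, and $kj \notin I_\mathfrak{q}$ since $j \in I_\mathfrak{q}$ and $I_\mathfrak{q}$ is a group; hence $f(kj) = 0$ as well, and both sides vanish.

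For the remaining case $k, j \in I_\mathfrak{q}$, I would use Iwahori factorizations $k = u_1 a_1 n_1$ and $j = u_2 a_2 n_2$ with $u_i \in K_U$, $a_i \in K_A$, $n_i \in K_N(\mathfrak{q})$, and apply normality of $K(\mathfrak{q})$ in $K$ to rewrite $kj$ as a product of an element of $K_U K_A$ with the element $n_1' n_2 \in K(\mathfrak{q})$, where $n_1' := (u_2 a_2)^{-1} n_1 (u_2 a_2)$. Proposition~\ref{proposition:formulas-for-fak-fan} then reduces the target identity to $\chi_\theta(n_1') = \chi_\theta(n_1)$. Writing $n_1 = 1 + x$ and using $\trace(\Ad(g^{-1}) x \cdot \theta) = \trace(x \cdot \Ad(g) \theta)$ together with the formula~\eqref{Eq:chitau}, this reduces to the congruence $\Ad(u_2 a_2) \theta \equiv \theta \pmod{\mathfrak{q}}$, which holds because $n_2 \in K(\mathfrak{q})$ gives $j \equiv u_2 a_2 \pmod{\mathfrak{q}}$ and $j \in J_\theta$ centralizes $\theta$ modulo $\mathfrak{q}$ by definition.

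The step requiring the most care is the final one: recognizing that the centralizer condition defining $J_\theta$ is precisely what is needed for $\chi_\theta$ to be invariant under conjugation by $(u_2 a_2)^{-1}$. The remaining work is bookkeeping with Iwahori factorizations and unwinding the explicit formulas of Section~\ref{sec:cnjgx9803p}.
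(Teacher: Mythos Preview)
Your proof is correct and takes a genuinely different route from the paper's. The paper argues indirectly via Mellin decomposition: each component $f[\eta]$ lies in the one-dimensional space $I(\eta)_\theta$ (Lemma~\ref{Lem:localmultipicityone-Ieta}), an abstract existence result from \cite[Lemma 8.12]{2023arXiv2309.06314} provides \emph{some} vector in $I(\eta)$ transforming under $J_\theta$ by an extended character, one-dimensionality forces that vector to be $f[\eta]$ itself, and independence of $I(\eta)|_K$ from $\eta$ plus Mellin inversion then transfers the conclusion to $f$. Your approach bypasses all of this by computing directly with the explicit formula of Proposition~\ref{proposition:formulas-for-fak-fan}: once one observes that $J_\theta$ sits inside the Iwahori subgroup (via the Toeplitz description of the centralizer of a nilpotent Jordan block), the identity $f(kj)=f(k)f(j)$ reduces to the congruence $\Ad(u_2 a_2)\theta\equiv\theta\pmod{\mathfrak{q}}$, which is exactly the defining property of $J_\theta$. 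Your argument is more elementary and self-contained, needing neither the cited external lemma nor the Mellin machinery; the paper's argument is more conceptual and would generalize more readily to settings where an explicit formula for $f$ is unavailable.
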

\begin{proof}
  This follows from combining the argument for existence as in \cite[Lemma 8.12]{2023arXiv2309.06314} and the uniqueness of such elements in Mellin components in Lemma \ref{Lem:localmultipicityone-Ieta}.

  Indeed, a specialization of \cite[Lemma 8.12]{2023arXiv2309.06314} reads as follows (note that $\theta \in \mathbf{M}(\mathfrak{o} / \mathfrak{q})$ is cyclic).  Let $\pi$ be a representation of $\mathbf{G}(\mathfrak{o})$ that contains a nonzero vector that transforms under $K(\mathfrak{q})$ via the character $\chi_\theta$.  Then there is an extension of $\chi_\theta$ to a character $\tilde{\chi}_\theta$ of $J_\theta$ and a nonzero vector $v \in \pi$ that transforms under $J _\theta$ via $\tilde{\chi}_\theta$.

  We can decompose $f$ into its Mellin components $f [\eta] \in I(\eta)$ as in \eqref{eqn:f_etag-:=-int_a-in-a-delta_u12-eta--1-a-f-a-g--}.  By Lemma \ref{lemma:f-left-invar-under-k_a-u-transf-right-under-kmathf-equivariance-properties-f}, each component transforms on the right under $K(\mathfrak{q})$ via $\chi_\theta$.  By Lemma \ref{Lem:f1}, each component satisfies $f [\eta] (1) = 1$, hence is nonzero.  By \cite[Lemma 8.12]{2023arXiv2309.06314}, there is for each $\eta$ an extension $\tilde{\chi}_{\theta,\eta}$ such that $I(\eta)$ contains a vector that transforms under $J_\theta$ via $\tilde{\chi}_{\theta, \eta}$.  By the one-dimensionality of $I(\eta)_\theta$ noted in Lemma \ref{Lem:localmultipicityone-Ieta}, we see that the vector in question must be $f[\eta]$ itself.  Since $I(\eta)|_{K}$ is independent of the unramified character $\eta$, we see that $\tilde{\chi}_{\theta, \eta}$ is likewise independent -- call it $\tilde{\chi}_\theta$.  By Mellin inversion, we deduce that $f$ likewise transforms on the right via $\tilde{\chi}_\theta$.  Uniqueness is clear, using again that $f \neq 0$.
\end{proof}

\subsection{$L^2$-estimates}
\begin{lemma}
  We have $\int_{U \backslash G} \lvert f \rvert^2 \asymp T^{- \dim(N)/2}$.
\end{lemma}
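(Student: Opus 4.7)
The plan is to use the formula for $f$ in $(a,n)$-coordinates from Proposition \ref{proposition:formulas-for-fak-fan}, namely $f(an) = 1_{K_A}(a) 1_{K_N(\mathfrak{q})}(n) \psiT(n)$, and unfold the integral over the open Bruhat cell $UAN$.  Since $UAN \subseteq G$ is open and dense, its complement contributes nothing to Haar integration, so a full-measure subset of $U \backslash G$ is parametrized by $(a,n) \in A \times N$ via $(a,n) \mapsto Uan$.  A standard Jacobian/modular-character computation shows that, in these coordinates, the invariant measure on $U \backslash G$ pulls back to $c \cdot \delta_U^{-1}(a)\,da\,dn$ for some positive constant $c$ depending only on the fixed measure normalizations.

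Substituting $|f(an)|^2 = 1_{K_A}(a)\,1_{K_N(\mathfrak{q})}(n)$, and observing that $\delta_U \equiv 1$ on $K_A$ (since the diagonal entries of an element of $K_A$ have absolute value $1$), we obtain
\begin{equation*}
  \int_{U \backslash G} |f|^2 \;=\; c\int_{K_A}\!\int_{K_N(\mathfrak{q})} \delta_U^{-1}(a)\,da\,dn \;\asymp\; \vol(K_A)\cdot\vol(K_N(\mathfrak{q})) \;=\; \vol(K_N(\mathfrak{q})).
\end{equation*}

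Finally, $K_N(\mathfrak{q})$ is the kernel of the reduction map $K_N \twoheadrightarrow \mathbf{N}(\mathfrak{o}/\mathfrak{q})$, so $[K_N : K_N(\mathfrak{q})] = [\mathfrak{o}:\mathfrak{q}]^{\dim(N)}$, yielding
\begin{equation*}
  \vol(K_N(\mathfrak{q})) \;=\; [\mathfrak{o}:\mathfrak{q}]^{-\dim(N)} \;=\; T^{-\dim(N)/2},
\end{equation*}
where the last equality uses $T = [\mathfrak{o}:\mathfrak{q}^2] = [\mathfrak{o}:\mathfrak{q}]^2$.  The only real bookkeeping point is confirming the modular factor in the Bruhat-coordinate measure decomposition, which is routine and, more importantly, entirely harmless here because it is trivial on the unit support $K_A$ of $f$.
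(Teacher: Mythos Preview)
Your proof is correct and follows essentially the same approach as the paper: the paper's proof is a single sentence invoking the explicit formula \eqref{eqn:fa-n-=-1_k_aa-1_k_nmathfrakqn-psin.-} together with integration in Bruhat coordinates, and you have simply filled in those details.
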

\begin{proof}
  This is immediate from \eqref{eqn:fa-n-=-1_k_aa-1_k_nmathfrakqn-psin.-} and the formula for integration in Bruhat coordinates.
\end{proof}


%

%
%

\section{Main local result in non-compact case}\label{Sec:mainlocal}
\subsection{Local Zeta integral}
%
For $W \in \mathcal{W}(\pi,\psi)$ and $f \in \mathcal{I}(\eta)$, we define
\begin{equation*}
  Z(W,f) = \int_{N_H \backslash H} W \cdot W[f, \psi^{-1}],
\end{equation*}
which appears as the local component of the integral representation of special value of Rankin--Selberg L-function on $\GL_{n+1}\times \GL_n$ (see \cite{MR620708},\cite{MR701565}).  Such integrals typically requires interpretation via meromorphic continuation, but when $W|_{N_H \backslash H}$ is compactly-supported (as it is for us), the above integral converges for all $s$ and defines an entire function of $s$.

\label{sec:cqzbt5us6y}\subsection{p-adic Formulation}\label{sec:cqzbt5u05j}
We recall \cite[Definition 8.1]{2023arXiv2309.06314}:
\begin{definition}\label{Def:regular}
  We say that $\pi$ is \emph{regular at depth $\mfq^2$} if there is some $\tau \in \mathbf{M}(\mathfrak{o}/\mfq)$ that is cyclic (i.e., admits a cyclic vector) and such that $\pi$ contains a nonzero vector $v$ that transforms under $K (\mfq )$ via the character $\chi_\tau$ (see \eqref{Eq:chitau}), i.e.,
  \begin{equation}\label{eqn:g-v-=-chi_taug-v-quad-text-all--g-in-kmathfrakq.-}
    g v = \chi_\tau(g) v \quad \text{ for all } g \in K(\mfq).
  \end{equation}
  In that case, we refer to $\tau$ as a \emph{regular parameter for} $\pi$ \emph{at depth} $\mfq^2$, to its characteristic polynomial $P \in (\mathfrak{o}/\mfq)[X]$ as a \emph{polynomial for} $\pi$ \emph{at depth} $\mfq^2 $.
\end{definition}
Here we introduce slightly stronger condition:
\begin{definition}\label{definition:we-say-that-repr-pi-mathbfgf-has-emph-depth-mathfr-uniform-depth}
  We say that $\pi$ is \emph{uniform at depth} $\mathfrak{q}^2 $ if it satisfies the condition of regularity at depth $\mfq^2$ with $\det(\tau)$ (or equivalently, $P(0)$) being a unit.
\end{definition}
\begin{remark}
  $\pi$ being uniform at depth $\mathfrak{q}^2$ is equivalent to the pair $(\pi,\sigma)$ being stable at depth $\mathfrak{q}^2$ as in \cite[Definition 8.6] {2023arXiv2309.06314} for any unramified representation $\sigma$ of $H$.
\end{remark}
\begin{example}
  Let $\chi$ be a character of $\mathfrak{o}^\times$ with conductor $\mfq^2$, i.e., with trivial restriction to $\mfq^2$ but not to $\mathfrak{p}^{-1} \mfq^2$.  Then $\chi$ is regular and uniform at depth $\mfq^2$.

  The supercuspidal representations satisfying the assumptions indicated in \cite[Section 8.8]{2023arXiv2309.06314} are regular and uniform at depth $\mfq^2$.

  Being regular and uniform at depth $\mfq^2$ is closed under parabolic induction (see \cite[Proposition 8.19]{2023arXiv2309.06314}).
\end{example}

\begin{remark}
  Let $\pi$ be a principal series representation induced from characters $\chi_1,\dotsc,\chi_n$, where the conductors of the $\chi_i$ are at most $\mfq^2$ but not all equal to $\mfq$.  One can show then that $\pi$ is regular, but not uniform, at depth $\mfq^2$.
\end{remark}

\begin{lemma}\label{lem:cq0ikz837w}
  Let $\pi$ be a fixed irreducible smooth supercuspidal representation of $G(\mathbb Q_p)$. Let $\chi$ be a character of $ \mathbb Q_p^\times$ with conductor $\mfq^2$ where $\mfq=\mathfrak{p}^m$ for $m$ sufficiently large. Then $\pi\times \chi$ is uniform at depth $\mfq^2$.
\end{lemma}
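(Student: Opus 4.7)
The plan is to reduce uniformity of $\pi\otimes\chi$ at depth $\mathfrak{q}^2$ to an existence statement about characters of $K(\mathfrak{q})$ that appear in $\pi$ itself, and then to establish that statement for $m$ large using the supercuspidal structure of $\pi$. Since $\chi$ has conductor $\mathfrak{q}^2$, its restriction to $1+\mathfrak{q}$ is a nontrivial character of the quotient $(1+\mathfrak{q})/(1+\mathfrak{q}^2)\cong\mathfrak{q}/\mathfrak{q}^2$, so using the pairing $(y,c)\mapsto\psiT(cy)$ there is a unique $c\in\mathfrak{o}^\times$ (modulo $\mathfrak{q}$) with $\chi(1+y)=\psiT(cy)$ for $y\in\mathfrak{q}$. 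For $k=1+x\in K(\mathfrak{q})$ one has $\det k\equiv 1+\tr x\pmod{\mathfrak{q}^2}$, so $(\chi\circ\det)(k)=\psiT(c\,\tr x)=\chi_{cI}(k)$. Thus $(\pi\otimes\chi)(k)v=\chi_{cI}(k)\pi(k)v$, and uniformity of $\pi\otimes\chi$ at depth $\mathfrak{q}^2$ is equivalent to producing $v\in\pi$ and $\tau_0\in\mathbf{M}(\mathfrak{o}/\mathfrak{q})$ with $\pi(k)v=\chi_{\tau_0}(k)v$ for all $k\in K(\mathfrak{q})$, such that $\tau_0+cI$ is cyclic with unit determinant. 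Because adding a scalar matrix preserves the underlying module structure and hence cyclicity, $\tau_0+cI$ is cyclic iff $\tau_0$ is; and $\det(\tau_0+cI)$ is a unit iff $-c$ is not an eigenvalue of the reduction $\bar\tau_0\in M_n(k)$. These conditions cut out a nonempty Zariski-open locus $\Omega\subset M_n(k)$ of admissible reductions.

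To produce such $\tau_0$ with $\pi^{K(\mathfrak{q}),\chi_{\tau_0}}\ne 0$, I would use the Bushnell--Kutzko realization $\pi\cong\operatorname{c-Ind}_J^G\lambda$ from a cuspidal type $(J,\lambda)$, together with Mackey's formula
\[
\pi^{K(\mathfrak{q}),\chi_{\tau_0}}\cong\bigoplus_{g\in K(\mathfrak{q})\backslash G/J}\Hom_{K(\mathfrak{q})\cap{}^gJ}(\chi_{\tau_0},{}^g\lambda).
\]
The set $\Sigma_\pi:=\{\tau_0:\pi^{K(\mathfrak{q}),\chi_{\tau_0}}\ne 0\}$ is stable under $\Ad(K)$ because $K$ normalizes $K(\mathfrak{q})$ and preserves $\pi$, so its reduction is a union of conjugacy classes in $M_n(k)$. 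As $m$ grows, progressively more double cosets $g\in K(\mathfrak{q})\backslash G/J$ yield nontrivial contributions to $\Sigma_\pi$, and the characters of $K(\mathfrak{q})\cap{}^gJ$ carried by ${}^g\lambda$ produce parameters whose $\Ad(K)$-saturations cover increasingly large unions of conjugacy classes. For $m$ sufficiently large, $\bar\Sigma_\pi$ meets the open locus $\Omega$, providing the desired $\tau_0$ and $v$.

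The main obstacle is this last quantitative point: while the computation of $(\chi\circ\det)|_{K(\mathfrak{q})}$ and the algebraic reformulation are direct, verifying that generic cyclic parameters actually appear in $\Sigma_\pi$ for $m\gg 1$ requires carefully unpacking the Mackey decomposition in terms of the structure of $(J,\lambda)$. In the explicit tame and simple-type cases of \cite[Section 8.8]{2023arXiv2309.06314}, the relevant $\Ad(K)$-orbits can be read off directly from the type data; in general one combines those building blocks with the Bushnell--Kutzko classification, exploiting the fact that the highly ramified twist by $\chi$ provides the scalar shift $cI$ that pushes any appearing cyclic parameter into $\Omega$.
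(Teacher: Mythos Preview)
Your reduction is correct and matches the paper: writing $\chi(1+y)=\psiT(cy)$ for $c\in\mathfrak{o}^\times$, one has $(\chi\circ\det)|_{K(\mathfrak{q})}=\chi_{cI}$, so the task is to find a cyclic $\tau_0\in\mathbf{M}(\mathfrak{o}/\mathfrak{q})$ occurring in $\pi$ for which $-c$ is not an eigenvalue of $\bar\tau_0\in M_n(k)$.

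The gap is in how you produce such a $\tau_0$. Your Mackey argument is not carried out, and the heuristic that ``progressively more double cosets contribute'' so that $\bar\Sigma_\pi$ eventually meets a generic open set is not substantiated. In fact the mechanism is the opposite: for a fixed supercuspidal $\pi$ and $m$ much larger than the depth of $\pi$, the parameters that appear are highly constrained, not generic. The paper instead invokes a concrete input, \cite[Lemma~2.14]{assing2024localanalysiskuznetsovformula}: for $m$ large there is a vector $v'\in\pi$ on which $K(\mathfrak{p}^{2m-1})$ acts by $\chi_{\tau'}$ with $\tau'\in\mathbf{M}(\mathfrak{o}/\mathfrak{p})$ \emph{regular nilpotent}. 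One then extends, using that $K(\mathfrak{p}^m)/K(\mathfrak{p}^{2m})$ is abelian and the argument of \cite[Lemma~8.12]{2023arXiv2309.06314}, to obtain $\tau\in\mathbf{M}(\mathfrak{o}/\mathfrak{q})$ with $\tau\equiv\tau'\pmod{\mathfrak{p}}$ and $\pi(k)v=\chi_\tau(k)v$ for $k\in K(\mathfrak{q})$. The point is now immediate: $\bar\tau$ is nilpotent, so its only eigenvalue is $0\ne -c$, and hence $\tau+cI$ is cyclic with unit determinant (invertible central plus nilpotent is invertible). Your open-set argument is replaced by the single fact that the parameters are nilpotent mod $\mathfrak{p}$, which you should cite rather than attempt to derive from scratch via Bushnell--Kutzko.
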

\begin{proof}
  Recall first from \cite[Lemma 2.14]{assing2024localanalysiskuznetsovformula} that when $m$ is large enough, there exist a vector $v'\in \pi$ and an element $\tau'\in \mathbf{M}(\mathfrak{o}/\mathfrak{p})$ which is regular and nilpotent, such that $K_G(\mathfrak{p}^{2m-1})$ acts on $v'$ by $\chi_{\tau'}$.  That is, for all $1+x\in K_G(\mathfrak{p}^{2m-1})$,
  \begin{equation}\label{Eq:chitau}
    \pi(1+x)v'=\psi_{\tilde{T}}(\trace(x\tau'))v'.
  \end{equation}
  Note that $\tau'$ being regular and nilpotent is equivalent to that $\tau'$ is conjugated to a single Jordan block with eigenvalues $0$ by $K_G$.


  Note that $K_G(\mathfrak{p}^m)/K_G(\mathfrak{p}^{2m})$ is abelian.  Using a similar argument as in \cite[Lemma 8.12]{2023arXiv2309.06314} we further conclude that there exists $\tau\in \mathbf{M}(\mathfrak{o}/\mfq)$ and a vector $v\in \pi$ such that $K_G(\mathfrak{p}^{m})$ acts on $v$ by $\chi_{\tau}$, extending \eqref{Eq:chitau}.  In particular $\tau\equiv \tau' \mod \mathfrak{p}$, and by \cite[Example 3.5]{2023arXiv2309.06314} $\tau$ is a regular parameter for $\pi$ at depth $\mfq^2$.

  For $\chi$ with conductor $\mfq^2$, let $\alpha_\chi\in \mathbb Z_p^\times$ be the element satisfying
  \[
    \chi(1+x)=\psi_{\tilde{T}}(\alpha_\chi x).
  \]
  Consider now the natural map for, for example, the representations in Whittaker models
  \begin{align*}
    \iota :\pi&\rightarrow \pi\times \chi\\
    v &\mapsto v\chi(\det(g)).
  \end{align*}
  Then $\iota(v)$ is a vector in $\pi\times \chi$ that transforms under $K_G(\mathfrak{p}^{m})$ via $\chi_{\tau+\alpha_\chi}$, with $\tau+\alpha_\chi$ still a regular parameter for $\pi\times \chi$ at depth $\mfq^2$.   Furthermore $\tau+\alpha_\chi\mod\mathfrak{p}$ is invertible by being the sum of an invertible central element with a nilpotent element. Thus $\pi\times \chi$ is uniform at depth $\mfq^2$.
\end{proof}

\begin{theorem}\label{theorem:main-local-noncompact}
  Let $\pi$ be a generic irreducible unitary representation of $G$ that is uniform at depth $\mathfrak{q}^2$.  Let $\psi$ be an unramified nondegenerate unitary character of $N$.  There exists
  \begin{itemize}
  \item an idempotent $\omega \in C_c^\infty(K)$,
  \item a Whittaker function $W \in \mathcal{W}(\pi,\psi)$,
  \item a compact open subgroup $J_H$ of $K_H$ containing the principal congruence subgroup $K_H(\mathfrak{q})$,
  \item a character $\chi_H$ of $J_H$ with trivial restriction to $K_H(\mathfrak{q}^2)$, and
  \item a function $f \in {C_c^\infty(U_H \backslash H)}$
  \end{itemize}
  with the following properties.

  Define
  \begin{equation*}
    \omega ^\sharp (g) := \int _{z \in \mathbf{Z}(F)} \pi |_{Z}(z) \omega(z g) \, d z,
  \end{equation*}
  where $\pi|_{Z}$ denotes the central character of $\pi$.

  \begin{enumerate}
    [(i)]
  \item\label{enumerate:cqzbt42t3t} $W$ and $f$ satisfy the $L^2$-normalizations
    \begin{equation}\label{eqn:int_n_h-backslash-h-lvert-w-rvert2-=-1-l2-normalization-W}
      \int_{N_H \backslash H} \lvert W \rvert^2 = 1,
    \end{equation}
    \begin{equation}\label{eqn:int_u_h-backslash-h-lvert-f-rvert2-=-1.-l2-normalization-f}
      \int_{U_H \backslash H} \lvert f \rvert^2  = 1.
    \end{equation}
  \item\label{enumerate:piomega-w-=-w.-} $\pi(\omega) W = W$.
  \item\label{enumerate:we-have-int-_h-lvert-omega-sharp-rvert-ll-t-n2.-} We have $\int _{H} \lvert \omega ^\sharp \rvert \ll T ^{n/2}$.
  \item\label{enumerate:f-invar-norm-intertw-oper-mathc-psi-w-in-w_h-defin} $f$ is invariant by the normalized intertwining operators $\mathcal{F}_{w,\psi}$ ($w \in W_H$) defined in \cite[\S2.13.2]{2021arXiv210915230N}.
  \item\label{enumerate:f-left-invar-under-k_a-right-invar-under-k_hm-supp} $f$ is left-invariant under $K_{A_H}$, right-invariant under $K_H(\mathfrak{q}^2)$, and supported on $U_H K_{A_H} \tilde{T}^{-\rho_{U_H}^\vee} K_H(\mathfrak{q})$.
  \item\label{enumerate:we-have-fg-h-=-chi_hh-fg-all-g-h-in-h-times-j_h.-} We have $f(g h) = \chi_H(h) f(g)$ for all $(g,h) \in H \times J_H$.
  \item\label{enumerate:each-s-in-mathfr-mathbbc-we-have-begin-zw-fs-=-c-tzeta-integrals} For each $s \in \mathfrak{a}_{H,\mathbb{C}}^*$, we have
    \begin{equation*}
      Z(W, f[s]) = c T^{\rank(G) \trace(s) / 2 - \rank(H)^2 / 4}=c T^{(n+1)\trace(s) / 2 - n^2 / 4},
    \end{equation*}
    where $c > 0$ depends only upon measure normalizations.
  \item\label{enumerate:let-psi_1-psi_2-:-k_h-right-mathbbc-be-funct-satis} Let $\Psi_1, \Psi_2 : K_H \rightarrow \mathbb{C}$ be functions satisfying
    \begin{equation*}
      \lvert \Psi_j(g z) \rvert = \left\lvert \Psi_j(g) \right\rvert \text{ for all } (g,z) \in K_H \times  J_H.
    \end{equation*}
    Let
    \begin{equation*}
      \gamma \in G - H Z
    \end{equation*}
    Then
    \begin{equation*}
      \int _{x, y \in K_H} \left\lvert \Psi_1(x) \Psi_2(y) \omega ^\sharp (x ^{-1} \gamma y) \right\rvert
      \ll
      \Delta
      T^{n/2}
      \lVert \Psi_1 \rVert_{L^2}
      \lVert \Psi_2 \rVert_{L^2},
    \end{equation*}
    where
    \begin{equation*}
      \Delta := \frac{1}{1 + T^{1/2} d_H(\gamma)}
      + \frac{d_H(\gamma)^\infty }{T^{1/4} }.
    \end{equation*}
  \end{enumerate}
\end{theorem}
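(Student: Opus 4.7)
The plan is to assemble the pieces from \S\ref{Sec:LWhit} and \S\ref{sec:some-analysis-u-backslash-g} into a single package, import the bilinear form estimate from prior work, and perform one explicit zeta integral computation. The strategy closely mirrors the archimedean argument of \cite[\S3, \S20]{2021arXiv210915230N}, but simplifies substantially in the $p$-adic case.

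First I would construct $W$ and $\omega$. Uniformity at depth $\mathfrak{q}^2$ produces a cyclic parameter $\tau \in \mathbf{M}(\mathfrak{o}/\mathfrak{q})$ with $\det(\tau)$ a unit, and a nonzero vector $v \in \pi$ transforming under $K(\mathfrak{q})$ by $\chi_\tau$. I would take $W$ to be the corresponding Whittaker function, conjugated if necessary so that $\tau$ is subcyclic with respect to the standard basis. Proposition~\ref{proposition:localized-whittaker-function-subcyclic-parameter-concentrates} then gives $W|_H$ explicitly: it is supported on $\mathbf{N}_H(F) K_H(\mathfrak{q})$ and given by \eqref{eqn:wh-=-psiu-chi_tauy-w1.-} there, so $W(1)$ is determined up to a scalar by the $L^2$-normalization in (i). Take $\omega$ to be a scalar multiple of $e_\tau$. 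The multiplicity-one statement of \S\ref{Sec:WhittakerKirillov} makes the space of $\chi_\tau$-equivariant vectors one-dimensional, so $\pi(e_\tau) W$ is a scalar multiple of $W$, which scaling fixes to give (ii). The bound (iii) is a direct volume count: $|\omega^\sharp|$ is supported on $Z \cdot K(\mathfrak{q})$, whose volume modulo the center is $\asymp T^{-n(n+2)/2}$, and $\omega$ has $L^\infty$-norm $\asymp T^{n(n+2)/4}$, giving the stated $T^{n/2}$.

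Second I would construct $f$. Let $f_0 := \mathcal{J}[\psi_{\tilde T}, 1_{K_{A_H}}, e_{\theta_H}]$ be the analog on $H$ of the function built in \S\ref{sec:construction-f}. Define $f$ to be an appropriate scalar multiple of the right translate of $f_0$ by $\tilde T^{-\rho_{U_H}^\vee}$, so that the support condition in (v) matches Proposition~\ref{proposition:formulas-for-fak-fan} after the translation. Property (iv) is immediate from Lemma~\ref{lemma:f-mathcalf_w-psi-1-invariant-each-w-in-w.-}, since right translation commutes with the normalized intertwining operators. The right-equivariance (vi) under a compact open subgroup $J_H$ containing $K_H(\mathfrak{q})$ with character $\chi_H$ is the content of Lemma~\ref{lemma:let-j_th-leq-k-denote-inverse-image-centr-mathbfg}, after conjugation by $\tilde T^{-\rho_{U_H}^\vee}$. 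The scalar is fixed by the $L^2$-normalization in (i), computed via \eqref{eqn:fa-n-=-1_k_aa-1_k_nmathfrakqn-psin.-} in Bruhat coordinates.

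Third, for the zeta integral (vii), I would combine the explicit form of $W|_H$ from Proposition~\ref{proposition:localized-whittaker-function-subcyclic-parameter-concentrates} with the explicit form of $f[s]$ obtained by Mellin-inverting Proposition~\ref{proposition:formulas-for-fak-fan}. The integrand concentrates near the point $\tilde T^{-\rho_{U_H}^\vee}$ on $N_H \backslash H$: the modulus character evaluated there yields the factor $T^{(n+1)\trace(s)/2}$, while the measures of $K_H(\mathfrak{q})$ and the $L^2$-normalizations combine to produce $T^{-n^2/4}$. The remaining integral collapses to the volume of a neighborhood on which both $W$ and $f[s]$ are constant, giving an $s$-independent positive constant $c$. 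Finally, property (viii) is imported directly from \cite[Theorem 9.1]{2023arXiv2309.06314}, which applies verbatim to the test vector $\pi(e_\tau) W = W$ constructed from the cyclic $\tau$.

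The main obstacle will be the zeta-integral calculation (vii): disentangling the Mellin expansion of $f$, the Kirillov-model description of $W|_H$, and all the measure normalizations simultaneously, so that the exponents $T^{(n+1)\trace(s)/2 - n^2/4}$ come out exactly. The $p$-adic setting spares us the analytic subtleties of the archimedean case (support of Mellin data is discrete, so the interchanges in Lemma~\ref{lemma:compact-support-of-smoothened-jacquet-distributions} are automatic), but a careful bookkeeping of the shift by $\tilde T^{-\rho_{U_H}^\vee}$ and the resulting change of variables is still the heart of the argument.
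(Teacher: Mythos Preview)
Your overall architecture is right, but the construction of $\omega$ has a genuine gap that breaks both \eqref{enumerate:we-have-int-_h-lvert-omega-sharp-rvert-ll-t-n2.-} and \eqref{enumerate:let-psi_1-psi_2-:-k_h-right-mathbbc-be-funct-satis}. Taking $\omega = e_\tau$ (supported on $K(\mathfrak q)$) gives
\[
\int_H |\omega^\sharp| \;=\; \vol(K_H(\mathfrak q))\cdot \vol(K(\mathfrak q))^{-1}\cdot \vol(K_Z(\mathfrak q)) \;\asymp\; T^{-n^2/2}\cdot T^{(n+1)^2/2}\cdot T^{-1/2} \;=\; T^{n},
\]
not $T^{n/2}$; your volume exponents are miscomputed, but even with the correct ones the bound is off by $T^{n/2}$. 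The paper instead extends $\chi_\tau$ to a character $\tilde\chi_\tau$ of the larger group $J_\tau$ (the preimage in $K$ of the centralizer $\mathbf{G}_\tau(\mathfrak o/\mathfrak q)$), via Lemma~\ref{lemma:let-j_th-leq-k-denote-inverse-image-centr-mathbfg}, and sets $\omega = \vol(J_\tau)^{-1}\,1_{J_\tau}\,\tilde\chi_\tau^{-1}$. Stability of $\tau$ forces $J_\tau \cap H = K_H(\mathfrak q)$, while $\vol(J_\tau)$ picks up an extra factor $|\mathbf G_\tau(\mathfrak o/\mathfrak q)| \asymp T^{(n+1)/2}$, and these two facts combine to give exactly $T^{n/2}$. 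The same $J_\tau$-structure is what \cite[Theorem~9.1]{2023arXiv2309.06314} is stated for, so \eqref{enumerate:let-psi_1-psi_2-:-k_h-right-mathbbc-be-funct-satis} does not apply ``verbatim'' to your $e_\tau$. Relatedly, the one-dimensionality from \S\ref{Sec:WhittakerKirillov} already gives $\pi(e_\tau)W=W$ with no scaling needed, but that is not the issue.

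Two smaller but real corrections. First, $W\in\mathcal W(\pi,\psi)$ with $\psi$ \emph{unramified} does not satisfy the hypothesis of Proposition~\ref{proposition:localized-whittaker-function-subcyclic-parameter-concentrates} directly; rather $W(a_T\,\cdot)$ does, where $a_T=\diag(\tilde T^{n},\dotsc,\tilde T,1)$ conjugates $\psi$ to $\psi_{\tilde T}$. Hence $W|_H$ is supported on $N_H\, a_T\, K_H(\mathfrak q)$, not on $N_H K_H(\mathfrak q)$, and this $a_T$ is what produces the $T^{(n+1)\trace(s)/2}$ in the zeta integral. Second, $f$ is a \emph{left} translate $f(g)=c_1 f_0(\tilde T^{\rho_{U_H}^\vee} g)$, not a right translate: a right translate by $\tilde T^{-\rho_{U_H}^\vee}$ would conjugate $K_H(\mathfrak q)$ and destroy the right-equivariance in \eqref{enumerate:f-left-invar-under-k_a-right-invar-under-k_hm-supp}--\eqref{enumerate:we-have-fg-h-=-chi_hh-fg-all-g-h-in-h-times-j_h.-}. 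The conjugation of $\tau$ should be chosen so that $\tau_H=\theta$ exactly (Lemma~\ref{lemma:cqzbt6bk3w}), which is what makes $\chi_\tau|_{K_H(\mathfrak q)}=\chi_\theta$ match the right-equivariance of $f$; ``subcyclic'' alone is not enough for this alignment.
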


\begin{remark}
  We have the following rough analogies between the conditions stated above and those in
  \cite[Theorem 3.1]{2021arXiv210915230N}:
  \begin{itemize}
  \item Conditions \eqref{enumerate:cqzbt42t3t} and \eqref{enumerate:f-invar-norm-intertw-oper-mathc-psi-w-in-w_h-defin} are analogous to unnumbered hypotheses in \emph{loc. cit.}
  \item Condition \eqref{enumerate:piomega-w-=-w.-} is like (v) in \emph{loc. cit.}, but gives an exact reproducing property rather than an approximate one.
  \item Condition \eqref{enumerate:we-have-int-_h-lvert-omega-sharp-rvert-ll-t-n2.-} is similar to (vi) in \emph{loc. cit.}.
  \item Condition \eqref{enumerate:f-left-invar-under-k_a-right-invar-under-k_hm-supp} is roughly analogous to (iii) in \emph{loc. cit.}.  The latter bounds certain seminorms of $f$ which, as explained \cite[Lemma 20.2]{2021arXiv210915230N}, says that $f$ has invariance and support properties that may be understood as approximate analogues of those noted here.
  \item Condition \eqref{enumerate:we-have-fg-h-=-chi_hh-fg-all-g-h-in-h-times-j_h.-} is roughly analogous to the combination of (i) and (ii) in \emph{loc. cit.}.  Both conditions relate $f$ to its convolution with respect to some character of a compact open subgroup.  Here, the relation is exact with respect to $J_H$, while in \emph{loc. cit.}, it is approximate (and with respect to just the central directions in $J_H$, which suffice for the study of archimedean and depth aspects).
  \item Condition \eqref{enumerate:each-s-in-mathfr-mathbbc-we-have-begin-zw-fs-=-c-tzeta-integrals}, an exact identity for local zeta integrals, is roughly analogous to the upper and lower bounds established in (iv) of \emph{loc. cit.}.
  \item  Condition \eqref{enumerate:let-psi_1-psi_2-:-k_h-right-mathbbc-be-funct-satis} is roughly analogous to (vii) in \emph{loc. cit.} (but here we postulate invariance with respect to $J_H$, rather than just its central directions).
  \end{itemize}
\end{remark}

\subsection{Setup}

Recall that $\pi$ is assumed to be uniform at depth $\mathfrak{q}^2$.  By definition, there exists a cyclic element $\tau \in \mathbf{M}(\mathfrak{o}/\mathfrak{q})$ with unit determinant such that $\pi$ admits a nonzero vector that transform under $K(\mathfrak{q})$ via $\chi_\tau$ as in \eqref{Eq:chitau}.  It is clear that any conjugate of $\tau$ has the same property concerning $\pi$.


\begin{lemma}\label{lemma:cqzbt6bk3w}
  Let $\tau \in \mathbf{M}(\mathfrak{o} /\mathfrak{q})$, as above, be a cyclic element with unit determinant.  Then, possibly after conjugating $\tau$ by an element of $\mathbf{G}(\mathfrak{o})$, the following conditions hold.
  \begin{enumerate}[(i)]
  \item $\tau_H = \theta$
  \item $\tau$ is stable.
  \end{enumerate}
\end{lemma}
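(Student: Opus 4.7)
The plan is to pick a cyclic basis for $\tau$ and then conjugate so that this basis becomes the standard basis of $\mathbf{V}(\mathfrak{o}/\mathfrak{q})$; both conclusions will then be immediate from the resulting companion matrix shape.

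More precisely, since $\tau$ is cyclic, there is a cyclic basis $(v_1,\dotsc,v_{n+1})$ of $\mathbf{V}(\mathfrak{o}/\mathfrak{q})$ with $\tau v_j = v_{j+1}$ for $j = 1,\dotsc,n$.  Let $P(X) = X^{n+1} + c_n X^n + \dotsb + c_0$ be the characteristic polynomial of $\tau$.  By Cayley--Hamilton applied to $v_1$, we have $\tau v_{n+1} = -\sum_{j=0}^{n} c_j v_{j+1}$.  Since $P(0) = c_0 = (-1)^{n+1}\det(\tau)$ and $\det(\tau)$ is a unit by hypothesis, $c_0$ is a unit.  Now let $\bar g \in \mathbf{G}(\mathfrak{o}/\mathfrak{q})$ be the unique linear automorphism sending $v_j$ to $e_j$ for $j = 1,\dotsc,n+1$, where $e_{n+1} = e$.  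Using the surjectivity of $\mathbf{G}(\mathfrak{o}) \to \mathbf{G}(\mathfrak{o}/\mathfrak{q})$, lift $\bar g$ to an element $g \in \mathbf{G}(\mathfrak{o})$ and replace $\tau$ by $\Ad(g)\tau$.  The new $\tau$ then satisfies $\tau e_j = e_{j+1}$ for $j = 1,\dotsc,n$ and $\tau e_{n+1} = -\sum_{j=0}^n c_j e_{j+1}$.

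For (i), the projection $\tau_H \in \mathbf{M}_H(\mathfrak{o}/\mathfrak{q})$ is computed directly from this description: for $j \leq n-1$ we have $\tau e_j = e_{j+1} \in \mathbf{V}_H(\mathfrak{o}/\mathfrak{q})$, so $\tau_H e_j = e_{j+1}$; and for $j = n$ we have $\tau e_n = e_{n+1} = e$, which projects to zero in $\mathbf{V}_H(\mathfrak{o}/\mathfrak{q})$, so $\tau_H e_n = 0$.  Comparing with \eqref{eqn:theta-:=-beginpmatrix-0--0--0-}, this is precisely $\theta$.

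For (ii), I would use the second characterization of stability from \S\ref{sec:cnsgt0z1cm}: it suffices to check that the characteristic polynomials of $\tau$ and $\tau_H = \theta$ generate the unit ideal of $(\mathfrak{o}/\mathfrak{q})[X]$.  The characteristic polynomial of $\theta$ is $X^n$, and that of $\tau$ is $P(X)$ with $P(0) = c_0$ a unit, so $P(X)$ and $X^n$ are indeed coprime (one may write $1$ as an explicit $(\mathfrak{o}/\mathfrak{q})[X]$-linear combination of the two, or simply observe that $P$ has invertible constant term).  There is no real obstacle here; the only minor care needed is tracking the lift from $\mathbf{G}(\mathfrak{o}/\mathfrak{q})$ to $\mathbf{G}(\mathfrak{o})$ and keeping sign conventions straight between $\det(\tau)$ and $P(0)$.
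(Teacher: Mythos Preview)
Your proof is correct and follows essentially the same approach as the paper: conjugate $\tau$ into companion form with respect to the standard basis, read off $\tau_H = \theta$, and verify stability. The paper is terser---it cites \cite[Lemma 3.6]{2023arXiv2309.06314} for the existence of a cyclic basis and \cite[Lemma 4.6]{2023arXiv2309.06314} for stability---whereas you spell out the characteristic-polynomial coprimality and the role of the unit-determinant hypothesis explicitly, but the underlying argument is the same.
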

\begin{proof}
  Let $e_1,\dotsc,e_n$ be a basis for $\mathbf{V}_H$, so that $e_1,\dotsc,e_{n+1} := e$ is a basis for $\mathbf{V}$.  By \cite[Lemma 3.6]{2023arXiv2309.06314}, $\tau$ admits a cyclic basis.  By conjugating $\tau$, we may assume that $e_1,\dotsc,e_{n+1}$ is a cyclic basis for $\tau$, e.g., for $n = 2$,
  \begin{equation*}
    \tau =
    \begin{pmatrix}
      0 & 0 & \ast \\
      1 & 0 & \ast \\
      0 & 1 & \ast \\
    \end{pmatrix}
    .
  \end{equation*}
  In particular, the upper-left $n\times n$ block $\tau_H$ of $\tau$ coincides with the nilpotent Jordan block $\theta \in \mathbf{M}_H(\mathfrak{o})$ characterized by $\theta e_i = e_{i+1}$ for $i < n$ and $\theta e_n = 0$.  The element $\tau$ is then stable by the equivalent description in \cite[Lemma 4.6]{2023arXiv2309.06314}.
\end{proof}

We henceforth suppose that $\tau$ satisfies the conclusions of Lemma \ref{lemma:cqzbt6bk3w}.

\subsection{Construction of $W$}

We identify $\pi$ with its Whittaker model $\mathcal{W}(\pi,\psi)$.  By hypothesis on $\pi$, there is a nonzero vector $W \in \pi$ that transforms on the right under $K(\mathfrak{q})$ via $\chi_\tau$.  We now describe the support and invariance properties of the restriction of $W$ to $H$.  To that end, define $a_T \in A_H \leq H$ by
\begin{equation*}
  a_T := \diag(\tilde{T}^{\rank(H)}, \tilde{T}^{\rank(H) - 1}, \dotsc, \tilde{T}, 1).
\end{equation*}
\begin{lemma}\label{Lem:supportofW}
  For $W$ as above:
  \begin{enumerate}[(i)]
  \item For $h \in H$, we have $W(a_T h) \neq 0$ only if $h \in N_H K_H(\mathfrak{q})$.
  \item For $(n,h) \in N_H \times K_H(\mathfrak{q})$, we have
    \begin{equation}\label{eqn:wa_t-n-h-=-psi_tildetn-chi_thetah-wa_t.-}
      W(a_T n h) = \psi_{\tilde{T}}(n) \chi_\theta(h) W(a_T).
    \end{equation}
  \end{enumerate}
\end{lemma}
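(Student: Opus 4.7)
The plan is to reduce both assertions to Proposition~\ref{proposition:localized-whittaker-function-subcyclic-parameter-concentrates} by translating everything on the left by $a_T$. I would introduce $W'(g) := W(a_T g)$ and compute its equivariance from the known transformation properties of $W$. Since conjugation by $a_T$ scales each simple-root entry $u_{j,j+1}$ of $u \in N$ by the ratio $(a_T)_{jj}/(a_T)_{j+1,j+1} = \tilde{T}$, the left-equivariance $W(ug) = \psi(u) W(g)$ becomes
\begin{equation*}
  W'(ug) = \psi(a_T u a_T^{-1}) W'(g) = \psi_{\tilde{T}}(u) W'(g).
\end{equation*}
The right-equivariance $W(gk) = \chi_\tau(k) W(g)$ for $k \in K(\mathfrak{q})$ passes through the translate verbatim, so $W'(gk) = \chi_\tau(k) W'(g)$.

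Next, I would invoke Lemma~\ref{lemma:cqzbt6bk3w}, which guarantees (after possibly conjugating $\tau$ by an element of $\mathbf{G}(\mathfrak{o})$, a modification that does not affect any hypothesis) that $\tau$ is cyclic with respect to the basis $(e_1,\dotsc,e_{n+1})$ and satisfies $\tau_H = \theta$. Cyclicity implies subcyclicity with respect to the same basis, so $W'$ meets the hypotheses of Proposition~\ref{proposition:localized-whittaker-function-subcyclic-parameter-concentrates}. Applying that proposition to $h \in H$ gives that $W'(h) \neq 0$ forces $h \in \mathbf{N}_H(F) K_H(\mathfrak{q})$, and that on the support one has $W'(n h_0) = \psi_{\tilde{T}}(n) \chi_\tau(h_0) W'(1)$ for $h = n h_0$ with $(n, h_0) \in N_H \times K_H(\mathfrak{q})$.

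Finally I would translate back via $W'(h) = W(a_T h)$ and reconcile $\chi_\tau$ with $\chi_\theta$ on $K_H(\mathfrak{q})$: for $1 + x \in K_H(\mathfrak{q})$ included into $K(\mathfrak{q})$ via the block embedding, the last row and column of $x$ vanish, so
\begin{equation*}
  \trace(x\tau) = \trace(x_H \tau_H) = \trace(x_H \theta),
\end{equation*}
i.e.\ $\chi_\tau|_{K_H(\mathfrak{q})} = \chi_\theta|_{K_H(\mathfrak{q})}$. Substituting this into the conclusion of the proposition produces precisely (i) and (ii).

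The genuine content is already packaged in Proposition~\ref{proposition:localized-whittaker-function-subcyclic-parameter-concentrates} and the structural Lemma~\ref{lemma:cqzbt6bk3w}; the main obstacle here is simply bookkeeping the equivariance of the translate $W'$ and verifying that the character identity $\chi_\tau = \chi_\theta$ on $K_H(\mathfrak{q})$ is respected by the embedding, after which the proof is mechanical.
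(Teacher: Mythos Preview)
Your proof is correct and follows essentially the same approach as the paper: translate on the left by $a_T$ to convert $\psi$-equivariance into $\psi_{\tilde{T}}$-equivariance, then apply Proposition~\ref{proposition:localized-whittaker-function-subcyclic-parameter-concentrates}. The paper is terser---it declares the transformation formula \eqref{eqn:wa_t-n-h-=-psi_tildetn-chi_thetah-wa_t.-} ``clear by construction''---whereas you spell out the trace identity $\trace(x\tau) = \trace(x_H\theta)$ that justifies replacing $\chi_\tau$ by $\chi_\theta$ on $K_H(\mathfrak{q})$; this is a welcome clarification, not a different argument.
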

\begin{proof}
  The element $a_T$ conjugates $\psi|_{N_H}$ to $\psi_{\tilde{T}}|_{N_H}$, so the left translate $W (a_T \bullet)$ lies in the $\psi_{\tilde{T}}$-Whittaker model $\mathcal{W}(\pi,\psi_{\tilde{T}})$ and transforms on the right under $K(\mathfrak{q})$ via $\chi_\tau$.  By Proposition \ref{proposition:localized-whittaker-function-subcyclic-parameter-concentrates}, we see that $W(a_T \bullet)$ satisfies the stated support condition.  The transformation property \eqref{eqn:wa_t-n-h-=-psi_tildetn-chi_thetah-wa_t.-} is clear by construction.
\end{proof}
We may thus normalize $W$ so that
\begin{equation*}
  W(a_T) > 0
\end{equation*}
and $W$ satisfies the $L^2$-normalization property \eqref{eqn:int_n_h-backslash-h-lvert-w-rvert2-=-1-l2-normalization-W}.

Recall that, by the theory of the Kirillov model (see Section \ref{Sec:WhittakerKirillov}), $W$ is determined by its restriction to $H$.  By the Bruhat decomposition for $H$, the restriction of $W$ to $H$ is determined by the further restriction of $W$ to $Q_H$, the lower-triangular subgroup of $H$.
\begin{lemma}\label{lemma:restr-w-q_h-posit-mult-char-funct-a_t-k_q_h}
  The restriction of $W$ to $Q_H$ is a positive multiple of the characteristic function of $a_T K_{Q_H}(\mathfrak{q})$.
\end{lemma}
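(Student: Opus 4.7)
The plan is to evaluate the restriction of $W$ to $Q_H$ by factoring out $a_T$ on the left and appealing to Lemma~\ref{Lem:supportofW}. For $q \in Q_H$, write $q = a_T \cdot (a_T^{-1} q)$. By Lemma~\ref{Lem:supportofW}(i), $W(q) \neq 0$ forces $a_T^{-1} q \in N_H K_H(\mathfrak{q})$, and on this support the value is dictated by Lemma~\ref{Lem:supportofW}(ii) as $\psi_{\tilde T}$ on the $N_H$ piece times $\chi_\theta$ on the $K_H(\mathfrak{q})$ piece. The two tasks are therefore: (a) show that the intersection of $a_T N_H K_H(\mathfrak{q})$ with $Q_H$ is exactly $a_T K_{Q_H}(\mathfrak{q})$, and (b) check that $\chi_\theta$ is trivial on $K_{Q_H}(\mathfrak{q})$. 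The scalar value then comes out to $W(a_T)$, which is positive by our normalization.

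For (a), write $a_T^{-1} q = n k$ with $n \in N_H$ and $k \in K_H(\mathfrak{q})$, and apply the Iwahori factorization $k = k_N k_A k_U \in K_{N_H}(\mathfrak{q}) K_{A_H}(\mathfrak{q}) K_{U_H}(\mathfrak{q})$. Then $a_T^{-1} q = (n k_N)(k_A k_U)$, and since $a_T^{-1} q \in Q_H$ while $k_A k_U \in Q_H$, the uniqueness of the Bruhat decomposition on the open cell $N_H A_H U_H$ forces $n k_N = 1$. Hence $n \in K_{N_H}(\mathfrak{q}) \subseteq K_H(\mathfrak{q})$, so $a_T^{-1} q = n k \in K_H(\mathfrak{q}) \cap Q_H = K_{Q_H}(\mathfrak{q})$, as required. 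This is the step with the most moving parts and will be the main bookkeeping burden, but it is a direct consequence of the uniqueness of the Bruhat factorization combined with the Iwahori decomposition.

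For (b), write $k = 1 + x \in K_{Q_H}(\mathfrak{q})$, so $x$ has entries in $\mathfrak{q}$ supported in the lower-triangular part (including the diagonal) of the $H$-block, and zero elsewhere. Since $\theta = \sum_{i=1}^{n} E_{i+1,i}$ is strictly subdiagonal, $\trace(x\theta) = \sum_i x_{i,i+1}$, and every such entry vanishes by the support of $x$. Thus $\chi_\theta(k) = \psi_{\tilde T}(\trace(x\theta)) = 1$. Combining (a) and (b) with Lemma~\ref{Lem:supportofW}(ii) (applied with trivial $N_H$-component), we conclude that $W|_{Q_H} = W(a_T) \cdot 1_{a_T K_{Q_H}(\mathfrak{q})}$, which is a positive multiple of the claimed characteristic function.
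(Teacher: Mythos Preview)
Your proof is correct and follows essentially the same approach as the paper's: both reduce via Lemma~\ref{Lem:supportofW} to the two ingredients (a) $N_H K_H(\mathfrak{q}) \cap Q_H = K_{Q_H}(\mathfrak{q})$ via the Bruhat decomposition, and (b) $\chi_\theta|_{K_{Q_H}(\mathfrak{q})} = 1$. You simply spell out in detail what the paper compresses into two clauses, using the Iwahori factorization explicitly for (a) and the trace computation for (b).
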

\begin{proof}
  Since $Q_H$ consists of matrices of form
  \begin{equation*}
    \begin{pmatrix}
      \ast & 0 & 0 \\
      \ast & \ast & 0 \\
      0 & 0 & 1 \\
    \end{pmatrix}
    ,
  \end{equation*}
  we see that $\chi_\theta$ has trivial restriction to $K_{Q_H}(\mathfrak{q})$, and $N_H K_H(\mfq)\cap Q_H=K_{Q_H}(\mfq)$ by Bruhat decomposition.  The conclusion then follows from Lemma \ref{Lem:supportofW}.
\end{proof}

\subsection{Construction of $\omega$}

Recall from Lemma \ref{lemma:let-j_th-leq-k-denote-inverse-image-centr-mathbfg} that, with $J_\tau \leq K$ the preimage of $\mathbf{G}_\tau(\mathfrak{o}/\mathfrak{q})$, there is a unique extension $\tilde{\chi}_\tau$ of $\chi_\tau$ to a character of $J_\tau$ such that $W$ transforms on the right under $J_\tau$ via $\tilde{\chi}_\tau$.

We define
\begin{equation*}
  \omega := {\vol(J_\tau)}^{-1} \tilde{\chi}_\tau^{-1} \in C_c^\infty(K).
\end{equation*}
This is an idempotent with $\pi(\omega) W = W$, as required by Theorem \ref{theorem:main-local-noncompact}, part \eqref{enumerate:piomega-w-=-w.-}.

The $L^1(H)$-estimate asserted in Theorem \ref{theorem:main-local-noncompact}, part \eqref{enumerate:we-have-int-_h-lvert-omega-sharp-rvert-ll-t-n2.-} is verified exactly as in the ``compact'' case (see \cite[Section 9.4]{2023arXiv2309.06314}, with $Q$/$J_G$ there corresponding to $T^{1/2}$/$J_\tau$ here).

\subsection{Construction of \texorpdfstring{$f$}{f}}\label{Sec:constructfp}
We first construct an element $f_0 \in C_c^\infty(U_H \backslash H)$ by
\begin{equation*}
  \overline{f_0} = \mathcal{J} [\psi_{\tilde{T}}, 1_{K_A}, e_\theta],
\end{equation*}
as in \S\ref{sec:construction-f}.  We then define $f \in C_c^\infty(U_H \backslash H)$ to be the normalized left translate
\begin{equation*}
  f(g) := c_1 f_0 \left(\tilde{T}^{\rho_{U_H}^\vee} g\right),
\end{equation*}
where $c_1 > 0$ is chosen to achieve the normalization \eqref{eqn:int_u_h-backslash-h-lvert-f-rvert2-=-1.-l2-normalization-f}.

\begin{lemma}
  $f$ is invariant under the normalized intertwining operators $\mathcal{F}_{w,\psi}$ ($w \in W_H$) defined in \cite[\S2.13.2]{2021arXiv210915230N}.
\end{lemma}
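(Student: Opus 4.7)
The plan is to deduce this from Lemma~\ref{lemma:f-mathcalf_w-psi-1-invariant-each-w-in-w.-} by (a) passing between $\psi_{\tilde T}$ and $\psi_{\tilde T}^{-1}$ via complex conjugation, and (b) passing between $\psi_{\tilde T}$ and $\psi$ via left translation by $\tilde T^{\rho_{U_H}^\vee}$.

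First I would apply Lemma~\ref{lemma:f-mathcalf_w-psi-1-invariant-each-w-in-w.-} in the Rankin--Selberg setting, with $\mathbf{H}, \mathbf{N}_H, \mathbf{U}_H, K_{A_H}, W_H$ replacing the $\mathbf{G}, \mathbf{N}, \mathbf{U}, K_A, W$ of \S\ref{sec:some-analysis-u-backslash-g}.  This gives that $\overline{f_0} = \mathcal{J}[\psi_{\tilde T}, 1_{K_{A_H}}, e_\theta]$ is $\mathcal{F}_{w,\psi_{\tilde T}^{-1}}$-invariant for each $w \in W_H$.  Next I would invoke the basic identity $\overline{\mathcal{F}_{w,\psi}(\phi)} = \mathcal{F}_{w,\psi^{-1}}(\overline{\phi})$ for the normalized intertwining operators (a direct consequence of the definition in \cite[\S2.13]{2021arXiv210915230N}, since complex conjugation swaps the two normalizing $\gamma$-factors).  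Applying this to $\overline{f_0}$ and conjugating both sides then yields that $f_0$ itself is $\mathcal{F}_{w,\psi_{\tilde T}}$-invariant.

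Finally I would translate this invariance from $\psi_{\tilde T}$ to $\psi$ via the left translation $L_a : \phi \mapsto \phi(a \cdot)$ with $a := \tilde T^{\rho_{U_H}^\vee}$.  The key compatibility is the identity
\begin{equation*}
  \mathcal{F}_{w,\psi_a} \circ L_a = \mu(a,w)\, L_a \circ \mathcal{F}_{w,\psi},
\end{equation*}
where $\psi_a(x) := \psi(a x)$ and $\mu(a,w)$ is a nonzero scalar coming from the change of variables in the $\gamma$-factor normalizations.  Conjugation of the character $\psi$ on $N_H$ by $\tilde T^{\rho_{U_H}^\vee}$ produces precisely $\psi_{\tilde T}$ (this is the standard computation behind the choice of $\rho_{U_H}^\vee$), so applying the identity with $\psi$ as the unramified character and $a = \tilde T^{\rho_{U_H}^\vee}$ converts the $\mathcal{F}_{w,\psi_{\tilde T}}$-invariance of $f_0$ into the $\mathcal{F}_{w,\psi}$-invariance of $L_a f_0$, and hence of $f = c_1 L_a f_0$.

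The only mildly delicate point is verifying the two compatibility identities (complex conjugation and left translation) for the specific normalization used in \cite[\S2.13]{2021arXiv210915230N}; both are essentially bookkeeping on the normalizing $\gamma$-factors, and neither introduces any obstruction.  Once these are in place, the proof is a two-line combination of Lemma~\ref{lemma:f-mathcalf_w-psi-1-invariant-each-w-in-w.-}, conjugation, and left translation.
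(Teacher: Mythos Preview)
Your proposal is correct and follows essentially the same route as the paper: the paper's proof simply cites Lemma~\ref{lemma:f-mathcalf_w-psi-1-invariant-each-w-in-w.-} together with the archimedean analogue \cite[Lemma~10.9]{2021arXiv210915230N}, whose argument (complex conjugation to pass from $\psi_{\tilde T}^{-1}$ to $\psi_{\tilde T}$, then left translation by $\tilde T^{\rho_{U_H}^\vee}$ to pass from $\psi_{\tilde T}$ to $\psi$) is exactly what you spell out. One small remark: the scalar $\mu(a,w)$ you introduce is in fact equal to $1$ for the normalization of \cite[\S2.13]{2021arXiv210915230N}, since those operators are characterized via the Jacquet integral and left translation by $\tilde T^{\rho_{U_H}^\vee}$ intertwines the $\psi$- and $\psi_{\tilde T}$-Whittaker functionals exactly; you should state this rather than leave $\mu$ as an unspecified nonzero constant, since otherwise you only obtain an eigenvector rather than an invariant vector.
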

\begin{proof}
  This follows from Lemma \ref{lemma:f-mathcalf_w-psi-1-invariant-each-w-in-w.-}, taking into account that we have simultaneously passed from $\psi$ to $\psi_{\tilde{T}}$ and translated on the left by $\tilde{T}^{\rho_{U_H}^\vee }$.  A proof for the archimedean analogue is recorded in \cite[Lemma 10.9]{2021arXiv210915230N} and the same argument applies here.

\end{proof}

Thus part \eqref{enumerate:f-invar-norm-intertw-oper-mathc-psi-w-in-w_h-defin} of Theorem \ref{theorem:main-local-noncompact} holds.

The invariance and support properties of $f$ asserted in Theorem \ref{theorem:main-local-noncompact}, part \eqref{enumerate:f-left-invar-under-k_a-right-invar-under-k_hm-supp} are a consequence of the following lemma:
\begin{lemma}\label{Lem:fu-n-=-c_1-1_k_a-left-tild-right-1_k_n_hm-psi}
  For $f$ as above:
  \begin{enumerate}[(i)]
  \item For $(u,a,k)\in U_H \times A_H \times K_H$, we have
    \begin{equation}\label{eqn:fu-n-=-c_1-1_k_a-left-tild-right-1_k_n_hm-psi}
      f(u a k) = c_1 1_{K_A}\left( \tilde{T}^{\rho_{U_H}^\vee} a \right)1_{K_{A_H} K_{U_H} K_H(\mathfrak{q})}(k) \chi_{\theta}^{-1}(k')
    \end{equation}
    where $k=a'u'k'$ for $a'\in K_{A_H}, u'\in K_{U_H}, k'\in K_H(\mfq)$ in the support.
  \item For $(u,a,n) \in U_H \times A_H \times N_H$, we have
    \begin{equation*}
      f(u a n) = c_1 1_{K_A} \left( \tilde{T}^{\rho_{U_H}^\vee} a \right) 1_{K_{N_H}(\mathfrak{q})}(n) \psi_{\tilde{T}}^{-1}(n)
    \end{equation*}
    for some normalizing factor $c_1 > 0$.
  \end{enumerate}
\end{lemma}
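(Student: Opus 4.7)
The plan is to reduce this directly to Proposition~\ref{proposition:formulas-for-fak-fan} applied to $\overline{f_0}$ and then to unwind the definition of $f$ as a normalized, complex-conjugated, left-translated version of the function $\mathcal{J}[\psi_{\tilde{T}}, 1_{K_A}, e_\theta]$ analyzed in~\S\ref{sec:some-analysis-u-backslash-g}. Since $f \in C_c^\infty(U_H \backslash H)$, the factor $u \in U_H$ plays no role on the left, so it suffices to compute $f(ak)$ and $f(an)$.

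First I would observe that $\tilde{T}^{\rho_{U_H}^\vee}$ lies in $A_H$, which normalizes $U_H$, so that the prescription $f(g) = c_1 f_0(\tilde{T}^{\rho_{U_H}^\vee} g)$ defines a function on $U_H \backslash H$ and satisfies
\begin{equation*}
  f(ak) = c_1 f_0\bigl( (\tilde{T}^{\rho_{U_H}^\vee} a)\, k \bigr),
  \qquad
  f(an) = c_1 f_0\bigl( (\tilde{T}^{\rho_{U_H}^\vee} a)\, n \bigr).
\end{equation*}

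Next, by construction $\overline{f_0} = \mathcal{J}[\psi_{\tilde{T}}, 1_{K_A}, e_\theta]$ is precisely the function studied in \S\ref{sec:construction-f} (with $H$ playing the role of $G$ there). Proposition~\ref{proposition:formulas-for-fak-fan} therefore gives explicit formulas for $\overline{f_0}(a'k')$ and $\overline{f_0}(a'n')$ in terms of $1_{K_A}(a')$, the support indicators $1_{K_{A_H} K_{U_H} K_H(\mathfrak{q})}(k')$ and $1_{K_{N_H}(\mathfrak{q})}(n')$, and the values $\chi_\theta(k'')$ and $\psi_{\tilde{T}}(n')$ respectively. Taking complex conjugates preserves the (real-valued) indicator and support data while inverting the unitary characters $\chi_\theta$ and $\psi_{\tilde{T}}$, so the corresponding formulas for $f_0$ itself involve $\chi_\theta^{-1}$ and $\psi_{\tilde{T}}^{-1}$.

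Substituting $a' = \tilde{T}^{\rho_{U_H}^\vee} a$, $k' = k$, $n' = n$ and multiplying by the normalizing constant $c_1$ then yields both claimed identities. I do not anticipate any substantive obstacle: the argument is essentially bookkeeping, and the only points requiring care are (a) verifying that left translation by $\tilde{T}^{\rho_{U_H}^\vee} \in A_H$ descends to $U_H \backslash H$ (which is automatic from $A_H$ normalizing $U_H$), and (b) correctly tracking the effect of complex conjugation on the two characters appearing in Proposition~\ref{proposition:formulas-for-fak-fan}.
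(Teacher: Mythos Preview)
Your proposal is correct and is precisely the argument the paper intends: the paper's proof consists of the single sentence ``This follows by translation of Proposition~\ref{proposition:formulas-for-fak-fan},'' and you have simply unpacked that sentence, correctly tracking the left translation by $\tilde{T}^{\rho_{U_H}^\vee}$ and the effect of complex conjugation on the characters.
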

\begin{proof}
  This follows by translation of Proposition \ref{proposition:formulas-for-fak-fan}.
\end{proof}

Note that $\chi_\theta^{-1}=\chi_{-\theta}$.  We take
\begin{equation*}
  J_H := J_{-\theta} = J_{\theta}, \ \ \chi_H=\tilde{\chi}_{-\theta}.
\end{equation*}
By Lemma \ref{lemma:let-j_th-leq-k-denote-inverse-image-centr-mathbfg}, we see that $f_0$, and hence also $f$, transforms on the right under $J_H$ via the character $\chi_H$.  Thus part \eqref{enumerate:we-have-fg-h-=-chi_hh-fg-all-g-h-in-h-times-j_h.-} of Theorem \ref{theorem:main-local-noncompact} holds.

We note also that
\begin{equation}\label{eqn:textf-right-invariant-under-k_q_hmathfrakq}
  \text{$f$ is right-invariant under $K_{Q_H}(\mathfrak{q})$}.
\end{equation}
This follows from the explicit formula \eqref{eqn:fu-n-=-c_1-1_k_a-left-tild-right-1_k_n_hm-psi} and the fact that $\tilde{\chi}_{-\theta}$ has trivial restriction to $K_{Q_H}(\mfq)$, as noted earlier in the proof of Lemma \ref{lemma:restr-w-q_h-posit-mult-char-funct-a_t-k_q_h}.

\subsection{Zeta integrals}
We turn to the evaluation of the zeta integral:
\begin{equation*}
  Z(W,f[s]) = \int_{N_H \backslash H} W \cdot W[f[s], \psi^{-1}].
\end{equation*}
We split this evaluation into a series of lemmas.
\begin{lemma}
  We have
  \begin{equation}\label{Eq:explicitZeta}
    Z(W,f[s]) = W[f[s], \psi^{-1}](a_T)\sqrt{\operatorname{vol}(N_H\backslash N_H a_T   K_H(\mfq))}.
  \end{equation}
\end{lemma}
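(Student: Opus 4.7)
The plan is to exploit the fact that, on the support of $W|_H$, the integrand $W \cdot W[f[s],\psi^{-1}]$ is forced to be constant; the $L^2$-normalization of $W$ will then supply the square-root.

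First I would use Lemma \ref{Lem:supportofW} to pin down the support of $W$ on $H$. Since $a_T$ normalizes $N_H$, that lemma says $W|_H$ is supported on $N_H a_T K_H(\mathfrak{q})$, and for $k \in K_H(\mathfrak{q})$ we have $W(a_T k) = \chi_\theta(k)\, W(a_T)$. In particular $|W|$ is constant on its support modulo $N_H$.

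Next I would argue that on the same set, the right equivariance of $W[f[s],\psi^{-1}]$ under $K_H(\mathfrak{q})$ is by the inverse character $\chi_\theta^{-1}$. By the construction in \S\ref{Sec:constructfp}, $f$ transforms on the right under $J_H$ via $\chi_H = \tilde{\chi}_{-\theta}$, and $\chi_H|_{K_H(\mathfrak{q})} = \chi_{-\theta} = \chi_\theta^{-1}$. The Mellin component
\begin{equation*}
  f[s](g) = \int_{A_H} \delta_{U_H}^{1/2}(a) |a|^s f(a^{-1} g)\, da
\end{equation*}
inherits this right-equivariance since the $K_H(\mathfrak{q})$-action commutes with the left $A_H$-integration; and the Jacquet-type integral producing $W[f[s],\psi^{-1}]$ likewise preserves right-$K_H(\mathfrak{q})$-equivariance. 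Hence for $k \in K_H(\mathfrak{q})$,
\begin{equation*}
  W[f[s],\psi^{-1}](a_T k) = \chi_\theta^{-1}(k)\, W[f[s],\psi^{-1}](a_T).
\end{equation*}

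Combining the two equivariances, for any $h = n a_T k$ with $(n,k) \in N_H \times K_H(\mathfrak{q})$ the product $W(h) \cdot W[f[s],\psi^{-1}](h)$ is genuinely $N_H$-invariant on the left and the factors $\chi_\theta(k)$ and $\chi_\theta^{-1}(k)$ cancel. Thus the integrand equals the constant $W(a_T)\, W[f[s],\psi^{-1}](a_T)$ throughout the support, and
\begin{equation*}
  Z(W,f[s]) = W(a_T)\, W[f[s],\psi^{-1}](a_T)\, \operatorname{vol}\!\big(N_H\backslash N_H a_T K_H(\mathfrak{q})\big).
\end{equation*}
Applying exactly the same constancy argument to $|W|^2$ and invoking the normalization \eqref{eqn:int_n_h-backslash-h-lvert-w-rvert2-=-1-l2-normalization-W} gives $|W(a_T)|^2 \operatorname{vol}(N_H \backslash N_H a_T K_H(\mathfrak{q})) = 1$, whence $W(a_T) = \operatorname{vol}(N_H \backslash N_H a_T K_H(\mathfrak{q}))^{-1/2}$ (positive by our normalization). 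Substituting yields \eqref{Eq:explicitZeta}. The only subtle point is the bookkeeping for the right-equivariance of $W[f[s],\psi^{-1}]$ at the level of Mellin components, and that is handled by the observation above.
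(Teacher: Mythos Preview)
Your proof is correct and follows essentially the same approach as the paper: use Lemma~\ref{Lem:supportofW} for the support and $\chi_\theta$-equivariance of $W|_H$, use the $\chi_\theta^{-1}$-equivariance of $f$ (hence of $f[s]$ and $W[f[s],\psi^{-1}]$) to see the integrand is constant on the support, then extract the square-root via the $L^2$-normalization of $W$. Your version is slightly more explicit about why the right-$K_H(\mathfrak{q})$-equivariance passes through the Mellin transform and the Jacquet integral, which the paper leaves implicit.
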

\begin{proof}
  We first substitute the formula for $W |_H$ given in Lemma \ref{Lem:supportofW}, which says that the restriction of $W$ to $H$ is supported on $N_H a_T K_H(\mathfrak{q})$ and transforms on the right under $K_H(\mathfrak{q})$ by $\chi_\theta$.  We have seen (from the proof of assertion \eqref{enumerate:we-have-fg-h-=-chi_hh-fg-all-g-h-in-h-times-j_h.-} of Theorem \ref{theorem:main-local-noncompact}) that $f$ transforms on the right under $K_H(\mathfrak{q})$ by $\chi_\theta^{-1}$.  It follows that, writing $c := W(a_t) > 0$, we have
  \begin{equation*}
    Z(W, f[s]) = W[f[s], \psi^{-1}](a_T) c \vol(X), \quad
    X := N_H \backslash N_H a_T K_H(\mathfrak{q}).
  \end{equation*}
  On the other hand, by our normalization of $W$, we have $1 = \lVert W \rVert^2 = c^2 \vol(X)$.  The claimed identity follows.
\end{proof}
Recall that $(G,H)=(\GL_{n+1},\GL_n)$ and $T=|\mathfrak{o}/\mfq^2|$.  As elements of $G$,
\begin{equation*}
  \tilde{T}^{\rho_{U_H}^\vee}=\diag(\tilde{T}^{-(n-1)/2}, \tilde{T}^{-(n-3)/2},\dots, \tilde{T}^{(n-1)/2},1),
\end{equation*}
\begin{equation*}
  \tilde{T}^{\rho_{U_H}^\vee} a_T=\diag(\tilde{T}^{(n+1)/2},\tilde{T}^{(n+1)/2},\dots,\tilde{T}^{(n+1)/2},1).
\end{equation*}

Recall from Lemma \ref{Lem:fu-n-=-c_1-1_k_a-left-tild-right-1_k_n_hm-psi} that for some normalizing factor $c_1$, we have
\begin{equation}\label{eq:cqzbuaqaq7}
  f(u a n) = c_1 1_{K_A} \left( \tilde{T}^{\rho_{U_H}^\vee} a \right) 1_{K_{N_H}(\mathfrak{q})}(n) \psi_{\tilde{T}}^{-1}(n),
\end{equation}
\begin{lemma}
  We have
  \begin{equation}\label{Eq:Whittakervalue}
    W[f[s], \psi^{-1}](a_T) = c_1T^{(n+1)\trace(s)/2}\operatorname{vol}(a_TK_{N_H}(\mfq)a_T^{-1}, \, d n).
  \end{equation}
\end{lemma}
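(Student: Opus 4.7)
The statement is an exact computation that unfolds the Jacquet-integral definition of $W[f[s], \psi^{-1}]$ at $a_T$ and substitutes the explicit Bruhat-cell formula for $f$ recorded in \eqref{eq:cqzbuaqaq7} (coming from Lemma~\ref{Lem:fu-n-=-c_1-1_k_a-left-tild-right-1_k_n_hm-psi}). The main observation powering the collapse of combinatorial factors is that $\tilde{T}^{\rho_{U_H}^\vee} a_T = \tilde{T}^{(n+1)/2} I_n$ is a scalar matrix in $H$, as recorded in the display immediately preceding the lemma.

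I would begin by writing
\[
W[f[s], \psi^{-1}](a_T) = \int_{N_H} f[s](n a_T) \, \psi(n) \, dn
\]
and performing the change of variable $n = a_T n' a_T^{-1}$. This contributes the Jacobian $\delta_{N_H}(a_T) = \delta_{U_H}^{-1}(a_T)$ and transforms $\psi(n)$ into $\psi_{\tilde{T}}(n')$, since conjugation by $a_T$ rescales each super-diagonal entry by $\tilde{T}$. Using the left-$A_H$ equivariance $f[s](a_T n') = \delta_{U_H}^{1/2}(a_T) |a_T|^s f[s](n')$ of the induced-model section, one gets
\[
W[f[s], \psi^{-1}](a_T) = \delta_{U_H}^{-1/2}(a_T) \, |a_T|^s \int_{N_H} f[s](n') \, \psi_{\tilde{T}}(n') \, dn'.
\]

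Next, I would compute $f[s](n')$ directly from the definition of the Mellin component by plugging in \eqref{eq:cqzbuaqaq7}. The $a$-integral over $A_H$ is supported on the single coset $\tilde{T}^{-\rho_{U_H}^\vee} K_{A_H}$ of volume $1$, on which $\delta_{U_H}^{-1/2}$ and $|\cdot|^{-s}$ are constant, yielding
\[
f[s](n') = c_1 \, \delta_{U_H}^{1/2}(\tilde{T}^{\rho_{U_H}^\vee}) \, |\tilde{T}^{\rho_{U_H}^\vee}|^s \, 1_{K_{N_H}(\mathfrak{q})}(n') \, \psi_{\tilde{T}}^{-1}(n').
\]
The crucial feature is the cancellation $\psi_{\tilde{T}}^{-1}(n') \psi_{\tilde{T}}(n') = 1$ on the support, which reduces the remaining integral to $\vol(K_{N_H}(\mathfrak{q}))$.

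Finally, I would invoke the scalar-matrix identity to consolidate the surviving factors. It forces $\delta_{U_H}(\tilde{T}^{\rho_{U_H}^\vee} a_T) = 1$, hence $\delta_{U_H}^{-1/2}(a_T) \delta_{U_H}^{1/2}(\tilde{T}^{\rho_{U_H}^\vee}) = \delta_{U_H}^{-1}(a_T)$, and it gives $|a_T \tilde{T}^{\rho_{U_H}^\vee}|^s = T^{(n+1)\trace(s)/2}$. The Haar identity $\vol(K_{N_H}(\mathfrak{q})) = \delta_{U_H}(a_T) \vol(a_T K_{N_H}(\mathfrak{q}) a_T^{-1}, dn)$ then absorbs the surviving $\delta_{U_H}^{-1}(a_T)$, producing \eqref{Eq:Whittakervalue}. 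The only real obstacle is bookkeeping the modular characters and the character rescaling $\psi \to \psi_{\tilde{T}}$ consistently; all ingredients are already supplied by the earlier lemmas, and no new estimates are required.
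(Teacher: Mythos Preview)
Your proof is correct and follows essentially the same route as the paper: both expand the Mellin component $f[s]$ on the Bruhat cell via \eqref{eq:cqzbuaqaq7}, feed it into the Jacquet integral at $a_T$, use that $a_T$ conjugates $\psi$ to $\psi_{\tilde{T}}$ to cancel the oscillatory factor, and invoke the scalar-matrix identity $\tilde{T}^{\rho_{U_H}^\vee}a_T = \tilde{T}^{(n+1)/2}I_n$ to extract $T^{(n+1)\trace(s)/2}$. The only cosmetic difference is that the paper evaluates $f[s](a_T\cdot a_T^{-1}na_T)$ directly in the original variable $n$, so the volume $\vol(a_TK_{N_H}(\mfq)a_T^{-1})$ falls out immediately, whereas you first change variables $n=a_Tn'a_T^{-1}$, pick up the Jacobian $\delta_{U_H}^{-1}(a_T)$, and then reabsorb it at the end via the Haar identity; the net computation is identical.
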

\begin{proof}
  We expand the definitions and compute: we have
  \begin{equation*}
    f[s](g) :=
    \int_{a \in A} (\delta_{U_H}^{1/2} |\cdot|^s )^{-1} (a)
    f (a g ) \, d a,
  \end{equation*}
  hence for $(a, n) \in A_H \times N_H$,
  \begin{align*}
    f[s](a n)&=c_1\int\limits_{b\in A} (\delta_{U_H}^{1/2} |\cdot|^s )^{-1} (b)
               1_{K_A} \left( \tilde{T}^{\rho_{U_H}^\vee} a b \right) 1_{K_{N_H}(\mathfrak{q})}(n) \psi_{\tilde{T}}^{-1}(n) \, d b\\
             &=c_1(\delta_{U_H}^{1/2} |\cdot|^s )(\tilde{T}^{\rho_{U_H}^\vee} a)1_{K_{N_H}(\mathfrak{q})}(n) \psi_{\tilde{T}}^{-1}(n),
  \end{align*}
  thus
  \begin{align}
    W[f[s], \psi^{-1}](a_T)&=\int\limits_{N_H}f[s](na_T)\psi(n)dn=\int\limits_{N_H}f[s](a_T \underbrace
                          {
                          a_T^{-1}na_T
                          }_{
                          \in N_H
                          })\psi(n)dn\\
                        &=c_1(\delta_{U_H}^{1/2} |\cdot|^s )(\tilde{T}^{\rho_{U_H}^\vee} a_T)\int\limits_{N_H}1_{K_{N_H}(\mathfrak{q})}(a_T^{-1}na_T) \psi_{\tilde{T}}^{-1}(a_T^{-1}na_T)\psi(n)dn\notag\\
                        &=c_1T^{(n+1)\trace(s)/2}\operatorname{vol}(a_TK_{N_H}(\mfq)a_T^{-1}, dn).\notag
  \end{align}
  Here in the last equality we used that $a_T$ conjugates $\psi$ and $\psi_{\tilde{T}}$.
\end{proof}

We collect the remaining ingredients in the following lemma:
\begin{lemma}
  \begin{enumerate}
  \item $ \operatorname{vol}(N_H\backslash N_H a_T K_H(\mfq))\asymp \delta_{N_H}^{-1}(a_T)T^{-n(n+1)/4}$
  \item $c_1\asymp \delta_{N_H}^{-1/2}(\tilde{T}^{-\rho_{U_H}^\vee})T^{n(n-1)/8}$
  \item $\operatorname{vol}(a_TK_{N_H}(\mfq)a_T^{-1}, dn)\asymp \delta_{N_H}(a_T) T^{-n(n-1)/4}$
  \end{enumerate}
\end{lemma}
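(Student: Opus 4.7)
My plan is to reduce all three estimates to direct volume computations using three inputs: the Iwahori factorization $K_H(\mathfrak{q}) = K_{U_H}(\mathfrak{q}) K_{A_H}(\mathfrak{q}) K_{N_H}(\mathfrak{q})$; the standard identity $\vol(aKa^{-1}) = \delta_N(a) \vol(K)$ for conjugation of a compact subgroup $K$ of a unipotent group $N$ by a diagonal element $a$ (where $\delta_N$ is the modular character); and the explicit formula for $f$ from Lemma~\ref{Lem:fu-n-=-c_1-1_k_a-left-tild-right-1_k_n_hm-psi}. Since $T = |\mathfrak{o}/\mathfrak{q}^2|$, I have $|\mathfrak{o}/\mathfrak{q}| = T^{1/2}$, yielding $\vol(K_{N_H}(\mathfrak{q})) = T^{-n(n-1)/4}$ and $\vol(K_H(\mathfrak{q})) = T^{-n^2/2}$. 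Part (3) is then immediate: since $a_T \in A_H$ normalizes $N_H$, conjugation scales Haar measure on $N_H$ by $\delta_{N_H}(a_T)$.

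For part (1), I would use the fibration formula
\begin{equation*}
\vol_H(a_T K_H(\mathfrak{q})) = \int_{N_H \backslash H} \vol_{N_H}\bigl(\{n \in N_H : n g \in a_T K_H(\mathfrak{q})\}\bigr)\, d\bar g.
\end{equation*}
For $g = a_T k$ with $k \in K_H(\mathfrak{q})$, the fiber equals $a_T K_H(\mathfrak{q}) a_T^{-1} \cap N_H$ after reparametrization. Writing any element of $K_H(\mathfrak{q})$ by the Iwahori factorization as $u' a' n'$, uniqueness of the Bruhat factorization forces the condition $a_T u' a' n' a_T^{-1} \in N_H$ to have trivial $U_H$- and $A_H$-components, so the fiber reduces to $a_T K_{N_H}(\mathfrak{q}) a_T^{-1}$, of volume $\delta_{N_H}(a_T) T^{-n(n-1)/4}$ by part (3). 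Since this fiber volume is constant on the support, dividing it into $\vol(K_H(\mathfrak{q})) = T^{-n^2/2}$ and simplifying the exponents yields $\vol(N_H \backslash N_H a_T K_H(\mathfrak{q})) = \delta_{N_H}^{-1}(a_T) T^{-n(n+1)/4}$.

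For part (2), I would enforce the normalization $\int_{U_H \backslash H}|f|^2 = 1$ via the Iwasawa decomposition $H = U_H A_H K_H$ with quotient measure $\delta_{U_H}^{-1}(a)\, da\, dk$. The explicit formula for $f$ from Lemma~\ref{Lem:fu-n-=-c_1-1_k_a-left-tild-right-1_k_n_hm-psi} factorizes the $|f|^2$-integral into: an $a$-integral contributing $\delta_{U_H}^{-1}(\tilde T^{-\rho_{U_H}^\vee})$, from the support condition $\tilde T^{\rho_{U_H}^\vee} a \in K_{A_H}$; and a $k$-integral equal to $\vol(K_{A_H} K_{U_H} K_H(\mathfrak{q}))$. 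I would compute the latter by identifying the set as the preimage in $K_H$ of the opposite Borel $\mathbf{Q}_H(\mathfrak{o}/\mathfrak{q}) \subseteq \mathbf{H}(\mathfrak{o}/\mathfrak{q})$, giving $|\mathbf{Q}_H(\mathfrak{o}/\mathfrak{q})| \vol(K_H(\mathfrak{q})) = T^{n(n+1)/4} \cdot T^{-n^2/2} = T^{-n(n-1)/4}$. Solving then gives $c_1 = \delta_{U_H}^{1/2}(\tilde T^{-\rho_{U_H}^\vee}) T^{n(n-1)/8}$, which matches the stated form via the identity $\delta_{U_H} = \delta_{N_H}^{-1}$ on $A_H$ (since $U_H$ and $N_H$ are opposite unipotents, their roots are negatives of one another).

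The main obstacle is bookkeeping rather than conceptual: keeping consistent conventions for $\delta_{U_H}$ and $\delta_{N_H}$, tracking factors of $T^{1/2}$ arising from $|\mathfrak{o}/\mathfrak{q}|$ versus $T = |\mathfrak{o}/\mathfrak{q}^2|$, and correctly normalizing the Iwasawa and Bruhat measure decompositions. No step requires ideas beyond standard $p$-adic reductive group computations.
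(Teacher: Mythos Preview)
Your proposal is correct and follows essentially the same approach as the paper: direct volume computations using the Iwahori factorization of $K_H(\mathfrak{q})$ and standard measure formulas on $N_H \backslash H$ and $U_H \backslash H$. The only cosmetic difference is that the paper computes (1) and (2) in Bruhat coordinates (absorbing the $K_{N_H}(\mathfrak{q})$-factor into $N_H$ for (1), and using the $(u,a,n)$-formula for $f$ in (2)), whereas you use a fibration argument for (1) and Iwasawa coordinates with the $(u,a,k)$-formula for (2); both routes amount to the same bookkeeping.
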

\begin{proof}
  Note that in general for integration over the open cell $UA N$ in Bruhat decomposition, the corresponding Haar measure can be written as
  \begin{equation*}
    dg=c_0\delta_{N}(a)\, d u \, d a \, d n
  \end{equation*}
  for some absolutely bounded constant $c_0$.
  Similarly for $NAU$, the measure is $c_0\delta_{U}(a)\, d n \, d a \, d u$.

  (1) We have
  \begin{align*}
    &\operatorname{vol}(N_H\backslash N_H a_T   K_H(\mfq))=\operatorname{vol}(N_H\backslash N_H a_T K_{A_H}(\mfq)  K_{U_H}(\mfq))\\
    \asymp &\delta_{U_H}(a_T)T^{-n(n+1)/4}=\delta_{N_H}^{-1}(a_T)T^{-n(n+1)/4}.
  \end{align*}

  (2) By normalization and Lemma \ref{Lem:fu-n-=-c_1-1_k_a-left-tild-right-1_k_n_hm-psi}, we have
  \begin{equation*}
    1=\int\limits_{U_H\backslash H}|f|^2=|c_1|^2\operatorname{vol}(U_H\backslash U_H\tilde{T}^{-\rho_{U_H}^\vee}K_AK_{N_H}(\mfq))\asymp|c_1|^2\delta_{N_H}(\tilde{T}^{-\rho_{U_H}^\vee})T^{-n(n-1)/4}.
  \end{equation*}
  The expression for $c_1$ follows.

  (3) follows also from direct computations.
  \begin{equation*}
    \operatorname{vol}(a_TK_{N_H}(\mfq)a_T^{-1},dn)\asymp \delta_{N_H}(a_T) T^{-n(n-1)/4}
  \end{equation*}

\end{proof}
Plugging these computations and~\eqref{Eq:Whittakervalue} into~\eqref{Eq:explicitZeta}, we get
\begin{align*}
  Z(W,f[s]) &=c_1T^{(n+1)\trace(s)/2}\text{Vol}(a_TK_{N_H}(\mfq)a_T^{-1}, dn)\text{Vol}^{1/2}(N_H\backslash N_H a_T   K_H(\mfq)).\\
            &\asymp T^{(n+1)\trace(s)/2} \delta_{N_H}^{-1/2}(\tilde{T}^{-\rho_{U_H}^\vee})T^{n(n-1)/8}\delta_{N_H}(a_T) T^{-n(n-1)/4}\delta_{N_H}^{-1/2}(a_T)T^{-n(n+1)/8}\\
            &=T^{(n+1)\trace(s)/2}\delta_{N_H}^{1/2}( \tilde{T}^{\rho_{U_H}^\vee}a_T)T^{-n^2/4}\\
            &=T^{(n+1)\trace(s)/2}T^{-n^2/4}.
\end{align*}
This confirms part~\eqref{enumerate:each-s-in-mathfr-mathbbc-we-have-begin-zw-fs-=-c-tzeta-integrals} of Theorem~\ref{theorem:main-local-noncompact}.

\subsection{Bilinear form estimates}
Part~\eqref{enumerate:let-psi_1-psi_2-:-k_h-right-mathbbc-be-funct-satis} of Theorem~\ref{theorem:main-local-noncompact} can be proved exactly as in the compact case in~\cite[Theorem 9.1(iii)]{2023arXiv2309.06314}.

\subsection{Archimedean case and adaption} \label{Sec:ArchimedeanTest}

We also need the analogue of Theorem \ref{theorem:main-local-noncompact} at the Archimedean place.  This is contained in \cite[Theorem 3.1]{2021arXiv210915230N} when the archimedean component $T_\infty$ is large enough.  (Recall from Theorem \ref{theorem:cnpy9gjptx} that $T_\infty$ is a parameter that controls the size of our archimedean parameters.) However when $T_\infty$ is very small compared to $T$, the same choice of test functions would lead to the problem that the last term of \cite[(21.24)]{2021arXiv210915230N} would be non-negligible and mess up the rest arguments.

For this reason, we introduce some auxiliary parameters for the archimedean place.  We fix a small parameter $\delta_0>0$, to be chosen later, and set
\begin{equation}
  \label{Eq:auxiliaryR}
  R := \max\{T_\infty, T^{\delta_0}\}.
\end{equation}
If $T_\infty =R$, we pick test vectors as in \cite[Theorem 3.1]{2021arXiv210915230N} directly; if $T_\infty < R$ on the other hand, we use the following lemma.
\begin{lemma}\label{Lemma:archimedeananalogueforR}
  Let $\pi$ be a generic irreducible unitary representation of $G$ over $\mathbb{R}$, each of whose archimedean $L$-function parameters are at most $R$. There exists
  \begin{itemize}
  \item $f,f_*\in \mathcal{S}^e(U_H\backslash H)^{W_H}$, a fixed element $\phi_{Z_H}\in C_c^\infty(Z_H)$,
  \item a smooth vector $W\in \mathcal{W}(\pi,\psi)$ of norm $\leq 1$,
  \item $\omega_0\in C_c^\infty(G)$, supported in each fixed neighborhood of the identity element,
  \end{itemize}
  with the following further properties. Define
  \begin{equation*}
    \omega(g)=(\omega_0*\omega_0^*)(g),\ \omega^\sharp(g)=\int\limits_{z\in Z}\pi|_Z(z)\omega(zg)dz,
  \end{equation*}
  \begin{enumerate}
    [(i)]
  \item\label{Item1} $f_*(g)=\int\limits_{z\in Z_H}f(gz^{-1})\phi_{Z_H}(z)dz.$
  \item\label{enumerate:cnpv1gulzv} Each of the following assertions remains valid upon replacing $f$ with $f_*$.
  \item\label{enumerate:cnpv1gumz8} For each fixed compact subset $\mathcal{D}$ of $\mathfrak{a}^*$ and fixed $l\in \mathbb{Z}_{\geq 0}$, we have
    \begin{equation*}
      \nu_{\mathcal{D},l,T}(f)\ll R^{o(1)}.
    \end{equation*}
  \item\label{enumerate:cnpv1gunur} The local zeta integral $Z(W,f[s])$ is entire in $s$, with lower bound
    \begin{equation*}
      Z(W,f[0])\gg R^{-\dim(H)/4}.
    \end{equation*}
    For each $s\in \mathfrak{a}_{\mathbb{C}}^*$ with $\mathfrak{R}(s)\ll 1$, we have the upper bound
    \begin{equation*}
      Z(W,f[s])\ll R^{O(1)}(1+|s|)^{-\infty},
    \end{equation*}
    where the implied exponent is independent of $\delta_0$.
  \item\label{enumerate:cnpv1guowu} For each fixed $u\in \mathfrak{U}(G)$, we have
    \begin{equation*}
      ||\pi(u)(\pi(\omega_0)W-W)||\ll R^{-\infty}.
    \end{equation*}
  \item\label{enumerate:cnpv1guqe8} We have $\int\limits_{H}|\omega^\sharp|\ll R^{\dim(N)+\epsilon}$.
  \item\label{enumerate:cnpv1gus83} For each fixed compact subset $\Omega_H$ of $H$, there is a fixed compact subset $\Omega'_H$ of $H$ with the following property. Let $\Psi_1,\Psi_2:H\rightarrow \mathbb{C}$ be continuous functions. For $j=1,2$ define the convolutions
    \begin{equation*}
      (\Psi_j*\phi_{Z_H})(x):=\int\limits_{z\in Z_H}\Psi_j(xz^{-1})\phi_{Z_H}(z)dz.
    \end{equation*}
    Let $\gamma\in \overline{G}-H$. Then the integral
    \begin{equation*}
      I:=\int\limits_{x,y\in \Omega_H}|(\Psi_1*\phi_{Z_H})(x)(\Psi_2*\phi_{Z_H})(y)\omega^\sharp(x^{-1}\gamma y)dxdy
    \end{equation*}
    satisfies the estimate
    \begin{equation*}
      I\ll R^{O(1)}||\Psi_1||_{L^2(\Omega'_H)}||\Psi_2||_{L^2(\Omega'_H)}.
    \end{equation*}
    Here the exponent is some fixed quantity that does not depend upon $\delta_0$.
  \end{enumerate}
\end{lemma}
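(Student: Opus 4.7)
The plan is to follow verbatim the construction of test vectors and test functions from \cite[Theorem 3.1 and its proof]{2021arXiv210915230N}, but with the parameter $R$ playing the role there played by $T_\infty$.  Concretely, we use the archimedean analogue of our $f = \mathcal{J}[\psi_{\tilde T}, 1_{K_A}, e_\theta]$ scaled by $R^{\rho_{U_H}^\vee}$ instead of $\tilde T^{\rho_{U_H}^\vee}$, and we take $W$ to be the normalized vector whose restriction to $H$ concentrates on the translate $a_R K_H$ with $a_R$ defined from $R$ in the evident manner.  The key difference with \cite{2021arXiv210915230N} is that the archimedean $L$-function parameters of $\pi$ are now only assumed to be bounded \emph{above} by $R$, not also bounded below by $c_0 R$; thus the ``stability'' condition of \cite[\S7]{2021arXiv210915230N} no longer automatically holds.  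This is the only place where the proof diverges from \emph{loc.\ cit.}, and it causes several bounds to degrade from sharp ones to bounds of shape $R^{O(1)}$.

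Most of the items are then verified exactly as in \cite{2021arXiv210915230N}, with the substitution $T_\infty \mapsto R$.  Part \eqref{Item1} is the definition of $f_*$; part \eqref{enumerate:cnpv1gulzv} holds because the convolution with a fixed $\phi_{Z_H}$ preserves the archimedean seminorms and support properties up to a fixed multiplicative constant.  Part \eqref{enumerate:cnpv1gumz8}, the $\nu_{\mathcal D,l,T}(f) \ll R^{o(1)}$ seminorm bound, follows from the explicit Jacquet-integral construction of $f$ and the standard estimates on intertwining operators, which depend only on the size of the relevant parameters being $\leq R$.  Part \eqref{enumerate:cnpv1guowu}, the reproducing property $\pi(\omega_0) W \approx W$, follows from the spectral localization argument of \cite{2021arXiv210915230N} since $\omega_0$ is built from the same microlocal calculus and only upper bounds on the archimedean parameters are needed.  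Part \eqref{enumerate:cnpv1guqe8}, the $L^1$-bound on $\omega^\sharp$, is proved exactly as in \cite[\S20]{2021arXiv210915230N}, with the exponent $\dim(N) + \eps$ arising from the same volume calculation in Bruhat coordinates.

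The core technical content is in parts \eqref{enumerate:cnpv1gunur} and \eqref{enumerate:cnpv1gus83}.  For \eqref{enumerate:cnpv1gunur}, the lower bound $Z(W,f[0]) \gg R^{-\dim(H)/4}$ reduces, via the Rankin--Selberg unfolding and the Kirillov-model restriction formula, to a local Whittaker integral around the element $a_R$; this integral can be computed explicitly from the construction of $W$ and $f$, and the bound is sharp because only the size of $W(a_R)$ and the support of $f$ on its open cell are used, both of which depend only on $R$ itself.  The upper bound $Z(W, f[s]) \ll R^{O(1)}(1+|s|)^{-\infty}$ comes from meromorphic continuation of the Jacquet integral combined with Mellin-theoretic contour shifts: in \cite{2021arXiv210915230N} one uses stability to obtain a bound essentially matching the lower bound, but without stability the polynomial factor in the gamma functions contributes an extra $R^{O(1)}$; crucially, the resulting exponent depends only on $\dim(G)$ and the meromorphic growth of the local $L$-factors, and in particular is independent of $\delta_0$.

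For \eqref{enumerate:cnpv1gus83}, the bilinear estimate, one repeats the proof of \cite[Theorem 3.1(vii)]{2021arXiv210915230N} essentially verbatim: $\omega^\sharp$ is the kernel of a microlocalized projection whose wavefront is supported near the image of a fixed element under the $R$-scaled Adjoint action, and the Cauchy--Schwarz/TT${}^*$ reduction expresses $I$ in terms of the $L^2$-norms of $\Psi_1 * \phi_{Z_H}$ and $\Psi_2 * \phi_{Z_H}$.  Again, absence of stability means that the bilinear kernel need not enjoy the optimal concentration, but it still satisfies a uniform $L^1 \to L^\infty$ bound of size $R^{O(1)}$, which suffices.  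I expect the main obstacle to be a careful bookkeeping of the exponents: one must verify that every appearance of ``stability'' in the arguments of \cite{2021arXiv210915230N} is either unused or can be replaced by the mere upper-bound assumption on parameters at the cost of a polynomial-in-$R$ loss, and that this loss has an exponent depending only on $n$ (not on $\delta_0$), so that it will be compensated in the global argument of Section \ref{sec:cnpv1hj5sh} by choosing $\delta_0$ sufficiently small.
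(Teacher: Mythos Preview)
Your proposal is essentially correct and follows the same approach as the paper: both construct $f$, $W$, and $\omega_0$ by running the recipe of \cite[\S11.4, Theorem 8.7]{2021arXiv210915230N} with $R$ in place of $T_\infty$, and both identify the loss of stability as the sole reason certain bounds degrade to $R^{O(1)}$. A few of your specific justifications are slightly off---for instance, the paper obtains the upper bound in \eqref{enumerate:cnpv1gunur} directly from $W|_{Q_H}$ being a smooth bump (via \cite[Definition 8.4]{2021arXiv210915230N}) rather than through gamma factors, and the $L^1$-bound \eqref{enumerate:cnpv1guqe8} comes from \cite[Theorem 8.7(i)(ii)]{2021arXiv210915230N} rather than \S20---but these do not affect the substance of the argument.
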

\begin{proof}
  We follow closely the proof in \cite[Section 11.4]{2021arXiv210915230N}, with the parameter $T$ there (corresponding to $T_\infty$ in the current paper) replaced by $R$.  We take for $f$ the element $c_2\mathcal{J}_R[\psi^{-1},\beta_0,\gamma]$ as in \cite[Definition 10.4]{2021arXiv210915230N}.  By \cite[Prop 9.4, Lemma 10.9]{2021arXiv210915230N}, it is a member of $\mathcal{S}^e(U_H\backslash H)^{W_H}$.  We construct the element $W$ as in \cite[Section 11.4]{2021arXiv210915230N} and the test function $\omega_0$ as in \cite[Theorem 8.7]{2021arXiv210915230N}, again with the parameter $R$ playing the role of $T$.

  The proofs of assertions \eqref{Item1}--\eqref{enumerate:cnpv1guowu} are then exactly as in \cite{2021arXiv210915230N} (sometines in sharper forms).  Indeed:
  \begin{itemize}
  \item (i) is automatic from definition.
  \item (ii) is true, as all arguments manifestly apply to both $f$ and $f_*$.
  \item (iii) can be proven similarly as in \cite[Section 11.4]{2021arXiv210915230N}.
  \item For (iv), one can obtain the lower bound like in \cite[Section 11.4]{2021arXiv210915230N}.  The upper bound follows again from the fact that $W|_{Q_H}$ is a smooth bump function near the identity, as discussed in \cite[Definition 8.4]{2021arXiv210915230N}.
  \item (v) follows from \cite[Theorem 8.7(iv)]{2021arXiv210915230N}
  \end{itemize}
  It remains to treat \eqref{enumerate:cnpv1guqe8} and \eqref{enumerate:cnpv1gus83}.  The first of these, \eqref{enumerate:cnpv1guqe8} follows from combining (i) and (ii) of \cite[Theorem 8.7]{2021arXiv210915230N}. The point here is that when $T_\infty\lll R$, we no longer have stability condition (see \cite[Section 7]{2021arXiv210915230N}, \cite[Definition 8.6] {2023arXiv2309.06314}) at the archimedean place.

  Turning to  \eqref{enumerate:cnpv1gus83}, we can argue as in \cite[Section 11.4]{2021arXiv210915230N}, using trivial bounds by $R^{O(1)}$ whenever $T$ was involved there, without using the additional power saving from volume bound (since again stability is not available).
\end{proof}

\section{Growth bounds for Eisenstein series}\label{sec:cnjen3n0cb}
In this section, we generalize~\cite[Theorem 4.1]{2021arXiv210915230N}.  This takes a bit of preparations.


\subsection{Some classes of Schwartz functions}\label{sec:cnm700p7hg}
Suppose for the moment that $F = \mathbb{R}$ (a similar discussion applies when $F = \mathbb{C}$, replacing $T$ by $T^{1/2}$ where appropriate).  Let $\mathcal{D}$ be a compact subset of $\mathfrak{a}^*$, let $\ell \in \mathbb{Z}_{\geq 0}$, and let $T \geq 1$.  In~\cite[\S2.9]{2021arXiv210915230N}, we defined a seminorm $\nu_{\mathcal{D},\ell,T}$ on $\mathcal{S}^e(U \backslash G)$ by the following formula:
\begin{equation*}
  \nu_{\mathcal{D},\ell,T}(f) := \sup_{
    \substack{
      s \in \mathfrak{a}_{\mathbb{C} }^\ast :  \\
      \Re(s) \in \mathcal{D}
    }
  }
  \sum _{
    \substack{
      m \leq \ell \\
      \dotsb
    }
  } {\left( 1 + \lvert s \rvert \right)}^{\ell }
  \frac{\lVert R (x_1 \dotsb x_m ) f [s] \rVert}{
    T^{\left\langle \rho_U^\vee , \Re(s) \right\rangle + m }
  }.
\end{equation*}
Here the sum is taken over integers elements $x_1,\dotsc,x_m$ of a fixed basis for the Lie algebra of $G$.


As in \cite[\S20.1.1]{2021arXiv210915230N}, we denote by $\mathfrak{E}(N \backslash G, T)$ the class of all $f \in \mathcal{S}^e(N \backslash G)$ such that for all fixed $\mathcal{D}$ and $\ell$ as above, we have $\nu_{\mathcal{D}, \ell, T}(f) \ll T^{o(1)}$.  The informal idea (made precise in \cite[Lemma 20.2]{2021arXiv210915230N}) is that $f$ belongs to this class precisely when $f$ concentrates near $T^{-\rho_U^\vee}$, has $L^2$-norm $O(1)$, is uniformly smooth under left translation by $A$ (in the sense that its norms barely increase after taking derivatives on the left under $A$), and has ``frequency'' $O(T)$ under right translation by $G$ (e.g., in the sense that its norms increase by a factor of at most roughly $T^m$ when we take $m$ derivatives on the right).

Suppose now that $F$ is non-archimedean, and let $(\mathfrak{o}, \mathfrak{p}, q, \varpi)$ denote the associated data.  We will define (simpler) non-archimedean analogues of the classes defined above in the archimedean case.
\begin{definition}\label{Defn:Esubspace}
  Let $\mathfrak{q} \subseteq \mathfrak{p} $ be an $\mathfrak{o}$-ideal.  We choose a generator $\tilde{T}^{1/2}$ of $\mathfrak{q}^{-1}$ and define $\tilde{T}$, $\tilde{T}^{\pm \rho_U^\vee}$ and $T$ as in \eqref{eqn:tild-:=-left-tild-right2-qquad-tild-rho_-:=-left-t}.  We denote then by
  \begin{equation*}
    \mathfrak{E}(U \backslash G, \mathfrak{q}^2)
  \end{equation*}
  the class of functions
  \begin{equation*}
    f : U \backslash G \rightarrow \mathbb{C}
  \end{equation*}
  with the following properties.
  \begin{enumerate}
    [(i)]
  \item\label{enumerate:cnh5ndgerr} $f$ is left-invariant under $K_A$ and right-invariant under $K(\mathfrak{q}^2)$.
  \item\label{enumerate:cnh5ndggry} $f$ is supported on
    \begin{equation*}
      U K_A \tilde{T}^{-\rho_U^\vee} K.
    \end{equation*}
  \item\label{enumerate:cnh5ndghxh} The $L^2$-norm of $f$, call it $\lVert f \rVert$, is $O(1)$.
  \end{enumerate}
  We denote by $\mathfrak{E}(U \backslash G, \mathfrak{q}^2)^W$ the subclass consisting of elements fixed by $\mathcal{F}_{w,\psi}$ for each $w \in W$ (\S\ref{sec:normalized-intertwining-operators}).
\end{definition}

\begin{lemma}\label{Lem:duality}
  The classes $\mathfrak{E}(U \backslash G, \mathfrak{q}^2)$ and $\mathfrak{E}(U \backslash G, \mathfrak{q}^2)^W$ are both preserved under the duality map
  \begin{equation*}
    f \mapsto [\tilde{f}:g \mapsto f (w_G g^{- \transpose})].
  \end{equation*}
  Here $w_G$ is the standard representative for the longest Weyl element.
\end{lemma}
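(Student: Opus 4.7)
The strategy is to verify each of the defining properties (i)--(iii) of Definition \ref{Defn:Esubspace} directly for $\tilde{f}$, and then to address the $W$-invariance separately. The central observation is that $\sigma:g\mapsto g^{-\transpose}$ is a group \emph{automorphism} of $\mathbf{G}(F)$ (since $(gh)^{-\transpose}=g^{-\transpose}h^{-\transpose}$), so the duality map $\iota(g):=w_G\,g^{-\transpose}$ is a self-map of $G$ satisfying $\iota^2=\mathrm{id}$ and preserving Haar measure. I will record explicitly that $\iota$ interacts well with each of the structural pieces appearing in the definition:
\begin{itemize}
\item $\sigma(K_A)=K_A$ (diagonal with units) and $w_G K_A w_G^{-1}=K_A$ (permutation of entries);
\item $\sigma(U)$ is the opposite unipotent, and $w_G$ conjugates it back into $U$;
\item $K$ and $K(\mathfrak{q}^2)$ are preserved by $\sigma$ and by conjugation by $w_G\in K$;
\item $w_G\rho_U^\vee=-\rho_U^\vee$, hence $w_G\,\sigma(\tilde{T}^{-\rho_U^\vee})\,w_G^{-1}=\tilde{T}^{-\rho_U^\vee}$.
\end{itemize}

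With these in hand, the invariance in (i) is an immediate calculation. For $a\in K_A$, write $\tilde{f}(ag)=f(w_G a^{-1}g^{-\transpose})=f\bigl((w_Ga^{-1}w_G^{-1})\iota(g)\bigr)=f(a'\iota(g))$ with $a'\in K_A$, and apply left $K_A$-invariance of $f$. For $k\in K(\mathfrak{q}^2)$, using that $\sigma$ is a homomorphism, $\tilde{f}(gk)=f(w_Gg^{-\transpose}k^{-\transpose})=f(\iota(g)k^{-\transpose})=f(\iota(g))$ by right $K(\mathfrak{q}^2)$-invariance of $f$, since $k^{-\transpose}\in K(\mathfrak{q}^2)$.

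For condition (ii), I will show $\iota(UK_A\tilde{T}^{-\rho_U^\vee}K)\subseteq UK_A\tilde{T}^{-\rho_U^\vee}K$ factor by factor: if $g=uat k$ with the obvious meanings, then
\begin{equation*}
\iota(g)=w_G u^{-\transpose}a^{-1}t^{-1}k^{-\transpose}
=\bigl(w_Gu^{-\transpose}w_G^{-1}\bigr)\bigl(w_Ga^{-1}w_G^{-1}\bigr)\bigl(w_Gt^{-1}w_G^{-1}\bigr)\bigl(w_Gk^{-\transpose}\bigr),
\end{equation*}
whose four factors lie in $U$, $K_A$, $\tilde{T}^{-\rho_U^\vee}$, $K$ respectively, by the bullet points above. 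Since $\iota$ is an involution, both inclusions give equality, and the support of $\tilde{f}$ lies in the required set. Condition (iii) follows from $\lVert\tilde{f}\rVert_{L^2}=\lVert f\rVert_{L^2}$, because $\iota$ preserves Haar measure on $G$ (being the composition of the measure-preserving automorphism $\sigma$ with a left translation).

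It remains to treat the $W$-invariance, and this is the step I expect to be the most delicate. The plan is to relate the normalized intertwining operators $\mathcal{F}_{w,\psi}$ to the duality via their Mellin-component description: on the induced representation $I(\eta)=\mathrm{Ind}_Q^G(\eta)$, the map $f\mapsto\tilde{f}$ sends $I(\eta)$ to $I(\eta^{-1}\circ w_G)$, and transports $\mathcal{F}_{w,\psi}$ to $\mathcal{F}_{w_0 w w_0^{-1},\psi}$ (with the $\psi$-dependence coming out symmetrically, since we take $\psi$ unramified). The Weyl orbit structure is preserved, so being fixed by all $\mathcal{F}_{w,\psi}$ is invariant under the duality. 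The hard part here is keeping track of the precise identifications between induced representations and verifying the compatibility of normalizing factors; I would carry this out by expanding $f=\sum_\eta f[\eta]$ in Mellin components as in \S\ref{Sec:localMellin}, reducing the verification to the finite-dimensional case on the $K$-side where the Weyl group action is transparent.
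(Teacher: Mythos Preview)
Your verification of properties (i)--(iii) is correct and matches the paper's approach: the key identity $w_G(\tilde{T}^{-\rho_U^\vee})^{-\transpose}=\tilde{T}^{-\rho_U^\vee}w_G$ is exactly what the paper singles out as ``the main point''. One small remark: for (iii) you invoke Haar measure on $G$, but the norm is on $U\backslash G$; since your $\iota$ intertwines left $U$-translation with itself (as you implicitly check for (ii)) and $G$ is unimodular, the descent to the quotient measure is immediate, so this is fine.

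For the $W$-invariance, your route via tracking how the normalized intertwining operators $\mathcal{F}_{w,\psi}$ transform under duality is valid in principle, but the paper avoids the bookkeeping with normalizing factors that you flag as delicate. Instead, it passes to Mellin components and computes directly that
\[
\tilde{f}[s](g)=f[-w_G s](w_G g^{-\transpose}),\qquad W[\tilde{f}[s],\psi](g)=W[f[-w_G s],\psi^{-1}](w_G g^{-\transpose}),
\]
and then invokes the characterization of $W$-invariance from \cite[Lemma~2.6]{2021arXiv210915230N}, which says that $f$ lies in the $W$-fixed subspace if and only if the Jacquet integral $s\mapsto W[f[s],\psi]$ is Weyl-invariant in $s$. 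Since $s\mapsto -w_G s$ is a bijection commuting with the Weyl action, and since the $W$-fixed subspace is independent of whether one uses $\psi$ or $\psi^{-1}$ (as recorded in \S\ref{sec:normalized-intertwining-operators}), the invariance transfers immediately. This sidesteps any need to match up normalizing $\gamma$-factors on the intertwining side, which is the cleaner path here.
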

\begin{proof}
  This is an analogue of \cite[Lemma 20.5]{2021arXiv210915230N}.  It may be proved in the same way, but the proof simplifies a bit in the present non-archimedean case.  The main point is that $w_G (T^{- \rho_U^\vee })^{- \transpose} = T^{- \rho_U^\vee } w_G$, which shows that the duality map preserves the concentration property.

  As in the proof of \cite[Lemma 20.5]{2021arXiv210915230N}, the $W-$invariance follows from that
  \begin{equation*}
    \tilde{f}[s](g)=f[-w_G s](w_Gg^{-T}),
  \end{equation*}
  \begin{equation*}
    W[\tilde{f}[s],\psi](g)=W[f[-w_Gs],\psi^{-1}](w_Gg^{-T})
  \end{equation*}
  by direct computations, \cite[(2.27)]{2021arXiv210915230N} and equivalent description of $W-$invariance in \cite[Lemma 2.6]{2021arXiv210915230N}.

\end{proof}

\subsection{Global preliminaries}\label{sec:cnpv1czec0}
\subsubsection{Basic notation}
Let $\mathbb{A}$ denote the adele ring of $\mathbb{Q}$ and $\psi : \mathbb{A} / \mathbb{Q} \rightarrow \mathbb{C}$ the standard nontrivial unitary character, whose restriction to $\mathbb{R}$ is given by $x \mapsto e^{2 \pi i x}$. For a finite set $S$ of places, let $\mathbb{A}_S$ denote the product of local fields inside $S$, and $\mathbb{A}^S$ the restricted product of local fields outside $S$.

For a general minimal parabolic subgroup $Q$ with its unipotent subgroup $U$, we set $[\mathbf{G}]_Q := \mathbf{U}(\mathbb{A}) \mathbf{A}(\mathbb{Q}) \backslash \mathbf{G}(\mathbb{A})$ and $[\mathbf{G}] := \mathbf{G}(\mathbb{Q}) \backslash \mathbf{G}(\mathbb{A})$.

\subsubsection{Schwartz spaces}
We define the Schwartz space $\mathcal{S}([\mathbf{G}]_Q)$ and its subspace $\mathcal{S}^e([\mathbf{G}]_Q)$ as in \cite[\S2.6]{2021arXiv210915230N}; the latter consist of Schwartz functions on $[\mathbf{G}]_Q$ that are left-invariant under the maximal compact subgroup of $\mathbf{A}(\mathbb{A})$ (or equivalently, that of $\mathbf{A}(\mathbb{A}) / \mathbf{A}(\mathbb{Q})$).  Both spaces admit a Mellin theory as in the local case in \S\ref{Sec:localMellin}, which for $\mathcal{S}^e([\mathbf{G}]_Q)$ is indexed by $s \in \mathfrak{a}_{\mathbb{C}}^*$. See \cite[\S2.8]{2021arXiv210915230N}.


\subsubsection{Completed pseudo Eisenstein series}\label{Sec:completePseudoEisenstein}
Given $\Phi \in \mathcal{S}([\mathbf{G}]_Q)$, we denote by $\Eis[\Phi] : [\mathbf{G}] \rightarrow \mathbb{C}$ the associated Eisenstein series
\begin{equation}\label{eqn:eisph-:=-sum_g-in-mathbfqm-backsl-mathbfgm-phiga-eisenstein-series-incomplete}
  \Eis[\Phi](g) := \sum_{\gamma \in \mathbf{Q}(\mathbb{Q}) \backslash \mathbf{G}(\mathbb{Q})} \Phi(\gamma g).
\end{equation}
This sum converges absolutely, and defines a smooth function of uniform moderate growth. See \cite[\S2.15]{2021arXiv210915230N} for further references.

In \cite[\S2.16]{2021arXiv210915230N}, we defined certain ``completed pseudo Eisenstein series'' on $[\mathbf{G}]$ attached to elements $f_\infty \in \mathcal{S}^e(\mathbf{U}(\mathbb{R}) \backslash \mathbf{G}(\mathbb{R}))$.  As noted in the cited reference, this definition is the specialization to $F = \mathbb{Q}$ and $S = \{\infty\}$ of a more general recipe valid over any number field and for any finite set of places $S$ containing the archimdean places.  In particular, we can apply it to $F = \mathbb{Q}$ and a general finite set of places $S$.  In that case, it reads as follows.  For each $s \in \mathfrak{a}_{\mathbb{C}}^*$, denote by $\mathcal{I}(s)$ the corresponding principal series representation, obtained by normalized induction from $\mathbf{Q}(\mathbb{A})$ of the character $|.|^s$ of $\mathbf{A}(\mathbb{A})$.  It factors as the restricted tensor product of local principal series representations $\mathcal{I}_{\mathfrak{p}}(s)$.  Writing $R_U^+$ for the set of $U$-positive roots of $\mathbf{A}$, we set
\begin{equation*}
  \zeta^{(S)}(U,s) := \prod_{\alpha \in R_U^+} \zeta^{(S)}(1 + \alpha^\vee(s)),
\end{equation*}
where $\zeta^{(S)}(z) = \prod_{p \notin S} (1 - p^{-z})^{-1}$ denotes the partial Riemann zeta function.
We define the Weyl-invariant polynomial
\begin{equation*}
  \mathcal{P} _G (s) := \prod _{\alpha \in R_U^+} \alpha ^\vee (s)^2 (\alpha^\vee(s)^2 - 1).
\end{equation*}
The product $\mathcal{P}_G(s) \zeta^{(S)}(U,s)$ is then entire and of polynomial growth in vertical strips, hence acts as a multiplier on, e.g., the Schwartz space $\mathcal{S}^e(\mathbf{U}(\mathbb{R}) \backslash \mathbf{G}(\mathbb{R}))$.

Fix now, for each place $\mathfrak{p} \in S$, an element $f_\mathfrak{p} \in \mathcal{S}^e(\mathbf{U}(\mathbb{Q}_\mathfrak{p}) \backslash \mathbf{G}(\mathbb{Q}_\mathfrak{p}))$, with associated Mellin components $f_\mathfrak{p}[s]$.  Write $f_S$ for the tensor product of these elements.  We associate to $f_S$ an element
\begin{equation*}
  \Phi[f_S] \in \mathcal{S} ^{e} ({[G] }_{Q})
\end{equation*}
characterized by requiring that its Mellin components $\Phi[f_S][s]$ are given by
\begin{equation*}
  \Phi[f_S][s] = \mathcal{P}_G(s) \zeta^{(S)}(U,s) \otimes _{\mathfrak{p} } f_\mathfrak{p}[s],
\end{equation*}
where for $\mathfrak{p} \in S$, the element $f_\mathfrak{p}[s]$ is a Mellin component of one of our chosen $f_\mathfrak{p}$, while for $p \notin S$, we take $f_p[s] \in \mathcal{I}_p(s)$ to be the spherical vector, normalized to take the value $1$ at the identity element.  The ``completed'' pseudo Eisenstein series $\Eis[\Phi[f_S]]$ may then be defined as in \eqref{eqn:eisph-:=-sum_g-in-mathbfqm-backsl-mathbfgm-phiga-eisenstein-series-incomplete}.  We refer to \cite[\S2.16]{2021arXiv210915230N} for a detailed discussion of these series.  We note that, due to our introduction of the factor $\mathcal{P}_G(s)$, the series $\Eis[\Phi[f_S]]$ actually defines a Schwartz function on $[\mathbf{G}]$, and in particular, has rapid decay.

\subsubsection{Miscellaneous notation}
For $Y \in \mathbf{A}(\mathbb{A})$ and $\Phi \in \mathcal{S}([\mathbf{G}]_Q)$, we define
\begin{equation*}
  L(Y) \Phi(g) := \delta_U^{-1/2}(Y) \Phi(Y g).
\end{equation*}
This definition applies in particular to $Y \in \mathbf{A}(\mathbb{R}) \hookrightarrow \mathbf{A}(\mathbb{A})$.

Let $\Omega \subseteq \mathbf{G}(\mathbb{R})$ be a fixed compact subset.  Set $K_p := \mathbf{G}(\mathbb{Z}_p)$.  Let $\mathbf{A}(\mathbb{R})^0$ denote the connected component of the identity element of $\mathbf{A}(\mathbb{R})$, and $\mathbf{A}(\mathbb{R})^0_{\geq 1}$ the subset of dominant elements
of $\mathbf{A}(\mathbb{R})^0$.

For each $t \in \mathbf{A}(\mathbb{R})^0_{\geq 1}$, we define a seminorm $\lVert . \rVert_{t,\Omega}$ on $L^2([\mathbf{G}])$ by
\begin{equation*}
  \lVert \varphi  \rVert_{t, \Omega }^2 := \int_{g \in \Omega \times \prod_p K_p} \lvert \varphi (t g)  \rvert^2 \, d g.
\end{equation*}

\subsection{Main statement}\label{sec:cnjobat5rx}
For the rest of this section, we shall specify $\mathbf{Q}$ to be $\mathbf{B}$.  We are now prepared to state our generalization of \cite[Theorem 4.1]{2021arXiv210915230N}, of which the cited result is simply the case $S = \{\infty \}$.

\begin{theorem}\label{theorem:eisenstein-growth-bound-general-level}
  Assume that $n = \rank(\mathbf{V})$ is fixed.  Let $\Omega$ be a fixed compact subset of $\mathbf{G}(\mathbb{R})$.  Let $S$ be a finite set of places of $\mathbb{Q}$, containing the infinite place.  Let $T_\infty \geq 1$.  For each finite prime $p \in S$, let $\mathfrak{q}_p \subseteq p \mathbb{Z}_p$ be a nonzero $\mathbb{Z}_p$-ideal, and write $T_p := [\mathbb{Z}_p:\mathfrak{q}_p]^2$.  Set $T := \prod_\mathfrak{p} T_\mathfrak{p}$.
  Let $Y \in A(\mathbb{R})^0$ satisfy that $T^{-\kappa} \leq |Y_j| \leq T^{\kappa}$ for  $\kappa =1/2$, all $v\in S$ and $j \in \{1, \dotsc, n\}$, with trivial components outside $S$.  Let $t \in {\mathbf{A}(\mathbb{R})}_{\geq 1}^0$.  For each $\mathfrak{p} \in S$, let $f_\mathfrak{p} \in {\mathfrak{E}(N(\mathbb{Q}_\mathfrak{p}) \backslash G(\mathbb{Q}_\mathfrak{p}), T_\mathfrak{p})}^W$
  be given, and write $f_S$ for their tensor product.  Then the pseudo Eisenstein series
  \begin{equation*}
    \Psi := \Eis[\Phi[L(Y) f_S]] \in \mathcal{S} ([G])
  \end{equation*}
  as defined in \cite[\S2.16.1]{2021arXiv210915230N}  satisfies the following local $L^2$-norm bound:
  \begin{equation*}
    \frac{\lVert \Psi  \rVert_{t,\Omega}^2}{\delta_N(t)} \ll T ^{o(1)}
    \min \left( |{\det(Y) }|^{-1} t _1 ^{- n }, |\det (Y)| t _n ^n  \right).
  \end{equation*}
\end{theorem}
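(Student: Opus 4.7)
The plan is to generalize the proof of \cite[Theorem 4.1]{2021arXiv210915230N}, which is the special case $S = \{\infty\}$, to accommodate the non-archimedean $\mathfrak{E}$-class test vectors at the finite primes in $S$. The first step is to observe that the two bounds inside the minimum are exchanged by duality: Lemma \ref{Lem:duality} says that the class $\mathfrak{E}(N(\mathbb{Q}_\mathfrak{p})\backslash G(\mathbb{Q}_\mathfrak{p}), T_\mathfrak{p})^W$ is preserved by $f_\mathfrak{p} \mapsto \tilde{f}_\mathfrak{p}$, and the pseudo-Eisenstein series built from $\tilde{f}_S$, together with the conjugated data $w_G Y^{-1} w_G$, $w_G t^{-1} w_G$, realizes the second bound as an instance of the first. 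It therefore suffices to establish the estimate
\begin{equation*}
\lVert \Psi\rVert_{t,\Omega}^2 \;\ll\; \delta_N(t)\, T^{o(1)}\, |\det Y|^{-1}\, t_1^{-n}.
\end{equation*}

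The main computation proceeds via the Mellin decomposition of $\Phi[L(Y) f_S]$ introduced in Section \ref{Sec:completePseudoEisenstein}. By construction, the Mellin component at $s \in \mathfrak{a}_\mathbb{C}^*$ factors as
\begin{equation*}
\Phi[L(Y) f_S][s] \;=\; \mathcal{P}_G(s)\,\zeta^{(S)}(U,s)\,\delta_U^{1/2}(Y)\,|Y|^{-s}\,\bigotimes_{\mathfrak{p}\in S} f_\mathfrak{p}[s]\,\otimes\,\bigotimes_{p\notin S}\phi_p^\circ[s],
\end{equation*}
where $\phi_p^\circ[s]$ is the normalized spherical vector in $\mathcal{I}_p(s)$. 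Combining Mellin inversion with the contour shift technique of \cite[\S4]{2021arXiv210915230N}, I would bound $\lVert\Psi\rVert_{t,\Omega}^2$ by an integral over $s$ (on an appropriate dominant translate of the unitary axis) of the weighted local $L^2$-norm $\delta_U(t) t^{2\Re(s)}\lVert\Phi[L(Y) f_S][s]\rVert^2$, using that $\mathcal{P}_G(s)$ cancels every pole of $\zeta^{(S)}(U,s)$ so the contour may be freely shifted.

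The local $L^2$-norm $\lVert f_\mathfrak{p}[s]\rVert_{\mathcal{I}_\mathfrak{p}(s)}$ is the key new quantitative input at finite $\mathfrak{p}\in S$. It is controlled by $T_\mathfrak{p}^{\langle \rho_U^\vee, \Re(s)\rangle + o(1)}$, as one checks by combining the support condition and the $L^2$-bound of Definition \ref{Defn:Esubspace}\eqref{enumerate:cnh5ndggry}-\eqref{enumerate:cnh5ndghxh} with the Iwasawa factorization: the concentration of $f_\mathfrak{p}$ near $K_A \tilde{T}_\mathfrak{p}^{-\rho_U^\vee} K$ translates, after integration in the $A$-variable against $\delta_U^{1/2}\lvert\cdot\rvert^{-s}$, into the stated power of $T_\mathfrak{p}$. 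At $\mathfrak{p}=\infty$ one uses \cite[Lemma 20.2]{2021arXiv210915230N} to get the analogous bound times a $(1+|s|)^{-\infty}$ decay, while the unramified local factors contribute harmlessly. Substituting these into the displayed factorization and using $|Y_j|\le T^\kappa$ then produces the desired bound on the integral after a contour shift to a ray $\Re(s) = -\sigma\rho$ for an appropriate $\sigma>0$; the $t$-dependence collapses to $t_1^{-n}\delta_N(t)$ and the $Y$-dependence to $|\det Y|^{-1}$, with the shift constant absorbed into the $T^{o(1)}$.

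The main obstacle is verifying that the contour shift and the assembly of local contributions are genuinely uniform in the set $S$ and in the individual parameters $T_\mathfrak{p}$, so that only the product $T$ and the factor $T^{o(1)}$ appear. This requires showing that (a) the factor $\zeta^{(S)}(U,s)$ contributes only $T^{o(1)}$ on vertical strips independently of $S$ (standard via $\zeta^{(S)}(z) = \zeta(z)\prod_{p\in S}(1-p^{-z})$ and $\sum_{p\in S}\log p \ll \log T$), and (b) the tensor product of the local $L^2$-bounds multiplies out cleanly to $T^{\langle\rho_U^\vee,\Re(s)\rangle + o(1)}$. Once these two points are checked, the remainder of the argument is parallel to the archimedean case of \cite[Theorem 4.1]{2021arXiv210915230N}, and the desired estimate follows.
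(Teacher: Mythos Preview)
Your duality reduction to a single bound is correct and matches the paper. However, the core of your argument---bounding $\lVert \Psi\rVert_{t,\Omega}^2$ directly via Mellin inversion and contour shifting of $\lVert \Phi[L(Y)f_S][s]\rVert^2$---does not work and reflects a misreading of how \cite[Theorem~4.1]{2021arXiv210915230N} is actually proved.

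The global $L^2$-norm of a pseudo-Eisenstein series is indeed controlled by the Plancherel integral of its Mellin components, but the quantity $\lVert \Psi\rVert_{t,\Omega}^2$ is a \emph{local} $L^2$-norm, measuring the size of $\Psi$ on a specific Siegel set at height $t$. There is no mechanism by which a contour shift in $s$ produces the weight $\delta_N(t)\,t_n^n$ (or $t_1^{-n}$); the expression ``$\delta_U(t)\,t^{2\Re(s)}\lVert\Phi[s]\rVert^2$'' you write down is not a bound for $\lVert\Psi\rVert_{t,\Omega}^2$ under any standard identity. Moreover, ``the contour shift technique of \cite[\S4]{2021arXiv210915230N}'' does not exist: \S4 of that paper is only the statement of Theorem~4.1, and its proof occupies \S18--22, where the method is entirely different.

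The paper's approach, which is the same as that of \cite{2021arXiv210915230N}, is to majorize the region $t\Omega \times \prod_p K_p$ by a mirabolic Eisenstein series $\theta$ built from a test function $\phi$ on primitive row vectors, giving $\lVert\Psi\rVert_{t,\Omega}^2 \ll \delta_N(t)\int_{[\mathbf{G}]}\theta\,\lvert\Psi\rvert^2$. This Rankin--Selberg integral is then unfolded (\S\ref{sec:cnh5ncyv28}--\ref{sec:cnh5ncxyno}), producing a finite sum over parabolics $P$ and Weyl elements of integrals $\mathcal{Q}_P(\phi,L(Y)f_S)$, which in turn unfold to sums over $c\in\mathbf{A}''(\mathbb{Q})$ of products of local Whittaker-type integrals. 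The genuine new difficulties in the $S$-adic setting---controlling the denominators of $c$ at finite $p\in S$ (\S\ref{sec:cnjobc8wnb}, requiring Proposition~\ref{Prop:WhittakerIntDomain}), and combining the local estimates across varying $S$ so that only $T^{o(1)}$ appears (Lemmas~\ref{lemma:cnjen3r9kt} and~\ref{lemma:cnjen3r5jb})---are invisible in your sketch because your framework never reaches the point where they arise.
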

The proof is a generalization of that of the cited result, with a few nuances.  A couple arguments need to be translated from the archimedean to the non-archimedean setting, but the translation always introduces simplification rather than complication.


%
%

The general strategy, explained in~\cite[\S1.3.6--1.3.8 and \S18]{2021arXiv210915230N}, remains the same: we define certain mirabolic Eisenstein series $\theta$
that majorize the region $t \Omega$, and estimate the integral $\int \lvert \Psi \rvert ^2 \theta$ via unfolding.



Together with duality, we shall reduce the proof Theorem \ref{theorem:eisenstein-growth-bound-general-level} to that of Proposition \ref{proposition:proposition-20-14} below, which requires a little more preparations.

\subsection{Reduction to bounds for Rankin---Selberg integrals}\label{sec:cnh5ncyv28}


Recall that by choosing a basis for $\mathbf{V}$, we can identify $\mathbf{G} = \mathbf{G} \mathbf{L} _n$.  Let $\mathbf{V}^*$ be the scheme of row vectors for $\mathbf{G}$.  We denote by $\mathbf{V}^*_{\prim}$ the subfunctor given by taking $\mathbf{V}^*_{\prim}(R)$ to consist of all elements of $\mathbf{V}^*(R)$ whose entries generate the unit ideal of $R$.  For example, for a field $F$, we have $\mathbf{V}^*_{\prim}(F) = \mathbf{V}^*(F) - \{0\}$, while for a local ring $\mathfrak{o}$ with maximal ideal $\mathfrak{p}$, we have $\mathbf{V}^*_{\prim}(\mathfrak{o}) = \mathbf{V}^*(\mathfrak{o}) - \mathfrak{p} \mathbf{V}^*(\mathfrak{o})$.  The space $C_c^\infty(\mathbf{V}_{\prim}^*(\mathbb{A}))$ is spanned by pure tensors $\phi = \otimes_\mathfrak{p} \phi_\mathfrak{p}$, where $\phi_\mathfrak{p} \in C_c^\infty(\mathbf{V}_{\prim}^*(\mathbb{Q}_\mathfrak{p}))$ such that for almost all primes $p$, the function $\phi_p$ is the characteristic function of $\mathbf{V}^*_{\prim}(\mathbb{Z}_p)$.

Let $\mathbf{P}$ be either a standard (upper-triangular) maximal proper parabolic subgroup of $\mathbf{G}$, or $\mathbf{G}$ itself.  Having chosen $\mathbf{P}$, we introduce the following auxiliary notation, as in \cite[\S19.8 and \S20.5]{2021arXiv210915230N}.
\begin{itemize}
\item If $\mathbf{P}$ is a proper parabolic subgroup, then we write its standard (block-diagonal) Levi subgroup as $\mathbf{M} ' \times \mathbf{M} ''$, where $\mathbf{M} '$ (resp. $\mathbf{M} ''$) is the upper-left (resp. lower-right) block.
\item If $\mathbf{P} = \mathbf{G}$, then we set $\mathbf{M} ' := \{1\}$ and $\mathbf{M} '' := \mathbf{G}$.
\item We denote by $n'$ (resp. $n''$) the rank of $\mathbf{M} '$ (resp. $\mathbf{M} ''$), so that $n = n' + n''$.
\item We denote by $\mathbf{U} _P$ the unipotent radical of $\mathbf{P}$.
\item We denote by $\mathbf{B}_{M'}$ and $\mathbf{B}_{M''}$ the standard (upper-triangular) Borel subgroups of $\mathbf{M} '$ and $\mathbf{M} ''$.
\item We denote by $\mathbf{A} '$ and $\mathbf{A} ''$ the respective diagonal subgroups of $\mathbf{M} '$ and $\mathbf{M} ''$.
\item For $\Phi \in \mathcal{S}([\mathbf{G}]_B)$, we define $W^P [\Phi ] \in C^\infty(\mathbf{G}(\mathbb{A}))$ by the absolutely convergent integral formula
  \begin{equation*}
    W^P [\Phi ] (g) := \int_{N'' (\mathbb{A} )}
    \Phi (w_{M''}^{-1} u g ) \psi^{-1} (u) \, d u,
  \end{equation*}
  where $w_{M''} = w_{M''}^{-1} \in \mathbf{M}''$ denotes the standard long Weyl element.
\item Let $S$ be a finite set of places of $\mathbb{Q}$, containing the infinite place.  Let $f_S = \otimes_{\mathfrak{p} \in S} f_\mathfrak{p}$, where $f_\mathfrak{p} \in \mathfrak{E}(\mathbf{N}(\mathbb{Q}_\mathfrak{p}) \backslash \mathbf{G}(\mathbb{Q}_\mathfrak{p}), T_\mathfrak{p})$.
  We may then define $\Phi[f_S] \in \mathcal{S}([\mathbf{G}]_B)$ as in \S\ref{Sec:completePseudoEisenstein}.  Let $\phi \in C_c^\infty(\mathbf{V}^*_{\prim}(\mathbb{A}))$.  We then define the integral
  \begin{equation}
    \mathcal{Q}_P(\phi, f_S)
    :=
    \int _{g \in (M' N'' {U_P} \backslash G)(\mathbb{A}) } \phi(e_n g)
    \int _{h \in [M']_{B_{M'}}} \\
    \left\lvert
      W^P[\Phi[f_S]](h g)
    \right\rvert^2
    \, \frac{d h}{\delta_{U_P}(h)}
    \, d g.
  \end{equation}
\end{itemize}

We have the following analogue of \cite[Proposition 20.14]{2021arXiv210915230N}.
\begin{proposition}\label{proposition:proposition-20-14}
  Suppose given
  \begin{itemize}
  \item a finite set $S$ of places of $\mathbb{Q}$,
  \item for each $\mathfrak{p} \in S$, a quantity $T_\mathfrak{p} \geq 1$ such that for finite primes $p$, the quantity $T_p$ is an even power of $p$,
  \item elements $f_\mathfrak{p} \in \mathfrak{E}(N(\mathbb{Q}_\mathfrak{p}) \backslash G(\mathbb{Q}_\mathfrak{p}), T_\mathfrak{p})^W$,
  \item an element $r > 0$ with $r =T^{O(1)}$ (playing the role of $t_n$).


  \item { an element $Y \in A(\mathbb{R})^0$ with $T^{-\kappa} \leq |Y_j| \leq T^{\kappa}$ for $\kappa = 1/2$ and all $j \in \{1, \dotsc, n\}$.}
  \end{itemize}
  Let $\phi_\infty \in C_c(\mathbf{V}_{\prim}^*(\mathbb{R}))$ be a nonnegative function that is $K_\infty$-invariant and supported on elements $v$ with $\lVert v \rVert \asymp r$.  Let $\phi_p \in C_c(\mathbf{V}^*_{\prim}(\mathbb{Q}_p))$ denote the characteristic function of $\mathbf{V}^*_{\prim}(\mathbb{Z}_p)$.  Set $\phi := \otimes_{\mathfrak{p}} \phi_\mathfrak{p}$.  Then
  \begin{equation*}
    \mathcal{Q}_P (\phi, L(Y) f_S ) \ll r^n\det (Y)  T^{o(1)}.
  \end{equation*}
\end{proposition}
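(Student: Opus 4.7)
The plan is to generalize the proof of \cite[Proposition 20.14]{2021arXiv210915230N} from the case $S = \{\infty\}$ to an arbitrary finite set $S$ of places containing infinity. As the paper indicates at the end of \S\ref{sec:cnjobat5rx}, the archimedean arguments translate to the non-archimedean setting with simplification rather than complication, so the task is primarily structural. First I would exploit the product structure of $\Phi[f_S]$: the global Mellin component factors as $\Phi[f_S][s] = \mathcal{P}_G(s)\zeta^{(S)}(U,s) \bigotimes_\mathfrak{p} f_\mathfrak{p}[s]$, with $f_p[s]$ the normalized spherical vector for $p \notin S$. Substituting into $W^P[\Phi[f_S]]$ and unfolding the inner integral over $[M']_{B_{M'}}$ via Mellin inversion, as in \cite[\S20.5--20.6]{2021arXiv210915230N}, expresses $\mathcal{Q}_P(\phi, L(Y) f_S)$ as a contour integral in $s \in \mathfrak{a}_{\mathbb{C}}^*$ of a product of local Rankin--Selberg zeta integrals against the partial zeta-multiplier. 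The multiplier $\mathcal{P}_G(s)$ cancels the poles of the $\zeta^{(S)}$-factors and enables contour shifts to convergence.

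Next I would estimate the resulting integrand place by place. Outside $S$, the unramified local zeta integrals reduce to partial $L$-values bounded by $T^{o(1)}$ after suitable contour shifts (convexity). At finite places $\mathfrak{p} \in S$, the rigid support and invariance conditions (i)--(iii) of Definition \ref{Defn:Esubspace} reduce each Mellin component $f_\mathfrak{p}[s]$ to an element of a finite-dimensional space supported on the open Bruhat cell, with its $s$-support a translate of the lattice of $K_A$-invariant characters; this yields bounded contributions uniform in $s$. At the archimedean place, the seminorm bound $\nu_{\mathcal{D},\ell,T_\infty}(f_\infty) \ll T_\infty^{o(1)}$ provides sufficient polynomial-in-$s$ control to justify the contour manipulations. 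Integration against $\phi$ produces the factor $r^n$ from the volume of the annulus $\{v \in \mathbf{V}^*_{\prim}(\mathbb{R}) : \|v\| \asymp r\}$, while the spherical $\phi_p = 1_{\mathbf{V}^*_{\prim}(\mathbb{Z}_p)}$ at unramified $p$ amounts to evaluation of an absolutely convergent partial zeta-product absorbed into $T^{o(1)}$. The $\det(Y)$ dependence arises from the shift $L(Y)$, which translates the support of $f_\infty$ by $Y$; after accounting for the normalizing factor $\delta_U^{-1/2}(Y)$ in $L(Y)$ and the modular character along $N$, this contributes precisely $\det(Y)$. Should the projection $g \mapsto \phi_\mathfrak{p}(e_n g)$ select a direction incompatible with the concentration of $f_\mathfrak{p}$, I would invoke Lemma \ref{Lem:duality} to replace $f_\mathfrak{p}$ by its dual $\tilde{f}_\mathfrak{p}$ (which belongs to the same class $\mathfrak{E}(U \backslash G, \mathfrak{q}^2)^W$) to realign the supports.

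\textbf{Main obstacle.} I expect the principal difficulty to be ensuring that the contributions at places $\mathfrak{p} \in S$ genuinely combine to $T^{o(1)}$ rather than $T^{\varepsilon}$ for some fixed positive $\varepsilon$ that could accumulate over places; this is what enforces the uniformity needed for the global argument. The rigidity of Definition \ref{Defn:Esubspace} makes the finite-place contributions essentially trivial (they produce bounded factors, no positive power of $T_\mathfrak{p}$), so the $o(1)$ exponent is really controlled at infinity by the archimedean seminorm hypothesis, and at $p\nmid S$ by convexity for partial $L$-values. A secondary concern is verifying that the translated function $L(Y)f_\infty$ still has controlled Mellin components in the right range, which is where the condition $T^{-1/2}\leq|Y_j|\leq T^{1/2}$ (i.e.\ the exponent $\kappa = 1/2$) enters: it guarantees compatibility between the support translation by $Y$ and the concentration scale $\tilde{T}^{-\rho_U^\vee}$ that defines membership in $\mathfrak{E}(N\backslash G, T_\mathfrak{p})$.
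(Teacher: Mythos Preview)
Your proposal has a genuine structural gap. The integrand in $\mathcal{Q}_P(\phi, L(Y)f_S)$ involves $\lvert W^P[\Phi[f_S]](hg)\rvert^2$, the absolute value squared of the partial Whittaker transform. You cannot Mellin-invert this directly into a contour integral over $s$ of a product of local zeta integrals: the square obstructs factorization. The paper's route is fundamentally arithmetic rather than spectral. It first unfolds via strong approximation (Lemma~\ref{lemma:let-f_s-=-otim-in-s-f_mathfr-be-as-abov-assume-tha}) to express $\mathcal{Q}_P$ as an integral of $\bigl\lvert \sum_{c\in\mathbf{A}''(\mathbb{Q})} W(c,g)\bigr\rvert^2$, then opens the square and reduces (Lemma~\ref{Lem:globalperiodboundreduction}) to a sum over $0\neq m\in\mathbb{Z}[1/T_f^{nd}]$ of $\tau(mT_f^{nd})^{O(1)}$ times a maximum over $c\in\Sigma^P(m)$ of the product $\prod_{\mathfrak{p}\in S}\mathcal{Q}_P(\phi_\mathfrak{p},L(Y_\mathfrak{p})f_\mathfrak{p},c)$ of genuinely local integrals. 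This product is then bounded by Lemma~\ref{lemma:cnjen3r9kt}, and the sum over $m$ is closed by the Rankin-trick Lemma~\ref{lemma:cnjen3r5jb}.

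Two further specific issues: first, the sum over $c$ is over rationals, not $S$-integers, and a key novelty here (flagged in \S\ref{Sec:sketch}) is controlling the \emph{denominators} of the entries of $c$ at finite $p\in S$ via Proposition~\ref{Prop:denominatorcontrol}; without this, the sum over $c$ would be infinite in a bad way. Second, your claim that the finite-place contributions are ``bounded factors, no positive power of $T_\mathfrak{p}$'' is not how the argument runs: the local bound at $p\in S$ is $\mathcal{Q}_P(\phi_p,f_p,c)\ll (D_{P,p}\lvert\det c\rvert_p)^{-1}$ (Proposition~\ref{Prop:mainboundforQp}), and the paper explicitly warns that one cannot simply multiply local analogues of \cite[Prop.~21.28]{2021arXiv210915230N} over $S$, because the conditions on $c$ are global. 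Instead an $S$-adic combination is proved directly (Lemma~\ref{lemma:cnjen3r9kt}), using the archimedean decomposition $f_\infty=\sum_\mu f^\mu+f^\clubsuit$ together with the nonvanishing conditions at finite places to force the archimedean parameter into the range where \cite[Lemma~21.33]{2021arXiv210915230N} applies.
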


The proof of Proposition \ref{proposition:proposition-20-14} will be given later.  For now, let us explain why Proposition \ref{proposition:proposition-20-14} implies Theorem \ref{theorem:eisenstein-growth-bound-general-level}.  This implification may be verified exactly as in \cite[\S20]{2021arXiv210915230N}.  The basic idea is as follows.  For $\phi$ as in the statement of Proposition \ref{proposition:proposition-20-14}, $g \in \Omega \times \prod_p K_p$,
define the mirabolic Eisenstein series $\theta : [\mathbf{G}] \rightarrow \mathbb{C}$ by
\begin{equation*}
  \theta(g) := \sum_{v \in \mathbf{V}^*_{\prim}(F)} \phi(v g)
  =
  \sum_{
    \substack{
      v = (v_1,\dotsc,v_n) \in \mathbb{Z}^n  \\
      \gcd (v_1,\dotsc,v_n) = 1
    }
  } \phi_\infty(v g).
\end{equation*}
Let $t$ and $\Omega$ be as in the statement of Theorem \ref{theorem:eisenstein-growth-bound-general-level}.  Write $e_1^*, \dotsc, e_n^*$ for the standard basis of $\mathbf{V}^*$.  Then
\begin{equation*}
  \theta(t g) \geq \phi_\infty(e_n^* t g)
  = \phi_\infty(t_n e_n^* g).
\end{equation*}
By choosing $r:= t_n$, and choosing $\phi_\infty(r\cdot ) $ 
in a suitable manner depending only upon $\Omega$, we may arrange that the right hand side of the above is $\geq 1$ for any $g\in \Omega \times \prod_p K_p$ (see \cite[Lemma 20.7]{2021arXiv210915230N} for details).  It follows readily that for each $\Psi \in \mathcal{S}([G])$,
\begin{equation*}
  \lVert \Psi  \rVert_{t,\Omega}^2 \ll \delta_N(t) \int_{[\mathbf{G}]} \theta \lvert \Psi  \rvert^2.
\end{equation*}
(This is explained in \cite[Corollary 20.8]{2021arXiv210915230N} under the additional hypothesis that $\Psi$ be right $\mathbf{G}(\mathbb{Z}_p)$-invariant, but that hypothesis is used only to simplify the subsequent presentation, and is not used in the proof.)  By specializing to $\Psi = \Eis [\Phi[L(Y) f_S]]$, we see that the proof of Theorem \ref{theorem:eisenstein-growth-bound-general-level} reduces to obtaining suitable estimates for the integrals
\begin{equation*}
  \int_{[\mathbf{G} ] } \theta
  \left\lvert \Eis [\Phi[L(Y) f_S]]  \right\rvert^2.
\end{equation*}
The latter integrals may be unfolded.  The unfolding is detailed in \cite[\S19]{2021arXiv210915230N}.  It introduces a bounded sum (over parabolic subgroups $\mathbf{P}$ and Weyl group elements) of integrals as in Proposition \ref{proposition:proposition-20-14}.  The discussion of \cite[\S20.5]{2021arXiv210915230N} generalizes in a straightforward way from $\{\infty \}$ to general $S$, giving the required reduction.

We may and shall assume that $t_n= T^{\O(1)}$, as otherwise the required estimates follow (in stronger form) from standard Sobolev-type bounds as in \cite[\S2]{michel-2009}.  Proposition \ref{proposition:proposition-20-14} then implies the bound
\begin{equation*}
  \frac{\lVert \Psi  \rVert_{t,\Omega}^2}{\delta_N(t)} \ll T ^{o(1)}
  \det (Y)t _n ^n.
\end{equation*}
To obtain the other bound in Theorem \ref{theorem:eisenstein-growth-bound-general-level}, we apply the duality as in \cite[\S20.4]{2021arXiv210915230N}. The observations of \cite[\S20.1.2]{2021arXiv210915230N} concerning duality remain valid by Lemma \ref{Lem:duality}.


\subsection{Strong approximation and further unfolding}\label{sec:cnh5ncxyno}
Here we record an analogue of \cite[Lemma 20.19]{2021arXiv210915230N}.  To formulate that analogue, we define a linear map
\begin{equation*}
  \Theta^P : \left( \otimes_{\mathfrak{p}  \in S} \mathcal{S}^e\left(N(\mathbb{Q}_\mathfrak{p}) \backslash G(\mathbb{Q}_\mathfrak{p})\right) \right)
  \rightarrow C^\infty(N(\mathbb{A}) \backslash G(\mathbb{A}))
\end{equation*}
\begin{equation*}
  f_S \mapsto \Theta^P[f_S]
\end{equation*}
which upon taking $S = \{\infty \}$ specializes to the definition in \cite[\S20.8]{2021arXiv210915230N}.  We assume that $f_S = \otimes_{\mathfrak{p} \in S} f_{\mathfrak{p} }$.  Then $\Theta^P[f_S] = \prod_{\mathfrak{p}} \Theta^P[f_S]_\mathfrak{p}$, with local factors defined as follows.
\begin{itemize}
\item As in \cite[\S20.8]{2021arXiv210915230N}, we define $\Theta^P [f_S]_\infty$ by requiring that its Mellin components $\Theta^P[f]_\infty [s] \in \mathcal{I}_\infty(s)$ are given by
  \begin{equation*}
    \Theta^P [f ]_\infty[s] = \mathcal{P}_G(s) \frac{\zeta^{(S)} (N, s) }{ \zeta^{(S)} (N'', s)} f _\infty [s].
  \end{equation*}
\item For $p \in S - \{\infty\}$, we take $\Theta^P [f]_p := f_p$.
\item For $p \notin S$, we take $\Theta^P [f]_p := \Theta^P_p$ , where $\Theta^P_p$ is the ``$\mathbf{M}''$-basic vector", defined in \cite[\S20.8]{2021arXiv210915230N}, whose Mellin components $\Theta_p^P[s]$ are spherical and take the value $\zeta_{\mathfrak{p}}(N'', s)$ at the identity.
\end{itemize}
$\Theta^P[f_S]$ is related to $\Phi[f_S]$ above similarly as in \cite[Lemma 20.18]{2021arXiv210915230N}.

\begin{lemma}\label{lemma:let-f_s-=-otim-in-s-f_mathfr-be-as-abov-assume-tha}
  Let $f_S = \otimes_{\mathfrak{p} \in S} f_\mathfrak{p}$ be as above.  Assume that for each finite prime $p \in S$ and $g \in \mathbf{G}(\mathbb{Q}_p)$, the set
  \begin{equation*}
    \left\{ a \in \mathbf{A}(\mathbb{Q}_p) : f_\mathfrak{p}(a g) \neq 0 \right\}
  \end{equation*}
  is contained in a single $\mathbf{A}(\mathbb{Z}_p)$-coset.  Let $\phi \in C_c^\infty(\mathbf{V}^*_{\prim}(\mathbb{A}))$ be nonnegative.  For $(c,g) \in \mathbf{A}(\mathbb{Q}) \times \mathbf{G}(\mathbb{A})$, introduce the notation
  \begin{equation}\label{eq:cnjgi2biuc}
    W(c,g) := \int_{u \in N''(\mathbb{A})} \Theta^P [f] (c^{-1} w_{M''}^{-1} u g ) \psi^{-1} (u) \, d u.
  \end{equation}
  Then
  \begin{equation*}
    \mathcal{Q}_P (\phi, f) = 2^{n '}
    \int_{
      \substack{
        g \in \mathbf{N} \backslash \mathbf{G}(\mathbb{A}_S)  \\
        a'' \in \mathbf{A} ''(\mathbb{A}^S)
      }
    }
    \phi (e_n  ga'')
    \left\lvert
      \sum_{c \in \mathbf{A} ''(\mathbb{Q})}
      W(c, ga'' )
    \right\rvert^2
    \, d g
    \, \frac{d a ''}{\delta_N(a'')}.
  \end{equation*}
\end{lemma}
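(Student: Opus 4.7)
The plan is to adapt the proof of \cite[Lemma 20.19]{2021arXiv210915230N}, which establishes the archimedean case ($S = \{\infty\}$), to the current $S$-adic setting. The overall structure is unchanged; the only genuinely new input is the use of the support hypothesis on $f_p$ at finite $p\in S$ to make strong approximation go through.

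First, I would substitute the definition of $W^P[\Phi[f_S]]$ as a $N''$-Jacquet integral of $\Phi[f_S]$ into $\mathcal{Q}_P(\phi,f_S)$, and apply a Whittaker--Fourier expansion on $[M']_{B_{M'}}$ to the function $h\mapsto W^P[\Phi[f_S]](hg)$. The polynomial $\mathcal{P}_G(s)$ built into $\Phi[f_S]$ makes it Schwartz on $[G]_B$, which legitimizes the expansion and kills all residual contributions. Unfolding the squared modulus $|W^P[\Phi[f_S]](hg)|^2$ against $dh/\delta_{U_P}(h)$ produces a further Jacquet integral over $N'(\mathbb{A})$, so that the composite becomes a full Whittaker-type integral of $\Phi[f_S]$ over $N(\mathbb{A}) = N'(\mathbb{A}) N''(\mathbb{A})$.

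Next, I would identify this full Whittaker-type integral of $\Phi[f_S]$ with the $N''$-Jacquet integral of $\Theta^P[f_S]$. This is a formal consequence of the definitions: at places outside $S$, the standard Casselman--Shalika computation on spherical vectors produces the factor $\zeta_{p}(N',s)$ when performing the $N'$-integration, and the ratio $\zeta^{(S)}(N,s)/\zeta^{(S)}(N'',s)$ in the definition of $\Theta^P[f]_\infty$ is designed precisely to absorb this factor globally, using $\zeta^{(S)}(N,s) = \zeta^{(S)}(N',s)\zeta^{(S)}(N'',s)$. At places in $S$ the local factors are simply $f_\mathfrak{p}$, so the identification is tautological. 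After this step, the integrand involves $|W(c,g)|$-type expressions built from $\Theta^P[f_S]$ as in \eqref{eq:cnjgi2biuc}.

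Finally, strong approximation converts the outer integration $(M' N'' U_P \backslash G)(\mathbb{A})$ into an integration over $\mathbf{N}\backslash \mathbf{G}(\mathbb{A}_S)$ times $\mathbf{A}''(\mathbb{A}^S)$. Here the support hypothesis on $f_p$ for finite $p\in S$ is essential: it guarantees that, for each $g\in \mathbf{G}(\mathbb{Q}_p)$, the $\mathbf{A}(\mathbb{Q}_p)$-direction of the integrand is concentrated on a single $\mathbf{A}(\mathbb{Z}_p)$-coset, which is exactly what is needed to separate the $\mathbb{A}_S^{\mathrm{fin}}$-contribution from the $\mathbb{A}^S$-contribution in the decomposition of the global $\mathbf{A}(\mathbb{A})$-integration. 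The $\mathbf{A}(\mathbb{Q})$-summation arising from the Whittaker expansion then splits as $\mathbf{A}'(\mathbb{Q})\times\mathbf{A}''(\mathbb{Q})$; the $\mathbf{A}'(\mathbb{Q})$-part is absorbed into the integration, while the $\mathbf{A}''(\mathbb{Q})$-part remains as the sum inside the squared modulus. The factor $2^{n'}$ arises from $|\mathbf{A}'(\mathbb{R})/\mathbf{A}'(\mathbb{R})^0| = 2^{n'}$, exactly as in the archimedean case.

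The main obstacle is the bookkeeping in the strong-approximation step: verifying that the support hypothesis truly decouples the $\mathbb{A}_S^{\mathrm{fin}}$-part from the $\mathbb{A}^S$-part cleanly enough to yield the claimed product structure (with the $\mathbf{A}''(\mathbb{Q})$-sum inside $|\cdot|^2$ and the remaining integration outside). Without the hypothesis this decoupling fails, and the identity as stated would not hold. All other steps are either routine manipulations or direct transcriptions of their counterparts in \cite[\S20.8]{2021arXiv210915230N}.
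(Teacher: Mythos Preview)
Your proposal follows essentially the same route as the paper's proof: both adapt \cite[Lemma 20.19]{2021arXiv210915230N}, identify the support hypothesis at finite $p\in S$ as the new ingredient, and unfold the $\mathbf{A}'(\mathbb{Q})$-sum into the $\mathbf{A}'(\mathbb{A})$-integral. One point of imprecision worth tightening: the support hypothesis is not used to ``separate $\mathbb{A}_S^{\fin}$ from $\mathbb{A}^S$'' in the integration, but rather to show that the sum over $c'\in\mathbf{A}'(\mathbb{Q})$ inside $|\cdot|^2$ is supported on a single $\mathbf{A}'(\mathbb{Z})$-coset with constant value there (the issue being that $S$-units are infinite once $S\supsetneq\{\infty\}$), so that $\bigl|\sum_{c'}\bigr|^2 = 2^{n'}\sum_{c'}|\cdot|^2$ and the $c'$-sum can then be unfolded with the $[\mathbf{A}']$-integral; the $2^{n'}$ is thus $|\mathbf{A}'(\mathbb{Z})|$ rather than $|\mathbf{A}'(\mathbb{R})/\mathbf{A}'(\mathbb{R})^0|$ (same number, different mechanism).
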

\begin{proof}
  This is an analogue of \cite[Lemma 20.19]{2021arXiv210915230N}.\footnote{The referenced lemma was formulated as an inequality, but the same proof gives equality.}  The new feature here is that the set of $S$-units in $\mathbb{Z}$ is infinite when $S$ is strictly larger than $\{\infty\}$, which we compensate for by the support condition that we have assumed concerning $f_{\mathfrak{p}}$.

  Indeed, following the first step of the proof in cited lemma, we get
  \begin{align}\label{Eq:QpIwasawa}
    & \mathcal{Q}_P (\phi, f)=\int\limits_{a''\in \mathbf{A}''(\mathbb{A})}\phi(e_na'')\int\limits_{
      \substack{
      k\in K\\
    k'\in K'\\
    a'\in [\mathbf{A}']
    }}\left|\sum\limits_{c\in \mathbf{A}(\mathbb{Q})}W(c,a'k'a''k)\right|^2 \, \frac{da'}{\delta_N(a')}\, d k' \, d k \frac{da''}{\delta_N(a'')}
  \end{align}
  Note that the integral over $a'$ is well-defined, i.e., independent of the choice of coset representative.  We may factor $c=c'c''$, with $(c', c'')\in \mathbf{A}'(\mathbb{Q}) \times \mathbf{A}''(\mathbb{Q})$.  Using that $\mathbf{M} '$ and $\mathbf{M} ''$ commute, we see that $W(c' c'', a' k' a'' k) = W(c'', (c')^{-1}a' k' a'' k)$.  Inverting $c'$, we see that the sum over $c$ and integral over $a'$ in the above may be written
  \begin{equation}\label{eq:cnjgi2hw8g}
    \int_{
      a'\in [\mathbf{A}']
    }
    \left|
      \sum_{c'\in \mathbf{A}'(\mathbb{Q})}
      \sum_{c''\in \mathbf{A}''(\mathbb{Q})}W(c'',c' a' k'a''k)\right|^2 \, \frac{da'}{\delta_N(a')}.
  \end{equation}

  We temporarily set $g := a' k' a'' k$.  We claim that the function $\mathbf{A}'(\mathbb{Q}) \ni c' \mapsto W(c'', c' g)$ is supported on a single $\mathbf{A}'(\mathbb{Z})$-coset that is independent of $c''$, and takes a constant value there.  To see this, we factor $W = \otimes W_{\mathfrak{p}}$ as a product over the places of integrals like \eqref{eq:cnjgi2biuc}, and reduce to showing that for each finite prime $p$, the function $\mathbf{A} '(\mathbb{Q}_p) \ni c' \mapsto \Theta^P [f]_p ((c'')^{-1} w_{M''}^{-1} u c' g)$ is supported on a single $\mathbf{A}'(\mathbb{Z}_p)$-coset independent of $c''$ and take a constant value there.  This feature holds by \cite[Lemma 20.17]{2021arXiv210915230N} when $p\notin S$ and by Lemma \ref{Lem:fu-n-=-c_1-1_k_a-left-tild-right-1_k_n_hm-psi} when $p \in S$, hence the claim.

  By the claim just established and the identity $\lvert \mathbf{A} '(\mathbb{Z}) \rvert = 2^{n'}$, we may simplify \eqref{eq:cnjgi2hw8g} to
  \begin{equation}\label{eq:cnjgi2hw8g2}
    2^{n '}    \int_{
      a'\in [\mathbf{A}']
    }
    \sum_{c'\in \mathbf{A}'(\mathbb{Q})}
    \left|
      \sum_{c''\in \mathbf{A}''(\mathbb{Q})}W(c'',c' a' k'a''k)\right|^2 \, \frac{da'}{\delta_N(a')}.
  \end{equation}
  The $a'$-integral and $c'$-sum now unfold to an integral over $\mathbf{A} '(\mathbb{A})$.  We insert this unfolded integral into \eqref{Eq:QpIwasawa}, giving
  \begin{align*}
    &\mathcal{Q}_P (\phi, f)\\
    = &2^{n '} \int\limits_{a''\in \mathbf{A}''(\mathbb{A})}\phi(e_na'')\int\limits_{
      \substack{
        k\in K\\
        k'\in K'\\
        a'\in \mathbf{A}'(\mathbb{A})
      }}\left|\sum\limits_{c\in \mathbf{A}''(\mathbb{Q})}W(c,a'k'a''k)\right|^2 \, \frac{da'}{\delta_N(a')}\, d k' \, d k \frac{da''}{\delta_N(a'')}
  \end{align*}
  We now write each adelic integral as a pair of integrals, one taken inside $S$, the other outside, and conclude using the Iwasawa decomposition, \cite[Lemma 20.17]{2021arXiv210915230N} and the $K_p$-invariance of $f_p$ for $p \notin S$.
\end{proof}

\subsection{Local estimates relevant for Eisenstein growth bound}\label{sec:cnh5ncyz7d}
Here we derive the local estimates at a non-archimedean place relevant for the proof of Theorem \ref{theorem:eisenstein-growth-bound-general-level}.  These are all straightforward adaptations (with significant simplifications) of the local archimedean estimates recorded in parts of \cite[\S21]{2021arXiv210915230N}.


We start with the case $P=G$. Let $f \in \mathcal{S}(N \backslash G)$, $c \in A$ and $g \in G$.  As in \cite[\S21.1]{2021arXiv210915230N}, we define
\begin{equation}\label{Eq:Wfcg}
  W(f, c, g) = \delta _N ^{1/2} (c) \int _{u \in N} f (c ^{-1} w _G u g) \psi ^{-1} (u) \, d u.
\end{equation}
Note that this integral is not exactly matching the local integral coming from Lemma \ref{lemma:let-f_s-=-otim-in-s-f_mathfr-be-as-abov-assume-tha}. But as $c$ is rational, we are free to multiply with $\delta_N^{1/2}(c)$ for every local place using the product law.
The integral converges absolutely under a mild growth assumption on $f$ (see \cite[Lemma 21.1]{2021arXiv210915230N}); in particular, it converges when $f \in \mathcal{S}(N \backslash G)$.

\begin{lemma}
  Suppose that $f \in \mathcal{S}(N \backslash G)$ is left $K_A$-invariant.  Then
  \begin{equation*}
    \int_{h \in N_H \backslash H}
    \int_{z \in Z}
    \lvert W (f, c, h z g) \rvert^2 \, d h \, d z
    \ll \lVert f \rVert^2.
  \end{equation*}
\end{lemma}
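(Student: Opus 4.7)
The plan is to reduce to $g=1$ via right translation, expand the square and substitute to isolate a single Fourier variable in $N$, and then conclude using a combination of positive semidefiniteness on $N$ and a Bruhat-geometric identification.

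First, right-translating $f$ by $g$ preserves $\|f\|_{L^2(N \backslash G)}$, the Schwartz property, and left $K_A$-invariance, so we may assume $g = 1$. Setting $F(v) := f(c^{-1} w_G v)$, the identities $w_G N w_G^{-1} = N^-$ and $c^{-1} N c = N$ show that $F$ is left $N^-$-invariant, with $\|F\|_{L^2(N^- \backslash G)} = \|f\|_{L^2(N \backslash G)}$ by unimodularity of $G$, and
\begin{equation*}
  W(f, c, g) = \delta_N^{1/2}(c) \int_N F(u g)\,\psi^{-1}(u)\,du
\end{equation*}
is the Jacquet/Whittaker transform of $F$ at $\psi$.

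Expanding $|W(f,c,hz)|^2$ as a double integral over $N \times N$ with character $\psi^{-1}(u_1)\psi(u_2)$ and substituting $u_1 = u_2 u$ (using that $\psi$ is a character of $N$) yields
\begin{equation*}
  \int_{N_H \backslash H}\!\int_Z |W(f,c,hz)|^2\,dh\,dz = \delta_N(c) \int_N \psi^{-1}(u)\,\Phi_c(u)\,du,
\end{equation*}
where $\Phi_c(u) := \int\!\!\int\!\!\int f(c^{-1} w_G u_2 u h z)\overline{f(c^{-1} w_G u_2 h z)}\,du_2\,dh\,dz$. Writing $c^{-1} w_G u_2 u h z = (c^{-1} w_G u_2 h z)(h^{-1} u h)$ (using $z$ central), one verifies directly that $\Phi_c$ is positive semidefinite on $N$. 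Since $\psi$ factors through the abelianization $N / [N,N] \cong F^n$, the Fourier integral $\int_N \psi^{-1}(u) \Phi_c(u)\,du$ reduces by Fubini to the Fourier integral on $F^n$ of the $[N,N]$-pushforward $\tilde\Phi_c$, which remains positive semidefinite and is finite because $f$ is Schwartz. Bochner's theorem on $F^n$ then gives $|\widehat{\tilde\Phi_c}(\psi)| \le \widehat{\tilde\Phi_c}(0) = \int_N \Phi_c(u)\,du$, and a Bruhat-geometric analysis, exploiting that $c^{-1} w_G \cdot N^- \cdot PZ = G$ (open dense) where $P = NH$ is the mirabolic, together with a Cauchy--Schwarz argument, bounds this by $\|f\|^2$ up to a constant depending on $c$.

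\textbf{Main obstacle}: The last step is the most delicate. Unfolding via Fubini yields
\begin{equation*}
  \int_N \Phi_c(u)\,du = \int_{N_H \backslash H}\!\int_Z \Big|\int_N f(c^{-1} w_G v h z)\,dv\Big|^2 dh\,dz,
\end{equation*}
and one must bound this by $\|f\|^2$. The parametrization $(v,h,z) \mapsto c^{-1} w_G v h z$ has positive-dimensional fibers when $n \geq 2$ (the dimension count gives $\dim(N \times (N_H \backslash H) \times Z) - \dim(N \backslash G) = (n+1)(n-2)/2 + 1 \geq 0$), and controlling the resulting integral requires the local constancy of $f$ (via its Schwartz property) together with a careful Iwasawa/Bruhat bookkeeping of the change-of-variables Jacobian in the coordinates $(v, h, z)$. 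The cleanest route is to first estimate the inner $v$-integral pointwise in $(h, z)$ using Cauchy--Schwarz against a compactly-supported indicator on $N$ adapted to $\mathrm{supp}(f)$, then integrate out $(h, z)$ to obtain $\|f\|^2$ up to the claimed constant.
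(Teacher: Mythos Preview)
Your approach has a genuine gap at the Bochner step, and more fundamentally it does not achieve the crucial feature of the lemma: \emph{uniformity in $c$}.

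First, the Bochner step. You claim $|\widehat{\tilde\Phi_c}(\psi)| \le \widehat{\tilde\Phi_c}(0)$. But Bochner's theorem (applied to a positive semidefinite $\tilde\Phi_c$) says only that $\widehat{\tilde\Phi_c}$ is a nonnegative measure; it gives no pointwise comparison between values of $\widehat{\tilde\Phi_c}$. The inequality you wrote would follow from the trivial bound $|\widehat{\tilde\Phi_c}(\psi)| \le \|\tilde\Phi_c\|_{L^1}$ together with $\tilde\Phi_c \ge 0$ pointwise, but positive semidefiniteness does not imply pointwise nonnegativity. Moreover, your claim that the $[N,N]$-pushforward $\tilde\Phi_c$ remains positive semidefinite on $N^{\mathrm{ab}}$ is not justified: when you unfold $\sum_{i,j}\lambda_i\bar\lambda_j\int_{[N,N]}\Phi_c(v_j^{-1}v_i n')\,dn'$, the variable $n'$ sits asymmetrically in the two factors and the expression is not a square.

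Second, and more seriously, even granting the Bochner bound, you are left with $\delta_N(c)\int_N\Phi_c(u)\,du$, which unfolds to $\delta_N(c)\int\!\!\int|\int_N f(c^{-1}w_G v h z)\,dv|^2\,dh\,dz$. You acknowledge this is the ``main obstacle'' and at one point say the resulting constant depends on $c$; but a $c$-dependent bound is vacuous here, since that is exactly what the lemma must avoid. The inner integral over $N$ is a Jacquet-type integral over the open Bruhat cell, which for generic compactly supported $f$ need not even have support that is uniform in $c$, and the Cauchy--Schwarz trick you propose against a fixed indicator on $N$ will produce volume factors that blow up with $c$.

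The paper's argument avoids all of this by a different mechanism. One Mellin-expands $f$ under left $A$-translation; the $K_A$-invariance restricts the expansion to the compact group of unramified characters. For each unramified $s$, the map $f[s]\mapsto W[f[s],\psi]$ is the Jacquet integral into the Whittaker model of $\mathcal{I}(s)$, and the result of \cite[Appendix~A]{MR2930996} identifies the $L^2(N_H\backslash H\times Z)$-norm of the Whittaker function with the $L^2$-norm of $f[s]$ in the induced model. The parameter $c$ enters only through the phase $|c|^{-s}$, which has modulus one on the unitary axis, so it disappears upon taking $L^2$-norms; Plancherel in $s$ then recovers $\|f\|^2$. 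This Kirillov-model isometry is the missing ingredient in your approach.
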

\begin{proof}
  This is an analogue of \cite[Lemma 21.4]{2021arXiv210915230N}, with the left $K_A$-invariance substituting for the Sobolev norm denoted $\mathcal{S}_{d_1,0}(f)$ in that reference, and can be verified in the same way.  The proof consists of expanding $f$ via Mellin inversion under left translation by $A$, using here the left $K_A$-invariance assumption to truncate the Mellin expansion to the group of unramified characters of $A$, and then appealing to the relationship established in \cite[Appendix A]{MR2930996} between $L^2$-norms in the Kirillov and induced models.  The main difference in the $p$-adic case is that the unramified characters form a compact group, so there are no convergence issues (addressed using differential operators in the archimedean case).  The remaining arguments are the same.
\end{proof}

\begin{lemma}
  \label{lemma:let-d--0.-let-f-in-mathc-backsl-g-be-such-that-fg-}
  Let $D > 0$.  Let $f \in \mathcal{S}(N \backslash G)$ be left $K_A$-invariant and $f(g) \neq 0$ only if $\lvert \det g \rvert = D$.  Let $c \in A$.  Then the integral
  \begin{equation*}
    I(c) := \int_{g \in N \backslash G}
    \lVert e _n g  \rVert^{- n } \lvert W (f, c, g )  \rvert^2 \, d g
  \end{equation*}
  satisfies
  \begin{equation*}
    I (c ) \ll \frac{\lVert f \rVert^2}{D \lvert \det c \rvert}.
  \end{equation*}
\end{lemma}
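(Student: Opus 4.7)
The plan is to unfold $I(c)$ into coordinates on $(N_H \backslash H) \cdot Z \cdot K$ via Iwasawa combined with an adjustment by $Z$, extract the Jacobian, and then use the support condition on $f$ to collapse it to a constant factor, at which point the previous lemma applies pointwise in $k$.

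Starting with Iwasawa $G = NAK$, which gives the measure $\delta_B^{-1}(a)\,da\,dk$ on $N \backslash G$, I have
\begin{equation*}
I(c) = \int_K \int_A \|e_n ak\|^{-n}\,|W(f,c,ak)|^2\,\delta_B^{-1}(a)\,da\,dk;
\end{equation*}
since $K$ preserves the norm and $e_n$ is fixed on the right by the first $n-1$ diagonal entries, $\|e_n ak\| = |a_n|$. Next I would factor $A = A_H \cdot D$, where $A_H = \{\diag(a_1,\dotsc,a_{n-1},1)\}$ and $D = \{\diag(1,\dotsc,1,t)\}$, and rewrite $a = a_H d$ as $a = a'_H z$ with $z = tI \in Z$ central and $a'_H = a_H\,\diag(t^{-1},\dotsc,t^{-1},1) \in A_H$. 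Tracking the modular factors (using $\delta_B|_Z \equiv 1$ and the identity $\delta_B(a'_H) = \delta_{B_H}(a'_H)\,|\det a'_H|_H$ which compares the positive roots of $B$ against those of $B_H$), this yields
\begin{equation*}
I(c) = \int_K \int_Z \int_{A_H} |z|^{-n}\,|\det a'_H|_H^{-1}\,\delta_{B_H}^{-1}(a'_H)\,|W(f,c,a'_H z k)|^2\,da'_H\,dz\,dk.
\end{equation*}

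I would then absorb a $K_H$-integration into $A_H$: inserting $1 = \int_{K_H}\,dk_H$ and substituting $k \mapsto k_H k$ (using the centrality of $z$ to commute it past $k_H$), the product $\eta := a'_H k_H$ ranges over $N_H \backslash H$ via Iwasawa $H = N_H A_H K_H$, with measure $d\eta = \delta_{B_H}^{-1}(a'_H)\,da'_H\,dk_H$ and with $|\det \eta|_H = |\det a'_H|_H$. This gives the clean expression
\begin{equation*}
I(c) = \int_K \int_Z \int_{N_H \backslash H} |z|^{-n}\,|\det \eta|_H^{-1}\,|W(f,c,\eta z k)|^2\,d\eta\,dz\,dk.
\end{equation*}

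Finally, the support hypothesis on $f$ forces $W(f,c,g) = 0$ unless $|\det g| = D\,|\det c|$; applied with $g = \eta z k$ this gives $|\det \eta|_H\,|z|^n = D\,|\det c|$ on the support of the integrand, so the awkward weight collapses: $|z|^{-n}\,|\det \eta|_H^{-1} = 1/(D\,|\det c|)$. Dropping the (non-negative) support indicator and invoking the preceding lemma for each fixed $k$ then yields
\begin{equation*}
I(c) \leq \frac{1}{D\,|\det c|}\int_K \int_Z \int_{N_H \backslash H} |W(f,c,\eta z k)|^2\,d\eta\,dz\,dk \ll \frac{\|f\|^2}{D\,|\det c|}.
\end{equation*}
The main technical obstacle is verifying the change-of-variables identity $\delta_B^{-1}(a) = \delta_{B_H}^{-1}(a'_H)\,|\det a'_H|_H^{-1}$ arising from the reparameterization $(A_H, D) \to (A_H, Z)$, and checking that the Jacobians $da_H\,dd = da'_H\,dz$ and $\delta_{B_H}^{-1}(a'_H)\,da'_H\,dk_H = d\eta$ are normalized consistently; once these bookkeeping identities are in hand, the support condition does the rest of the work.
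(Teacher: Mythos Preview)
Your proof is correct and is precisely the ``comparison of measures'' reduction to the previous lemma that the paper invokes. The change of variables $(a_H,d) \to (a'_H,z)$, the modular identity $\delta_B(a'_H)=\delta_{B_H}(a'_H)\,|\det a'_H|$, and the collapse of the weight via the support condition $|\det g|=D\,|\det c|$ are all carried out correctly; the final step is exactly an application of the preceding lemma with $g=k$, integrated over $K$.
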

\begin{proof}
  This is an analogue of \cite[Lemma 21.5]{2021arXiv210915230N}, and can be deduced from the previous lemma in the same way.  The proof consists of a comparison of measures.
\end{proof}

Denote by $\Delta_N$ the set of $N-$positive simple roots. Recall that $\mathfrak{E}(N\backslash G,\mathfrak{q}^2)$ consists of $K(\mathfrak{q}^2)-$invariant functions, which motivates the condition in the following result:
\begin{lemma}\label{Cor:Wsupport}
  Suppose that $f$ is $K(\mathfrak{q}^2)-$invariant. Then $W(f,c,ak)$ is nonvanishing only if $a^\alpha\in \mathfrak{q}^{-2}$ for all $\alpha\in \Delta_N$. In other words it is necessary that $a_i/a_{i+1}\in \mathfrak{q}^{-2}$.
\end{lemma}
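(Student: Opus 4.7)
The plan is to exploit the tension between the right $K(\mathfrak{q}^2)$-invariance that $W(f,c,\cdot)$ inherits from $f$ and the Whittaker transformation law it satisfies on the left under $N$. Two direct manipulations of~\eqref{Eq:Wfcg} give the ingredients: changing variables $u \mapsto u u_0^{-1}$ yields $W(f,c,u_0 g) = \psi(u_0) W(f,c,g)$ for $u_0 \in N$, and right $K(\mathfrak{q}^2)$-invariance of $f$ immediately gives $W(f,c,g k_0) = W(f,c,g)$ for $k_0 \in K(\mathfrak{q}^2)$.

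Now fix $a \in A$ and $k \in K$, and let $u_0 \in K_N(\mathfrak{q}^2) := N \cap K(\mathfrak{q}^2)$, i.e.\ $u_0 = I + X$ with $X$ strictly upper triangular and entries in $\mathfrak{q}^2$. Since $K(\mathfrak{q}^2)$ is normal in $K$, the conjugate $k^{-1} u_0 k$ still lies in $K(\mathfrak{q}^2)$, so
$$W(f,c,ak) = W(f,c,ak\cdot k^{-1}u_0 k) = W(f,c,au_0 k) = W(f,c,(au_0 a^{-1})\cdot ak) = \psi(au_0 a^{-1})\, W(f,c,ak),$$
where the final equality uses the Whittaker law, valid since $au_0 a^{-1} \in N$. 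If $W(f,c,ak) \neq 0$, this forces $\psi(au_0 a^{-1}) = 1$ for every $u_0 \in K_N(\mathfrak{q}^2)$.

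It remains to read off the constraint on $a$. A direct computation gives $(au_0 a^{-1})_{i,i+1} = (a_i/a_{i+1}) X_{i,i+1}$, hence
$$\psi(au_0 a^{-1}) = \psi\!\left(\sum_{i=1}^{n-1}(a_i/a_{i+1})\, X_{i,i+1}\right).$$
Specializing $X$ to have a single nonzero entry $X_{i,i+1}$ ranging over $\mathfrak{q}^2$ and using that $\psi$ has conductor $\mathfrak{o}$ forces $(a_i/a_{i+1})\,\mathfrak{q}^2 \subseteq \mathfrak{o}$, equivalently $a_i/a_{i+1} \in \mathfrak{q}^{-2}$, which is the desired condition $a^\alpha \in \mathfrak{q}^{-2}$ for each $\alpha \in \Delta_N$. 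I expect no serious obstacle here — the only real step is routing the $K(\mathfrak{q}^2)$-invariance through the Iwasawa coordinates via the normality of $K(\mathfrak{q}^2)$ in $K$, after which the Whittaker identity does the rest.
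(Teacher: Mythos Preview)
Your proof is correct and follows essentially the same approach as the paper, which simply refers back to Proposition~\ref{proposition:localized-whittaker-function-subcyclic-parameter-concentrates} with $\chi_\tau$ replaced by the trivial character: both arguments compare the left $(N,\psi)$-equivariance of $W(f,c,\bullet)$ against its right $K(\mathfrak{q}^2)$-invariance, and then read off the constraint on the simple-root entries of $a$. Your write-up makes the routing through normality of $K(\mathfrak{q}^2)$ in $K$ explicit, which is exactly the mechanism the paper has in mind.
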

\begin{proof}
  Similar to the proof of Proposition \ref{proposition:localized-whittaker-function-subcyclic-parameter-concentrates}, with $\chi_\tau$ replaced by the trivial character, one can compare the left $(N,\psi)$-equivariance of $W(f,c,\bullet)$ against its right $K(\mathfrak{q}^2)$-invariance. The support requirement follows easily.
\end{proof}

As in \cite[\S21.4]{2021arXiv210915230N}, we define for $\phi \in C_c^\infty(F^n - \{0\})$ the Rankin---Selberg integral
\begin{equation*}
    \mathcal{Q}(\phi, f, c) := \int_{g \in N \backslash G} \phi (e_n g) \lvert W (f, c, g ) \rvert^2 \, d g.
\end{equation*}
At non-archimedean places we in fact require this definition only when $\phi$ is
the characteristic function of the subset of primitive integral elements,
so we henceforth suppose that $\phi$ is of this form.

\begin{lemma}
  \label{lemma:let-d--0.-supp-that-f-in-c_cinftyn-backsl-g-left-k}
  Let $D > 0$.  Suppose that $f \in \mathcal{S}(N \backslash G)$ is left $K_A$-invariant and supported on elements $g$ with $\lvert \det g \rvert = D$.  Then
  \begin{equation*}
      \mathcal{Q}(\phi, f, c)  \ll \frac{\lVert f \rVert^2}{D \lvert \det c \rvert}.
  \end{equation*}
\end{lemma}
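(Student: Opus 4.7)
The plan is to deduce this lemma directly from the previous Lemma \ref{lemma:let-d--0.-let-f-in-mathc-backsl-g-be-such-that-fg-} by a pointwise comparison of the two weight functions $\phi(e_n g)$ and $\lVert e_n g \rVert^{-n}$.

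First I would record the elementary observation that any primitive integral vector $v \in \mathbf{V}^*_{\prim}(\mathfrak{o})$ satisfies $\lVert v \rVert = 1$ in the maximum norm on $F^n$: by definition the entries of $v$ lie in $\mathfrak{o}$ and generate the unit ideal, so at least one has absolute value exactly $1$ while none exceeds $1$. Hence whenever $\phi(e_n g) = 1$ we also have $\lVert e_n g \rVert^{-n} = 1$, while on the complement of the support of $\phi$ the weight $\lVert e_n g \rVert^{-n}$ is trivially nonnegative. This yields the pointwise inequality
\begin{equation*}
  \phi(e_n g) \leq \lVert e_n g \rVert^{-n}
\end{equation*}
on all of $N \backslash G$.

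Integrating this inequality against $\lvert W(f, c, g) \rvert^2 \, dg$ gives $\mathcal{Q}(\phi, f, c) \leq I(c)$, and the conclusion then follows at once from Lemma \ref{lemma:let-d--0.-let-f-in-mathc-backsl-g-be-such-that-fg-}. There is essentially no obstacle: the analytic content (the comparison of measures via Mellin inversion over the compact group of unramified characters of $A$, and the relation between $L^2$-norms in the Kirillov and induced models) was dispatched in the previous lemma, and the present statement merely specializes that estimate to the explicit test function $\phi$ relevant for the unramified primes $p \notin S$ in the proof of Proposition \ref{proposition:proposition-20-14}.
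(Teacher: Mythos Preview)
Your argument is correct and is essentially the approach the paper intends: the paper's proof just cites the archimedean analogue \cite[Lemma 21.15]{2021arXiv210915230N} and says the present lemma follows from Lemma~\ref{lemma:let-d--0.-let-f-in-mathc-backsl-g-be-such-that-fg-} in the same way, which amounts exactly to your pointwise comparison $\phi(e_n g)\leq \lVert e_n g\rVert^{-n}$ (here with $r=1$ since primitive integral vectors have norm $1$). The final paragraph about Mellin inversion and Kirillov models is superfluous---that content lives two lemmas back---but it does no harm.
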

\begin{proof}
  This is an analogue of \cite[Lemma 21.15]{2021arXiv210915230N}, and may be deduced from Lemma \ref{lemma:let-d--0.-let-f-in-mathc-backsl-g-be-such-that-fg-} in the same way.
\end{proof}


\begin{lemma}\label{Lem:SupportQ}
  Let $f \in \mathcal{S}(N \backslash G)$ be such that $f$ is $K(\mathfrak{q}^2)-$invariant and $f(g) \neq 0$ only if $\lvert \det g \rvert = D$. Then for $\phi$ as above and $T=[\mathfrak{o}:\mathfrak{q}^2]$, we have the following:
  \begin{enumerate}
        \item Let $b,c\in A$, then the integral
        $$
  I(f,b,c):=\int_{g \in N \backslash G} \phi (e_n g)  W (f, b, g )\overline{W(f,c,g)}dg
        $$
        is nonvanishing only if $|\det(b)|=|\det(c)|$.
        \item  $  \mathcal{Q}(\phi,f,c) $ is nonvanishing only if
        \begin{equation*}
                D|\det(c)|\leq  T^{n(n-1)/2}.
        \end{equation*}
  \end{enumerate}

\end{lemma}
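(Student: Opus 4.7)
The plan rests on two mechanisms already assembled in \S\ref{sec:cnh5ncyz7d}: the determinant scaling baked into the definition \eqref{Eq:Wfcg} of $W(f,c,g)$, and the Iwasawa support constraint of Corollary~\ref{Cor:Wsupport}.

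First I would prove part (1) by unwinding \eqref{Eq:Wfcg}. Since $u \in N$ is unipotent and $w_G$ has unit determinant, the integrand $f(c^{-1} w_G u g)$ is nonzero at a given $g$ only when $|\det(c^{-1} w_G u g)| = D$, which forces $|\det g| = D\,|\det c|$. The same reasoning with $b$ in place of $c$ shows $W(f, b, g) \neq 0$ only if $|\det g| = D\,|\det b|$. Therefore the product $W(f,b,g)\,\overline{W(f,c,g)}$ vanishes pointwise unless $|\det b| = |\det c|$, which kills $I(f,b,c)$ identically outside this locus.

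For part (2), I would combine this same determinant scaling with Corollary~\ref{Cor:Wsupport}. Using the Iwasawa decomposition on $N \backslash G$, write $g = a k$ with $a = \diag(a_1,\ldots,a_n) \in A$ and $k \in K$. Since $k \in \mathbf{G}(\mathfrak{o})$, the last row $e_n k$ is a primitive integral vector, and thus $\phi(e_n g) = \phi(a_n e_n k)$ is nonzero only when $|a_n| = 1$. On the other hand, the right $K(\mathfrak{q}^2)$-invariance of $f$ passes through the defining integral \eqref{Eq:Wfcg} to give the same right-invariance for $W(f,c,\bullet)$, so Corollary~\ref{Cor:Wsupport} applies and forces $|a_i / a_{i+1}| \leq T$ for each simple root. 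A reverse induction starting from $|a_n| = 1$ then yields $|a_i| \leq T^{\,n-i}$, and so
\begin{equation*}
  |\det g| \;=\; \prod_{i=1}^n |a_i| \;\leq\; T^{\,0 + 1 + \cdots + (n-1)} \;=\; T^{\,n(n-1)/2}.
\end{equation*}
Combining with the identity $|\det g| = D\,|\det c|$ from part (1), valid on the support of $W(f,c,\bullet)$, gives the desired bound $D\,|\det c| \leq T^{\,n(n-1)/2}$.

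There is no substantive analytic obstacle; the only point requiring even minor care is verifying that Corollary~\ref{Cor:Wsupport} applies to the ``integrated'' function $W(f,c,\bullet)$, but this is immediate from \eqref{Eq:Wfcg}. The lemma is in essence a bookkeeping consequence of the determinant scaling and the Iwasawa truncation already in hand.
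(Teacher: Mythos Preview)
Your proof is correct and follows essentially the same route as the paper: part (1) via the determinant constraint coming from the support of $f$ in \eqref{Eq:Wfcg}, and part (2) via Iwasawa decomposition, the support of $\phi$ forcing $|a_n|=1$, and Lemma~\ref{Cor:Wsupport} bounding $|\det a|$. Your write-up is in fact slightly more explicit than the paper's in spelling out the reverse induction $|a_i|\leq T^{n-i}$; note only that the result you cite as ``Corollary~\ref{Cor:Wsupport}'' is stated in the paper as a Lemma.
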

Note that       (1) is the analogue of  the 2nd nonvanishing condition in \cite[Lemma 21.3]{2021arXiv210915230N}.
\begin{proof}
Recall that one can further expand $W(f,b,g)$ and $W(f,c,g)$ by \eqref{Eq:Wfcg}. From the condition  that $f(g)\neq 0$ only if $|\det(g)|=D$, we get that for fixed $g$, $W(f,b,g)$ and $W(f,c,g)$ are both nonvanishing only if $$|\det(b)|=|\det(c)|=\frac{|\det(g)|}{D}.$$ This proves (1).
  For (2), as $\phi$ is $K-$invariant, one can use the Iwasawa decomposition and factor the integral $  \mathcal{Q}(\phi, f,c)$ as
  \begin{equation*}
   \int\limits_{a\in A}\phi (e_n a_n)
    \int\limits_{k\in K} \lvert W(f,c,ak) \rvert^2\ d k \frac{da}{\delta_N(a)}.
  \end{equation*}
  By the support condition for $\phi$, we get $|a_n|=1$ for nonzero contributions to the integral. As $f$ is $K(\mathfrak{q}^2)-$invariant, we have by Lemma \ref{Cor:Wsupport} that the nonzero contribution comes from
  \begin{equation*}
    |\det(a)|\leq T^{n(n-1)/2}.
  \end{equation*}
  (2) follows then by the condition on the support of $f$ and  \eqref{Eq:Wfcg}.
\end{proof}

Consider now the general case where $P \leq G$ could be $G$ itself, or a standard maximal parabolic subgroup.  We denote by $W_P(f,c,g)$ and $\mathcal{Q}_P(\phi,f,c)$ the (partial) Whittaker transform and (partial pseudo) local Rankin---Selberg integral, defined as in \cite[\S21.6]{2021arXiv210915230N}
\footnote{The current version of \cite{2021arXiv210915230N} has a typo that it uses $\delta_{N''}$ instead of $\delta_N$ in the definition of $W_P$. This typo does not propagate in \cite{2021arXiv210915230N} for archimedean computations as it is relevant only for \cite[Lemma 21.19]{2021arXiv210915230N}, which actually requires using $\delta_N$ to be true. On the other hand the bounds at unramified places would be weaker than used in \cite{2021arXiv210915230N}, but still enough to derive the desired bounds. Compare Lemma
  \ref{Lem:globalperiodboundreduction} with \cite[Lemma 22.1]{2021arXiv210915230N}.
}
by
\begin{equation*}
  W_P(f,c,g) := \delta_{N}^{1/2}(c) \int _{u \in N''} f(c^{-1} w^{-1}_{M''} u g) \psi ^{-1} (u) \,d u
\end{equation*}
and
\begin{equation*}
  \mathcal{Q}_P(\phi,f,c) := \int _{g \in N \backslash G}
  \phi(e_n g) |W_P(f,c,g)|^2 \, d g.
\end{equation*}

Denote
\begin{equation}\label{Eq:DP}
        D_P=\prod\limits_{n'<i\leq n}T^{i-\frac{n+1}{2}}.
\end{equation}

\begin{proposition}[Main bound for $\mathcal{Q}_P$]\label{Prop:mainboundforQp}
  Let $f \in \mathfrak{E}(N \backslash G, \mathfrak{q}^2)$ (as in Definition \ref{Defn:Esubspace}) with $\lVert f \rVert \ll 1$, and let $c \in A''$.
Then $\mathcal{Q}_P(\phi,f,c)$ is nonvanishing only if
\begin{equation*}
D_P|\det(c)|\leq T^{n''(n''-1)/2},
\end{equation*}
 in which case we have
  \begin{equation*}
    \mathcal{Q}_P (\phi, L(Y)f, c) \ll
     \frac{1}{D_P|\det(c)|}
  \end{equation*}
\end{proposition}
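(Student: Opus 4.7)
The plan is to reduce to the case $P=G$ already treated in Lemmas~\ref{Lem:SupportQ} and~\ref{lemma:let-d--0.-supp-that-f-in-c_cinftyn-backsl-g-left-k}, by exploiting the block-diagonal structure of $c \in A''$ and $w_{M''}$ along the Levi decomposition $M' \times M''$ of $P$. The argument parallels \cite[Proposition 21.19]{2021arXiv210915230N}, with significant simplifications in the p-adic setting. First, I would unfold $\mathcal{Q}_P$ via Iwasawa $G = NAK$ and the $K$-invariance of $\phi$, giving
\begin{equation*}
  \mathcal{Q}_P(\phi, f, c) = \int_K \int_{a \in A,\, |a_n|=1} |W_P(f, c, ak)|^2 \, \frac{da}{\delta_N(a)} \, dk.
\end{equation*}
Since $u \in N''$, $c \in A''$, and $w_{M''}^{-1}$ are each block-diagonal with trivial $M'$-component, the partial Whittaker transform $W_P(f, c, ak)$ reduces, upon fixing the outer parameters $a_{M'}$ and $k$, to a full Whittaker transform on the Levi $M''$ evaluated at a block-diagonal element whose $M''$-part is $c_{M''}^{-1} w^{(M'')} u_{M''} a_{M''}$.

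Second, I would perform the support analysis. Adapting Lemma~\ref{Cor:Wsupport} to the simple roots of $N''$, the $K(\mathfrak{q}^2)$-invariance of $f$ forces $a_i/a_{i+1} \in \mathfrak{q}^{-2}$ for $n' < i < n$, while the $\mathfrak{E}$-support condition pins $|a_i| = T^{i - (n+1)/2}$ at each $i$. Combined with $|a_n|=1$ and the determinant relation forced by $\operatorname{supp}(f) \subset N K_A \tilde{T}^{-\rho_N^\vee} K$, a short computation using $\sum_{i=n'+1}^n (i-(n+1)/2) = n'n''/2$ yields the constraint $D_P|\det c| \le T^{n''(n''-1)/2}$ and identifies $D_P$ as the ``excess'' determinant factor arising from the mismatch between the $\tilde{T}^{-\rho_N^\vee}$-normalization inherited from $G$ and the $\tilde{T}^{-\rho_{N''}^\vee}$-normalization natural to $M''$.

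Third, for the $L^2$-bound, with outer parameters $(a_{M'}, k)$ fixed, the remaining integral over the $M''$-block of $a$ is a Kirillov-model $L^2$-estimate on $M''$. Applying Lemma~\ref{lemma:let-d--0.-supp-that-f-in-c_cinftyn-backsl-g-left-k} to the Levi $M''$, with effective shell parameter $D_P$, produces a pointwise bound on the inner integral of order $\|f\|^2/(D_P|\det c|)$; integrating over the outer parameters $(a_{M'}, k)$ and using $\|f\|_{L^2(N\backslash G)} \ll 1$ completes the estimate. The argument transfers to $L(Y)f$: since $L(Y)$ acts on $W_P$ by a shift of the argument by $Y' := w_{M''}^{-1} Y w_{M''}$ together with scalar factors absorbed in the measure, it preserves both the $K_A$-invariance and the $L^2$-norm, and the same estimate applies.

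The main obstacle is the careful support analysis in Step~2: combining the $K(\mathfrak{q}^2)$-invariance with the support shell of $\mathfrak{E}$ to extract the precise factor $D_P$. The combinatorics is routine but easy to mis-index, and verifying that the excess determinant $\prod_{i=n'+1}^{n} T^{i - (n+1)/2} = T^{n'n''/2}$ reproduces exactly~\eqref{Eq:DP} requires care.
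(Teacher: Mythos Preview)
Your proposal is correct and follows essentially the same approach as the paper: both reduce to the Levi block $M''$ via the formula of \cite[Lemma 21.19]{2021arXiv210915230N}, read off $D_P$ from the $\mathfrak{E}$-support condition (Definition~\ref{Defn:Esubspace}\eqref{enumerate:cnh5ndggry}) as the $M''$-determinant on the support shell, obtain the nonvanishing constraint by applying Lemma~\ref{Lem:SupportQ}(2) on $M''$, and deduce the upper bound from Lemma~\ref{lemma:let-d--0.-supp-that-f-in-c_cinftyn-backsl-g-left-k} on $M''$ after integrating out the outer variables $(a',k)$ using the $L^2$-identity of \cite[Lemma 21.20]{2021arXiv210915230N}. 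The only cosmetic difference is that the paper invokes \cite[Lemmas 21.19, 21.20]{2021arXiv210915230N} by name, whereas you sketch the same block-diagonal unfolding by hand; also, the paper's proof silently treats the case $Y=1$ (the general $Y$ being absorbed by the change of variables recorded in the subsequent lemma), which matches your remark that $L(Y)$ only shifts the argument and preserves the relevant norms.
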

\begin{proof}
  The upper bound of this result is an analogue of \cite[Lemma 21.32]{2021arXiv210915230N} with $r=1$, $Y=1$. 

  Recall from Definition \ref{Defn:Esubspace} \eqref{enumerate:cnh5ndggry}
  that for $(a,k) \in A \times K$, we have $f(a k) \neq 0$ only if
  \begin{equation*}
    \lvert
    a_1 \rvert = T^{\frac{1 - n}{2}},
    \quad
    \lvert
    a_2 \rvert = T^{\frac{3 - n}{2}},
    \quad
    \dotsc,
    \quad
    \lvert
    a_n \rvert = T^{\frac{n - 1}{2}}.
  \end{equation*}
  (Note that we are taking $U=N$ in Definition \ref{Defn:Esubspace} here; One can swap between upper right and lower left unipotent subgroups by using a left translate by the longest Weyl element.)
  This implies that as a function in $a''\in M''$, if $f(a'a''k)\neq 0$, then
  \begin{equation*}
    D=|\det(a'')|=|\prod\limits_{i=n'+1}^n a_i|=
 D_P.
  \end{equation*}
%

  By applying Lemma \ref{Lem:SupportQ}(2) to the formula in \cite[Lemma 21.19]{2021arXiv210915230N}
  with $G$ replaced by a general Levi block $M''$, the integral $\mathcal{Q}_P$ would be nonvanishing only when
\begin{equation*}
        D_P|\det(c)|
\leq T^{n''(n''-1)/2}.
\end{equation*}

  Furthermore by \cite[Lemma 21.20]{2021arXiv210915230N} we have
\begin{equation*}
\int\limits_{a'\in A'}\int\limits_{k\in K}\lVert |\det|^{n'/2}R(a'k)f\rVert^2_{L^2(N''\backslash M'')}\frac{da'}{\delta_N(a')}dk=\|f\|^2_{L^2(N\backslash G)}\ll 1,
\end{equation*}
and thus by Lemma \ref{lemma:let-d--0.-supp-that-f-in-c_cinftyn-backsl-g-left-k}
\begin{equation*}
  \mathcal{Q}_P (\phi, f, c) \ll
  \frac{1}{D|\det(c)|}=\frac{1}{D_P|\det(c)|}.
\end{equation*}
\end{proof}
\begin{corollary}\label{Cor:QPbound}
  When $ \mathcal{Q}_P (\phi, f, c)$ is nonvanishing, we have
  \begin{equation*}
    \mathcal{Q}_P (\phi, f, c)\ll T^{-n''(n''-1)/2}\left(\frac{T^{n''(n''-1)/2}}{D_P|\det(c)|}\right)^l
  \end{equation*}
  for any $l\geq 1$.
\end{corollary}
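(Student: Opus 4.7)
The plan is to derive Corollary \ref{Cor:QPbound} directly from Proposition \ref{Prop:mainboundforQp} by combining its two conclusions. Proposition \ref{Prop:mainboundforQp} gives two pieces of information: first, the nonvanishing condition $D_P|\det(c)| \leq T^{n''(n''-1)/2}$, which is equivalent to saying that the ratio
\begin{equation*}
X := \frac{T^{n''(n''-1)/2}}{D_P|\det(c)|} \geq 1
\end{equation*}
whenever $\mathcal{Q}_P(\phi, f, c)$ is nonvanishing; second, the upper bound $\mathcal{Q}_P(\phi, f, c) \ll 1/(D_P|\det(c)|)$.

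I would then rewrite the upper bound factor $1/(D_P|\det(c)|)$ by inserting a compensating power of $T^{n''(n''-1)/2}$:
\begin{equation*}
\frac{1}{D_P|\det(c)|} = T^{-n''(n''-1)/2} \cdot X.
\end{equation*}
This is precisely the $l=1$ case of the claimed estimate. For $l \geq 1$, since $X \geq 1$ by the nonvanishing condition, we have $X \leq X^l$, so
\begin{equation*}
\mathcal{Q}_P(\phi, f, c) \ll T^{-n''(n''-1)/2} X \leq T^{-n''(n''-1)/2} X^l,
\end{equation*}
which is the claimed bound. There is no hard step here; the corollary is a purely formal consequence of weakening the sharp $l=1$ bound using the nonvanishing constraint to ensure the base of the exponent exceeds $1$. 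The only thing to keep in mind is that the bound is trivially vacuous (and in particular no stronger than the $l=1$ version) whenever $X > 1$, and sharp precisely at the boundary $D_P|\det(c)| = T^{n''(n''-1)/2}$.
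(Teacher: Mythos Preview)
Your proof is correct and matches the paper's approach exactly: the paper simply states that the corollary ``follows immediately from the upper bound and the nonvanishing condition of the last proposition,'' which is precisely what you have spelled out.
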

This is an analogue of \cite[Lemma 21.33]{2021arXiv210915230N}, and follows immediately from the upper bound and the nonvanishing condition of the last proposition.

\subsection{Control for denominators}\label{sec:cnjobc8wnb}
The goal of this subsection is to control the denominators of entries of $c\in A''(\mathbb{Q})$ at $p\in S-\{\infty\}$ when $\mathcal{Q}_P(\phi,f,c)$ is non-vanishing, providing the extension of the 1st nonvanishing condition
of \cite[Lemma 21.3]{2021arXiv210915230N}.

We consider for simplicity the case $M''=G$. The general case is similar using \cite[Lemma 21.19]{2021arXiv210915230N}.  Recall from Lemma \ref{Cor:Wsupport} that if $g=ak$ and $f$ is $K(\mathfrak{q}^2)$-invariant, then $W(f,c,ak)\neq 0 $ only if
\begin{equation}\label{Eq:acondition1}
  a^\alpha\in \mathfrak{q}^{-2}, \forall \alpha\in \Delta_N.
\end{equation}
Also recall that by the support of $\phi$, $  \mathcal{Q}(\phi,f,c)\neq 0$ requires
\begin{equation}
  \label{Eq:acondition2}
  |a_n|=1
\end{equation}
for the integral in $g=ak$.  We shall thus assume from now on that $a$ satisfies both \eqref{Eq:acondition1} and \eqref{Eq:acondition2}.
\begin{proposition}\label{Prop:denominatorcontrol}
  There exists fixed $d$ (depending only on the rank of group) such that for $a$ specified as above, $f\in \mathfrak{E}(N\backslash G, \mathfrak{q}^2)$
  and $T=[\mathfrak{o}:\mathfrak{q}^2]$,
  \begin{equation*}
    W(f,c,ak)\neq 0 \Rightarrow |c_i|\leq T^d,\forall i.
  \end{equation*}
\end{proposition}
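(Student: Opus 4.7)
The plan is to analyze the Jacquet integral
\[
W(f,c,ak) \;=\; \delta_N^{1/2}(c)\int_{N} f\bigl(c^{-1} w_G u a k\bigr)\, \psi^{-1}(u)\, du
\]
by combining the explicit support of $f \in \mathfrak{E}(N\backslash G, \mathfrak{q}^2)$ with Proposition~\ref{Prop:WhittakerIntDomain}, which will be established in the appendix and controls the effective support of such Jacquet integrals.

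First I would make the substitution $u \mapsto a u a^{-1}$, with Jacobian $\delta_N(a)$, and use the identity $w_G a = a^* w_G$ where $a^* := w_G a w_G^{-1} = \diag(a_n, a_{n-1}, \dotsc, a_1)$. This puts the integral in the form
\[
W(f,c,ak) \;=\; \delta_N^{1/2}(c)\,\delta_N(a) \int_{N} f\bigl(b\, w_G\, u\, k\bigr)\, \psi_a^{-1}(u)\, du,
\]
where $b := c^{-1} a^* \in A$ and $\psi_a(u) := \psi(a u a^{-1})$ is a character whose restriction to the simple root subgroup $N_{\alpha_i}$ has conductor $|a_{i+1}/a_i|\,\mathfrak{o}$. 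By~\eqref{Eq:acondition1} the ratios $|a_i/a_{i+1}|$ are bounded by $T$, and combined with~\eqref{Eq:acondition2} this yields $|a^*_i| = |a_{n+1-i}| \leq T^{i-1}$.

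Next I would invoke Proposition~\ref{Prop:WhittakerIntDomain}, applied to the function $g \mapsto f(b g k)$ and the character $\psi_a^{-1}$. The proposition restricts the effective $u$-support of the Jacquet integral to a region in $N$ where each entry $u_{ij}$ ($i<j$) is bounded by $T^{m_{ij}}$ for explicit exponents $m_{ij}$ depending only on $n$. Writing $v := w_G u w_G^{-1} \in U$ and Iwasawa-decomposing $v = n_v \alpha_v k_v$ with $\alpha_v \in A$, the bound $|v_{ij}| \leq T^{O_n(1)}$ then forces $|\alpha_{v,i}| \leq T^{O_n(1)}$. Moreover $b w_G u = (b n_v b^{-1})(b\alpha_v)(k_v w_G)$, so the Iwasawa $A$-component of the argument of $f$ equals $b\alpha_v$.

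Finally, the support condition on $f$, namely on $N K_A \tilde{T}^{-\rho_N^\vee} K$ by analogy with Definition~\ref{Defn:Esubspace}, forces $b \alpha_v \in K_A \tilde{T}^{-\rho_N^\vee}$. Solving entrywise,
\[
|c_i| \;=\; |a^*_i|\,|\alpha_{v,i}|\,T^{-(\rho_N^\vee)_i} \;\leq\; T^{i-1}\cdot T^{O_n(1)} \cdot T^{(n-1)/2} \;\leq\; T^d
\]
for a fixed $d = d(n)$. The main obstacle is the invocation of Proposition~\ref{Prop:WhittakerIntDomain}: for generic $v \in U$ the Iwasawa $A$-component $\alpha_v$ is unbounded, so the support of $f$ alone cannot yield a polynomial bound on $c$. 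The cancellation from the oscillating factor $\psi_a^{-1}$, quantified by the appendix, is essential to cut the $u$-range down to a region where $|\alpha_v|$ is polynomially bounded in $T$; once this is available, the remainder of the argument is bookkeeping of exponents.
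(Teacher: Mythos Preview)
Your overall strategy is the same as the paper's: invoke Proposition~\ref{Prop:WhittakerIntDomain} to restrict the $u$-integration to a domain where entries are $\leq T^{O_n(1)}$, then Iwasawa-decompose and use the support of $f$ on $N K_A \tilde{T}^{-\rho_N^\vee} K$ to pin down $|c_i|$. The Iwasawa bookkeeping you sketch is essentially the content of the unnamed lemma following the paper's proof (bounding $|a_i'|$ via minors).

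The gap is in how you invoke Proposition~\ref{Prop:WhittakerIntDomain}. That proposition is stated for a \emph{Mellin component} $f[s]\in\mathcal{I}(s)$, the untwisted character $\psi^{-1}$, and with $a$ sitting to the right of $u$ inside the argument $w_G u a k$. After your substitution $u\mapsto a u a^{-1}$ you instead have $f$ itself, the twisted character $\psi_a^{-1}$, and $b=c^{-1}a^*$ on the left; this is not what Proposition~\ref{Prop:WhittakerIntDomain} says, and you cannot cite it as a black box for ``the function $g\mapsto f(bgk)$ and the character $\psi_a^{-1}$''. The paper closes this gap by \emph{not} substituting: it Mellin-expands $W(f,c,ak)=\int_{(\sigma)}|c|^{-s}W^0_s(ak)\,ds$, applies Proposition~\ref{Prop:WhittakerIntDomain} to each $W^0_s(ak)=\int_N f[s](w_G u a k)\psi^{-1}(u)\,du$ to obtain a domain $\mathcal{N}$ uniform in $s$, swaps the order of integration, and then the $s$-integral collapses to the constraint $|(c^{-1}a'\tilde{T}^{\rho_N^\vee})_i|=1$ for $w_G u a = a' n' k'$. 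Your change of variables, if you undo it, just recovers this argument. Alternatively you could try to rerun the appendix argument for your $h'(g)=f(bw_G g k)$ and $\psi_a^{-1}$: the required left $K_A U$-invariance and right $K_Q(\mathfrak{q}^2)$-invariance of $h'$ do hold, and the conductor of $\psi_a$ on simple root subgroups is $\leq T$, so the appendix machinery would adapt---but this is extra work you have not done, not a citation.
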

This result can be reduced to the following result:

\begin{proposition}\label{Prop:WhittakerIntDomain}
  There exists fixed $d$ and domain $\mathcal{N}\subset N$ with the property
  \begin{equation*}
    n=(n_{ij})\in \mathcal{N}\Rightarrow |n_{ij}|<T^d,
  \end{equation*}
  such that for $a$ as above and the Mellin component $f[s]\in \mathcal{I}(s)$ of $f$ which is also $K(\mathfrak{q}^2)-$invariant, we have
  \begin{equation*}
    \int\limits_{u\in N}f[s](w_Guak)\psi^{-1}(u)du= \int\limits_{u\in \mathcal{N}}f[s](w_Guak)\psi^{-1}(u)du.
  \end{equation*}
\end{proposition}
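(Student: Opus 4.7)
\bigskip

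\textbf{Proof plan for Proposition \ref{Prop:WhittakerIntDomain}.}

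The plan is to show that for $u \in N$ lying outside a box of the form $\{|u_{ij}| < T^{d}\}$ (for a suitable $d$ depending only on $n$), the integrand $f[s](w_{G}uak)\psi^{-1}(u)$ integrates to zero over a small neighborhood of $u$, so these regions contribute nothing to Jacquet's integral. The mechanism is a standard oscillation argument: on the ``far'' region the integrand will be a constant in $u$ times $\psi^{-1}(u)$, and $\psi$ will have large enough ``conductor direction'' to force cancellation.

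The first step is to use the Iwasawa decomposition to write $w_{G}u = n(u)\,t(u)\,\kappa(u)$ with $n(u)\in N$, $t(u)\in A$, $\kappa(u)\in K$. Since $f[s]\in \mathcal{I}(s)$ is left $B$-equivariant by $\delta_{B}^{1/2}|\cdot|^{s}$, this yields
\[
  f[s](w_{G}uak) = \delta_{B}^{1/2}(t(u))\,|t(u)|^{s}\,f[s](\kappa(u)ak)
  = \delta_{B}^{1/2}(t(u)a)\,|t(u)a|^{s}\,f[s]\bigl((a^{-1}\kappa(u)a)\,k\bigr),
\]
so everything is reduced to analyzing when $(a^{-1}\kappa(u)a)k$ lies in a single $K(\mfq^{2})$-coset of $K$. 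In the rank-one case $(|x| > 1)$ one computes $\kappa(u) = \bigl(\begin{smallmatrix}1 & 0\\ 1/x & 1\end{smallmatrix}\bigr)$ and $a^{-1}\kappa(u)a$ becomes the lower-unipotent with entry $a_{1}/(a_{2}x)$, which lies in $K(\mfq^{2})$ once $|x|\ge T|a_{1}/a_{2}|$; then by normality of $K(\mfq^{2})$ in $K$ and the right $K(\mfq^{2})$-invariance of $f[s]$, the value $f[s](\kappa(u)ak)$ becomes \emph{constant} in $x$ on this range. Integrating $\psi^{-1}(x)$ over $\{|x|\geq T|a_1/a_2|\}$ (a union of annuli on which $\psi^{-1}$ is a nontrivial character) gives $0$. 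Combined with the standing hypothesis $|a_1/a_2|\le T$, this forces the effective $x$-support into $|x|\leq T^{2}$.

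The second step is to promote this rank-one mechanism to the full $N$. I would parameterize $u = (u_{ij})_{i<j}$ and, for each entry $u_{ij}$ whose absolute value exceeds some threshold $T^{d_{ij}}$, consider the \emph{right translation} $u\mapsto u\cdot(I+\epsilon E_{ij})$ for $\epsilon$ in a suitable neighborhood of zero. Such a right translation multiplies $w_{G}uak$ on the right by an elementary unipotent, which by the analysis above can be absorbed (up to $a$-conjugation and $K(\mfq^{2})$-normality) into the $K(\mfq^{2})$-invariance of $f[s]$, so that $f[s](w_{G}u(I+\epsilon E_{ij})ak)=f[s](w_{G}uak)$ in the large regime; on the other hand this shift changes $\psi^{-1}(u)$ by the nontrivial factor $\psi^{-1}(\epsilon)$. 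Averaging $\epsilon$ over a small subgroup on which $\psi^{-1}$ remains nontrivial then kills the integral, so the support must satisfy $|u_{ij}|<T^{d_{ij}}$. Taking $d := \max_{i<j}d_{ij}$ and $\mathcal{N}:=\{u\in N:|u_{ij}|<T^{d}\,\forall i<j\}$ completes the argument.

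The main obstacle is the bookkeeping for general rank $n$: to compute each threshold $d_{ij}$ one must understand the entries of $\kappa(u)$ in the Iwasawa decomposition, which are ratios of minors of $w_{G}u$ and hence polynomials in the $u_{kl}$. One expects to carry this out by decreasing induction on $i-j$ (peeling off the simple roots one by one via the $SL_{2}$-type formula and reducing to an Iwasawa problem for lower rank), keeping track of how conjugation by $a$ affects the resulting $K(\mfq^{2})$-coset. The uniform bound $|a_{i}/a_{i+1}|\le T$ combined with $|a_{n}|=1$ keeps all these conjugation distortions polynomially bounded in $T$, so a fixed $d$ depending only on the rank of $G$ suffices.
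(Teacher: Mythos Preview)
There is a genuine gap in your second step. You claim that the right translation $u \mapsto u(I+\epsilon E_{ij})$ changes $\psi^{-1}(u)$ by the nontrivial factor $\psi^{-1}(\epsilon)$, but this is false for $j > i+1$. Since $\psi(u) = \psi(u_{12}+u_{23}+\cdots)$ sees only the super-diagonal, one computes $(u(I+\epsilon E_{ij}))_{k,k+1} = u_{k,k+1} + \epsilon\, u_{k,i}\,[j=k+1]$; the condition $j=k+1$ forces $k=j-1>i$, whence $u_{j-1,i}=0$ (a strictly lower-triangular entry of an upper-unipotent matrix), so the phase is unchanged. For $j=i+1$ the phase does change by $\psi^{-1}(\epsilon)$, but the invariance constraint $a^{-1}(I+\epsilon E_{i,i+1})a \in K(\mfq^{2})$ together with the standing hypothesis $|a_{i}/a_{i+1}|\le T$ forces $|\epsilon|\le 1$, hence $\psi^{-1}(\epsilon)=1$ for all allowed $\epsilon$. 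Either way, the oscillation mechanism yields no information, and your proposed induction on $j-i$ has no base case.

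The paper's proof circumvents this by perturbing with \emph{lower}-triangular (opposite-unipotent) elements on both sides. Writing $h(g)=f[s](w_{G}gak)$, one has left invariance of $h$ under $K_{A}U$ (via the $w_{G}$-twist of the left $N$-equivariance of $f[s]$) and right invariance under $K_{Q}(p^{d}\mathfrak{o})$ for $d\gg_{G} v(T)$. On a suitably chosen ``nice domain'' $N_{0}\subset N$ of large slope $\rho$ (after conjugating by a diagonal $A(\rho)$ so that $u'=u^{A(\rho)}$ has integral entries), one takes $q_{1}=I+p^{\rho-l-1}xE_{j_{0},i_{0}+1}$ on the right: this is lower-triangular (hence absorbed by right $K_{Q}$-invariance), and the product $u'q_{1}$ has its $(i_{0},i_{0}+1)$-entry shifted by $p^{\rho-l-1}x\,(u')_{i_{0},j_{0}}$, transferring the large off-diagonal entry into the super-diagonal. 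A compensating lower-triangular $q_{2}$ on the left (absorbed by left $U$-invariance) brings $q_{2}u'q_{1}$ back into $N$ without disturbing the other super-diagonal entries modulo $p^{\rho}$. Integrating over $x\in\mathfrak{o}$ then kills the contribution. Your rank-one Iwasawa computation is correct, but promoting it to general rank genuinely requires this two-sided opposite-unipotent manipulation (or an equally nontrivial substitute); simple upper-unipotent translations cannot see the off-super-diagonal entries through $\psi$.
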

\begin{remark}
  Note that this property would imply the analytic continuation of the Jacquet integral at p-adic places, but not the other way around. The point here is the more effective control of $\mathcal{N}$, compared with, for example, \cite[Corollary 2.3]{MR581582}. It may also be possible to quantify the argument there, though we shall go for a direct  proof in the Appendix.

\end{remark}
\begin{proof}
 [Proof of Proposition \ref{Prop:denominatorcontrol}]

  By the definition of $\mathfrak{E}(N\backslash G, \mathfrak{q}^2)$, $f$ can be written as a linear combination of elements $f_j$ supported on $ N K_A \tilde{T}^{-\rho_N^\vee} k_j K(\mfq^2)$ for $k_j\in B(\mathfrak{o})\backslash K/K(\mfq^2)$, and each $f_j$ is uniquely determined by its value on $ \tilde{T}^{-\rho_N^\vee}k_j$ by its left and right invariance properties.  It then suffices to prove the proposition when $f$ is some $f_j$, which we shall assume from now on.

  Recall that the Mellin component of $f$ is given by
  \begin{equation*}
    f[s](g) =
    \int_{a \in A} (\delta_{N}^{1/2} |\cdot|^s )^{-1} (a)
    f (a g ) \, d a.
  \end{equation*}
  Then for $f=f_j$ and $k\in K$, it is clear that
  \begin{equation}\label{Eq:fsk}
    f[s](k)=
\begin{cases}
      (\delta_{N}^{1/2} |\cdot|^s )( \tilde{T}^{\rho_N^\vee})f( \tilde{T}^{-\rho_N^\vee}k_j) , &\text{\ if }k\in B(\mathfrak{o})k_j K(\mfq^2);\\
      0 , &\text{\ otherwise}.
    \end{cases}
.
  \end{equation}


  By Mellin inversion, we have as in the proof of \cite[Lemma 21.2]{2021arXiv210915230N} that
  \begin{equation*}
    W(f,c,g)=\int\limits_{(\sigma)}|c|^{-s}W_s^0(g)ds,
  \end{equation*}
  where
  \begin{equation*}
    W_s^0(g)=\int\limits_{u\in N}f[s](w_Gug)\psi^{-1}(u)du.
  \end{equation*}
  By Proposition \ref{Prop:WhittakerIntDomain}, we can rewrite
  \begin{equation}\label{Eq:Wcrelation}
    W(f,c,ak)=\int\limits_{u\in \mathcal{N}}\left(\int\limits_{(\sigma)}|c|^{-s}f[s](w_Guak) ds\right) \psi^{-1}(u)du
  \end{equation}

  For fixed $a,k,u\in \mathcal{N}$, one can apply the Iwasawa decomposition
  \begin{equation*}
    w_G u a= a'n'k'.
  \end{equation*}
  By \eqref{Eq:fsk} the inner integral in \eqref{Eq:Wcrelation} is either vanishing, or equals
  \begin{equation*}
    \int\limits_{(\sigma)}|c|^{-s}(\delta_{N}^{1/2} |\cdot|^s )( a'\tilde{T}^{\rho_N^\vee})f( \tilde{T}^{-\rho_N^\vee}k_j) ds
  \end{equation*}
  when $k'k \in B(\mathfrak{o})k_j K(\mfq^2)$, and is non-vanishing only if $|(c^{-1}a'\tilde{T}^{\rho_N^\vee})_i|=1$ for each $i$.  The conclusion follows then from the following lemma.
\end{proof}
\begin{lemma}
  Suppose that $u=(u_{ij})$ with $|u_{ij}|\leq T^{d_1}$, $|a_n|=1$ and $|a^{\alpha}|\leq T^{d_2}$ for any $\alpha\in \Delta_N$. Then there exists $d$ depending only on $d_i$ and the rank of the group such that if we apply the Iwasawa decomposition
  \begin{equation*}
    w_G ua=a'n'k'
  \end{equation*}
  then for each $i$,
  \begin{equation*}
    |a_i'|\leq T^{d}.
  \end{equation*}
\end{lemma}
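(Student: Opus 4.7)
The proof is a direct ultrametric estimate; no substantial obstacle arises because we work over a non-archimedean local field. The strategy is to bound the entries of $g := w_G u a$ crudely using the hypotheses, then to extract each diagonal entry $a'_i$ from the Iwasawa factorization $g = a'n'k'$ using only that $(k')^{-1} \in K$ has integral entries.

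First, I would convert the root-theoretic bound on $a$ into a bound on its individual entries. From $|a_n| = 1$ combined with $|a_j/a_{j+1}| \leq T^{d_2}$ (the case of each simple root $\alpha_j \in \Delta_N$), a telescoping computation from $j = n$ upward gives $|a_j| \leq T^{(n-j) d_2}$, and in particular $\max_j |a_j| \leq T^{(n-1) d_2}$.

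Next, I would write the product $g := w_G u a$ entrywise. Since $u \in N$ is upper-triangular unipotent, $(ua)_{ij} = u_{ij} a_j$, which vanishes for $j < i$. Left multiplication by $w_G$ reverses the rows, so $(w_G u a)_{ij} = u_{n+1-i,\,j}\, a_j$; each nonzero entry thus satisfies $|(w_G u a)_{ij}| \leq T^{d_1} \cdot T^{(n-1) d_2} = T^{d_1 + (n-1) d_2}$.

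Finally, I would extract each $a'_i$ from the Iwasawa factorization. Since $n'$ is unipotent, $g(k')^{-1} = a'n'$ is upper-triangular with the $a'_i$ on its diagonal, so
\[
  a'_i \;=\; \sum_\ell g_{i\ell} \,\bigl((k')^{-1}\bigr)_{\ell i}.
\]
Because $(k')^{-1} \in K = \mathbf{G}(\mathfrak{o})$ has entries in $\mathfrak{o}$, the ultrametric inequality yields $|a'_i| \leq \max_\ell |g_{i\ell}| \leq T^{d_1 + (n-1) d_2}$. Thus $d := d_1 + (n-1) d_2$ works, depending only on $d_1$, $d_2$, and the rank $n$. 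The only point to verify carefully is the convention that $n'$ really has $1$s on the diagonal, so that $(g(k')^{-1})_{ii} = a'_i$; beyond this bookkeeping there is nothing hard. The bound is admittedly crude, but that is precisely enough for the application — indeed, an archimedean analogue of the argument would lose the clean ultrametric step and require genuine work, which is the contrast the paper is exploiting.
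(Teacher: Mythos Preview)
Your proof is correct and in fact more direct than the paper's. The paper argues via minors: letting $M_l(g)$ denote the maximum norm among the $l \times l$ minors of the bottom $l$ rows of $g$, one has $M_l(w_G u a) = M_l(a'n')$ (right multiplication by $K$ preserves this quantity, by Cauchy--Binet and integrality of $k'$ and $(k')^{-1}$), and for upper-triangular $a'n'$ the ratio $M_{l+1}/M_l$ equals $|a'_{n-l}|$; a Laplace expansion then gives $M_{l+1}(g) \leq M_l(g) \cdot \max_{i,j}|g_{ij}|$, which yields exactly the same bound $|a'_i| \leq \max_{i,j} |g_{ij}|$ that you obtain. Your route bypasses the minor machinery entirely by reading $a'_i$ off as the $(i,i)$-entry of $g(k')^{-1}$ and applying the ultrametric inequality once. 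Both arguments land on the identical numerical bound; yours is shorter for this particular statement, while the minor-based formulation is the classical device for extracting Iwasawa $A$-components and would adapt more readily if one needed products $|a'_i \cdots a'_n|$ or two-sided control.
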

\begin{proof}
  Let $M_l(g)$ be the maximum of the norm of $l\times l$ minors of the bottom $l$ rows in $g$. Then by the relation $ w_G ua=a'n'k'$, we have
  \begin{equation*}
    M_l(w_Gua)=M_l(a'n').
  \end{equation*}
  Since $a'n'$ is upper triangular, the lemma would follow if $M_{l+1}(a'n')/M_l(a'n')\leq T^{d}$ for some $d$ and any $l$.

  On the other hand, by the strong triangle inequality and the Laplace expansion of determinant, we have
  \begin{equation*}
    M_{l+1}(w_Gua)\leq M_{l}(w_Gua)\max\{|(w_Gua)|_{i,j}\},
  \end{equation*}
  with $\max\{|(w_Gua)|_{i,j}\}\leq T^{d}$ for some fixed constant $d$ depending only on $d_1,d_2$ and the rank of the group, as $|u_{i,j}|\leq T^{d_1}$ and $|a_i|\leq T^{nd_2}$. Thus the lemma holds.
\end{proof}

\subsection{Reduction to local bounds}\label{sec:cnh5ncy2ak}
Consider now the global situation. Let $S$ be as in Proposition \ref{proposition:proposition-20-14}, and $T_f=\prod\limits_{p \in S \text{\ finite}} T_p$.  For any positive integer $d$, let
\begin{equation*}
\mathbb{Z}[1/T_f^d]
\end{equation*}
 denote the set of reduced rational numbers whose denominators divide $T_f^d$.

Let $d\in \mathbb{Z}_{\geq 0}$ now be as in Proposition \ref{Prop:denominatorcontrol}.
For any rational number $m$, let $\Sigma^P(m)$ denote the set of $c\in A''(\mathbb{Q})$ with $\det(c)=m$ and $\prod_{\mathfrak{p} \in S} \mathcal{Q}_P (\phi_\mathfrak{p}, L(Y)f_\mathfrak{p}, c)$ nonvanishing. Here we consider  $Y\in A(\mathbb{A})$ with trivial components at finite places.
\begin{lemma}\label{Lem:globalperiodboundreduction}
  With notation and assumptions as in Lemma \ref{lemma:let-f_s-=-otim-in-s-f_mathfr-be-as-abov-assume-tha} and Proposition \ref{Prop:mainboundforQp}, we have
  \begin{equation*}
    \mathcal{Q}_P (\phi, L(Y)f_S) \ll
    \sum_{0 \neq m \in \mathbb{Z}[1/T_f^{nd} ]}
        \tau (mT_f^{nd})^{O(1)}
        \max_{c \in \Sigma^P (m)}\{\mathcal{Q}_{P,S}(\phi, L(Y)f_S,c)\},
  \end{equation*}
  where $\tau(x)$ is the number of divisors of an integer $x$,
  and
\begin{equation*}
\mathcal{Q}_{P,S}(\phi, L(Y)f_S,c):=
  \prod_{v \in S}  \mathcal{Q}_P (\phi_v, L(Y)f_v, c).
\end{equation*}
\end{lemma}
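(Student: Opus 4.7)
The plan is to apply Lemma \ref{lemma:let-f_s-=-otim-in-s-f_mathfr-be-as-abov-assume-tha} to express $\mathcal{Q}_P(\phi, L(Y)f_S)$ as an adelic integral of $\lvert\sum_{c \in A''(\mathbb{Q})} W(c, ga'')\rvert^2$, expand the square into a double sum over pairs $(c, c')$, factor each resulting bilinear integral over places, and then use the support and denominator-control results already established to collapse that sum onto pairs with $\det(c) = \pm\det(c') =: m \in \mathbb{Z}[1/T_f^{nd}]$. The hypothesis of Lemma \ref{lemma:let-f_s-=-otim-in-s-f_mathfr-be-as-abov-assume-tha}---that $\{a : f_\mathfrak{p}(ag) \neq 0\}$ lies in a single $A(\mathbb{Z}_p)$-coset at each finite $p \in S$---is immediate from the support condition in Definition \ref{Defn:Esubspace}(ii) and is preserved by $L(Y)$ because $Y$ has trivial component at every finite place.

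Two ingredients then reduce the double sum. First, Lemma \ref{Lem:SupportQ}(1) applied at every place (plus its obvious archimedean analogue) forces $\lvert\det(c)\rvert_v = \lvert\det(c')\rvert_v$ at each $v$, and by the product formula this collapses to $\det(c) = \pm\det(c')$ in $\mathbb{Q}^\times$. Second, Proposition \ref{Prop:denominatorcontrol} at each $p \in S$, combined with the standard spherical integrality at $p \notin S$ (the analogue of \cite[Lemma 20.17]{2021arXiv210915230N}), pins every non-vanishing $c$ to have entries in $\mathbb{Z}[1/T_f^d]$, whence $m = \det(c) \in \mathbb{Z}[1/T_f^{nd}]$. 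After grouping by $m$, Cauchy--Schwarz applied to the bilinear integrals---either pointwise on the integrand as $\lvert uv\rvert \leq \tfrac12(\lvert u\rvert^2 + \lvert v\rvert^2)$, or after integration as $\lvert I(c,c')\rvert \leq (\mathcal{Q}_{P,S}(c)\mathcal{Q}_{P,S}(c'))^{1/2}$---gives the bilinear-to-diagonal reduction
\begin{equation*}
\sum_{c, c' \in \Sigma^P(m)} \lvert I(c,c')\rvert \ll |\Sigma^P(m)|^2 \max_{c \in \Sigma^P(m)} \mathcal{Q}_{P,S}(\phi, L(Y)f_S, c),
\end{equation*}
up to a bounded multiplicative factor encoding the outside-$S$ contribution from the spherical data.

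The divisor-function count is elementary: writing $c_i = b_i/T_f^d$ with $b_i \in \mathbb{Z}$, the relation $\prod c_i = m$ becomes $\prod b_i = \pm m T_f^{nd}$, and the number of integer $n$-tuples with fixed product is the $n$-fold divisor function $\tau_n(\lvert m\rvert T_f^{nd}) \ll \tau(mT_f^{nd})^{O(1)}$, so $|\Sigma^P(m)|^2 \ll \tau(mT_f^{nd})^{O(1)}$. The main obstacle will be cleanly absorbing the outside-$S$ contribution $\prod_{p \notin S} I_p(c, c')$ into either $\tau(mT_f^{nd})^{O(1)}$ or the implicit constant: this is the unramified Rankin--Selberg identity for the $M''$-basic vector $\Theta^P_p$ (whose Mellin component is normalized to take the value $\zeta_p(N'', s)$ at the identity), and is the point where the slightly weaker unramified bounds mentioned in the footnote of Proposition \ref{Prop:mainboundforQp} come into play, still giving what is needed for the stated conclusion.
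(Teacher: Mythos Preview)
Your proposal is correct and follows essentially the same route as the paper's proof, which is a terse deduction from Lemma~\ref{lemma:let-f_s-=-otim-in-s-f_mathfr-be-as-abov-assume-tha}, Lemma~\ref{Lem:SupportQ}, Proposition~\ref{Prop:denominatorcontrol}, and \cite[Lemmas 21.3, 21.19]{2021arXiv210915230N}, with the same divisor-function count $|\Sigma^P(m)| \ll \tau(mT_f^{nd})^{O(1)}$ for $c$ with denominators bounded by $T_f^d$. You have simply written out in detail what the paper leaves as a reference to the analogous argument in \cite[Lemma 22.1]{2021arXiv210915230N}.
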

\begin{proof}
  This is an analogue of \cite[Lemma 22.1]{2021arXiv210915230N},
  and may be deduced in the same way from Lemma \ref{lemma:let-f_s-=-otim-in-s-f_mathfr-be-as-abov-assume-tha}, \ref{Lem:SupportQ}, Proposition \ref{Prop:denominatorcontrol} and \cite[Lemma 21.3, 21.19]{2021arXiv210915230N}.  In particular 
   we only need to consider those $m\in \mathbb{Z}[1/T_f^{nd}]$ and $c\in \Sigma^P(m)$ with denominators bounded by $T_f^d$, in which case
\begin{equation*}
|\Sigma^P (m)|\ll \tau(mT_f^{nd})^{O(1)}.
\end{equation*}

\end{proof}

\subsection{Combining the local bounds}\label{sec:cnq4lj3ylk}

Here we combine the archimedean local estimates in \cite[Section 21]{2021arXiv210915230N} with the p-adic estimates from Section \ref{sec:cnh5ncyz7d}.

Recall the auxiliary parameter $\RR$  from Section \ref{Sec:ArchimedeanTest}.
Denote $D_P=\prod\limits_{v\in S} D_{P,v}$ where $D_{P,p}$ is defined as in \eqref{Eq:DP} for finite place $p\in S$, and $D_{P,\infty}$ is defined similarly as
\begin{equation*}
\prod\limits_{n'<i\leq n}R^{i-\frac{n+1}{2}}.
\end{equation*}

For each $p\in S$ let $f_p$ be a fixed element of  $\mathfrak{E}(N \backslash G, \mathfrak{q}^2)$ (as translated from, for example, Section \ref{Sec:constructfp}).
Let $f_\infty$ be as in Section \ref{Sec:ArchimedeanTest}. Recall from \cite[Lemma 21.26]{2021arXiv210915230N} a decomposition
\begin{equation*}
f_\infty=\sum\limits_{\mu}f^\mu+ f^{\clubsuit}.
\end{equation*}
Then $f_S=\bigotimes\limits_{v\in S} f_v$ has a similar decomposition
\begin{equation}\label{Eq:decomposefS}
f_S=\sum\limits_{\mu}f_S^\mu+ f_S^{\clubsuit}
\end{equation}
with the local components $(f_S^\mu)_p=(f_S^\clubsuit)_p=f_p$.  We shall not directly use \cite[Proposition 21.28]{2021arXiv210915230N}, but the ingredients \cite[Lemma 21.29-21.34]{2021arXiv210915230N} leading to it.
 Note first that by a change of variable we have the following:
\begin{lemma}
  Over a local field, we have
  \begin{equation*}
    \mathcal{Q}_{P}(\phi,L(Y)f,c)=
    \mathcal{Q}_P\left(\phi,f,\frac{c}{Y''}\right)
  \end{equation*}
\end{lemma}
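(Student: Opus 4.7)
The plan is to prove this identity by a direct change of variables, leveraging the abelianness of $A$ together with the block-Levi structure $\mathbf{M}' \times \mathbf{M}''$. The idea is that $L(Y)$ can be absorbed into a shift of $c$ by the $M''$-component $Y''$, leaving behind only a harmless left translation by $Y'$ that is then eliminated by a substitution in $g$.

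First I would unpack the definition
\begin{equation*}
  W_P(L(Y)f, c, g) = \delta_N^{1/2}(c)\, \delta_N^{-1/2}(Y) \int_{N''} f(Y c^{-1} w_{M''}^{-1} u g)\, \psi^{-1}(u)\, du
\end{equation*}
and rewrite the inner argument. Writing $Y = Y' Y''$ with $Y' \in A'$ and $Y'' \in A''$, the abelianness of $A$ lets us commute $Y''$ past $c^{-1}$ to form $(c/Y'')^{-1}$, while $Y' \in M'$ commutes with both $w_{M''}^{-1}$ and $u \in N''$ and so slides to the right of all of them. One finds
\begin{equation*}
  Y c^{-1} w_{M''}^{-1} u g = (c/Y'')^{-1}\, w_{M''}^{-1} u\, Y' g.
\end{equation*}
Comparing with the definition of $W_P(f, c/Y'', Y' g)$, the $\delta_N^{1/2}$-prefactors collapse to a single $\delta_N^{-1/2}(Y')$, giving
\begin{equation*}
  W_P(L(Y)f, c, g) = \delta_N^{-1/2}(Y')\, W_P(f, c/Y'', Y' g).
\end{equation*}

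Next I would substitute this into $\mathcal{Q}_P$, take absolute value squared, and perform the change of variable $g \mapsto (Y')^{-1} g$ on $N \backslash G$. The crucial observation is that $e_n Y' = e_n$, because $Y' \in A'$ sits in the upper-left $n' \times n'$ block and thus has $1$ in its $(n,n)$-entry; consequently $\phi(e_n g) = \phi(e_n Y' g)$, so the integrand is of the form $F(Y' g)$ with $F(g) := \phi(e_n g)\,|W_P(f, c/Y'', g)|^2$ left $N$-invariant. The standard identity
\begin{equation*}
  \int_{N \backslash G} F(\gamma g)\, d\bar{g} = \delta_N(\gamma) \int_{N \backslash G} F(g)\, d\bar{g} \qquad (\gamma \in A,\ F \text{ left $N$-invariant})
\end{equation*}
applied with $\gamma = Y'$ produces a factor $\delta_N(Y')$ that exactly cancels the $\delta_N^{-1}(Y')$ from the squared Whittaker transform, yielding the claim. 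I do not foresee any genuine obstacle here; the argument is bookkeeping with modular characters, and the only subtlety is to exploit the specific feature that $Y'$ commutes with all of $M''$ so that $Y = Y' Y''$ splits cleanly into a $c$-shift and a $g$-shift.
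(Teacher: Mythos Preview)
Your proof is correct and takes essentially the same approach as the paper: unpack the definitions, split $Y=Y'Y''$, commute $Y'\in M'$ past $w_{M''}^{-1}$ and $u\in N''$, then substitute $g\mapsto (Y')^{-1}g$ and cancel the modular factors. You are in fact slightly more explicit than the paper in isolating the intermediate identity $W_P(L(Y)f,c,g)=\delta_N^{-1/2}(Y')\,W_P(f,c/Y'',Y'g)$ and in spelling out why $e_n Y'=e_n$, both of which the paper leaves implicit.
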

\begin{proof}
  By definition,
  \begin{equation*}
    \mathcal{Q}_P(\phi,L(Y) f,c) = \int _{g \in N \backslash G}
    \phi(e_n g) \left| W_P(L(Y) f,c,g) \right|^2 \, d g
  \end{equation*}
  with
  \begin{equation*}
    W_P(f,c,g)  =
    \delta_{N}^{1/2}(c) \int _{u \in N''} f(c^{-1} w^{-1}_{M''} u g) \psi ^{-1} (u) \,d u.
  \end{equation*}
  By definition of $L(Y)$,
  \begin{equation*}
    W_P(L(Y) f,c,g)  =
    \delta_{N}^{1/2}(c) \delta_N^{-1/2}(Y) \int _{u \in N''} f(Y c^{-1} w^{-1}_{M''} u g) \psi ^{-1} (u) \,d u.
  \end{equation*}
  Write $Y=Y'Y''$ and substitute $g \mapsto(Y')^{-1} g$.  This introduces the factor $\delta_N(Y')$ for the integral in $g$.  We get
  \begin{align*}
    &\mathcal{Q}_P(\phi,L(Y) f,c)\\
    =
    &\int _{g \in N \backslash G}
      \phi(e_n g)	\delta_{N}(c)\delta_N^{-1}(Y'Y'') \left|\int _{u \in N''} f(Y'Y'' c^{-1} w^{-1}_{M''} u g) \psi ^{-1} (u) \,d u \right|^2 \, d g\\
    =&\int _{g \in N \backslash G}
       \phi(e_n g)	\delta_{N}(c)\delta_N^{-1}(Y'') \left|\int _{u \in N''} f(Y'' c^{-1} w^{-1}_{M''} u g) \psi ^{-1} (u) \,d u \right|^2 \, d g\\
    =&
    \mathcal{Q}_P(\phi,f,\frac{c}{Y''}).
  \end{align*}
\end{proof}
This allows us to extend the local bounds in \cite{2021arXiv210915230N} for $\mathcal{Q}_P$ to larger range for $Y$, whenever the bound $|\det(c)|_\infty\geq 1$ was used in a mild way.  In particular we have the following:
\begin{lemma}\label{Lem:3bounds}
  \begin{enumerate}
  \item\label{enumerate:cnq4los175}
    \begin{equation*}
      \mathcal{Q}_{P,S}(\phi, L(Y)f_{S}^\clubsuit,c)\ll \frac{T^{-\infty}}{|\det(c)|_\infty^{9}
        |\det(c)|_S
      };
    \end{equation*}
  \item\label{enumerate:cnq4los3pi}
    \begin{equation*}
      \mathcal{Q}_{P,S}(\phi, L(Y)f_{S}^\mu,c)\ll \frac{T^{O(1)}}{|\det(c)|_\infty^{9}
        |\det(c)|_S
      };
    \end{equation*}
  \item\label{enumerate:cnq4los4kq}
    \begin{equation*}
      \mathcal{Q}_{P,S}(\phi, L(Y)f_{S}^\mu,c)\ll \frac{r^{n''}T^{o(1)}\det(Y'')}{D_P|\det(c)|_S}.
    \end{equation*}
  \end{enumerate}

\end{lemma}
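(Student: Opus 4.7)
The plan is to factorize each $\mathcal{Q}_{P,S}$ as a product of local integrals over the places $v \in S$, bound each local factor separately using the tools already at our disposal, and then multiply. A preliminary reduction is to observe that $Y$ is archimedean (its components at finite primes are trivial), so $L(Y)$ acts trivially on $f_p$ for each finite $p \in S$. Thus at finite primes we are always reduced to bounding $\mathcal{Q}_P(\phi_p, f_p, c)$, to which Proposition~\ref{Prop:mainboundforQp} and Corollary~\ref{Cor:QPbound} apply directly. Separately, from the definition \eqref{Eq:decomposefS}, each of $f_S^\mu$ and $f_S^\clubsuit$ has the \emph{same} finite components $(f_S^\bullet)_p = f_p$, so the difference between the three bounds is driven entirely by the archimedean factor.

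For \eqref{enumerate:cnq4los175}, at the archimedean place I would invoke the bound on $\mathcal{Q}_P(\phi_\infty, L(Y) f_\infty^\clubsuit, c)$ coming from \cite[Lemma 21.34]{2021arXiv210915230N} (adapted to the auxiliary parameter $R$ from \S\ref{Sec:ArchimedeanTest}), which gives super-polynomial decay $R^{-\infty}/|\det(c)|_\infty^{9+O(1)}$. Since $R \geq T^{\delta_0}$, this decay is also $T^{-\infty}$. At each finite place $p \in S$, I apply Proposition~\ref{Prop:mainboundforQp} to bound $\mathcal{Q}_P(\phi_p, f_p, c) \ll 1/(D_{P,p}|\det(c)|_p)$; absorbing the $D_{P,p}$ factor into the $T^{-\infty}$ gain then yields the required bound with $|\det(c)|_S$ in the denominator.

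For \eqref{enumerate:cnq4los3pi}, the argument is identical, but at the archimedean place I use the weaker (but still polynomially decaying) bound from \cite[Lemma 21.33]{2021arXiv210915230N} (in the version adapted to $R$), which loses $R^{O(1)} = T^{O(1)}$ in lieu of the $R^{-\infty}$ decay enjoyed by the $\clubsuit$ piece. For \eqref{enumerate:cnq4los4kq}, I instead apply the ``main bound'' at every place in $S$: at the archimedean place this is the archimedean analogue of Proposition~\ref{Prop:mainboundforQp} (namely \cite[Lemma 21.32]{2021arXiv210915230N}), which gives $\mathcal{Q}_P(\phi_\infty, L(Y)f_\infty^\mu, c) \ll r^{n''}/(D_{P,\infty}|\det(c/Y'')|_\infty)$. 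Using the change-of-variable lemma $\mathcal{Q}_P(\phi, L(Y)f,c) = \mathcal{Q}_P(\phi, f, c/Y'')$, the $Y''$ moves into $c$, producing a factor of $\det(Y'')$ after clearing denominators. Multiplying by the finite-place bounds $1/(D_{P,p}|\det(c)|_p)$ from Proposition~\ref{Prop:mainboundforQp} and collecting gives the stated expression, with the $T^{o(1)}$ absorbing mild losses arising from the adaptation to the $R$-parameter (valid because $\delta_0$ is small).

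The main obstacle is bookkeeping: one must keep careful track of which factors of $T$ vs.\ $R$ arise at each step, verify that the change of variable is compatible with the support and normalization conventions of the $f_\infty^\mu, f_\infty^\clubsuit$ decomposition, and confirm that the nonvanishing support conditions used in the finite-place bounds of Proposition~\ref{Prop:mainboundforQp} are preserved under the modified argument $c/Y''$ (this is automatic at finite places since $Y$ is trivial there). Once these compatibility checks are done, each bound follows from multiplying archimedean and non-archimedean estimates, so no new technical input is required beyond what is already in Section~\ref{sec:cnh5ncyz7d} and \cite[Lemmas 21.29--21.34]{2021arXiv210915230N}.
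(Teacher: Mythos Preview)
Your approach is essentially identical to the paper's: factor $\mathcal{Q}_{P,S}$ over $v \in S$, apply Proposition~\ref{Prop:mainboundforQp} at each finite place, and invoke the appropriate archimedean lemma from \cite{2021arXiv210915230N} at infinity, using $R^{-\infty}=T^{-\infty}$ and $R^{O(1)}=T^{O(1)}$. However, your archimedean citations for parts \eqref{enumerate:cnq4los175} and \eqref{enumerate:cnq4los3pi} are misnumbered. The paper uses \cite[Lemmas 21.29, 21.30, 21.32]{2021arXiv210915230N} for parts (1), (2), (3) respectively. In that reference, Lemma~21.34 is the parameter-range comparison (the archimedean counterpart of Lemma~\ref{Lem:comparenn''} here), and Lemma~21.33 is the analogue of Corollary~\ref{Cor:QPbound}; neither supplies the $f^\clubsuit$ super-decay bound or the crude $T^{O(1)}$ bound for $f^\mu$ that you describe.

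One substantive point you leave implicit: the archimedean input \cite[Lemma 21.30]{2021arXiv210915230N} was proved under the hypothesis $|\det(c)|_\infty \geq 1$. In the present $S$-adic setting, $c$ may have denominators at the finite primes in $S$, so one only knows $|\det(c)|_\infty \gg T^{-O(1)}$ (via the denominator control of \S\ref{sec:cnjobc8wnb}). The paper observes explicitly that this weaker lower bound is harmless, since the resulting loss is absorbed into the $T^{O(1)}$ already appearing in part \eqref{enumerate:cnq4los3pi}. Your ``compatibility checks'' paragraph gestures toward the finite-place side of this issue but misses the archimedean side, which is where the actual subtlety lies.
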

\begin{proof}
  For (1)/(2)/(3) we combine Proposition \ref{Prop:mainboundforQp}  with Lemma 21.29/21.30/21.32 of \cite{2021arXiv210915230N}, noting that $R^{-\infty}=T^{-\infty}$, $R^{O(1)}=T^{O(1)}$. \cite[Lemma 21.30]{2021arXiv210915230N} used that $|\det(c)|_\infty\geq 1$, but in a very mild way. Indeed in our case we have $|\det(c)|_\infty\gg T^{-O(1)}$, which can be absorbed in $T^{O(1)}$ in the final bound.

\end{proof}
When
\begin{equation}\label{Eq:simplerrange}
        \text{
        $P=G$, or $r^{n''}\det(Y'')\ll D_P r^n \det(Y) T^{o(1)}$,
        }
\end{equation}
 (3) above actually suffices for our purpose, giving
\begin{equation}\label{Eq:QPboundin1range}
  \mathcal{Q}_{P,S}(\phi, L(Y)f_{S}^\mu,c)\ll \frac{r^{n}T^{o(1)}\det(Y)}{|\det(c)|_S}.
\end{equation}
This is the analogue of \cite[Proposition 21.31]{2021arXiv210915230N}.
We need however additional preparations for the remaining range.
\begin{lemma}\label{Lem:comparenn''}
  When
  \begin{equation}\label{Eq:complementaryrange}
  P\neq G, n'\geq 1, r^{n''}\det(Y'')\ggg D_P r^n \det(Y)T^{o(1)},
  \end{equation}
  and $\delta_0$ small enough, we have
  \begin{equation*}
    \frac{r^{n''}(RT_f)^{n''(n''-1)/2}\det(Y'')}{D_P}\ll T^{-\epsilon}
  \end{equation*}
  for some fixed $\epsilon>0$.
\end{lemma}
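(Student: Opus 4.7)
The plan is to show that the complementary range condition~\eqref{Eq:complementaryrange} is, under the stated hypotheses, impossible to satisfy — so the lemma holds vacuously. The key is to extract contradictory upper and lower bounds on $D_P$.

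First, I would rearrange the complementary range hypothesis. Dividing both sides of $r^{n''}\det(Y'')\ggg D_P r^n \det(Y)T^{o(1)}$ by $r^n\det(Y)$, and using $r^n/r^{n''}=r^{n'}$ together with $\det(Y)/\det(Y'')=\det(Y')$, I obtain
\begin{equation*}
D_P\,r^{n'}\,|\det(Y')|\lll T^{-o(1)}.
\end{equation*}
Since $r=t_n\geq 1$ (as $t\in \mathbf{A}(\mathbb{R})^0_{\geq 1}$) and $|\det(Y')|\geq T^{-n'/2}$ (from the hypothesis $|Y_j|^{-1}\leq T^{1/2}$ of Proposition~\ref{proposition:proposition-20-14}), this forces the upper bound $D_P\lll T^{n'/2}$; more quantitatively, $D_P\leq T^{n'/2-\epsilon_0}$ for some fixed $\epsilon_0>0$ whose positivity is guaranteed by taking $\delta_0$ small enough.

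Second, I would establish the opposing lower bound $D_P\geq T^{n'n''/2}$. By definition $R=\max(T_\infty,T^{\delta_0})\geq T_\infty$, so $RT_f\geq T_\infty T_f=T$, and hence $D_P=(RT_f)^{n'n''/2}\geq T^{n'n''/2}$.

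Combining these two estimates yields $T^{n'n''/2}\leq T^{n'/2-\epsilon_0}$, equivalently $n'(n''-1)/2\leq -\epsilon_0<0$. But $n'\geq 1$ by hypothesis, and $n''\geq 1$ because $P\neq G$ forces $P$ to be a proper maximal parabolic with $n''=n-n'\geq 1$, so $n'(n''-1)/2\geq 0$, a contradiction. The complementary range is therefore empty under the hypotheses, and the conclusion $\ll T^{-\epsilon}$ holds vacuously for any fixed $\epsilon>0$. The main point requiring care is the quantification of the slack hidden in the symbols $\ggg$ and $\lll$, which is precisely the role of the assumption ``$\delta_0$ small enough''; once that slack is fixed the rest is elementary exponent arithmetic.
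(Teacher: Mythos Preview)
Your argument rests on the claim that $r=t_n\geq 1$, which you justify by reading the notation $\mathbf{A}(\mathbb{R})^0_{\geq 1}$ as ``all coordinates at least $1$.'' But the paper defines this set as the \emph{dominant} elements, i.e., the positive Weyl chamber $t_1\geq t_2\geq\cdots\geq t_n>0$; nothing forces $t_n\geq 1$. In fact, in the application (the reduction of Theorem~\ref{theorem:eisenstein-growth-bound-general-level} to Proposition~\ref{proposition:proposition-20-14}), the bound $\mathcal{Q}_P\ll r^n\det(Y)T^{o(1)}$ with $r=t_n$ is useful precisely when $t_n$ is small, and the hypotheses only give $r=T^{O(1)}$, allowing $r$ to be a negative power of $T$. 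With $r$ small the complementary range is genuinely non-empty (take, e.g., $n=3$, $n'=1$, $n''=2$, $|Y_1|\asymp T^{-1/2}$, $r\asymp T^{-1}$), so vacuity fails and your contradiction evaporates.

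The paper's proof (deferred to \cite[Lemma 21.34]{2021arXiv210915230N}) instead uses the hypothesis to extract an \emph{upper} bound on $r$: from $D_P\,r^{n'}\,|\det(Y')|\leq T^{-\eta}$ one gets $r\leq T^{-\eta/n'}(RT_f)^{-n''/2}|\det(Y')|^{-1/n'}$. Substituting $r^{n''}$ into the target quantity and simplifying (using your correct identity $D_P=(RT_f)^{n'n''/2}$) yields
\[
\frac{r^{n''}(RT_f)^{n''(n''-1)/2}\det(Y'')}{D_P}\leq T^{-\eta n''/n'}(RT_f)^{-n''(n'+1)/2}|\det(Y')|^{-n''/n'}|\det(Y'')|.
\]
Now apply $RT_f\geq T$, $|\det(Y')|\geq T^{-n'/2}$, $|\det(Y'')|\leq T^{n''/2}$ to bound this by $T^{-\eta n''/n'+n''(1-n')/2}$, which is $\leq T^{-\eta n''/n'}$ since $n'\geq 1$. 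The condition on $\delta_0$ enters because the archimedean piece of $D_P$ is built from $R$ rather than $T_\infty$; when $R=T^{\delta_0}>T_\infty$ one must check that the discrepancy $(R/T_\infty)^{O(1)}=T^{O(\delta_0)}$ does not overwhelm the saving, which is where ``$\delta_0$ small enough'' is invoked.
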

\begin{proof}
  The proof is essentially the same as for \cite[Lemma 21.34]{2021arXiv210915230N}, especially when $R=T_\infty$. In general we use the fact that $\delta_0$ can be chosen to be any fixed small enough number, so the effect of $R$ can be controlled.
\end{proof}

\begin{lemma}\label{Lem:firstaltComplementaryrange}
  Under the assumption of Lemma \ref{Lem:comparenn''}, we have
  \begin{equation*}
    \mathcal{Q}_{P,S}(\phi, L(Y)f_{S}^\mu,c)\ll
     \frac{T^{-\infty}}{|\det(c)|_\infty^{9}
        |\det(c)|_S
    }
  \end{equation*}
\end{lemma}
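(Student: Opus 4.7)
The plan is to mirror the proof of \cite[Lemma 21.34]{2021arXiv210915230N} in the hybrid setting where $S$ may contain non-archimedean primes.  The key observation is that bound (3) of Lemma \ref{Lem:3bounds}, combined with Lemma \ref{Lem:comparenn''}, already provides an initial power saving of $T^{-\epsilon}$, while bound (2) preserves the $|\det(c)|_\infty^{-9}$ factor required for the summation in Lemma \ref{Lem:globalperiodboundreduction}.  The strategy is to combine these two ingredients locally and then amplify the $T^{-\epsilon}$ to $T^{-\infty}$ using the $l$-power flexibility built into Corollary \ref{Cor:QPbound}.

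First I would apply, at each $v \in S$, a local bound with $l$-power flexibility: at finite $p \in S$ this is exactly Corollary \ref{Cor:QPbound}, whereas at the archimedean place one uses the analogue of \cite[Lemma 21.33]{2021arXiv210915230N} adapted to the auxiliary parameter $R$, which additionally supplies the $|\det(c)|_\infty^{-9}$ factor arising from the oscillatory Mellin structure of $f^\mu$ (the same mechanism underlying the proof of Lemma \ref{Lem:3bounds}(2)).  Using the change of variable $c \mapsto c/Y''$ as in the lemma preceding Lemma \ref{Lem:3bounds} to absorb the left translation by $Y$, and then taking the product over $v \in S$, the result can be rewritten in the form
\begin{equation*}
  \mathcal{Q}_{P,S}(\phi, L(Y)f_S^\mu, c) \ll \frac{T^{o(1)}}{|\det(c)|_\infty^{9}|\det(c)|_S} \cdot \left(\frac{r^{n''}(RT_f)^{n''(n''-1)/2}\det(Y'')}{D_P}\right)^{l}
\end{equation*}
for any fixed $l \geq 1$.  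Next I would invoke Lemma \ref{Lem:comparenn''}, which in the complementary range bounds the quantity in parentheses by $T^{-\epsilon}$ for some fixed $\epsilon > 0$.  Raising to the $l$-th power produces a $T^{-l\epsilon}$ saving, and since $l$ is arbitrary we conclude $T^{-\infty}$.

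The main obstacle will be producing the archimedean analogue of Corollary \ref{Cor:QPbound} while simultaneously retaining the $|\det(c)|_\infty^{-9}$ factor coming from the Mellin structure of $f^\mu$.  This is the direct hybrid counterpart of the interplay between Lemmas 21.30 and 21.33 of \cite{2021arXiv210915230N}, complicated here by the need to phrase everything in terms of the parameter $R$ instead of $T_\infty$, and by the presence of the $Y''$-twist at the archimedean place.  Compatibility with the denominator control of Proposition \ref{Prop:denominatorcontrol} will also need to be checked in order to ensure that the $|\det(c)|_S$-dependence is correctly tracked across the $l$-power amplification.
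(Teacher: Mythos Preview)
Your strategy is the same as the paper's: apply the $l$-power bounds (\cite[Lemma 21.33]{2021arXiv210915230N} at $\infty$, Corollary \ref{Cor:QPbound} at finite $p\in S$), combine with Lemma \ref{Lem:comparenn''}, and let $l\to\infty$.  However, the displayed intermediate bound cannot be obtained the way you describe, and the obstacle you flag at the end is precisely where the paper's argument differs from yours.

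The issue is that taking the same exponent $l$ at every place yields only $|\det(c)|_S^{-l}$, and since $|\det(c)|_S\geq 1$ this collapses to $|\det(c)|_S^{-1}$ with no separate $|\det(c)|_\infty^{-9}$ factor; the archimedean Mellin structure of $f^\mu$ does not by itself supply an extra $|\det(c)|_\infty^{-9}$ on top of the $l$-power bound.  The paper instead uses \emph{asymmetric} exponents: $l_0$ at $\infty$ and $l_0-9$ at each finite $p\in S$.  The resulting $|\det(c)|_\infty^{-l_0}\prod_p|\det(c)|_p^{-(l_0-9)}=|\det(c)|_\infty^{-9}|\det(c)|_S^{-(l_0-9)}$ produces the needed factor.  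For this to combine correctly with the remaining factors, the paper first invokes the nonvanishing condition of Proposition \ref{Prop:mainboundforQp} to ensure $\frac{T_p^{n''(n''-1)/2}}{D_{P,p}|\det(c)|_p}\geq 1$ at each finite $p$, so that raising these from power $l_0-9$ to $l_0$ is an upper bound; this lets the full product $\left(\frac{r^{n''}(RT_f)^{n''(n''-1)/2}\det(Y'')}{D_P}\right)^{l_0}$ reassemble and Lemma \ref{Lem:comparenn''} applies.  Also, the prefactor is $T^{O(1)}$ (from \cite[Lemma 21.33]{2021arXiv210915230N}, with the bounded denominator of $c$ from Proposition \ref{Prop:denominatorcontrol} absorbed there), not $T^{o(1)}$; it is killed by $T^{-l_0\epsilon}$ as $l_0\to\infty$.
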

\begin{proof}
  Note that $|\det(c)|_S\geq 1$, thus from the lemma above we have
  \begin{equation*}
    \frac{r^{n''}R^{n''(n''-1)/2}\det(Y'')}{D_{P,\infty}|\det(c)|_\infty}\prod\limits_{p\in S} \frac{T_p^{n''(n''-1)/2}}{D_{P,p}|\det(c)|_p}\ll T^{-\epsilon}
  \end{equation*}
  from Lemma    \ref{Lem:comparenn''}. By the nonvanishing condition in Proposition \ref{Prop:mainboundforQp}, either $\mathcal{Q}_{P,S}$ is zero so the lemma is automatic, or we have
  \begin{equation*}
    \frac{T_p^{n''(n''-1)/2}}{D_{P,p}|\det(c)|_p}\geq 1
  \end{equation*}
  for all $p\in S$, in which case we have
  \begin{equation*}
    \frac{r^{n''}R^{n''(n''-1)/2}\det(Y'')}{D_{P,\infty}|\det(c)|_\infty}\ll T^{-\epsilon}.
  \end{equation*}
  We then apply \cite[Lemma 21.33]{2021arXiv210915230N} for the archimedean component. Note that the proof there used $|\det(c)|_\infty\geq 1$, but in a mild way. In our case the possible denominator for $c$ is bounded by $T^d$ for some fixed $d$, which can be absorbed in $T^{C_1}$ of \cite[Lemma 21.33]{2021arXiv210915230N}. Combining \cite[Lemma 21.33]{2021arXiv210915230N} with  $l=l_0$ and Corollary \ref{Cor:QPbound} with $l=l_0-9$ for $l_0$ large enough, we get
  \begin{align*}
        \mathcal{Q}_{P,S}&\ll T^{O(1)}\left(\frac{r^{n''}R^{n''(n''-1)/2}\det(Y'')}{D_{P,\infty}|\det(c)|_\infty}\right)^{l_0}\prod\limits_{p\in S} \left(\frac{T_p^{n''(n''-1)/2}}{D_{P,p}|\det(c)|_p}\right)^{l_0-9}\\
        &=T^{O(1)}\left(\frac{r^{n''}(RT_f)^{n''(n''-1)/2}\det(Y'')}{D_{P}} \right)^{l_0}\frac{1}{|\det(c)|_S^{l_0-9}|\det(c)|_\infty^9}\\
        &\ll T^{O(1)-l_0\epsilon}\frac{1}{|\det(c)|_S|\det(c)|_\infty^9}.
\end{align*}
Here we have used that $|\det(c)|_S=\prod\limits_{\mathfrak{p}\in S}|\det(c)|_\mathfrak{p}\geq 1$ and $l_0$ is large enough. Taking $l_0\rightarrow \infty$, we conclude the proof.
\end{proof}

Let $c \in \Sigma^P(m)$.  Combining the discussions so far we get the following
\begin{lemma}\label{lemma:cnjen3r9kt}
  Suppose that $r=T^{O(1)}$, $T^{-1/2}\leq |Y_j|\leq T^{1/2}$. We have
  \begin{equation*}
 \mathcal{Q}_{P,S}(\phi, L(Y)f,c)
    \ll
    \min
    \left(
      \frac{r^n T^{o(1)} \det (Y) }{\lvert m \rvert_{S}},
      \frac{T^{O(1)}}{\lvert m \rvert _S\lvert m \rvert _\infty^9}
    \right)
    +
    \frac{T^{- \infty }}{ \lvert m \rvert _S\lvert m \rvert _\infty^9}.
  \end{equation*}
\end{lemma}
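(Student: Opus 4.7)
The strategy is to bound $\mathcal{Q}_{P,S}$ piece by piece via the decomposition \eqref{Eq:decomposefS}, then recombine using a triangle inequality. Concretely, write $f_S = \sum_\mu f_S^\mu + f_S^\clubsuit$ with only $T^{o(1)}$ summands (as in \cite[Lemma 21.26]{2021arXiv210915230N}). Each factor $\mathcal{Q}_{P,S}(\phi, L(Y) f_S^{\bullet}, c)$ is already controlled by Lemma \ref{Lem:3bounds} and Lemma \ref{Lem:firstaltComplementaryrange}; the task is simply to track which bound applies in which range. Since $\mathcal{Q}_{P,S}$ is quadratic (not linear) in the test vector, I would pass the decomposition through the archimedean factor via $|\sum W_\mu + W_\clubsuit|^2 \leq (N{+}1)\sum|W_\mu|^2 + (N{+}1)|W_\clubsuit|^2$, where $N = T^{o(1)}$ is the number of $\mu$'s; this loses only $T^{o(1)}$, which is absorbed.

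For the $\clubsuit$-piece, Lemma \ref{Lem:3bounds}\eqref{enumerate:cnq4los175} immediately gives
\[
\mathcal{Q}_{P,S}(\phi, L(Y) f_S^{\clubsuit}, c) \ll \frac{T^{-\infty}}{|m|_\infty^{9} |m|_S},
\]
which supplies the explicit tail term in the stated bound.

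For each $f_S^\mu$, Lemma \ref{Lem:3bounds}\eqref{enumerate:cnq4los3pi} supplies the bound $T^{O(1)}/(|m|_\infty^{9} |m|_S)$ unconditionally, matching the second argument of the min. For the first argument, I would split according to whether \eqref{Eq:simplerrange} or its negation \eqref{Eq:complementaryrange} holds. In the simpler range, Lemma \ref{Lem:3bounds}\eqref{enumerate:cnq4los4kq} combined with the inequality $r^{n''}\det(Y'') \ll D_P r^n \det(Y) T^{o(1)}$ produces \eqref{Eq:QPboundin1range}, namely $r^n T^{o(1)} \det(Y)/|m|_S$, which is exactly the first argument of the min. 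In the complementary range, Lemma \ref{Lem:firstaltComplementaryrange} produces a $T^{-\infty}/(|m|_\infty^{9} |m|_S)$ bound, which is absorbable either into the first argument of the min (since $T^{-\infty} \leq T^{o(1)} r^n \det(Y)/|m|_\infty^{9}$ under the standing $r = T^{O(1)}$ hypothesis) or into the explicit tail. Taking the minimum of the two $\mu$-bounds, summing over $\mu$, and adding the $\clubsuit$-contribution yields the claim.

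The only real obstacle here is careful bookkeeping across the two ranges \eqref{Eq:simplerrange} and \eqref{Eq:complementaryrange}; no genuinely new analytic input is required, since the quantitative work has already been performed in Lemmas \ref{Lem:3bounds} and \ref{Lem:firstaltComplementaryrange}. A minor point worth verifying is that the interaction between the archimedean decomposition and the multiplicative structure of $\mathcal{Q}_{P,S} = \mathcal{Q}_{P,\infty} \prod_{p \in S_f} \mathcal{Q}_{P,p}$ is harmless; the triangle inequality in the archimedean factor only, with the finite-place factors held fixed, handles this cleanly.
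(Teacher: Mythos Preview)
Your proposal is correct and follows essentially the same approach as the paper's proof, which simply cites the decomposition \eqref{Eq:decomposefS}, Lemma \ref{Lem:3bounds}(1)(2), \eqref{Eq:QPboundin1range} in the range \eqref{Eq:simplerrange}, and Lemma \ref{Lem:firstaltComplementaryrange} in the range \eqref{Eq:complementaryrange}. Your explicit handling of the quadratic dependence of $\mathcal{Q}_{P,S}$ on the archimedean test vector via the pointwise inequality $|\sum W_\mu + W_\clubsuit|^2 \leq (N+1)(\sum |W_\mu|^2 + |W_\clubsuit|^2)$ is a detail the paper leaves implicit but which is indeed needed.
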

\begin{proof}
The bound follows from combining the decomposition \eqref{Eq:decomposefS}, Lemma \ref{Lem:3bounds}(1)(2) together with \eqref{Eq:QPboundin1range} in case of the range \eqref{Eq:simplerrange}, or   Lemma \ref{Lem:firstaltComplementaryrange} in case of the complementary range \eqref{Eq:complementaryrange}.
\end{proof}

\subsection{Application of local bounds}\label{sec:cnh5ncy4e8}

\begin{proof}
[Proof of Proposition \ref{proposition:proposition-20-14}]
  Recall our goal is to prove for $r=T^{O(1)}$
  \begin{equation*}
    \mathcal{Q}_P (\phi, L(Y) f_S ) \ll |\det (Y)|_S \ r^n T^{o(1)}.
  \end{equation*}
  We start with the bound in Lemma \ref{Lem:globalperiodboundreduction}:
\begin{equation*}
  \mathcal{Q}_P (\phi, L(Y)f_S) \ll \sum_{0 \neq m \in \mathbb{Z}[1/T_f^{nd} ]} {\tau (mT_f^{nd})^{O(1)}} \max_{c \in \Sigma^P (m)} \prod_{\mathfrak{p} \in S} \mathcal{Q}_P (\phi_\mathfrak{p}, L(Y_\mathfrak{p})f_\mathfrak{p}, c).
  \end{equation*}
By
Lemma \ref{lemma:cnjen3r9kt} with $T^{-\infty}$ term  negligible,  we have
  \begin{align*}
    \max_{c \in \Sigma^P (m)}
    \prod_{\mathfrak{p} \in S} \mathcal{Q}_P (\phi_\mathfrak{p}, L(Y_\mathfrak{p})f_\mathfrak{p}, c) &\ll  \min                 \left(                                                    \frac{r^n T^{o(1)} |\det (Y)| }{\lvert m \rvert_{S}},              \frac{T^{O(1)}}{\lvert m \rvert^{9}_{\infty}\lvert m \rvert_{S}}                               \right)
                                                                                    \\
                                                                                  &\ll \frac{|\det(Y)|}{|m|_S}T^{o(1)}\min\left(r^n, \frac{T^{C}}{\lvert m \rvert^{9}_{\infty}}\right)
  \end{align*}
The result follows now from the following lemma.

\end{proof}
\begin{lemma}\label{lemma:cnjen3r5jb}
  Fix $d,C \geq 0$.  Let $S$ be a finite set of places of $\mathbb{Q}$, containing $\infty$.  Let $T_f=\prod\limits_{p\in S}T_p \ll T$ be an integer and $\rn\in\mathbb{R}_{>0}$ such that $\rn=T^{O(1)}$.  Then
  \begin{equation*}
    \sum_{0 \neq m \in \mathbb{Z} [1/T_f^d]} \frac{\tau(mT_f^d)^C}{\lvert m \rvert_S}
    \min \left( \rn,  \frac{T^{C}}{\lvert m \rvert _\infty^{9}}\right)
    \ll A T^{o(1)}.
  \end{equation*}
\end{lemma}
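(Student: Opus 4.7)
I would first parameterize $\mathbb{Z}[1/T_f^d]\setminus\{0\}$ bijectively via $m \leftrightarrow k := mT_f^d \in \mathbb{Z}\setminus\{0\}$. Writing $k_S := \prod_{p \in S,\,p \text{ finite}} p^{v_p(k)}$ for the $S$-part of $k$, the product formula gives $|m|_\infty = |k|/T_f^d$, $|m|_S = T_f^d/k_S$, and $\tau(mT_f^d)^C = \tau(k)^C$. After folding together $\pm k$, the sum to estimate becomes
\begin{equation*}
\frac{2}{T_f^d}\sum_{k=1}^\infty \tau(k)^C\, k_S\, \min\!\left(A,\ \frac{T^C T_f^{9d}}{k^9}\right).
\end{equation*}

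Next, factor $k = k_S \cdot l$ with $\gcd(l, T_f) = 1$ and use the multiplicativity bound $\tau(k) \leq \tau(k_S)\tau(l)$. Introduce the transition point $k_0 := T_f^d (T^C/A)^{1/9}$, where the two arguments of the minimum coincide, and split the inner sum over $l$ at $l = k_0/k_S$. Applying the classical bounds $\sum_{l \leq L,\,\gcd(l,T_f)=1}\tau(l)^C \ll L\, T^{o(1)}$ and $\sum_{l > L,\,\gcd(l,T_f)=1}\tau(l)^C/l^9 \ll L^{-8} T^{o(1)}$, the two halves of the inner sum contribute balanced amounts of order $A(k_0/k_S)\, T^{o(1)}$ (since at the balance point $A \cdot k_0 = T^C T_f^{9d}/k_0^8$). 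Summing against the outer weight $\tau(k_S)^C k_S/T_f^d$ then reduces the whole problem to controlling two divisor sums supported on $S$-smooth integers:
\begin{equation*}
\Sigma_1(X) := \sum_{\substack{k_S \mid T_f^\infty\\ k_S \leq X}}\tau(k_S)^C, \qquad \Sigma_2(X) := \sum_{\substack{k_S \mid T_f^\infty\\ k_S > X}}\frac{\tau(k_S)^C}{k_S^8}.
\end{equation*}

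The main obstacle, and the heart of the proof, is to handle these $S$-smooth divisor sums \emph{uniformly} in the (possibly large) set $S$. The tail $\Sigma_2(X)$ is relatively tame: for any fixed $\delta \in (0,7)$, the Dirichlet series $\sum_{k_S \mid T_f^\infty}\tau(k_S)^C/k_S^{8-\delta}$ has Euler factors $\sum_{a \geq 0}(1+a)^C p^{-(8-\delta)a}$ whose product over \emph{all} primes converges absolutely, giving $\Sigma_2(X) \ll X^{-7+o(1)}$. The partial sum $\Sigma_1(X)$ is more delicate: Rankin's inequality yields $\Sigma_1(X) \leq X^\epsilon \prod_{p \in S_f}\sum_{a \geq 0}(1+a)^C p^{-a\epsilon}$, in which the Euler product depends genuinely on $|S_f|$. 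The implicit constraint $T_f \geq \prod_{p \in S_f} p^2$ forces $|S_f| = O(\log T_f/\log\log T_f)$, and by choosing $\epsilon = \epsilon(T) \to 0$ sufficiently slowly (balanced against the size of $|S_f|$) I would absorb the Euler product into a factor of $T^{o(1)}$. Assembling Piece~A (from $k_S \leq k_0$) and Piece~B (from $k_S > k_0$) using the balance relation at $k_0$ then yields $\ll A\, T^{o(1)}$, completing the proof.
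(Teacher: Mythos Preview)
Your overall strategy coincides with the paper's: reduce to integer summation variable, factor into an $S$-smooth part and a part coprime to $S$, handle the coprime part by standard divisor-sum bounds, and control the $S$-smooth part via Rankin's trick together with the bound $|S_f| \ll \log T$ that follows from $T_p \geq p^2$.

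However, your computation of $|m|_S$ is wrong, and the error propagates. In the paper's convention (which is the only one consistent with the proof of the preceding Lemma~\ref{lemma:cnjen3r9kt}, where $|\det c|_S \geq 1$ is used), the symbol $|\cdot|_S$ denotes $\prod_{v\in S}|\cdot|_v$ \emph{including the archimedean place}. By the product formula $|T_f^d|_S = 1$, so $|m|_S = |k|_S = |k|/k_S$; writing $k=k_S\,l$ this gives $|m|_S = l$, not $T_f^d/k_S$. With your formula the transformed sum acquires an extraneous factor $k/T_f^d$, and your Piece~A then comes out as $A(k_0/T_f^d)\,\Sigma_1(k_0)\,T^{o(1)} = A^{8/9}T^{C/9+o(1)}$, which is not $\ll A\,T^{o(1)}$ when $A$ is small compared to $T^C$.

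Once you replace $1/|m|_S$ by $1/l$, your sum becomes
\[
2\sum_{k_S\mid T_f^{\infty}}\ \sum_{\gcd(l,T_f)=1}\frac{\tau(k_S l)^C}{l}\,\min\!\Bigl(A,\ \frac{T^C T_f^{9d}}{(k_S l)^9}\Bigr),
\]
which is exactly the paper's $J$ (with $a=k_S$, $b=l$, and $T_f^{9d}$ absorbed into $T^C$). From here the paper proceeds slightly more simply than you do: it first truncates to $a,b<T^{C}$ (the tail being negligible), replaces $\tau(ab)^C$ by $T^{o(1)}$ on that range, and then bounds $\min(A,T^C/(ab)^9)\leq \min(A,T^C/a^9)\leq A$, so that the $a$- and $b$-sums decouple; the $b$-sum is a harmonic sum $\ll T^{o(1)}$, and the $a$-sum reduces to counting $S$-smooth integers up to $T^C$ (no $\tau$-weight), which is where Rankin enters. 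Your version, carrying $\tau(k_S)^C$ through and splitting at the balance point, also works after the correction, but the bookkeeping is heavier than necessary.
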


\begin{proof}
  We start with a change of variable $mT_f^d\rightarrow m$, and note that $|m|_S$ is not changed by the product law.
  After replacing $C$ by a new fixed constant, we reduce to verifying that
  \begin{equation*}
    J:=\sum_{0 \neq m \in \mathbb{Z} } \frac{\tau(m)^C}{\lvert m \rvert_S}
    \min \left( \rn,  \frac{T^{C}}{\lvert m \rvert _\infty^{9}}\right)
    \ll \rn T^{o(1)}
  \end{equation*}
  for fixed and large enough $C$.  Write $m =\pm a b$, where $a$ is a product of primes in $S$ and $b$ is a positive integer coprime to $S$.  Then $|m|_S=|b|_S=b$.  The contribution when either $a\geq T^C$ or $b\geq T^C$ would be negligible due to the fast decay of $\frac{T^C}{|m|_\infty^9}$. It thus suffices to reduce the range of sum to $a, b< T^C$,
  where we have $\tau(m)^C=T^{o(1)}$. Then we have
  \begin{align*}
    J&\ll T^{o(1)}\sum\limits_{a,b< T^C}\frac{1}{b}\min\left( A, \frac{T^C}{a^9b^9}\right)\\
     &\leq T^{o(1)}\sum\limits_{a< T^C}\min\left( A, \frac{T^C}{a^9}\right)\sum\limits_{b<T^C}\frac{1}{b}\\
     &\ll T^{o(1)}\sum\limits_{a< T^C}\min\left( A, \frac{T^C}{a^9}\right)\\
     &\leq AT^{o(1)}\sum_{
       \substack{
       a \leq T^C :  \\
    p \mid a \implies p \in S
    }
    }1.
  \end{align*}
  It remains to prove that
  \begin{equation}\label{eqn:sum_-substack-leq-tc-:-}
    \sum_{
      \substack{
        a \leq T^C :  \\
        p \mid a \implies p \in S
      }
    }
    1
    \ll T^{o(1)}.
  \end{equation}

  To see this, we apply Rankin's trick, as follows.  For any fixed small enough $\eps > 0$, we have
  \begin{equation*}
    \sum_{
      \substack{
        a \leq T^C :  \\
        p \mid a \implies p \in S
      }
    }
    1
    \leq
    T^{C \eps}
    \sum_{
      \substack{
        p \mid a \implies p \in S
      }
    }
    a^{-\eps},
  \end{equation*}
  \begin{equation*}
    \sum_{
      \substack{
        p \mid a \implies p \in S
      }
    }
    a^{-\eps}
    \leq
    \prod_{p \in S}
    \frac{1}{1 - p^{-\eps}}.
  \end{equation*}
  Note that
  \begin{equation*}
    \frac{1}{1 - p^{- \eps }} \ll \exp(2p^{-\eps}),
  \end{equation*}
  which becomes inequality once $p$ is large enough in terms of $\eps$.  Thus
  \begin{equation*}
  \prod_{p \in S} \frac{1}{1 - p^{-\eps}}\ll_\eps \exp(2\sum\limits_{p\in S}p^{-\eps}).
  \end{equation*}
  our task \eqref{eqn:sum_-substack-leq-tc-:-} reduces to verifying that
  \begin{equation}\label{eqn:sum_p-in-s-p-eps-lll-log-t.-}
    \sum_{p \in S} p^{-\eps} \lll \log T.
  \end{equation}
  Write $S = \{p_1,\dotsc,p_{|S|-1},\infty\}$, with $p_j < p_{j+1}$.  Then $p_j \geq j$, so
  \begin{equation*}
    \sum_{p \in S} p^{-\eps} \leq \sum_{n =1}^{\lvert S  \rvert-1} \frac{1}{ n^\eps } \asymp \lvert S \rvert^{1 - \eps}.
  \end{equation*}
  Using that $\lvert S \rvert \lll \log T$, 
  we deduce the required bound~\eqref{eqn:sum_p-in-s-p-eps-lll-log-t.-}.
\end{proof}

\section{Proof of Theorem \ref{MainTheorem}} \label{sec:cnpv1hj5sh}
\subsection{General adaptions}\label{Sec:generaladaption}
In this section we give the proof of Theorem \ref{MainTheorem}, which is mostly parallel to \cite[Section 5]{2021arXiv210915230N}, with some general adaptions/modifications as follows:

\begin{enumerate}
\item We have introduced an auxiliary parameter $R$  in \eqref{Eq:auxiliaryR} to replace $T_\infty$ in \cite{2021arXiv210915230N}, with $\delta_0$ small enough and fixed, to be optimized later on.
\item
  In \cite{2021arXiv210915230N}, the set $S$  consists of all places at which $\pi$ is ramified (including the archimedean place), and a special role  was played by the subset $\{\infty\} \subseteq S$. In this paper we use instead the notation $S\subseteq S_\pi$ where $S$ denote the set of interesting places as in Theorem \ref{MainTheorem} among the set $S_\pi$ of all places with ramifications.
  Local factors or integrals at $\infty$ in \cite{2021arXiv210915230N} are thus replaced with products over $S$ of analogous quantities.
  The factors/integrals away from $\infty$ will be replaced by those away from $S$, taking the conductor  $C(\pi^S)$ for an example. Another example is $\Phi[f_\infty]$ defined in \cite[(2.49)]{2021arXiv210915230N} and used for constructing pseudo Eisenstein series, which is now replaced by (see Section \ref{Sec:completePseudoEisenstein})
  \begin{equation}\label{Eq:adaptedPhif}
    \Phi[f_S](s)=\mathcal{P}_G(s)\zeta^{(S)}(N,s)\otimes _{\mathfrak{p} }f_\mathfrak{p}[s].
  \end{equation}

\item \cite[Theorem 3.1]{2021arXiv210915230N} is replaced by Theorem \ref{theorem:main-local-noncompact} for p-adic places $p\in S$, and replaced by \cite[Theorem 3.1]{2021arXiv210915230N}/Lemma \ref{Lemma:archimedeananalogueforR} for the archimedean place, depending on  whether $R=T_\infty$ or not (recall \eqref{Eq:auxiliaryR}).
\item \cite[Theorem 4.1]{2021arXiv210915230N} is replaced by Theorem \ref{theorem:eisenstein-growth-bound-general-level}.
\end{enumerate}

\subsection{Finishing the proof}
We may carry out the rest of the proof as in \cite[Section 5]{2021arXiv210915230N}, keeping in mind the adaptions of Section \ref{Sec:generaladaption}.
Note that we can still directly apply the bound for the matrix counting in \cite[Section 5.9]{2021arXiv210915230N}, as the local testing function $\omega_p^\sharp$ used in Theorem \ref{theorem:main-local-noncompact} at $p\in S$ is still contained in $\overline{G}(\mathbb{Z}_p)$.

In the case $R=T_\infty$ we then obtain Theorem \ref{MainTheorem} directly; In the case $T_\infty<R$, all the resulting upper bound we get will have an additional coefficient $R^{O(1)}$ due to the ineffective controls in Lemma \ref{Lemma:archimedeananalogueforR}, and in particular we get
\begin{equation*}
  L(\pi,1/2)\ll R^{O(1)}T^{n/4-\delta}C(\pi^S)^{1/2}
\end{equation*}
for some fixed $\delta>0$.  By picking now $\delta_0$ to be small enough but fixed, we conclude the proof of Theorem \ref{MainTheorem}.


\appendix
\section{Proof of Proposition \ref{Prop:WhittakerIntDomain}}\label{Sec:appendix}
We introduce some notions first.  For a positive integer $\rho$, denote
\begin{equation*}
  A(\rho):=
  \begin{pmatrix}
    p^{(n-1)\rho} & & & \\
               & p^{(n-2)\rho} & & \\
               & & \cdots & \\
               & & & 1
  \end{pmatrix}
  .
\end{equation*}
For any matrix $g\in G$, denote by $g^{A(\rho)}$ the matrix $A(\rho)gA(\rho)^{-1}$.  Denote the valuation of the $(i,j)-$th entry of a matrix by
\begin{equation*}
  v_{i,j}(g)=v((g)_{i,j}).
\end{equation*}

\begin{definition}\label{Defn:niceNdomain}
  By a \textbf{nice domain} of slope 0, we mean the subset $N_0=N(\mathfrak{o})$ of $N$.  By a nice domain of slope $\rho> 0$ and remainder $ l\geq 0$, we mean a subset $N_0$ of $N$ consisting of unipotent matrices $u$ satisfying the following:
  \begin{enumerate}
  \item\label{Item:u0} There exists $u_0\in N(\mathfrak{o}/p^n\mathfrak{o})$ such that $N_0^{A(\rho)} =u_0 K_N(p^n\mathfrak{o})$. (Here $K_N(p^n\mathfrak{o})=\{u\in N| v_{i,j}(u-I)\geq n\}$.) In particular $N_0$ is a translate of an open compact subgroup of $N$;
  \item\label{Item:remainder} For any pair of indices $i<j$, we have
    \begin{equation*}
      v_{i,j}(u_0)\geq l;
    \end{equation*}
  \item\label{Item:achievingremainder} There exists a pair $(i_0,j_0)$ with $j_0-i_0>0$ minimal such that
    \begin{equation*}
      v_{i_0,j_0}(u_0)=l.
    \end{equation*}
  \item\label{Item:i0maximal} Furthermore if there are multiple pairs $(i,j)$ with $j-i=j_0-i_0$ and $v_{i,j}(u_0)=l$, we assume $i_0$ is maximal.
  \item\label{Item:rhominimal} $\rho$ is minimal in the sense that $N_0^{A(\rho-1)}\not\subset N(\mathfrak{o})$. In particular we have $l<n$.
  \end{enumerate}
\end{definition}
\begin{lemma}
  $N$ can be decomposed as the disjoint union of nice domains.
\end{lemma}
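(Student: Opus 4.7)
The plan is to construct an explicit partition indexed by the ``slope'' invariant
$$\rho(u) := \min\{\rho \in \mathbb{Z}_{\geq 0} : u^{A(\rho)} \in N(\mathfrak{o})\},$$
which is well-defined for every $u \in N$ because conjugation by $A(\rho)$ adds $(j-i)\rho$ to the valuation of the $(i,j)$ entry, so any sufficiently large $\rho$ works. Elements of $N(\mathfrak{o})$ have $\rho(u) = 0$; assign them all to the slope-$0$ nice domain $N(\mathfrak{o})$. For the remaining $u$, let $u_0(u) \in N(\mathfrak{o}/p^n\mathfrak{o})$ denote the reduction of $u^{A(\rho(u))}$ modulo $p^n$, and assign $u$ to the set
$$N_{\rho, u_0} := \{u' \in N : \rho(u') = \rho, \; (u')^{A(\rho)} \equiv u_0 \pmod{p^n \mathfrak{o}}\}.$$
This produces a tautological decomposition
$$N = N(\mathfrak{o}) \;\sqcup\; \bigsqcup_{\substack{\rho \geq 1 \\ u_0}} N_{\rho, u_0},$$
with the second union taken over those $(\rho, u_0)$ giving nonempty pieces.

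Having produced the partition, I would then verify that each nonempty $N_{\rho,u_0}$ with $\rho \geq 1$ satisfies the five conditions of Definition \ref{Defn:niceNdomain}. The central step is condition (\ref{Item:u0}), the identity $N_{\rho,u_0}^{A(\rho)} = u_0 K_N(p^n \mathfrak{o})$. The containment $\subseteq$ is by construction. For the reverse, one checks that right-translation of any $u \in N_{\rho,u_0}$ by an element $k' \in A(\rho)^{-1} K_N(p^n \mathfrak{o}) A(\rho)$ preserves both $\rho(\cdot)$ and the mod-$p^n$ class of $u^{A(\rho)}$. The needed input is the entry-wise bound
$$v_{i,j}(uk' - u) \geq n - (j-i)\rho,$$
obtained from the unipotent product formula using $v(u_{il}) \geq -(l-i)\rho$ (from $\rho(u) = \rho$) and $v(k'_{lj}) \geq n - (j-l)\rho$. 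Upon $A(\rho)$-conjugation this bound becomes $\geq n$, so the class of $u^{A(\rho)}$ mod $p^n$ is unchanged; upon $A(\rho-1)$-conjugation it becomes $\geq n - (j-i) \geq 1$ (since $j - i \leq n-1$), so the property $u^{A(\rho-1)} \notin N(\mathfrak{o})$ transfers from $u$ to $uk'$, ensuring $\rho(uk') = \rho(u)$.

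For the remaining conditions, set $l := \min_{i<j} v_{i,j}(u_0)$. Properties (\ref{Item:remainder})--(\ref{Item:i0maximal}) are then realized by choosing the pair $(i_0, j_0)$ achieving $v_{i,j}(u_0) = l$ with $j-i$ minimal and $i_0$ maximal among those. Condition (\ref{Item:rhominimal}) splits into two parts: the minimality of $\rho$ is built into the construction, since every $u \in N_{\rho,u_0}$ has $u^{A(\rho-1)} \notin N(\mathfrak{o})$ by definition, so $N_{\rho,u_0}^{A(\rho-1)} \not\subset N(\mathfrak{o})$. The assertion $l < n$ follows by contrapositive: if $l \geq n$, then $u_0 \equiv I \pmod{p^n \mathfrak{o}}$, forcing $v_{i,j}(u) \geq n - (j-i)\rho$ for every $u$, hence $v_{i,j}(u^{A(\rho-1)}) \geq n - (j-i) \geq 1$, so $u^{A(\rho-1)} \in N(\mathfrak{o})$, contradicting $\rho(u) = \rho$.

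The main obstacle I anticipate is executing the coset-invariance calculation for condition (\ref{Item:u0}) cleanly, since one must simultaneously control the three related quantities $\rho(\cdot)$, $u^{A(\rho)} \bmod p^n$, and the failure of $u^{A(\rho-1)}$ to lie in $N(\mathfrak{o})$, and confirm that the entry bound $n - (j-i)\rho$ is sharp at both precisions. The arithmetic is elementary, but the bookkeeping requires care; once it is in place, the remaining conditions follow immediately and the decomposition is established.
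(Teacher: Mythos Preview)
Your proposal is correct and follows essentially the same approach as the paper's proof. The paper's argument is quite terse: it simply observes that the slope $\rho$ is uniquely determined by the minimality condition~(\ref{Item:rhominimal}) and that $u_0$ is then determined by the reduction of $u^{A(\rho)}$ modulo $p^n\mathfrak{o}$, asserting disjointness without further comment. You supply the details the paper leaves implicit, most notably the verification of condition~(\ref{Item:u0}) via the entry-wise bound $v_{i,j}(uk'-u)\geq n-(j-i)\rho$, which is indeed the only point requiring care and which your argument handles correctly.
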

\begin{proof}
  It is clear that any $n\in N$ lands in some nice domain. Actually the slope of the corresponding domain is uniquely determined by item \eqref{Item:rhominimal} above.  So will $u_0$ be determined by $n^{A(\rho)} \mod p^n\mathfrak{o}$.

  It is also easy to check that the nice domains with different slopes are disjoint from each other, and so are those with the same slope $\rho$ and different $u_0\in N(\mathfrak{o}/p^n\mathfrak{o})$.
  (Note that some choices of $u_0$ may not satisfy Item \eqref{Item:rhominimal}, but this does not affect the argument above.)
\end{proof}
\begin{example}
  The matrix
  \begin{equation*}
    \begin{pmatrix}
      1 & p^{-k+1} & p^{-2k+1}\\
        &1 & p^{-k+2}\\
        & & 1
    \end{pmatrix}
  \end{equation*}
  for $k> 0$ is in a nice domain $N_0$ with slope $k$ and remainder $1$, such that
  \begin{equation*}
    N_0^{A(k)}=
    \begin{pmatrix}
      1 & p+p^3\mathfrak{o} & p+p^3\mathfrak{o}\\
        &1 & p^2+ p^3\mathfrak{o}\\
        & & 1
    \end{pmatrix}
    .
  \end{equation*}
\end{example}
As in Proposition \ref{Prop:WhittakerIntDomain}, let $a\in A$ satisfy $|a_n|=1$ and $|a^{\alpha}|\leq T^{d_2}$ for any $\alpha\in \Delta_N$; let $f[s]\in \mathcal{I}[s]$ be $K(\mathfrak{q}^2)-$invariant.  Now Proposition \ref{Prop:WhittakerIntDomain} can be easily reduced to the following lemma:

\begin{lemma}\label{Lem:vanishingforlargeslope}
  For $a$ and $f[s]$ as above, and for any nice domain $N_0$ with slope $\rho\gg_G v(T)$, we have
  \begin{equation*}
    \int\limits_{u\in N_0}f[s](w_Guak)\psi^{-1}(u)du=0.
  \end{equation*}
\end{lemma}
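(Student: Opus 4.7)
The plan is to use a change of variable $u = v u'$ in the integral
\begin{equation*}
  I := \int_{u \in N_0} f[s](w_G u a k) \psi^{-1}(u) \, du,
\end{equation*}
exploiting translation invariance of Haar measure on $N$ to force $I = \psi^{-1}(v) I$ for a suitable $v \in N$; since $\psi$ is unramified, we will arrange $\psi(v) \neq 1$ and conclude $I = 0$. The key ingredient is left $U$-invariance of $f[s]$: for any $v \in N$ the Weyl conjugate $w_G v w_G^{-1}$ lies in $U$, the opposite unipotent, and since $f[s]$ is a section of $\mathcal{I}(s) = \Ind_{AU}^G(|\cdot|^s)$, we have
\begin{equation*}
  f[s](w_G v u' a k) = f[s]\bigl((w_G v w_G^{-1}) \cdot w_G u' a k\bigr) = f[s](w_G u' a k)
\end{equation*}
for all $u',a,k$. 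The hypotheses on $a$ and on right $K(\mathfrak{q}^2)$-invariance of $f[s]$ play no role here, being inherited from the broader context of Proposition \ref{Prop:WhittakerIntDomain}.

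For the substitution to preserve the integration range, we must identify the left stabilizer of $N_0$. Write $N_0 = \tilde{u}_0 H_0$, where $\tilde{u}_0 := A(\rho)^{-1} u_0 A(\rho)$ (lifting $u_0$ to $N(\mathfrak{o})$) and $H_0 := A(\rho)^{-1} K_N(p^n \mathfrak{o}) A(\rho)$. The condition $v N_0 = N_0$ becomes $\tilde{u}_0^{-1} v \tilde{u}_0 \in H_0$, and unwinding the conjugation by $A(\rho)$ reduces this to $u_0^{-1}(A(\rho) v A(\rho)^{-1}) u_0 \in K_N(p^n \mathfrak{o})$. A direct computation expanding $(u_0 k u_0^{-1})_{ij} = \sum_{i \leq i' \leq j' \leq j} (u_0)_{i i'} k_{i'j'} (u_0^{-1})_{j'j}$ and observing that each off-diagonal term lies in $p^n \mathfrak{o}$ shows that $u_0 \in N(\mathfrak{o})$ normalizes $K_N(p^n \mathfrak{o})$, from which the left stabilizer is exactly $H_0$. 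Concretely, $v \in H_0$ is characterized by the valuation inequalities $v_{ij} \in p^{n-(j-i)\rho}\mathfrak{o}$ for $i<j$.

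It remains to choose $v \in H_0$ with $\psi(v) \neq 1$. Take $v := I + \varpi^{-1} E_{12}$; then $v_{12} = \varpi^{-1}$ and all other superdiagonal entries vanish, so membership $v \in H_0$ reduces to $\varpi^{-1} \in p^{n - \rho}\mathfrak{o}$, i.e.\ $\rho \geq n+1$ (with $n$ here denoting the rank of $G$). This is implied by the hypothesis $\rho \gg_G v(T)$ once $v(T)$ is moderately large. Moreover $\psi(v) = \psi(\varpi^{-1}) \neq 1$ since $\psi$ is unramified with conductor $\mathfrak{o}$. Substituting $u = vu'$ then yields
\begin{equation*}
  I = \int_{N_0} f[s](w_G v u' a k)\psi^{-1}(vu')\, du' = \psi^{-1}(v)\, I,
\end{equation*}
so $I = 0$ as required.

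The main step that requires care is the identification of the left stabilizer of $N_0$ with $H_0$, which rests on the normalization property of $K_N(p^n\mathfrak{o})$ by integral unipotent elements; the remaining steps are routine manipulations with translation invariance of Haar measure and the elementary structure of induced representations.
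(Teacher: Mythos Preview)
Your argument has a fatal gap in step~2, the invariance claim. You assert that $f[s]$ is a section of $\Ind_{AU}^G(|\cdot|^s)$ with $U$ the lower-triangular unipotent, hence left $U$-invariant. But in this section $f$ lies in $\mathfrak{E}(N\backslash G,\mathfrak{q}^2)$, so $f$ and its Mellin components $f[s]$ are left $N$-invariant, where $N$ is the \emph{upper}-triangular unipotent (see Lemma~\ref{Lemma:hinvariance}, which records that $f[s]$ is left $K_AN$-invariant). Consequently, for $v\in N$ you have $w_G v w_G^{-1}\in U$, and $f[s]$ has no left invariance under $U$; the identity $f[s](w_G v u' a k)=f[s](w_G u' a k)$ simply does not hold. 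Equivalently, $h(g)=f[s](w_G g a k)$ is left $U$-invariant, not left $N$-invariant, so left-translating $u\in N_0\subset N$ by $v\in N$ does not fix $h$. Your dismissal of the right $K(\mathfrak{q}^2)$-invariance hypothesis as irrelevant is a symptom of this error: that hypothesis is precisely what makes the vanishing possible.

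The paper's proof confronts exactly this obstacle. Since the only available invariances of $h$ are on the left by $U$ (lower) and on the right by the small congruence group $K_Q(p^d\mathfrak{o})$ (again lower), there is no direct translation in $N$ that preserves $h$. The paper instead builds a two-parameter family: it multiplies $u$ on the right by a small $q_1\in K_Q(p^d\mathfrak{o})$ (using right invariance), then on the left by $q_2\in K_Q$ (using left $K_AU$-invariance) chosen to land the product back in $N$. One checks that the net effect is a perturbation $u\mapsto u\,n(u,x)$ with $n(u,x)\in K_N(p^n\mathfrak{o})^{A(-\rho)}$ (so the domain $N_0$ is preserved), a volume-preserving change of variables, and a superdiagonal shift that makes $\psi$ detect a nontrivial character in $x$; integrating over $x$ gives zero. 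The use of the nice-domain structure (remainder $l$ and the extremal index pair $(i_0,j_0)$) is needed to control where the superdiagonal modification occurs.
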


To prove such vanishing result, our strategy is by the following lemma.  For simplicity denote
\begin{equation*}
  h(g)=f[s](w_G g ak).
\end{equation*}
\begin{lemma}\label{Lem:twoparafamily}
  Let $a,h$ be as above.  Let $N_0$ be a nice domain with slope $\rho\gg_G v(T)$ and remainder $l$ such that $N_0=u_0^{A(-\rho)}K_N(p^n\mathfrak{o})^{A(-\rho)}$.  Then there exists a function $n(u,x)$ for $u\in N_0$ and $x\in \mathfrak{o}$ satisfying the following:
  \begin{enumerate}
  \item\label{Vanishing:item1} $n(u,x)\in K_N(p^n\mathfrak{o})^{A(-\rho)}$;
  \item\label{Vanishing:item2} For all $u\in N_0$, $x\in \mathfrak{o}$,
    \begin{equation*}
      h(u)=h(un(u,x));
    \end{equation*}
  \item\label{Vanishing:item3} We can make a change of variable $un(u,x)=w$,
  so that for the function $n'(w,x)$ satisfying $u=wn'(w,x)$, we have
    \begin{align*}
      &\int\limits_{u\in N_0}h(un(u,x))\psi^{-1}(u)du\\
      =&\int\limits_{w\in N_0}h(w)\psi^{-1}(wn'(w,x))dw;
    \end{align*}
  \item\label{Vanishing:item4} $n'(w,x)$ satisfies
    \begin{equation*}
      \left(n'(w,x) \right)_{i,i+1}
      \begin{cases}
        \equiv -p^{-l-1}(u_0)_{i_0,j_0}x \mod\mathfrak{o}, &\text{\ if }i=i_0\\
        \in \mathfrak{o}, &\text{\ otherwise.}
      \end{cases}
    \end{equation*}
  \end{enumerate}

\end{lemma}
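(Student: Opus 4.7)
The plan is to exhibit an explicit $n(u,x)$ that does \emph{not} depend on $u$ and then verify the four listed properties by direct computation. The key conceptual observation driving the construction is that the invariance property (2), namely $h(u) = h(u \eta)$, actually holds for \emph{any} $\eta \in N$, not just elements of $K_N(p^n\mathfrak{o})^{A(-\rho)}$. Indeed, $f[s]$ lies in the normalized induced representation $\mathcal{I}(s) = \Ind_Q^G(|\cdot|^s)$ and is therefore left-invariant under the unipotent radical of $Q$ (the opposite, lower-triangular unipotent subgroup). Writing $u \eta = v u$ with $v := u \eta u^{-1} \in N$, we get $w_G u \eta = (w_G v w_G^{-1}) w_G u$, and $w_G v w_G^{-1}$ is lower-triangular unipotent, hence absorbed by the left $U$-invariance of $f[s]$. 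This reduces the problem to finding \emph{any} $n(u,x)$ satisfying the remaining combinatorial constraints (1), (3), (4).

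Concretely, I would define
\begin{equation*}
  n(u,x) := n(x) := I + c(x) E_{i_0, i_0+1}, \qquad c(x) := x \cdot p^{-l-1} (u_0)_{i_0, j_0},
\end{equation*}
where $E_{i_0, i_0+1}$ is the elementary matrix with $1$ in position $(i_0,i_0+1)$. Since $(u_0)_{i_0,j_0}$ has valuation exactly $l$ by Definition \ref{Defn:niceNdomain} \eqref{Item:achievingremainder}, we have $v(c(x)) \geq -1$ for all $x \in \mathfrak{o}$. A direct computation gives $A(\rho) n(x) A(-\rho) = I + p^{\rho} c(x) E_{i_0, i_0+1}$, whose sole nonzero off-diagonal entry has valuation $\geq \rho - 1 \geq n$ under the standing hypothesis $\rho \gg_G v(T)$. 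This shows $n(x) \in K_N(p^n\mathfrak{o})^{A(-\rho)}$, giving assertion \eqref{Vanishing:item1}; moreover, $N_0 \cdot n(x) = N_0$ since $K_N(p^n\mathfrak{o})^{A(-\rho)}$ is a group containing $n(x)$, which will be needed for \eqref{Vanishing:item3}.

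Property \eqref{Vanishing:item2} follows from the opening observation applied with $\eta = n(x)$. For \eqref{Vanishing:item3}, right translation by $n(x)$ on the unimodular group $N$ preserves the Haar measure, so the substitution $w = u n(x)$ is a measure-preserving bijection from $N_0$ to $N_0$ with inverse $u = w n(x)^{-1}$; hence $n'(w,x) = n(x)^{-1}$, independent of $w$. For \eqref{Vanishing:item4}, we use $E_{i_0, i_0+1}^2 = 0$ to obtain $n(x)^{-1} = I - c(x) E_{i_0, i_0+1}$ exactly, so $(n'(w,x))_{i_0, i_0+1} = -p^{-l-1}(u_0)_{i_0, j_0} x$ (matching the required form, a fortiori modulo $\mathfrak{o}$), while $(n'(w,x))_{i, i+1} = 0 \in \mathfrak{o}$ for all $i \neq i_0$.

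The main obstacle here is conceptual rather than technical: the form of $n'(w,x)$ in \eqref{Vanishing:item4} involves the coset data $u_0$, so one might initially expect the construction to require genuinely $u$-dependent perturbations or more elaborate block structure. The simplification comes from recognizing that \eqref{Vanishing:item2} is automatic from left $U$-invariance, which frees us to take the most economical perturbation — a single elementary matrix in the super-diagonal position $(i_0,i_0+1)$ — and verify that the hypothesis $\rho \gg_G v(T)$ exactly suffices to accommodate the required valuation $-1$ within $K_N(p^n\mathfrak{o})^{A(-\rho)}$.
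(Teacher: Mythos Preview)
Your argument has a fatal error in the verification of property \eqref{Vanishing:item2}. You claim that $f[s]$ is left-invariant under the lower-triangular unipotent subgroup, but this is false: $f$ lies in $\mathfrak{E}(N\backslash G,\mathfrak{q}^2)$, so $f[s]$ is left $N$-invariant (upper-triangular). The Appendix's local notation $Q=AU$ with $U$ lower-triangular describes the invariance of $h$ \emph{after} conjugation by $w_G$ (see Lemma~\ref{Lemma:hinvariance} and its proof), not the parabolic from which $f[s]$ is induced. Concretely, in your computation $w_G u\eta = (w_G v w_G^{-1}) w_G u$ the element $w_G v w_G^{-1}$ is lower-triangular, while $f[s]$ is only invariant on the left by upper-triangular unipotents, so nothing is absorbed.

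In fact your claimed identity $h(u)=h(u\eta)$ for all $\eta\in N$ would force $h|_N$ to be constant, making every Jacquet integral $\int_N f[s](w_G u a k)\psi^{-1}(u)\,du$ either zero or ill-defined, which is absurd. The paper's construction is genuinely more delicate: one multiplies $u$ on the right by $q_1^{A(-\rho)}$ with $q_1 = I + p^{\rho-l-1}x E_{j_0,i_0+1}$ lying in $K_Q(p^d\mathfrak{o})$ (this uses the \emph{right} $K_Q(p^d\mathfrak{o})$-invariance of $h$ from Lemma~\ref{Lemma:hinvariance}, which is where the hypothesis $\rho\gg_G v(T)$ actually enters), and then corrects on the left by some $q_2\in K_Q(p^{\rho-l-1}\mathfrak{o})$ to land back in $N$. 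The resulting $n(u,x)$ depends nontrivially on $u$, and the verification of \eqref{Vanishing:item4} requires the fine structure of the nice domain (Items \eqref{Item:achievingremainder}--\eqref{Item:i0maximal} of Definition~\ref{Defn:niceNdomain}) to control the superdiagonal entries of $w'=q_2 u' q_1$ modulo $p^\rho$; see Lemma~\ref{Lemma:conjugatedcongruence}.
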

\begin{proof}
 [Proof of Lemma \ref{Lem:vanishingforlargeslope}]
  \begin{align*}
    &\int\limits_{u\in N_0}h(u)\psi^{-1}(u)du\\
                =&\int\limits_{x\in \mathfrak{o}}\int\limits_{u\in N_0}h(un(u,x))\psi^{-1}(u)dudx\\
                =&\int\limits_{w\in N_0}\int\limits_{x\in \mathfrak{o}}h(w)\psi^{-1}(wn'(w,x))dxdw\\
                =&0.
\end{align*}
Here the 2nd, 3rd and 4th line follows respectively from item \eqref{Vanishing:item2}, \eqref{Vanishing:item3}, \eqref{Vanishing:item4} of Lemma \ref{Lem:twoparafamily} and $v_{i_0,j_0}(u_0)=l$.
\end{proof}

To devise the function $n(u,x)$ as in Lemma \ref{Lem:twoparafamily}, we need to utilize the invariance properties of $f[s]$. Let $U$ be the set of lower unipotent matrices, $Q=AU$. Let $K_A$ be the set of diagonal matrices with entries in $\mathfrak{o}^\times$, and $K_Q(p^d\mathfrak{o})=\{x\in Q| v_{i,j}(x-I)\geq d, \forall i,j\}$.  The following result is clear:
\begin{lemma}\label{Lemma:hinvariance}
        For $a$ and $f[s]$ as in Lemma \ref{Lem:vanishingforlargeslope}, and $h(g)=f[s](w_G g ak)$, we have that $h$ is left invariant by $K_AU$ and right invariant by $K_Q(p^d\mathfrak{o})$ for $d\gg_G v(T)$.
\end{lemma}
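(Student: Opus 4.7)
The plan is to verify the two invariances of $h(g) = f[s](w_G g a k)$ separately, exploiting two structural features of $f[s]$: its membership in $\mathcal{I}(s) = \Ind_B^G(|\cdot|^s)$ (where $B = AN$ is the upper-triangular Borel, which is the relevant one since $f \in \mathfrak{E}(N \backslash G, \mathfrak{q}^2)$ is left $N$-invariant) and its right $K(\mathfrak{q}^2)$-invariance inherited from $f$. Both halves follow once we set up the right conjugations.

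For the left invariance under $K_A U$, I would compute $h(xg) = f[s]\bigl((w_G x w_G^{-1})\, w_G g a k\bigr)$ for $x \in K_A U$. The long Weyl element $w_G$ conjugates the lower-triangular $U$ to the upper-triangular $N$ and normalizes $K_A$, so $w_G x w_G^{-1}$ lies in $K_A N \subseteq B$. Writing its diagonal component as $\alpha \in K_A$, the induced-representation transformation rule yields a factor $\delta_N^{1/2}(\alpha) |\alpha|^s$; both evaluate to $1$ since $|\alpha_i| = 1$ for all $i$, giving $h(xg) = h(g)$.

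For the right invariance under $K_Q(p^d \mathfrak{o})$, the plan is to move $y$ past $ak$ and show it becomes an element of $K(\mathfrak{q}^2)$. Writing $h(gy) = f[s]\bigl(w_G g a k \cdot (ak)^{-1} y (ak)\bigr)$, I reduce to showing $(ak)^{-1} y (ak) \in K(\mathfrak{q}^2)$ for $y \in K_Q(p^d \mathfrak{o})$. Since $K(\mathfrak{q}^2)$ is normal in $K$, conjugation by $k$ is harmless, and the task becomes showing $a^{-1} y a \in K(\mathfrak{q}^2)$. Diagonal entries of $y$ are unchanged, while strictly lower-triangular entries transform as $(a^{-1} y a)_{ij} = (a_j/a_i) y_{ij}$ for $i > j$. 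The assumption $|a^\alpha| \leq T^{d_2}$ for every simple $\alpha \in \Delta_N$ gives $v(a_i) - v(a_{i+1}) \geq -d_2 v(T)$, which by telescoping yields $v(a_j/a_i) \geq -(i-j) d_2 v(T) \geq -(n-1) d_2 v(T)$ for $i > j$. Hence each entry of $(a^{-1} y a) - I$ has valuation $\geq d - (n-1) d_2 v(T)$, and any $d \geq (1 + (n-1) d_2) v(T)$ forces $a^{-1} y a \in K(\mathfrak{q}^2)$. The condition $d \gg_G v(T)$ provides the required room.

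The only mildly delicate point is the second part, where the conjugation by $a$ can spread the entries of $y - I$ by a factor controlled precisely by the simple-root growth bound on $a$; this is a bookkeeping exercise involving the telescoping argument above, rather than a genuine obstacle. The left invariance is essentially immediate from the $w_G$-swap between the two opposite unipotents together with the induced-representation transformation rule.
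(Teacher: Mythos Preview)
Your proof is correct and follows essentially the same approach as the paper: the left invariance comes from the left $K_A N$-invariance of $f[s]$ together with the conjugation $w_G(K_A U)w_G^{-1} = K_A N$, and the right invariance comes from the right $K(\mathfrak{q}^2)$-invariance of $f[s]$ (normal in $K$, so conjugation by $k$ is harmless) together with the containment $a^{-1}K_Q(p^d\mathfrak{o})a \subset K(\mathfrak{q}^2)$ for $d \gg_G v(T)$, which follows from the simple-root bound on $a$. Your version simply makes explicit the telescoping valuation estimate that the paper leaves implicit in the phrase ``$aK(\mathfrak{q}^2)a^{-1}$ contains $K_Q(p^d\mathfrak{o})$.''
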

\begin{proof}
  The left invariance follows directly from the left invariance of $f[s]$ by $K_AN$ and the translation by $w_G$.

  On the other hand $f[s]$ is right invariant by $K(\mathfrak{q}^2)$, which is normalized by $k\in K$. Using that $a^\alpha \in \mathfrak{q}^{-2}$, $aK(\mathfrak{q}^2)a^{-1}$ contains $K_Q(p^d\mathfrak{o})$ for $d\gg_G v(T)$, hence the conclusion.
\end{proof}

It is also more convenient to work with the matrices after conjugation by $A(\rho)$. Thus let $u'=u^{A(\rho)}\in u_0 K_N(p^n\mathfrak{o})$.  Let $(i_0,j_0)$ be as in Definition \ref{Defn:niceNdomain}(3)(4) and
\begin{equation*}
        q_1=I+p^{\rho-l-1}x E_{j_0,i_0+1},
\end{equation*}
where $E_{i,j}$ has a single nonzero entry $1$ at the $(i,j)$ position. Then
\begin{equation*}
        q_1^{A(-\rho)}= I+p^{(j_0-i_0)\rho-l-1}x E_{j_0,i_0+1}.
\end{equation*}
Suppose now that $\rho\gg_G d$ where $d$ is as in Lemma \ref{Lemma:hinvariance}, so that
\begin{equation*}
q_1^{A(-\rho)}\in K_Q(p^d\mathfrak{o}).
\end{equation*}
This is possible as $j_0-i_0\geq 1$.
\begin{lemma}\label{Lemma:conjugatedcongruence}
  When $\rho\gg_G d$															 for $d$ as in Lemma \ref{Lemma:hinvariance}, for any $u'\in u_0 K_N(p^n\mathfrak{o})$ there exists  $q_2\in K_Q(p^{\rho-l-1}\mathfrak{o})$ depending on $u'$ such that
  \begin{equation*}
    w'=q_2u'q_1\in N,
  \end{equation*}
  satisfying
  \begin{enumerate}
  \item[(i)]\label{Item:w'range} $w'\equiv u'\mod p^n$;
  \item[(ii)]\label{Item:2nddiag} When mod $p^{\rho}$
    \begin{equation*}
      (w')_{i,i+1}\equiv
      \begin{cases*}
        (u')_{i,i+1}+ p^{\rho-l-1}(u_0)_{i_0,j_0}x, &
                                                      \text{ if  $i=i_0$}\\
        (u')_{i,i+1}, &\text{\ otherwise}.
      \end{cases*}
    \end{equation*}
  \item[(iii)]\label{Item:changofvar} There exists proper ordering of parameters for $u'$ and $w'$ so that for each fixed parameter $x$, the corresponding Jacobian matrix
    for the change of variables $u'\rightarrow w'$ is upper-triangular and has determinant norm 1.
  \end{enumerate}
\end{lemma}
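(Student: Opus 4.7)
The plan is to obtain $q_2$ from an Iwasawa-type factorization $u'q_1 = q_2^{-1}w'$ in the big cell $Q \cdot N$. Since $u'q_1$ differs from $u' \in N$ only by a perturbation of valuation $\geq \rho-l-1$ confined to a single column (the $(i_0{+}1)$-st when $j_0 > i_0+1$, or the $j_0$-th when $j_0 = i_0+1$), this factorization exists and can be constructed by sweeping out the strictly subdiagonal entries of $u'q_1$ row by row from the bottom. In the case $j_0 > i_0+1$, $q_2$ picks up an entry $(q_2)_{j_0,i_0+1} \approx -p^{\rho-l-1}x$ to cancel the perturbation at position $(j_0, i_0{+}1)$, along with associated diagonal adjustments; in the case $j_0 = i_0+1$, $q_2$ is itself diagonal with the single nontrivial entry $(1+p^{\rho-l-1}x)^{-1}$ at position $(j_0,j_0)$.

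Once $q_2$ is constructed, claim (i) follows because both $q_1-I$ and $q_2-I$ have entries of valuation $\geq \rho-l-1$, and the assumption $\rho \gg_G d$ together with $l < n$ gives $\rho-l-1 \geq n$. The containment $q_2 \in K_Q(p^{\rho-l-1})$ is immediate from the recursive construction. For claim (ii), I would expand
\[
w' \equiv u' + (q_2-I)u' + u'(q_1-I) \pmod{p^\rho},
\]
using that the bilinear term $(q_2-I)u'(q_1-I)$ has valuation $\geq 2(\rho-l-1) \geq \rho$. The $(i,i{+}1)$-entry of $u'(q_1-I)$ equals $p^{\rho-l-1}x\,u'_{i,j_0}\,\delta_{i+1,i_0+1}$, which vanishes unless $i = i_0$ and otherwise equals $p^{\rho-l-1}x\,(u_0)_{i_0,j_0}$ modulo $p^\rho$, matching the claimed formula (using $u'_{i_0,j_0}\equiv (u_0)_{i_0,j_0} \pmod{p^n}$ with $n \geq l+1$).

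The main obstacle is showing that the contribution of $(q_2-I)u'$ to the $(i,i{+}1)$-entry vanishes modulo $p^\rho$ for every $i$, so that only $u'(q_1-I)$ contributes nontrivially (at $i = i_0$). Here the minimality of $j_0-i_0$ and the maximality of $i_0$ in Definition \ref{Defn:niceNdomain} are essential. When $j_0 = i_0+1$, the only nontrivial entry of $q_2-I$ lies at the diagonal position $(j_0,j_0)$, so $(q_2-I)u'$ is supported in row $j_0$, disjoint from row $i_0$; its contribution to $(w')_{j_0,j_0+1}$ equals $-p^{\rho-l-1}x\,u'_{j_0,j_0+1}$ modulo higher-order terms, which has valuation $\geq \rho$ thanks to $v(u'_{j_0,j_0+1}) \geq l+1$, forced by the maximality of $i_0$. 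When $j_0 > i_0+1$, the nontrivial entries of $q_2-I$ couple to entries $u'_{k,i+1}$ whose valuations are $\geq l+1$ by the minimality of $j_0-i_0$, giving the same conclusion. Finally, claim (iii) follows by ordering the above-diagonal coordinates of $u'$ and $w'$ so that the explicit formulas give $w'_{i,j} = u'_{i,j} + (\text{terms involving earlier-ordered } u'_{i',j'} \text{ multiplied by factors of valuation} \geq \rho-l-1)$; the Jacobian is then upper triangular with diagonal entries that are units of norm $1$, hence has determinant of norm $1$.
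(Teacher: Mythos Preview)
Your approach matches the paper's: construct $q_2$ from the Iwasawa factorization $u'q_1 = q_2^{-1}w'$, then analyze $w'$ modulo $p^\rho$. Your first-order expansion $w' \equiv u' + (q_2-I)u' + u'(q_1-I)$ is a clean repackaging of what the paper does via explicit row operations, and the structure of the argument is the same.

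There is one genuine gap in your treatment of (ii) in the case $j_0 > i_0+1$. You assert that the coupled entries $u'_{k,i+1}$ have valuation $\geq l+1$ ``by the minimality of $j_0 - i_0$''. But the only surviving superdiagonal contribution from $(q_2-I)u'$ modulo $p^\rho$ is at position $(j_0, j_0+1)$, and (since one checks $q_2 - I \equiv -p^{\rho-l-1}x\,E_{j_0,i_0+1} \pmod{p^\rho}$) the entry it couples to is $u'_{i_0+1,j_0+1}$. The step size here is $(j_0+1)-(i_0+1) = j_0 - i_0$, exactly the minimal one, so minimality gives nothing. What forces $v(u'_{i_0+1,j_0+1}) \geq l+1$ is the \emph{maximality} of $i_0$ (item (4) of Definition~\ref{Defn:niceNdomain}): a pair $(i_0+1,j_0+1)$ with the same step size and valuation $l$ would contradict the choice of $i_0$. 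The paper invokes both items (3) and (4) at this step. You correctly flagged both as essential in your preamble, so this is presumably an oversight in the case split, but as written the argument does not close.

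Your sketch of (iii) has the same shape as the paper's. The paper makes the ordering explicit---row index first, and within each row the $j_0$-th column before the others---and observes that right multiplication by $q_1$ introduces dependence only on the $j_0$-th column within the same row, while the row operations encoded by $q_2$ introduce dependence only on strictly earlier rows; the congruence $w' \equiv u' \pmod{p^n}$ then forces the diagonal Jacobian entries to be units.
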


\begin{proof}
  [Proof of Lemma \ref{Lem:twoparafamily}]
  We construct $n(u,x)$ to be the unipotent matrix satisfying
  \begin{equation*}
    un(u,x)=q_2^{A(-\rho)}u q_1^{A(-\rho)},
  \end{equation*}
  which is equivalent to that
  \begin{equation*}
    w=un(u,x)=(w')^{A(-\rho)}.
  \end{equation*}
  Then item \eqref{Vanishing:item2} follows from Lemma \ref{Lemma:hinvariance} and the range for $q_i$ above.

  Item \eqref{Vanishing:item1}/ \eqref{Vanishing:item4} are direct translation from Item (i)/ (ii) respectively after conjugation.

  Item \eqref{Vanishing:item3} is also straightforward from Item (iii).

\end{proof}

\begin{proof}
  [Proof of Lemma \ref{Lemma:conjugatedcongruence}]
  Take the congruence mod $p^{\rho-l-1}$, we have that the image of $u'q_1$ lies in $N(\mathfrak{o}/p^{\rho-l-1}\mathfrak{o})$. Thus there exists unique $q_2\in K_Q(p^{\rho-l-1}\mathfrak{o})$ such that $w'=q_2u'q_1\in N$.

  Taking the congruence mod $p^n$ and using that $\rho\gg n$, we have $q_i\equiv I$ and the equality $w'=q_2u'q_1$ implies (i).

  For (ii) note first that to land $u'q_1$ back to $N$, $q_2$ corresponds to row operations involving only the rows between $i_0-$th row and $j_0-$th row. To simplify the matrix computations we shall only focus on those rows in the following.

  Consider now the easier case where $j_0-i_0=1$. Then $q_1$ is diagonal and
  %

  \begin{equation*}
    u'q_1
    =
    \begin{pmatrix}
      \cdots &	\cdots &\cdots  & \cdots				& \cdots& \cdots \\
      \cdots &	0 & 1            &(1+p^{\rho-l-1}x)u'_{i_0,i_0+1} & * & \cdots   \\
      \cdots &	0 & 0           &1+p^{\rho-l-1}x  & u'_{i_0+1,i_0+2}& \cdots   \\
      \cdots &	\cdots &\cdots  & \cdots                                                        & \cdots& \cdots
    \end{pmatrix}
    .
  \end{equation*}
  Then we take $q_2$ to correspond to multiplying $(i_0+1)-$th row with $(1+p^{\rho-l-1}x)^{-1}$.
  The $(i_0,i_0+1)-$entry remains
  $$u'_{i_0,i_0+1}+p^{\rho-l-1}xu'_{i_0,i_0+1}$$
  as required,
   and the $(i_0+1,i_0+2)-$entry (if exists) becomes
  \begin{equation*}
    \frac{u'_{i_0+1,i_0+2}}{1+p^{\rho-l-1}x}\equiv u'_{i_0+1,i_0+2} \mod p^\rho
  \end{equation*}
  using Item \eqref{Item:i0maximal} of Definition \ref{Defn:niceNdomain}.  (ii) follows in this case.

  Consider now the case $j_0-i_0>1$. Taking the  congruence mod $p^{\rho}$, we have

  \begin{equation*}
    u'q_1
    \equiv
    \begin{pmatrix}
      \cdots &	\cdots &\cdots  & \cdots				& \cdots& \cdots & \cdots& \cdots\\
      \cdots &	0 & 1            &u'_{i_0,i_0+1}+p^{\rho-l-1}xu'_{i_0,j_0} & * & \cdots   & u'_{i_0,j_0}& \cdots\\
      \cdots &	0 & 0           &1& *  &\cdots & u'_{i_0+1,j_0}& \cdots \\
      \cdots &	0 & 0           &0& 1   &\cdots & u'_{i_0+2,j_0}& \cdots \\
      \cdots &	\cdots &\cdots  & \cdots                                                        & \cdots& \cdots        & \cdots& \cdots\\
      \cdots &	0 & 0           & p^{\rho-l-1}x & 0   &\cdots & 1& \cdots \\
      \cdots &	\cdots &\cdots  & \cdots                                                        & \cdots& \cdots        & \cdots& \cdots\\
    \end{pmatrix}
    .
  \end{equation*}
  Here all terms in $(i_0+1)-$th column below the diagonal are congruent to $0$ using $p^{\rho-l-1}xu'_{i,j_0}\equiv 0\mod p^\rho$ for $i_0<i<j_0$ by Item \eqref{Item:achievingremainder} of Definition \ref{Defn:niceNdomain}, except the $p^{\rho-l-1}x$ term appearing on $(j_0, i_0+1)-$entry. To land back to $N$, we can make use of $(i_0+1)-$th row to $j_0-$th row and have
  \begin{equation*}
    q_2\equiv
    \begin{pmatrix}
      \cdots  &\cdots  & \cdots				& \cdots& \cdots & \cdots& \cdots &\cdots \\
      \cdots &	 1            &0 & 0&0 & \cdots   & 0& \cdots\\
      \cdots &	 0           &1& 0 &0 &\cdots & 0& \cdots \\
      \cdots &	 0           &0& 1 &0 &\cdots & 0& \cdots \\
      \cdots &	 0           &0 &0& 1 &\cdots & 0& \cdots \\
      \cdots &	\cdots  & \cdots                                                        & \cdots& \cdots        & \cdots&\cdots & \cdots\\
      \cdots & 0           & 0 & a_{j_0,i_0+1} &a_{j_0,i_0+2} &\cdots & a_{j_0,j_0}& \cdots \\
      \cdots &\cdots&\cdots  & \cdots                                                   & \cdots& \cdots        & \cdots& \cdots\\
    \end{pmatrix}
  \end{equation*}
  $\mod p^\rho$ for proper $a_{j_0,s}$ with $i_0+1\leq s\leq j_0$.  For $\rho\gg n$, we have $a_{j_0,s}\in p^{\rho-l-1}\mathfrak{o}$ for $s<j_0$, and $a_{j_0,j_0}\equiv 1\mod p^{\rho-l-1}\mathfrak{o}$ by $(u_0)_{i,j}\in \mathfrak{o}$.
  These row operations mod $p^{\rho}\mathfrak{o}$ would only affect $(j_0,j_0+1)-$entry of interest in (ii).
  Thus when mod $p^\rho$,
  \begin{equation*}
w'_{i,i+1}\equiv u'_{i,i+1}, \forall i\neq i_0,j_0;
\end{equation*}
  \begin{equation*}
w'_{i_0,i_0+1}\equiv u'_{i_0,i_0+1}+p^{\rho-l-1}xu'_{i_0,j_0}.
\end{equation*}
  Meanwhile
  \begin{align*}
    w'_{j_0,j_0+1}&\equiv a_{j_0,i_0+1}u'_{i_0+1,j_0+1}+a_{j_0,i_0+2}u'_{i_0+2,j_0+1}+\cdots +a_{j_0,j_0}u'_{j_0,j_0+1}\\
                  &\equiv u'_{j_0,j_0+1}\mod p^\rho
  \end{align*}
  as $v(u'_{i,j_0+1})\geq l+1$ for $i_0+1\leq i\leq j_0$	 by Item \eqref{Item:achievingremainder}\eqref{Item:i0maximal} of Definition \eqref{Defn:niceNdomain}. This concludes the proof of (ii).

  For (iii) note first that when reducing $u'q_1$ back to a unipotent matrix by row operations corresponding to $q_2$, the entries of the resulting matrix $w'$ are rational functions of those in $u'$ for each fixed $x$. We shall order the entries of $u'$ (and same with $w'$) as follows:
  \begin{enumerate}
  \item[1.]$(u')_{i,j}$ will be ordered before $(u')_{k,l}$ as long as $i<k$;
  \item[2.] For entries within the same row, $(u')_{i,j_0}$ will be ordered first, the remaining $(u')_{i,j}$ can be ordered in any way.
  \end{enumerate}
  With this ordering, it is not difficult to see that the Jacobian matrix for the change of variable $u'\rightarrow w'$ is upper triangular. Indeed the multiplication on right by $q_1$ will make $(i,i_0+1)$ entries depending on $(i,j_0)$ entries for $1\leq i\leq n$; Then row operations corresponding to the lower-triangular matrix $q_2$ will make $(i,j)-$entry depending on entries from rows above and $(i,j_0)-$entry.

  To tell the determinant of the Jacobi matrix, take the congruence of  $q_2u'q_1=w'$ modulo, say $p^{n+1}\mathfrak{o}$. As $\rho$ is sufficiently large compared to $n$, we have
\begin{equation*}
(u')_{i,j}\equiv (w')_{i,j} \mod p^{n+1}\mathfrak{o}.
\end{equation*}
This implies that the Jacobi matrix has determinant norm 1.
%
\end{proof}

\begin{example}
  We give an explicit example to show what is happening in the proof of Item (ii)(iii) above. Consider the case $G=\GL_4$, $N_0$ is a nice domain of slope $\rho$ and remainder $l$, which is achieved at $(1,4)-$entry satisfying Item \eqref{Item:achievingremainder} \eqref{Item:i0maximal} of Definition \ref{Defn:niceNdomain}. Then we have
  \begin{equation*}
    u' q_1=
    \begin{pmatrix}
      1 & u'_{1,2}+p^{\rho-l-1}xu'_{1,4} & u'_{1,3} & u'_{1,4}\\
      0 & 1+p^{\rho-l-1}xu'_{2,4}		& u'_{2,3} & u'_{2,4}\\
      0 & p^{\rho-l-1}xu'_{3,4}		&1                      & u'_{3,4}\\
      0 &p^{\rho-l-1}x                          &0			&1
    \end{pmatrix}
  \end{equation*}
  The row operations involves only 2nd-4th rows. Thus
  \begin{equation}
    w'_{1,4}=u'_{1,4}, \ \ w'_{1,2}=u'_{1,2}+p^{\rho-l-1}xu'_{1,4},\ \  w'_{1,3}=u'_{1,3}.
  \end{equation}
  Next we multiply the second row with $(1+p^{\rho-l-1}xu'_{2,4}	)^{-1}$, giving
  \begin{equation*}
    w'_{2,4}=\frac{u'_{2,4}}{ 1+p^{\rho-l-1}xu'_{2,4}}, \ \ w'_{2,3}=\frac{u'_{2,3}}{ 1+p^{\rho-l-1}xu'_{2,4}}.
  \end{equation*}
  This operation further reduces $u'q_1$ to
  \begin{equation*}
    \begin{pmatrix}
      1 & w'_{1,2} & w'_{1,3} & w'_{1,4}\\
      0 & 1			& w'_{2,3} & w'_{2,4}\\
      0 & p^{\rho-l-1}xu'_{3,4}		&1                      & u'_{3,4}\\
      0 &p^{\rho-l-1}x                          &0			&1
    \end{pmatrix}
    .
  \end{equation*}
  Then we use the second row to get rid of $(3,2)-$entry, and multiply the third row by a scalar to make $(3,3)-$entry $1$. This gives
  \begin{equation*}
    w'_{3,4}=\frac{u'_{3,4}-p^{\rho-l-1}xu'_{3,4}w'_{2,4}}{1-p^{\rho-l-1}xu'_{3,4}w'_{2,3}}.
  \end{equation*}
  One also need to change the 4th row, but that won't affect the change of variables in this case. It is clear now the corresponding Jacobi matrix is upper triangular with diagonal entries congruent to $1$ provided $\rho\gg n$.
\end{example}

\def\cprime{$'$} \def\cprime{$'$} \def\cprime{$'$} \def\cprime{$'$}


\begin{thebibliography}{10}

\bibitem{assing2024localanalysiskuznetsovformula}
Edgar Assing, Valentin Blomer, and Paul~D. Nelson.
\newblock Local analysis of the kuznetsov formula and the density conjecture.
\newblock 2024.
\newblock arXiv:2404.05561

\bibitem{MR1999922}
Ehud~Moshe Baruch.
\newblock A proof of {K}irillov's conjecture.
\newblock {\em Ann. of Math. (2)}, 158(1):207--252, 2003.

\bibitem{MR748505}
Joseph~N. Bernstein.
\newblock {$P$}-invariant distributions on {${\rm GL}(N)$} and the
  classification of unitary representations of {${\rm GL}(N)$}
  (non-{A}rchimedean case).
\newblock In {\em Lie group representations, {II} ({C}ollege {P}ark, {M}d.,
  1982/1983)}, volume 1041 of {\em Lecture Notes in Math.}, pages 50--102.
  Springer, Berlin, 1984.

\bibitem{MR581582}
W.~Casselman and J.~Shalika.
\newblock The unramified principal series of {$p$}-adic groups. {II}. {T}he
  {W}hittaker function.
\newblock {\em Compositio Math.}, 41(2):207--231, 1980.

\bibitem{MR2930996}
Brooke Feigon, Erez Lapid, and Omer Offen.
\newblock On representations distinguished by unitary groups.
\newblock {\em Publ. Math. Inst. Hautes \'Etudes Sci.}, 115:185--323, 2012.

\bibitem{2023arXiv2309.06314}
Yueke Hu and Paul~D Nelson.
\newblock Subconvex bounds for $U_{n+1}\times U_n$ in horizontal aspects.
\newblock 09 2023.
\newblock arXiv:2309.06314.

\bibitem{MR620708}
H.~Jacquet, I.~I. Piatetski-Shapiro, and J.~Shalika.
\newblock Conducteur des repr\'esentations du groupe lin\'eaire.
\newblock {\em Math. Ann.}, 256(2):199--214, 1981.

\bibitem{MR701565}
H.~Jacquet, I.~I. Piatetskii-Shapiro, and J.~A. Shalika.
\newblock Rankin-{S}elberg convolutions.
\newblock {\em Amer. J. Math.}, 105(2):367--464, 1983.

\bibitem{2023arXiv2309.16667}
Simon Marshall.
\newblock Subconvexity for $l$-functions on ${\rm U}(n) \times {\rm U}(n+1)$ in
  the depth aspect.
\newblock 09 2023.
\newblock arXiv:2309.16667.

\bibitem{michel-2009}
Philippe Michel and Akshay Venkatesh.
\newblock The subconvexity problem for {${\rm GL}_2$}.
\newblock {\em Publ. Math. Inst. Hautes \'Etudes Sci.}, (111):171--271, 2010.

\bibitem{2021arXiv210915230N}
Paul~D. {Nelson}.
\newblock {Bounds for standard {$L$}-functions}.
\newblock {\em arXiv e-prints}, page arXiv:2109.15230, September 2021.

\bibitem{2020arXiv201202187N}
Paul~D. Nelson.
\newblock Spectral aspect subconvex bounds for {$U_{n+1} \times U_n$}.
\newblock {\em Invent. Math.}, 232(3):1273--1438, 2023.

\bibitem{MR546599}
I.~I. Piatetski-Shapiro.
\newblock Multiplicity one theorems.
\newblock In {\em Automorphic forms, representations and {$L$}-functions
  ({P}roc. {S}ympos. {P}ure {M}ath., {O}regon {S}tate {U}niv., {C}orvallis,
  {O}re., 1977), {P}art 1}, Proc. Sympos. Pure Math., XXXIII, pages 209--212.
  Amer. Math. Soc., Providence, R.I., 1979.

\bibitem{MR348047}
J.~A. Shalika.
\newblock The multiplicity one theorem for {${\rm GL}_{n}$}.
\newblock {\em Ann. of Math. (2)}, 100:171--193, 1974.

\bibitem{2023arXiv2309.07534}
Liyang Yang.
\newblock Relative trace formula, subconvexity and quantitative nonvanishing of
  rankin-selberg $l$-functions for $\mathrm{GL}(n+1)\times\mathrm{GL}(n)$.
\newblock 09 2023.
\newblock arXiv:2309.07534.

\end{thebibliography}
\end{document}